\newtheorem{theorem}{Theorem}[section]
\newtheorem{corollary}[theorem]{Corollary}
\newtheorem{lemma}[theorem]{Lemma}
\newtheorem{proposition}[theorem]{Proposition}
\theoremstyle{definition}
\newtheorem{definition}[theorem]{Definition}
\newtheorem{remark}[theorem]{Remark}
\numberwithin{equation}{section}
\titleformat{\section}[block]{\scshape\filcenter}{\thesection.}{3pt}{}
\titleformat{\subsection}[block]{\scshape\filcenter}{\thesubsection.}{3pt}{}
\title{\large{\textbf{ROTATIONAL SYMMETRY OF ANCIENT SOLUTIONS TO THE RICCI FLOW IN HIGHER DIMENSIONS}}}
\author{\textsc{\small{Simon Brendle and Keaton Naff}}}
\date{}
\begin{document}

\maketitle

\abstract{We extend the second part of \cite{Bre20} on the uniqueness of ancient $\kappa$-solutions to higher dimensions. In dimensions $n \geq 4$, an ancient $\kappa$-solution is a nonflat, complete, ancient solution of the Ricci flow that is uniformly PIC and weakly PIC2; has bounded curvature; and is $\kappa$-noncollapsed. We show that the only noncompact ancient $\kappa$-solutions up to isometry are a family of shrinking cylinders, a quotient thereof, or the Bryant soliton.}

\section{Introduction}

Recently, there have been significant developments in the understanding of singularity models of the Ricci flow in dimension three. By work of Hamilton \cite{Ham95} and Perelman \cite{Per02}, we have known for some time that singularities of the Ricci flow in dimension three are modeled on ancient $\kappa$-solutions. An ancient $\kappa$-solution is a complete, nonflat, $\kappa$-noncollapsed, ancient solution of Ricci flow which has bounded, nonnegative curvature. Perelman gave a very good qualitative description of these singularity models. In the noncompact setting, Perelman proved that an ancient $\kappa$-solution must either be a quotient of a family of shrinking round cylinders, or the solution must have the structure of a tube attached to a positively curved cap. Perelman conjectured the latter solution must be isometric to the rotationally symmetric Bryant soliton. The first step toward proving this conjecture was accomplished in \cite{Bre13}, where the first author proved that the Bryant soliton is the unique noncompact, nonflat, $\kappa$-noncollapsed, steady gradient Ricci soliton in dimension three. The proof of Perelman's conjecture in its full generality was recently completed by the first author in \cite{Bre20}. 

In the compact setting, there is a related conjecture that an ancient $\kappa$-solution must either be a quotient of a family of shrinking round spheres, or be isometric to a so-called ancient oval, constructed by Perelman in \cite{Per03}.  At very negative times, an ancient oval looks approximately like two Bryant solitons which have been cut along a cross-sectional sphere far from the tip and glued together to form a compact solution. The proof of uniqueness in the compact setting presents several unique challenges not present in the proof in the noncompact setting. In particular, the ancient oval is not a self-similar solution of the Ricci flow and requires a careful analysis of its asymptotics for lack of a closed-form description of the solution. These challenges have been addressed in a pair of papers, \cite{ABDS19} and \cite{BDS21}, by Angenent, Daskalopoulos, Sesum, and the first author. By the combination of \cite{Bre20} and \cite{BDS21}, we now know that, up to isometries and quotients, the only ancient $\kappa$-solutions in dimension three are the shrinking round sphere, the shrinking round cylinder, the ancient oval solution, and the steady Bryant soliton. 

The classification in dimension three is special. The only singularities of the Ricci flow that occur in dimension three are spherical singularities (modeled on $S^3$), neck-pinch singularities (modeled on $S^2 \times \mathbb{R}$), and degenerate neck-pinch singularities (modeled on the Bryant soliton). For the purpose of performing surgery, it is important that the noncompact singularity models have a compact cross-section modeled on $S^2$. In this sense, through dimension reduction, the simple classification of $\kappa$-solutions in dimension three is in part due to the even simpler classification in dimension two: the only ancient $\kappa$-solution in dimension two is the shrinking round sphere (and its quotient). 

Now let us turn our attention to higher dimensions $n \geq 4$. In dimension four, the class of singularities models is not as simple. For example, a singularity of the flow might be modeled upon the product of a three-dimensional Bryant soliton with a line or a bubble-sheet cylinder $S^2 \times \mathbb{R}^2$. In general, singularities modeled on these ancient solutions will occur, unless one imposes a preserved curvature condition on the initial data to rule them out. In his fundamental work \cite{Ham97}, Hamilton showed that positive isotropic curvature (PIC) is the right curvature assumption on the initial metric that allows for only spherical, neck-pinch, and degenerate neck-pinch singularities to occur. In particular, Hamilton developed an analogue of the very important Hamilton-Ivey pinching condition for initial data that is PIC in dimension four. By combining the work of Perelman and Hamilton, Chen and Zhu in \cite{CZ06} showed singularities models for Ricci flows of PIC initial data in dimension four have the same qualitative description as singularity models in dimension three. 

There is a conjecture that a similar singularity phenomenon holds in all dimensions $n \geq 4$ when the initial data is PIC. The first author confirmed this conjecture in dimensions $n \geq 12$ in \cite{Bre19}. A key new ingredient in \cite{Bre19} is a pinching result for PIC initial data when $n \geq 12$. The result shows that singularity models for PIC initial data must be uniformly PIC and weakly PIC2. See Section 2 to recall the definitions. It is expected that there should be a similar pinching estimate for the missing dimensions $5 \leq n \leq 11$. In any case, together with Perelman's noncollapsing result, one concludes that an appropriate definition of an ancient $\kappa$-solution in the PIC setting for $n \geq 4$ is the following:
 
 \begin{definition}\label{kappa_solution}
Suppose $n \geq 4$. An $n$-dimensional ancient $\kappa$-solution is an ancient, complete, nonflat solution of the Ricci flow that is uniformly PIC and weakly PIC2; has bounded curvature; and is $\kappa$-noncollapsed on all scales.
\end{definition}

The purpose of this work is to confirm that the main uniqueness result of \cite{Bre20} holds in higher dimensions for $\kappa$-solutions in the sense above. The proof of the conjecture in \cite{Bre20} is accomplished in two parts. In the first part, the first author proves the Bryant soliton is unique among the class of rotationally symmetric metrics. 

\begin{theorem}[\cite{Bre20}]
Assume that $(M, g(t))$ is a three-dimensional ancient $\kappa$-solution which is noncompact and has positive sectional curvature. If $(M, g(t))$ is rotationally symmetric, then $(M, g(t))$ is isometric to the Bryant soliton up to scaling. 
\end{theorem}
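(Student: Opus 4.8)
\emph{Proof proposal.} The plan is to use rotational symmetry to reduce the flow to a scalar parabolic equation, then to prove that the solution must be a steady gradient Ricci soliton, and finally to appeal to the uniqueness of the Bryant soliton within the class of self-similar solutions (\cite{Bre13}) to conclude. First I would introduce warped-product coordinates $g(t)=ds^2+\phi(s,t)^2\,g_{S^2}$, with $s$ the arc length from the tip. The tip is unique because positivity of the sectional curvature forces $\phi_{ss}<0$ and $\phi_s^2<1$, so at each time $\phi$ is strictly concave and strictly increasing away from the tip; $\phi_s$ then decreases to a constant $L\in[0,1)$, and the value $L>0$ (an asymptotically conical end) is excluded by the structure theory for ancient $\kappa$-solutions, so $L=0$ and $\phi\to\infty$. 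In these coordinates the Ricci flow becomes a parabolic equation for $\phi$, and the sectional and scalar curvatures are explicit functions of $\phi$, $\phi_s$, $\phi_{ss}$.

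Next I would record the asymptotics furnished by the theory of ancient $\kappa$-solutions: at each time the end is asymptotic to a family of shrinking round cylinders, with $\phi_s\to 0$, $\phi^2 R\to 2$, and matching decay of the derivatives; and the asymptotic shrinking soliton obtained by rescaling as $t\to-\infty$ is the round cylinder $S^2\times\mathbb{R}$, since a noncompact, nonflat, rotationally symmetric three-dimensional shrinking $\kappa$-soliton is necessarily the cylinder (or a quotient). I would also use the interior estimates $|\nabla R|\le C R^{3/2}$ and $|\partial_t R|\le C R^2$.

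The heart of the argument is to show that $(M,g(t))$ is a steady gradient Ricci soliton. Since $\phi$ is strictly monotone in $s$ away from the tip, I would pass to the radius variable $x=\phi$ and study $v(x,t):=|\nabla\phi|^2=\phi_s^2$ on $(0,\infty)\times(-\infty,0)$, which satisfies $v\to 1$ as $x\to 0$ and $v\to 0$ as $x\to\infty$. The Ricci flow then becomes a gauge-free quasilinear parabolic equation for $v$, degenerate as $v\to 0$; the Bryant soliton is its unique positively curved stationary solution with these boundary values, up to scaling, while the cylinder is $v\equiv 0$. I would then form a soliton-defect quantity — either the symmetric $2$-tensor $h=\Ric+\nabla^2 f$ with $f$ determined by $|\nabla f|^2=R_{\max}(t)-R$, or, in the reduced picture, a difference $W(x,t)=v(x,t)-v_B(x/\lambda(t))$ with $\lambda(t)$ normalizing the curvature at the tip — and derive for it a parabolic equation with favorable structure. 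Using the cylindrical asymptotics at spatial infinity and smoothness at the tip as barriers, and using the (rigid) asymptotic cylinder to control the behavior as $t\to-\infty$, the maximum principle on the spacetime slab forces the defect to vanish identically; equivalently $v$ is independent of time and hence a stationary solution of the reduced equation, hence the Bryant profile up to scaling. Therefore $(M,g(t))$ is isometric to the Bryant soliton up to scaling.

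I expect the main difficulty to be concentrated in this third step. The delicate points are: sharpening the asymptotics at spatial infinity to a genuine two-sided barrier that controls the sub-leading term distinguishing the Bryant profile; handling the degeneracy of the reduced equation both at $x=0$ and at $x=\infty$; and, above all, controlling the solution uniformly as $t\to-\infty$, so that the maximum principle leaves no room for nontrivial ``data at $-\infty$'' — this is precisely where one must exploit that the backward limit is the rigid cylinder rather than some other ancient solution. Arranging the defect quantity so that its evolution equation carries no destabilizing zeroth-order term is the technical crux.
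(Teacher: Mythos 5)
The paper states this theorem by citation to \cite{Bre18} and does not reprove it, so there is no in-paper argument to check you against; what follows is a comparison with the argument in that reference. Your reduction to the gauge-free radial profile $v(x,t)=\phi_s^2$ with $x=\phi$, with boundary values $v(0)=1$ at the tip and $v(\infty)=0$ at the cylindrical end, is exactly the right framework and matches \cite{Bre18}. One small economy: the appeal to \cite{Bre13} at the end is unnecessary. Once $v$ is shown to be a time-independent solution of the reduced equation with these boundary values, the Bryant profile already follows from ODE uniqueness in $x$; \cite{Bre13} is needed only if one wants to drop rotational symmetry from the soliton hypothesis.

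The genuine gap is the central maximum-principle step, and in particular the claim that ``using the (rigid) asymptotic cylinder to control the behavior as $t\to-\infty$'' supplies the initial control. This does not work as stated. Perelman's asymptotic soliton is the cylinder only after parabolic rescaling; at fixed scale the profile $v(x,t)$ at very negative times is \emph{not} close to $v\equiv 0$ in any sense that helps trap it against $v_B$, and the steady-soliton defect $\mathrm{Ric}+D^2 f$ of a shrinking cylinder is not small — the cylinder is a shrinking soliton, not a steady one, so the data at $t=-\infty$ is not ``approximately zero defect.'' Moreover, the tensor $h=\mathrm{Ric}+D^2 f$ presupposes a choice of $f$; the single scalar identity $|\nabla f|^2=R_{\max}(t)-R$ underdetermines $f$, and absent a soliton structure there is no guarantee that the resulting $h$ evolves by a parabolic equation without destabilizing zeroth-order terms (your own closing sentence flags precisely this). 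What \cite{Bre18} actually supplies at this point is a sharp quantitative spatial asymptotics estimate — uniform control of $x^2 v(x,t)$ as $x\to\infty$ matching the Bryant decay rate, together with derivative estimates — followed by a careful comparison against rescaled Bryant profiles; these are the substantive analytic lemmas, and your outline names them as ``delicate points'' rather than supplying a plan to prove them. As written, the proposal is a correct road map with the engine missing.
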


This part of \cite{Bre20} has recently been extended to higher dimensions by Li and Zhang in \cite{LZ20}. 

\begin{corollary}[\cite{LZ20}]\label{li_zhang_pic}
Assume $n \geq 4$ and that $(M, g(t))$ is an $n$-dimensional ancient $\kappa$-solution, which is noncompact and strictly PIC2. If $(M, g(t))$ is rotationally symmetric, then $(M, g(t))$ is isometric to the Bryant soliton up to scaling. 
\end{corollary}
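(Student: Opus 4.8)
The plan is to follow the proof of Theorem~1.2 given in \cite{Bre18} essentially line by line, replacing the dimensional constant $1$ that is attached to the two-dimensional orbit spheres in dimension three by $n-2$, and checking that the specifically three-dimensional ingredients of that argument --- above all Hamilton--Ivey pinching, whose role is here taken over by the hypothesis of strict PIC2 --- do not obstruct the generalization.

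First I would set up the rotationally symmetric reduction. Since $(M,g(t))$ is strictly PIC2 it has positive sectional curvature, and being noncompact it is then diffeomorphic to $\R^n$ with a single end; the $SO(n)$-action realizing the symmetry has exactly one fixed point $o$, so for each $t$ one may write $g(t)=dr^2+\phi(r,t)^2\, g_{S^{n-1}}$, where $r=\operatorname{dist}_{g(t)}(o,\cdot)$, $\phi(0,t)=0$, and $\phi_r(0,t)=1$. Positivity of the sectional curvatures $-\phi_{rr}/\phi$ (on planes containing $\partial_r$) and $(1-\phi_r^2)/\phi^2$ (on planes tangent to an orbit) forces $0<\phi_r<1$ and $\phi_{rr}<0$ for $r>0$, and in this gauge the Ricci flow reduces to the scalar parabolic equation $\phi_t=\phi_{rr}-(n-2)(1-\phi_r^2)/\phi$. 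One then records the behaviour of the end: by the structure theory for ancient $\kappa$-solutions (Perelman's analysis, in the form available under the PIC hypotheses), the solution cannot be a cylinder quotient, so at each time it is a cap glued to a tube whose suitable rescalings converge to the round cylinder over $S^{n-1}$; in particular $\phi_r(r,t)\to 0$ as $r\to\infty$ with controlled decay, and the asymptotic shrinking soliton of the flow is the round shrinking cylinder over $S^{n-1}$. The curvature bounds, the Harnack inequality, and the noncollapsing hypothesis, which are used repeatedly below, are recorded here as well.

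The heart of the matter is to prove that $(M,g(t))$ is, up to diffeomorphisms, a gradient steady Ricci soliton. Following \cite{Bre18}, I would introduce the ``solitonic'' quantity built out of $\phi$, $\phi_r$, $\phi_t$ and their derivatives which measures the failure of $g(t)$ to move by the flow of a gradient vector field $\nabla f$ compatible with the symmetry --- equivalently, the combination that vanishes identically on the Bryant soliton --- and show by the strong maximum principle that it vanishes identically. The inputs are exactly those available: parabolicity of the $\phi$-equation, the fact that the solution is defined for all $t\le 0$, the curvature bounds together with the noncollapsing, and the cylindrical asymptotics from the previous step, which pin down the behaviour of the quantity near spatial infinity and as $t\to-\infty$. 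Once the soliton identity holds, the rest is elementary: a complete, noncompact, rotationally symmetric gradient steady Ricci soliton with positive sectional curvature is, up to scaling, the Bryant soliton, because in the warped-product gauge the soliton equations become a system of ordinary differential equations for $\phi$ with a unique solution that is smooth at the tip and complete --- this is Bryant's original analysis (see also \cite{Bre13}). This identifies $(M,g(t))$ with the Bryant soliton up to scaling and completes the proof.

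I expect the soliton-rigidity step to be the one genuine obstacle. The warped-product reduction, the asymptotics of the end, and the ODE classification of rotationally symmetric steady solitons are routine transcriptions of the three-dimensional arguments with $n-2$ in place of $1$. The delicate point is to verify that the barrier constructions controlling the solitonic quantity near $r=\infty$ and in the limit $t\to-\infty$, together with the application of the strong maximum principle, survive the change of the dimensional constant; this requires re-examining the coefficients in the evolution equation satisfied by that quantity, but introduces no new phenomena.
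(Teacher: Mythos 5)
Your proposal is a high-level sketch of a from-scratch re-derivation of the Li--Zhang theorem itself (i.e.\ a higher-dimensional version of the hard analytic argument in Part~1 of \cite{Bre18}: warped-product reduction, cylindrical asymptotics, a Lyapunov-type ``solitonic'' quantity controlled by barriers and the strong maximum principle, and finally an ODE classification of rotationally symmetric steady solitons). That is a legitimate route in principle --- it is essentially what \cite{LZ18} did --- but it misses the much shorter proof the paper actually uses, which is the reason this is stated as a \emph{corollary} of \cite{LZ18} rather than a theorem. The paper simply observes that Li and Zhang proved the statement under the hypothesis that the curvature operator is nonnegative, and then verifies in Lemma~\ref{li-zhang-pic} a purely algebraic fact: for a rotationally symmetric metric $g = dz^2 + \phi(z)^2 g_{S^{n-1}}$, the curvature tensor necessarily has the form $R_{ijkl} = \delta_{ik}A_{jl} + \delta_{jl}A_{ik} - \delta_{il}A_{jk} - \delta_{jk}A_{il}$, and the strictly PIC2 condition then forces positivity of the curvature operator. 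So an ancient $\kappa$-solution in the sense of Definition~\ref{kappa_solution} that is rotationally symmetric and strictly PIC2 is automatically an ancient $\kappa$-solution in Li--Zhang's sense, and their theorem applies verbatim.

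Beyond the mismatch in strategy, the critical step in your plan --- identifying the correct higher-dimensional ``solitonic'' quantity, constructing barriers near spatial infinity and as $t \to -\infty$, and verifying that the strong maximum principle closes the argument after the dimensional constants change --- is precisely the content of \cite{LZ18} and is not something that can be waved through with ``no new phenomena.'' If you wish to avoid citing \cite{LZ18}, those details must be supplied. Otherwise, the clean route is: cite \cite{LZ18} for the positive-curvature-operator case, and then prove the elementary algebraic lemma that strict PIC2 plus rotational symmetry implies positive curvature operator. (One also observes, as you do, that strict PIC2 gives positive sectional curvature, so the manifold is diffeomorphic to $\R^n$ by the soul theorem and the warped-product form makes sense globally; but the essential point the paper needs is the curvature-operator positivity, not merely sectional-curvature positivity.)
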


Technically, these authors proved the theorem above under the assumption that the $\kappa$-solution has positive curvature operator. In the rotationally symmetric setting, however, a curvature operator that is PIC2 is automatically positive, so their result clearly implies uniqueness under the weaker assumption (see Lemma \ref{li-zhang-pic}).

In the second part of \cite{Bre20}, the first author proves the $\kappa$-solution must be rotationally symmetric.

\begin{theorem}[\cite{Bre20}]
Assume that $(M, g(t))$ is a three-dimensional ancient $\kappa$-solution which is noncompact and has positive sectional curvature. Then $(M, g(t))$ is rotationally symmetric. 
\end{theorem}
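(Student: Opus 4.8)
The plan is to construct, on each end of $M$, a triple of Killing vector fields spanning an $\mathfrak{so}(3)$, to extend them across the compact core, and then to identify the resulting isometric $SO(3)$-action as a rotation. I begin with Perelman's structure theory for noncompact ancient $\kappa$-solutions in dimension $3$: since we may assume $(M,g(t))$ is not a quotient of the shrinking round cylinder (in which case the conclusion is immediate), $M$ is diffeomorphic to $\R^3$ by the soul theorem, the backward blow-down limit as $t\to-\infty$ is the shrinking round cylinder $S^2\times\R$, and every point outside a fixed compact set $K$ lies at the center of an $\varepsilon$-neck. In particular, after rescaling to unit neck size, the flow on a large parabolic neighborhood of such a point is $C^m$-close to the standard cylindrical flow, which carries the three rotational Killing fields $\Omega^{(1)},\Omega^{(2)},\Omega^{(3)}$.

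The technical core is a \emph{Neck Improvement Theorem}. Call a parabolic neighborhood $\varepsilon$-symmetric if it admits vector fields $U^{(1)},U^{(2)},U^{(3)}$ which, after rescaling to unit neck size, are $\varepsilon$-close in a scale-invariant $C^m$-norm to $\Omega^{(1)},\Omega^{(2)},\Omega^{(3)}$ and satisfy $\sum_a\|\mathcal{L}_{U^{(a)}}g\| + \sum_{a,b}\|[U^{(a)},U^{(b)}]-\sum_c\epsilon_{abc}U^{(c)}\|\le\varepsilon$ in the same norm. The assertion is: if the flow is $\varepsilon$-symmetric on the parabolic neighborhood of a point at the center of a neck, then it is $\tfrac{\varepsilon}{2}$-symmetric on a somewhat smaller one. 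To prove this, I first replace each $U^{(a)}$ by a nearby vector field solving the heat-type equation $\partial_t U = \Delta_{g(t)}U + \Ric(U)$ with $U^{(a)}$ as initial and boundary data; the correction is bounded by the error term, and for such $U$ the tensor $h=\mathcal{L}_Ug$ satisfies the Lichnerowicz heat equation $\partial_t h=\Delta_L h$. I then decompose $h$ on each cross-sectional $2$-sphere into spherical harmonics and carry out a spectral analysis of $\Delta_L$ on the cylinder: the only modes that fail to decay as one moves deeper into the neck are precisely those of the form $\mathcal{L}_Wg$ with $W$ an infinitesimal rotation of the cross-section, and these are removed by adjusting the $U^{(a)}$ by a bounded rotational field. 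Combining the definite decay rate of the remaining modes with interior parabolic estimates yields the factor-$2$ improvement, and the commutator relations are controlled in the same way, since $[U^{(a)},U^{(b)}]$ is again approximately normalized.

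Next I iterate. Perelman's asymptotics supply a fixed $\varepsilon_0$ for which every sufficiently far-out neck region is $\varepsilon_0$-symmetric. Marching out each end along a chain of overlapping necks and applying the Neck Improvement Theorem, with the scales arranged so consecutive necks are compatible, I conclude that for every $\varepsilon>0$ there is a neighborhood of infinity on which the flow is $\varepsilon$-symmetric, with uniform estimates on the approximate Killing fields. Letting $\varepsilon\to0$ and extracting a limit by Arzel\`a--Ascoli yields genuine vector fields $U^{(1)},U^{(2)},U^{(3)}$ on each end with $\mathcal{L}_{U^{(a)}}g=0$ and $[U^{(a)},U^{(b)}]=\sum_c\epsilon_{abc}U^{(c)}$, that is, an isometric action of a neighborhood of the identity in $SO(3)$. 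Each $U^{(a)}$, being Killing, solves the elliptic system $\Delta U+\Ric(U)=0$; since Ricci-flow metrics are real-analytic in the space variables and $M\cong\R^3$ is simply connected, the Killing fields extend by analytic continuation across the compact core to all of $M$. The resulting global isometric $SO(3)$-action has the cross-sectional $2$-spheres as orbits along the necks, so its principal orbits are $2$-dimensional; a $3$-manifold admitting such an action is a rotationally symmetric warped product, and therefore $(M,g(t))$ is rotationally symmetric.

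The main obstacle is the Neck Improvement Theorem, and within it the spectral analysis of $\Delta_L$ on $S^2\times\R$: one must identify exactly the neutral (non-decaying) modes, prove that every other mode decays into the neck at a definite rate, and then propagate the resulting estimates — together with the commutator errors and the overlap gluing — through the iteration with enough uniformity that the geometric decay beats the accumulated error. A secondary difficulty is making the construction compatible with the time direction, so that the fields on different time slices match up and the limit objects are honest spacetime Killing fields.
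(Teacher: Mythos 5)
Your overall architecture (neck improvement via the parabolic Lichnerowicz equation, spectral analysis on the shrinking cylinder, gluing approximate rotation fields) matches the paper's, but your global scheme for upgrading approximate to exact symmetry has a genuine gap. The Neck Improvement Theorem improves the symmetry of a point only if every point of a \emph{backward parabolic} neighborhood $B_{g(t_0)}(x_0,Lr)\times[t_0-Lr^2,t_0)$ is already $\varepsilon$-symmetric; each halving of $\varepsilon$ therefore consumes a further layer of control both earlier in time and inward in space. Your ``marching out along a chain of necks at a fixed time'' can only be iterated a number of times dictated by the distance of the point from the cap and by how far back in time neck-type control persists, so at any \emph{fixed} point you never obtain symmetry better than some definite quality; the statement ``for every $\varepsilon>0$ there is a neighborhood of infinity on which the flow is $\varepsilon$-symmetric'' does not hold with a fixed end as $\varepsilon\to 0$, and hence the Arzel\`a--Ascoli limit producing exact Killing fields on an end never materializes. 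This is precisely why the paper does not argue at a fixed time: it runs a continuity/contradiction argument in time, anchored at $t\to-\infty$, where Hamilton's Harnack rigidity plus the uniqueness of the Bryant soliton (and, at the later contradiction step, the classification of rotationally symmetric ancient solutions) show that rescalings converge to the Bryant soliton and hence that the flow is $\hat\varepsilon_k$-symmetric on long time intervals with $\hat\varepsilon_k\to 0$; one then defines the first time $t_k$ at which $\varepsilon_k$-symmetry fails and derives a contradiction. Crucially, this forces a separate treatment of the \emph{cap} region (Propositions \ref{main_prop}--\ref{main_prop_3}), where there is no neck structure: approximate Killing fields are propagated across the cap using the approximate soliton potential $f$ and a weighted Anderson--Chow-type maximum principle. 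Your proposal omits the cap entirely, and ``analytic continuation of Killing fields across the core'' cannot substitute for it, because no exact Killing fields on the end are ever produced in the first place.

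A secondary inaccuracy: in the cylinder mode analysis the non-decaying solutions are \emph{not} of the form $\mathcal L_W g$ with $W$ an infinitesimal rotation of the cross-section (such $W$ are exact symmetries of $\bar g(t)$ and contribute nothing); they are the rotationally invariant $\bar\omega\, g_{S^{2}}$ and $\bar\beta\, dz\otimes dz$ modes together with the first-spherical-harmonic mode, which is the Lie derivative of the metric along a \emph{conformal} Killing field of the cross-section (the gradient of a first spherical harmonic), and it is by subtracting such a field $\xi$ -- not a rotation -- that the neutral mode is removed in Step 6 of the Neck Improvement Theorem.
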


Extending this theorem to higher dimensions is the main result of this paper:

\begin{theorem}\label{main}
Assume $n \geq 4$ and that $(M, g(t))$ is an $n$-dimensional ancient $\kappa$-solution which is noncompact and strictly PIC2. Then $(M, g(t))$ is rotationally symmetric. 
\end{theorem}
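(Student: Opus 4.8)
The plan is to produce an effective isometric $SO(n)$-action under which the solution is invariant --- equivalently, $\binom{n}{2}$ Killing vector fields of $g(t)$ obeying the bracket relations of $\mathfrak{so}(n)$ whose principal orbits are round $(n-1)$-spheres; since the space of Killing fields is preserved along a Ricci flow, it suffices to find such fields for one time slice. I would follow the scheme of the second part of \cite{Bre18}, upgrading each step to arbitrary dimension $n \geq 4$.

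First I would record the asymptotic geometry. Since $(M, g(t))$ is noncompact and strictly PIC2 it cannot be a quotient of a shrinking cylinder (the cylinder is only weakly PIC2), so by the structure theory for ancient $\kappa$-solutions (Perelman \cite{Per02} for $n = 3$; Chen--Zhu \cite{CZ06} and Brendle \cite{Bre19} in the PIC setting, with Corollary \ref{li_zhang_pic} used to identify any rotationally symmetric limit that arises with the Bryant soliton) it has the structure of a cap attached to a long tube: outside a compact set every point is the center of an $\varepsilon$-neck modeled on the round cylinder $S^{n-1} \times \R$, with quantitative control on the rate at which the metric becomes cylindrical along the end, and strict PIC2 forces the cross-section to be the round $S^{n-1}$. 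On each neck I transplant the $\binom{n}{2}$ rotation fields of $S^{n-1} \times \R$ through the approximating diffeomorphism to obtain vector fields $U_{(\alpha)}$ that are approximately Killing and approximately satisfy the $\mathfrak{so}(n)$ relations; this yields a quantitative notion of an $(\varepsilon, \alpha)$-symmetric parabolic ball as in \cite{Bre18}.

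The technical heart is a Neck Improvement Theorem: there exist $\varepsilon_0 > 0$ and $\Lambda < \infty$ so that if a parabolic ball $P(x_0, \Lambda)$ is $(\varepsilon, \alpha)$-symmetric with $\varepsilon \leq \varepsilon_0$, then $P(x_0, 1)$ is $(\varepsilon/2, \alpha)$-symmetric. I would prove this by contradiction and compactness (using bounded curvature, $\kappa$-noncollapsing, and weak PIC2, the last preserved in the limit): rescaling a sequence of counterexamples produces a limit Ricci flow on the round cylinder $S^{n-1} \times \R$, and the leading-order obstruction to improving the $U_{(\alpha)}$ is a solution $h$ of the linearized equation, the Lichnerowicz heat equation $\partial_t h = \Delta_L h$, coupled to the linear equation for the vector-field correction. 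A separation-of-variables analysis on $S^{n-1} \times \R$ --- which in general dimension replaces the explicit $S^2$ computation of \cite{Bre18} by the eigenvalue analysis of $\Delta_L$ over the decomposition of symmetric $2$-tensors on $S^{n-1}$ into $SO(n)$-irreducibles --- shows that every such $h$ either decays relative to the working scale (so the symmetry improves) or is a gauge mode, i.e.\ an infinitesimal rotation, translation, or rescaling of the cylinder, which can be absorbed by reselecting the $U_{(\alpha)}$ and renormalizing; this gives the contradiction. Iterating the Neck Improvement Theorem down the scales, fed at every sufficiently large scale by the $\varepsilon$-neck structure of the previous paragraph, I conclude that along the cylindrical end the $U_{(\alpha)}$ converge to vector fields $V_{(\alpha)}$ with $\mathcal L_{V_{(\alpha)}} g$ vanishing there and the $\mathfrak{so}(n)$ relations holding exactly.

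It remains to propagate the symmetry into the cap. I would extend $V_{(\alpha)}$ to all of $M \times (-\infty, 0]$ by solving the linear parabolic equation $\partial_t V = \Delta V + \Ric(V)$ --- for which $h := \mathcal L_V g$ solves $\partial_t h = \Delta_L h$ --- using bounded curvature and an exhaustion of $M$ to produce a global solution asymptotic to $V_{(\alpha)}$ along the end, so that $h$ vanishes on the end. A Carleman-type unique continuation inequality for solutions of $|\partial_t h - \Delta h| \leq C(|h| + |\nabla h|)$ on parabolic cylinders, combined with a rescaling argument exploiting that the solution is ancient, then forces $h \equiv 0$ on $M \times (-\infty, 0]$; hence each $V_{(\alpha)}$ is a genuine Killing field, and the $V_{(\alpha)}$ generate the desired isometric $SO(n)$-action, whose principal orbits are the neck cross-sections. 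The principal new difficulty relative to \cite{Bre18} is the Neck Improvement Theorem: the spectral analysis of the Lichnerowicz operator on $S^{n-1} \times \R$ in general dimension, and the verification that the non-decaying modes are exactly the gauge modes, require the representation theory of $SO(n)$ acting on symmetric $2$-tensors rather than a finite explicit computation. A secondary difficulty is obtaining the asymptotic cylindrical structure under the PIC-type hypotheses for every $n \geq 4$ (not only in the range where the Hamilton--Ivey-type pinching is currently available) and carrying the strict PIC2 condition through the compactness arguments; the Carleman estimates, by contrast, are essentially dimension-independent in form.
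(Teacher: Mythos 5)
Your first three stages --- recording the cap-and-tube structure via the PIC structure theory, transplanting the rotation fields to obtain a quantitative notion of an $\varepsilon$-symmetric neck, and a Neck Improvement Theorem proved by a compactness/contradiction argument whose heart is the spectral analysis of $\partial_t h = \Delta_L h$ on the shrinking cylinder $S^{n-1}\times\R$ --- line up with the paper (Sections 3--4), and you are right that the spectral analysis is the main new ingredient in higher dimensions: the non-decaying modes on $S^{n-1}$ correspond precisely to the gauge directions (infinitesimal translations and conformal dilations of the cross-section), and one of them grows for $n>3$, which the paper removes by a time-dependent conformal Killing field $\xi$ in Step 6 of the Neck Improvement proof.

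The endgame, however, diverges from the paper and contains a gap. You propose to iterate the Neck Improvement Theorem to obtain, along the cylindrical end, \emph{exact} vector fields $V_{(\alpha)}$ with $\mathcal L_{V_{(\alpha)}}g = 0$, and then propagate this into the cap by Carleman-type unique continuation for the Lichnerowicz heat equation. The iteration does not in fact yield exact Killing fields at any fixed time slice. In the paper's Proposition \ref{symmetry_at_infinity}, iterating the Neck Improvement Theorem gives $2^{-j-1}\varepsilon_k$-symmetry at distance scales $\geq 2^{j/400}\Lambda r_k$ at time $t_k$ --- the symmetry improves as one moves \emph{outward} in space, not as one fixes a location and iterates. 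So the best available statement is polynomial decay of $\mathcal L_{U}g$ along the end, never vanishing on any open set. Without an open set where $\mathcal L_Vg \equiv 0$ (or at least infinite-order decay at a fixed point), a Carleman unique-continuation argument does not get off the ground; you cannot conclude $h\equiv 0$ from $h\to 0$ at infinity alone. And even granting vanishing on the end, you would need spatial backward unique continuation for a parabolic system coupled to the Ricci flow, which you would have to construct.

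The paper's route around this is different. It fixes (by contradiction) a sequence of critical times $t_k\to-\infty$ at which the $\varepsilon_k$-symmetry first fails, shows that rescalings at $t_k$ converge to the Bryant soliton, and then treats the cap not by unique continuation but by a quantitative weighted maximum principle: the Lie derivative $h^{(a)} = \mathcal L_{V^{(a)}}g$ solves $\partial_t h = \Delta_L h$ and is controlled using the replacement for the Anderson--Chow estimate (Proposition \ref{Anderson_Chow_replacement}), with the approximate soliton potential $f$ serving as a weight. This gives decay $|h^{(a)}|\leq C\,2^{-j/400}\varepsilon_k$ on the cap ball $B_{g(t_k)}(p_k, 4\Lambda r_k)$ from the boundary data inward, which in turn shows $(M,g(t_k))$ is $\tfrac{\varepsilon_k}{2}$-symmetric, contradicting the choice of $t_k$. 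The conclusion that $(M,g(t))$ is rotationally symmetric then comes from $\varepsilon_k\to 0$ together with Lemma \ref{symmetric}, not from producing exact Killing fields at a single slice. The Anderson--Chow-type barrier is essential here precisely because the Lie derivatives are never exactly zero on the boundary of the cap; it replaces the unique continuation step you hoped to use.

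One small additional point: your "secondary difficulty" concerning the range of dimensions for which a Hamilton--Ivey-type pinching is available is not an issue in this theorem, because the hypotheses (Definition \ref{kappa_solution}) already assume uniformly PIC and weakly PIC2; pinching is relevant only when one asks whether blow-up limits of PIC flows satisfy this definition, which is a separate question.
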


Dropping the strictly PIC2 assumption and combining the theorems above, we draw the following corollary. 

\begin{corollary} 
Assume $n \geq 4$ and that $(M, g(t))$ is an $n$-dimensional ancient $\kappa$-solution which is noncompact. Then $(M, g(t))$ is isometric to either a family of shrinking cylinders, a quotient thereof, or to the Bryant soliton. 
\end{corollary}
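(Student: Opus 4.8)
The plan is to dispose of the corollary by a dichotomy according to whether $(M,g(t))$ is strictly PIC2, handling the strict case with the two preceding theorems and the non-strict case with Hamilton's strong maximum principle. Recall from Definition~\ref{kappa_solution} that $(M,g(t))$ is complete, nonflat, uniformly PIC, weakly PIC2, has bounded curvature and is $\kappa$-noncollapsed on all scales; weak PIC2 in particular gives nonnegative sectional curvature. If $(M,g(t))$ is strictly PIC2 then, being noncompact, it meets the hypotheses of Theorem~\ref{main}, so it is rotationally symmetric, and Corollary~\ref{li_zhang_pic} then identifies it, up to scaling, with the Bryant soliton. So from now on I would assume $(M,g(t))$ is \emph{not} strictly PIC2 and aim to show it is a quotient of the round shrinking cylinder.

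Since weakly PIC2 is preserved by the Ricci flow and the PIC2 cone is invariant, a solution that is weakly but not strictly PIC2 carries, for all sufficiently negative times, a nontrivial parallel distribution invariant under the curvature operator; combining this with the de~Rham and Berger holonomy classifications (as in the work of Brendle and Schoen) shows that, for such $t$, the universal cover $(\tilde M,\tilde g(t))$ either splits as a nontrivial Riemannian product, or is K\"ahler, or is an irreducible locally symmetric space. A K\"ahler manifold of complex dimension at least two is never PIC, and a nonflat irreducible locally symmetric space that is weakly PIC2 has nonnegative sectional curvature, hence (being nonflat) is of compact type, hence --- being uniformly PIC --- is a round sphere, which is strictly PIC2; both possibilities are thus excluded. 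So $(\tilde M,\tilde g(t))$ is a nontrivial Riemannian product; since placing two vectors of an orthonormal four-frame in a flat direction, or splitting them between two nonflat factors, gives vanishing isotropic curvature while $\scal>0$, being uniformly PIC forces $(\tilde M,\tilde g(t))=(N,h(t))\times\R$ with $N$ irreducible and nonflat, each factor evolving by the Ricci flow.

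It remains to identify $(N,h(t))$ with a family of shrinking round spheres $S^{n-1}$. Completeness, bounded curvature, weak PIC2, nonflatness and $\kappa$-noncollapsing pass to the factor $N$, and the scalar curvature and the isotropic curvatures of frames tangent to $N$ are unchanged under the product with $\R$, so $N$ is again uniformly PIC when $n-1\geq 4$ (when $n-1=3$, $N$ is a three-dimensional $\kappa$-solution in the classical sense); thus $(N,h(t))$ is a simply connected $(n-1)$-dimensional ancient $\kappa$-solution, and $(N,h(t))\times\R$ is a noncompact ancient $\kappa$-solution. If $N$ were noncompact, then by the present corollary in dimension $n-1$ --- or, in the base case $n-1=3$, by the three-dimensional results of Perelman and \cite{Bre18} recalled above --- and since $N$ is simply connected, $N$ would be the $(n-1)$-dimensional Bryant soliton or the round cylinder $S^{n-2}\times\R$; in either case $(N,h(t))\times\R$ would contain regions asymptotic to $S^{n-2}\times\R^2$ and so fail to be uniformly PIC, a contradiction. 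Hence $N$ is compact. By Perelman's structure theory of $\kappa$-solutions --- the canonical-neighbourhood analysis, extended to the higher-dimensional uniformly-PIC setting in the works cited --- the ends of the noncompact $\kappa$-solution $(N,h(t))\times\R$ are asymptotically cylindrical with round cross-section; since this solution is invariant under translations of the $\R$-factor and $N$ is compact, $N$ must itself be a round sphere. Therefore $(\tilde M,\tilde g(t))$ is a family of round shrinking cylinders $S^{n-1}(r(t))\times\R$, and $(M,g(t))$, as a quotient of it, is a quotient of the round cylinder.

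The heart of the matter --- and the main obstacle --- is the non-strict case. One must carry out the strong-maximum-principle argument in a form valid for the PIC2 cone, and combine the holonomy classification carefully with the uniform PIC and weak PIC2 hypotheses to rule out the K\"ahler and irreducible-symmetric alternatives; one must also invoke the structure theory of $\kappa$-solutions in the higher-dimensional uniformly-PIC setting, and organize the induction on dimension so as to avoid circularity. I expect that all the structural input needed --- invariance and the strong maximum principle for weakly PIC2, the failure of PIC for K\"ahler manifolds and for irreducible compact symmetric spaces other than spheres, and Perelman's canonical-neighbourhood theory adapted to this setting --- is available in the works cited in the introduction, so that, once the dichotomy is set up, the corollary follows by assembling these facts with Theorem~\ref{main} and Corollary~\ref{li_zhang_pic}.
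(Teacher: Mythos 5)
Your dichotomy --- handle the strictly PIC2 case with Theorem~\ref{main} and Corollary~\ref{li_zhang_pic}, and in the non-strict case show the universal cover is a shrinking round cylinder --- is the right structure, and the strict branch matches the paper exactly. However, the non-strict branch is a substantial re-derivation of a cited fact: the dichotomy ``either strictly PIC2 or the universal cover is a family of shrinking round cylinders'' is Corollary~6.7 of \cite{Bre19} for $n \geq 5$ (with the $n=4$ analogue following from \cite{CZ06}), and the paper appeals to precisely that statement inside the proof of Lemma~\ref{neck_detection_lemma}. Once you quote it, the corollary is immediate, and the entire holonomy/splitting/induction apparatus you construct is unnecessary.

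If one insists on reconstructing that dichotomy, there is a real gap at the final step. After reducing to $\tilde M = N \times \R$ with $N$ compact, irreducible and simply connected, you invoke ``Perelman's structure theory'' to say the ends of $N \times \R$ have round cross-sections and conclude that $N$ is a round sphere. But the structure theorem available here (Theorem~\ref{structure_thm}) is stated only for solutions that are \emph{strictly} PIC2, and $N\times\R$ is precisely the case that is not; applying it to the factor $N$ instead is vacuous, since $N$ is compact and the theorem concerns noncompact solutions. Without strict PIC2 on $N\times\R$, one would effectively need the classification of compact $\kappa$-solutions in dimension $n-1$, which is known only for $n-1=3$; so for $n \geq 5$ the argument does not close, and even at $n=4$ you would still need to rule out the ancient oval separately by checking it fails uniform PIC1. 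The strong-maximum-principle step is also stated too loosely for the PIC2 cone --- the ``parallel distribution invariant under the curvature operator'' phrasing belongs to the positive-curvature-operator setting, and the boundary structure of the PIC2 cone is more delicate --- though this is secondary, since citing Corollary~6.7 of \cite{Bre19} removes the need for that step as well.
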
 

The proof of Theorem 1.4 is rather robust and much of it extends to higher dimensions without change or with minor modifications. We have tried to follow the proof in \cite{Bre20} closely so that readers already familiar with this work will recognize the arguments here. For the convenience of this reader, we have enumerated in Section 2 some of the minor differences between the setting in dimension three and the setting here. For readability and rigor, we have opted to include full proofs of the important results in \cite{Bre20}. Namely, we include the spectral analysis for the parabolic Lichnerowicz equation on the cylinder, the proof of the Neck Improvement Theorem, and the proof of the rotational symmetry of the ancient solution. These proofs occupy Sections 3, 4, and 5 respectively. For less significant results, where the proofs in \cite{Bre20} really do only require notational modification, we have omitted the details.

In the remainder of Section 2, we recall relevant definitions, as well as several of the preliminary results from \cite{Bre20} that carry over without modification to higher dimensions. Many of the results in \cite{Bre20} rely on Perelman's celebrated structure theorem for noncompact ancient $\kappa$-solutions in dimension three. We will need the analogous of this and some additional results for ancient $\kappa$-solutions in higher dimensions. These are included in Appendix A. We will also need a variant of the important Anderson-Chow estimate for solutions of the parabolic Lichnerowicz equation in higher dimensions. We will explain this variant in Appendix B. Finally, to serve as a reference for the reader, we have also included an adaptation of Shi's interior estimates for tensors (used often in \cite{Bre20} and here) in Appendix C. 

We would like to conclude this introduction by directing the reader to some very interesting recent results in the mean curvature flow that are related to the results discussed above. If the second fundamental form of a hypersurface in $\mathbb{R}^{n+1}$ is two-convex, then the induced metric on the hypersurface has  positive isotropic curvature. In particular, one expects the singularity behavior of the mean curvature flow of two-convex hypersurfaces to be similar to the singularity behavior of the Ricci flow of metrics with positive isotropic curvature. Indeed, by work of Angenent, Choi, Daskalopoulos, Sesum, and the first author in \cite{ADS19},\cite{ADS20}, \cite{BC19}, \cite{BC20},  the only ancient, nonflat, noncollapsed, uniformly two-convex solutions of the mean curvature flow are the shrinking round sphere, the shrinking round cylinder, the ancient ovals (constructed by White \cite{Whi03} and later by Haslhofer and Hershkovits \cite{HH16}), and the translating bowl soliton. The Neck Improvement Theorem in \cite{Bre20} and here, as well as the asymptotic analysis of the ancient oval in the Ricci flow \cite{ABDS19} are based on the analogous results established first for the mean curvature flow.  \\

\textbf{Acknowledgements.} This project was supported by the National Science Foundation under grant DMS-1806190 and by the Simons Foundation.


\section{Preliminaries}

\subsection{Definitions}

We begin by recalling the notions of uniformly PIC, weakly PIC2, and strictly PIC2. 

\begin{definition}\label{pic}
Suppose $n \geq 4$ and $(M, g)$ is a Riemannian manifold of dimension $n \geq 4$. 

\begin{itemize}
\item We say that $(M,g)$ is uniformly PIC if there exists a real number $\alpha > 0$ with the property that $R(\varphi,\bar{\varphi}) \geq \alpha \, |\text{\rm Rm}| \, |\varphi|^2$ for all complex two-vectors of the form $\varphi = (e_1+ie_2) \wedge (e_3+ie_4)$, where $\{e_1,e_2,e_3,e_4\}$ is an orthonormal four-frame. 

\item We say that $(M,g)$ is weakly PIC2 if $R(\varphi,\bar{\varphi}) \geq 0$ for all complex two-vectors of the form $\varphi = (e_1+i\mu e_2) \wedge (e_3+i\lambda e_4)$, where $\{e_1,e_2,e_3,e_4\}$ is an orthonormal four-frame and $\lambda,\mu \in [0,1]$. If the inequality is always strict, we say that $(M,g)$ is strictly PIC2. 
\end{itemize}
\end{definition}

\begin{remark}\label{uniformly-PIC-dim4}
When $n \geq 5$, the PIC condition implies that $|\mathrm{Rm}| \leq CR$ for some universal constant $C$, where $R$ denotes the scalar curvature. This is because the PIC condition implies (1) $\mathrm{Ric} \leq \frac{1}{2}Rg$ for $n \geq 5$ (see Lemma A.5 in \cite{Bre19}) and (2) $\mathrm{Ric} = 0 \iff \mathrm{Rm} = 0$ for $n \geq 4$ (see Proposition 7.3 in \cite{Bre10}). So if $n\geq 5$, it is equivalent to assume $R(\varphi, \bar{\varphi}) \geq \alpha R |\varphi|^2$, as is sometimes done, in the definition of uniformly PIC above. For $n = 4$, the PIC condition does not imply the scalar curvature controls the norm of the Riemann tensor. In fact, it is more natural to phrase Definition \ref{pic} in dimension four using notation introduced by Hamilton in \cite{Ham97}. In dimension four, the space of bivectors $\Lambda^2\mathbb{R}^4 \cong \mathrm{so}(4)$ naturally decomposes into self-dual and anti-self-dual subspaces $\Lambda_+ \oplus \Lambda_- \cong \mathrm{so}(3) \oplus \mathrm{so}(3)$. Under this decomposition, the curvature operator $R : \Lambda^2 \to \Lambda^2$ can be expressed as a block matrix 
\[
R = \begin{bmatrix} A & B \\ B^t & C \end{bmatrix}.
\]
Let $a_1 \leq a_2 \leq a_3$ and $c_1 \leq c_2 \leq c_3$ denote the eigenvalues of the $A$ and $C$ matrices respectively. Let $0 \leq b_1 \leq b_2 \leq b_3$ denote the eigenvalues of $\sqrt{B^t B}$. In four dimensions, $(M^4, g)$ is uniformly PIC if and only if there exists a real number $\alpha > 0$ with the property that
\[
\min\{a_1 + a_2, c_1 + c_2 \} \geq \alpha \max\{a_3, b_3, c_3 \}
\]
everywhere on $M$. 
\end{remark}

In \cite{LZ20}, the authors use a slightly different definition for $\kappa$-solutions than Definition \ref{kappa_solution}. Namely, the authors require the solution have nonnegative curvature operator in place of the uniformly PIC and weakly PIC2 assumptions. However, if $(M, g)$ is rotationally symmetric and strictly PIC2, then it must also have positive curvature operator. 

\begin{lemma}\label{li-zhang-pic}
Suppose that $n \geq 4$ and $(M, g)$ is a $n$-dimensional, rotationally symmetric, complete Riemannian manifold whose curvature operator is strictly PIC2. Then $(M, g)$ has positive curvature operator. 
\end{lemma}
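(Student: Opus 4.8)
The plan is to combine two observations. The first is that a curvature tensor which is strictly PIC2 has strictly positive sectional curvature: taking the free parameters $\lambda = \mu = 0$ in the definition of strictly PIC2 leaves $R_{1313} > 0$ for every orthonormal four-frame $e_1,e_2,e_3,e_4$, and since $n \geq 4$ any orthonormal pair of vectors can be completed to such a frame, so every sectional curvature is positive (this is also the strict form of the implication ``PIC2 implies nonnegative sectional curvature'' noted in Section 2). The second observation is that for a rotationally symmetric metric the curvature operator is diagonalized in an orthonormal basis of \emph{decomposable} two-vectors, and the eigenvalue attached to a unit decomposable two-vector $u \wedge v$ is exactly the sectional curvature $K(u,v)$; granting this, positivity of the curvature operator follows immediately from positivity of the sectional curvature.

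To make the second observation precise I would work first on the open dense subset of $M$ consisting of principal orbits of the rotational action, where the metric takes the warped-product form $g = dr^2 + \phi(r)^2\, g_{S^{n-1}}$ for a smooth positive function $\phi$. Writing $e_0 = \partial_r$ and letting $e_1, \dots, e_{n-1}$ be a local orthonormal frame tangent to the spheres, the standard warped-product computation gives that the only nonvanishing components of the curvature tensor are $R_{0\alpha 0\alpha} = -\phi''/\phi$ and $R_{\alpha\beta\alpha\beta} = \phi^{-2}\bigl(1-(\phi')^2\bigr)$ for $\alpha \neq \beta$ in $\{1,\dots,n-1\}$. Consequently $\Lambda^2 T_pM$ splits orthogonally as $\mathrm{span}\{e_0 \wedge e_\alpha\} \oplus \mathrm{span}\{e_\alpha \wedge e_\beta\}$, and the curvature operator acts as $-\phi''/\phi$ times the identity on the first summand and as $\phi^{-2}(1-(\phi')^2)$ times the identity on the second. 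These two scalars are precisely the sectional curvatures $K(e_0,e_1)$ and $K(e_1,e_2)$, so by the first observation they are positive, and hence the curvature operator is positive at every principal point. At a point fixed by the rotational action the isotropy representation is all of $O(n)$; the curvature operator commutes with it, so by Schur's lemma it is a scalar multiple of the identity on $\Lambda^2 \mathbb{R}^n$ (orientation-reversing isometries interchange the self-dual and anti-self-dual factors when $n = 4$), and that scalar is again a sectional curvature, hence positive. In either case the curvature operator is positive.

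The argument is essentially bookkeeping, and I do not anticipate a serious obstacle; the only point that requires a little care is the behavior on the fixed-point set of the rotational action, where the warped-product coordinates degenerate. As indicated, this is handled by invoking the $O(n)$-equivariance of the curvature operator there rather than the explicit warped-product formulas. The low-dimensional case $n = 4$ introduces nothing genuinely new beyond remembering the self-dual/anti-self-dual splitting of $\Lambda^2$, which the equivariance argument already accounts for.
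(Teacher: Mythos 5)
Your proof is correct, and it reaches the conclusion by a route that is genuinely different in presentation from the paper's, though both ultimately exploit the same structural fact about rotationally symmetric curvature tensors. The paper writes the curvature tensor as a Kulkarni--Nomizu product $R_{ijkl} = \delta_{ik}A_{jl} + \delta_{jl}A_{ik} - \delta_{il}A_{jk} - \delta_{jk}A_{il}$, asserts that strictly PIC2 forces $A > 0$, and then verifies positivity of $R$ on $2$-forms by a short index computation $R_{ijkl}\varphi^{ij}\varphi^{kl} = 4A_{ij}\varphi_{ik}\varphi_{jk}$. You instead observe directly from the warped-product curvature formulas that, in the adapted frame, $R_{ijkl}$ vanishes unless $\{i,j\}=\{k,l\}$, so the curvature operator is already diagonal in the decomposable basis $\{e_0\wedge e_\alpha\}\cup\{e_\alpha\wedge e_\beta\}$ with eigenvalues equal to the two sectional curvatures $-\phi''/\phi$ and $(1-(\phi')^2)/\phi^2$; combined with the elementary observation (set $\lambda=\mu=0$ in the PIC2 inequality) that strictly PIC2 gives strictly positive sectional curvature, the result is immediate. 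Your route is arguably sharper: the intermediate step in the paper ($A>0$) is a sufficient but actually stronger condition than what strictly PIC2 provides in this setting (for the rotationally symmetric form, positive curvature operator is equivalent to positive sectional curvature, i.e.\ $p+q>0$ and $q>0$ in terms of the eigenvalues $p,q$ of $A$, which permits $p\le 0$), whereas your argument passes exactly through the positivity condition that is needed and no more. The paper's formulation, on the other hand, avoids ever invoking the explicit warped-product computation and handles all of $\Lambda^2$ at once without picking a frame, which makes it more compact on the page. Your remarks about the fixed-point locus are careful and correct, although one could also avoid the $O(n)$-equivariance argument simply by continuity of the curvature operator across the fixed point, since the eigenvalues at nearby principal points are uniformly bounded below by a positive constant.
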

\begin{proof}
The curvature operator of a rotationally symmetric warped product $g = dz^2 + \phi(z)^2 g_{S^{n-1}}$ is diagonalizable. In particular, the Weyl tensor vanishes and algebraically $R_{ijkl} = \delta_{ik} S_{jl} + \delta_{jl} S_{ik} - \delta_{il} S_{jk} - \delta_{jk} S_{il}$ where the $(0, 2)$-tensor $S$ is the Schouten tensor. The strictly PIC2 condition implies that $(M,g)$ has positive sectional curvature, which in the rotationally symmetric case implies the curvature operator is positive. 
\end{proof}

\subsection{The Parabolic Lichnerowicz Equation}

Recall that if $h$ is a symmetric $(0, 2)$-tensor on a Riemannian manifold $(M, g)$, then the Lichnerowicz Laplacian $\Delta_{L, g}$ is defined 
\[
(\Delta_{L, g}h)_{ik} = (\Delta h)_{ik} + 2 R_{ijkl}h^{jl} - \mathrm{Ric}_{il}h^l_k - \mathrm{Ric}_{kl}h^l_i. 
\]
We restate three results from Section 5 of \cite{Bre20} on the parabolic Lichnerowicz equations that we will use later. 

\begin{lemma}\label{linearization_of_ricci}
Let $g$ be a Riemannian metric on a manifold $M$ and let $V$ be a vector field. Define $h:= \mathcal L_V(g)$ and $Z := \mathrm{div}(h) - \frac{1}{2} \nabla \mathrm{tr}(h)$. Note $Z = \Delta V + \mathrm{Ric}(V)$. Then
\[
\mathcal L_{V}(\mathrm{Ric}) = -\frac{1}{2} \Delta_{L, g} h + \frac{1}{2} \mathcal L_{Z}(g)
\]
where $\mathrm{Ric}$ is viewed as a $(0,2)$-tensor. 
\end{lemma}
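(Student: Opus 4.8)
The plan is to reduce this to the classical first-variation formula for the Ricci curvature, together with the naturality of $\Ric$ under diffeomorphisms. Recall that for an \emph{arbitrary} symmetric $(0,2)$-tensor $k$ on $(M,g)$ the linearization of the Ricci tensor is
\[
D\Ric_g(k) \;=\; -\tfrac{1}{2}\,\Delta_{L,g} k \;+\; \tfrac{1}{2}\,\mathcal{L}_{\mathrm{div}(k) - \frac{1}{2}\nabla\mathrm{tr}(k)}(g).
\]
This is exactly the identity underlying the DeTurck trick: linearizing the DeTurck vector field $W^k = g^{pq}(\Gamma^k_{pq} - \bar\Gamma^k_{pq})$ at $\bar g = g$ gives $DW(k) = \mathrm{div}(k) - \tfrac{1}{2}\nabla\mathrm{tr}(k)$, and the non-Laplacian second-order terms in $D\Ric_g(k)$ — namely $\nabla_i(\mathrm{div}\,k)_j$, $\nabla_j(\mathrm{div}\,k)_i$, and $\nabla_i\nabla_j\mathrm{tr}(k)$ — assemble precisely into $\mathcal{L}_{DW(k)}(g)$, while the curvature corrections upgrade the rough Laplacian $\Delta k$ to $\Delta_{L,g} k$. (If preferred, the displayed identity can simply be quoted as standard.)

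Applying this with $k = h = \mathcal{L}_V(g)$ yields $D\Ric_g(h) = -\tfrac{1}{2}\Delta_{L,g}h + \tfrac{1}{2}\mathcal{L}_Z(g)$, since $\mathrm{div}(h) - \tfrac{1}{2}\nabla\mathrm{tr}(h) = Z$ by the definition of $Z$. It then remains only to identify $D\Ric_g(\mathcal{L}_V g)$ with $\mathcal{L}_V(\Ric)$, which is the equivariance of curvature under diffeomorphisms: if $(\varphi_s)$ denotes the local flow of $V$, then $\Ric(\varphi_s^{*}g) = \varphi_s^{*}\,\Ric(g)$ for every $s$, and differentiating at $s = 0$ gives $D\Ric_g(\mathcal{L}_V g) = \mathcal{L}_V(\Ric)$. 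Combining the two displays produces the claimed formula.

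For completeness one also verifies the parenthetical remark $Z = \Delta V + \Ric(V)$: from $h_{ij} = \nabla_i V_j + \nabla_j V_i$ one gets $\mathrm{tr}(h) = 2\,\mathrm{div}(V)$ and $(\mathrm{div}\,h)_i = \Delta V_i + \nabla^j\nabla_i V_j$, and commuting the covariant derivatives in the last term via the Ricci identity produces $\nabla_i(\mathrm{div}\,V) + \Ric_{ij}V^j$; subtracting $\tfrac{1}{2}\nabla_i\mathrm{tr}(h) = \nabla_i(\mathrm{div}\,V)$ leaves $\Delta V_i + \Ric_{ij}V^j$.

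The only real subtlety is keeping the sign and trace conventions consistent among the definition of $\Delta_{L,g}$ adopted here, the curvature sign convention, and the linearization formula; once those are pinned down the argument is routine. A fully self-contained alternative — the one I would write out if complete details were wanted — avoids quoting the linearization formula altogether: differentiate $\Ric_{ij}$ directly along the flow of $V$ using $\partial_s\Gamma^k_{ij} = \tfrac{1}{2}g^{kl}(\nabla_i h_{jl} + \nabla_j h_{il} - \nabla_l h_{ij})$ together with $\partial_s\Ric_{ij} = \nabla_k(\partial_s\Gamma^k_{ij}) - \nabla_i(\partial_s\Gamma^k_{kj})$, and then reorganize the resulting second-order terms, trading commutators for curvature via the Ricci identity, into $-\tfrac{1}{2}\Delta_{L,g}h + \tfrac{1}{2}\mathcal{L}_Z(g)$.
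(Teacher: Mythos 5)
Your proof is correct, and it follows the standard route that the paper itself defers to (it simply cites this as Lemma 5.1 of \cite{Bre18} without reproducing the argument): identify $\mathcal{L}_V(\Ric)$ with $D\Ric_g(\mathcal{L}_V g)$ via naturality of the Ricci tensor under pullback, then invoke the classical first-variation formula $D\Ric_g(k) = -\tfrac{1}{2}\Delta_{L,g}k + \tfrac{1}{2}\mathcal{L}_{\mathrm{div}(k)-\frac{1}{2}\nabla\mathrm{tr}(k)}(g)$ with $k = h$, and verify $Z = \Delta V + \Ric(V)$ by the Bochner commutation identity. All three steps, including the sign bookkeeping in the variation formula and the Ricci-identity computation for $Z$, check out against the Lichnerowicz-Laplacian convention adopted in Section 2.2.
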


\begin{theorem}\label{pde_for_lie_derivative}
Suppose that $g(t)$ is a solution to the Ricci flow on a manifold $M$. Moreover, suppose that $V(t)$ is a family of vector fields satisfying $\frac{\partial}{\partial t} V(t) = \Delta_{g(t)} V(t) + \mathrm{Ric}_{g(t)}(V(t))$. Then the Lie derivative $h(t) := \mathcal L_{V(t)}(g(t))$ satisfies the parabolic Lichnerowicz equation 
\[
\frac{\partial}{\partial t} h(t) = \Delta_{L, g(t)} h(t).
\]
\end{theorem}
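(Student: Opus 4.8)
The plan is to differentiate $h(t) = \mathcal{L}_{V(t)}(g(t))$ in time by the product rule, substitute the Ricci flow equation and the prescribed evolution of $V(t)$, and then invoke Lemma \ref{linearization_of_ricci} to observe a cancellation. Since the Lie derivative of a tensor field is a purely differential-topological operation, independent of the background metric, the time derivative splits into exactly two terms:
\[
\frac{\partial}{\partial t}\,\mathcal{L}_{V(t)}(g(t)) = \mathcal{L}_{\partial_t V(t)}(g(t)) + \mathcal{L}_{V(t)}\bigl(\partial_t g(t)\bigr).
\]
Using the Ricci flow equation $\partial_t g(t) = -2\,\Ric_{g(t)}$ together with the hypothesis $\partial_t V(t) = \Delta_{g(t)} V(t) + \Ric_{g(t)}(V(t))$, this becomes $\partial_t h = \mathcal{L}_{\Delta V + \Ric(V)}(g) - 2\,\mathcal{L}_V(\Ric)$, where $\Ric$ is viewed as a $(0,2)$-tensor.

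Next I would set $Z := \Delta V + \Ric(V)$. By Lemma \ref{linearization_of_ricci}, with $h = \mathcal{L}_V(g)$, we have the identification $Z = \mathrm{div}(h) - \tfrac12 \nabla \mathrm{tr}(h)$, and the same lemma gives $\mathcal{L}_V(\Ric) = -\tfrac12 \Delta_{L,g} h + \tfrac12 \mathcal{L}_Z(g)$. Substituting both facts into the expression above yields
\[
\frac{\partial}{\partial t} h = \mathcal{L}_Z(g) - 2\Bigl(-\tfrac12 \Delta_{L,g} h + \tfrac12 \mathcal{L}_Z(g)\Bigr) = \Delta_{L,g} h,
\]
which is the claimed parabolic Lichnerowicz equation.

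There is essentially no analytic obstacle once Lemma \ref{linearization_of_ricci} is in hand; the only point that needs care is to track the DeTurck-type vector field $Z$ consistently through both the evolution equation for $V$ and the linearization of the Ricci tensor, so that the two copies of $\mathcal{L}_Z(g)$ cancel precisely. The real content of the statement is therefore concentrated in Lemma \ref{linearization_of_ricci}, which encodes the diffeomorphism invariance of $\Ric$ and the standard gauge-fixing that converts the linearized Ricci operator into $-\tfrac12$ times the Lichnerowicz Laplacian; given that lemma, the proof is a short and formal computation.
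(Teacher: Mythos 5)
Your proposal is correct and follows what is clearly the intended route: the paper places Lemma~\ref{linearization_of_ricci} immediately before Theorem~\ref{pde_for_lie_derivative} precisely so that the product rule for the Lie derivative, together with the cancellation of the two $\mathcal{L}_Z(g)$ terms, gives the result in the short formal computation you describe. The one point you flag as needing care—that the Lie derivative splits by the product rule because it is a bilinear, metric-independent differential expression in $(V,g)$—is indeed the only nontrivial observation, and you handle it correctly.
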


\begin{proposition}\label{norm_pde}
Suppose that $g(t)$ is a solution to the Ricci flow on a manifold $M$. Moreover, suppose that $V(t)$ is a family of vector fields satisfying $\frac{\partial}{\partial t} V = \Delta V +\mathrm{Ric}(V)+Q$. Then
\[
\frac{\partial}{\partial t} |V| \leq \Delta |V|+|Q|.
\]
\end{proposition}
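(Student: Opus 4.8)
The plan is a direct computation combining the evolution equations for $g(t)$ and $V(t)$, followed by a Kato-type inequality to pass from $|V|^2$ to $|V|$. First I would evolve $|V|^2 = g(t)(V,V) = g_{ij}V^iV^j$. Since $g(t)$ solves the Ricci flow, $\frac{\partial}{\partial t}g_{ij} = -2\Ric_{ij}$, and the hypothesis $\frac{\partial}{\partial t}V = \Delta V + \Ric(V) + Q$ gives
\[
\frac{\partial}{\partial t}|V|^2 = -2\Ric(V,V) + 2\langle V, \Delta V\rangle + 2\Ric(V,V) + 2\langle V, Q\rangle = 2\langle V, \Delta V\rangle + 2\langle V, Q\rangle.
\]
The crucial feature here, and the reason the term $\Ric(V)$ is built into the evolution equation, is that the two Ricci contributions cancel exactly. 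Combining this with the Bochner identity $\Delta |V|^2 = 2\langle V, \Delta V\rangle + 2|\nabla V|^2$ yields
\[
\frac{\partial}{\partial t}|V|^2 = \Delta |V|^2 - 2|\nabla V|^2 + 2\langle V, Q\rangle.
\]

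To obtain the stated inequality for $|V|$ itself, which need not be differentiable on the zero set of $V$, I would work with the regularization $u_\varepsilon := \sqrt{|V|^2 + \varepsilon^2}$, which is smooth and bounded below by $\varepsilon$. Writing $f = |V|^2$, one computes $\frac{\partial}{\partial t}u_\varepsilon = \frac{\partial_t f}{2u_\varepsilon}$ and $\Delta u_\varepsilon = \frac{\Delta f}{2u_\varepsilon} - \frac{|\nabla f|^2}{4u_\varepsilon^3}$, so that
\[
\frac{\partial}{\partial t}u_\varepsilon - \Delta u_\varepsilon = \frac{-|\nabla V|^2 + \langle V, Q\rangle}{u_\varepsilon} + \frac{|\nabla f|^2}{4u_\varepsilon^3}.
\]
The Kato inequality $|\nabla f|^2 = 4|\langle V, \nabla V\rangle|^2 \leq 4|V|^2|\nabla V|^2 \leq 4u_\varepsilon^2|\nabla V|^2$ shows the last term is at most $|\nabla V|^2/u_\varepsilon$, hence it is absorbed by $-|\nabla V|^2/u_\varepsilon$, leaving $\frac{\partial}{\partial t}u_\varepsilon - \Delta u_\varepsilon \leq \langle V, Q\rangle/u_\varepsilon \leq |Q|$ by Cauchy--Schwarz together with $|V| \leq u_\varepsilon$. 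Letting $\varepsilon \to 0$, $u_\varepsilon \to |V|$ locally uniformly, and one concludes $\frac{\partial}{\partial t}|V| \leq \Delta |V| + |Q|$ in the viscosity (equivalently, barrier) sense, which is the natural sense for applications of the maximum principle.

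There is no genuine obstacle: this is the standard maximum-principle-friendly estimate for the norm of a solution to a parabolic system coupled to the Ricci flow. The only two points requiring any attention are the exact cancellation of the Ricci terms, which uses the precise form of the hypothesis, and the regularization needed to make sense of the differential inequality across the zero set of $V$; both are routine.
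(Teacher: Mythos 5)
Your proof is correct and follows essentially the same route as the reference proof (the paper itself does not reprove this but cites Section 5 of \cite{Bre18}, where the argument is exactly the cancellation of the Ricci terms, the Bochner identity, and Kato's inequality). The only addition you make is the explicit regularization $u_\varepsilon = \sqrt{|V|^2 + \varepsilon^2}$ to handle the zero set of $V$ rigorously; this is standard and is the correct way to interpret the inequality, so the proof is complete.
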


The parabolic Lichnerowicz equation is the linearization of the Ricci-DeTurck flow with respect to an evolving family of metrics. The following definition and two propositions makes precise what this means. 

\begin{definition}
Suppose $\bar g(t)$ and $\tilde g(t)$ are smooth families of metrics on a manifold $M$. We say $\tilde g(t)$ is a solution of the Ricci-DeTurck flow with respect to the evolving metrics $\bar g(t)$ if 
\[
\frac{\partial}{\partial t} \tilde g(t) = -2\mathrm{Ric}_{\tilde g(t)} - \mathcal{L}_{\xi_t}(\tilde g(t))
\]
where $\xi_t := \Delta_{\tilde g(t), \bar g(t)} \mathrm{Id}$ and $\Delta_{\tilde g(t), \bar g(t)}\mathrm{Id}$ denotes the map Laplacian of $\mathrm{Id} :(M, \tilde g(t)) \to (M, \bar g(t))$. 
\end{definition}

\begin{proposition}\label{ricci-to-ricci-deturck}
Let $\bar g(t)$ be a smooth family of background metrics on a manifold $M$. Suppose $g(t)$ is a solution to the Ricci flow on $M$. Assume $\Phi_t$ solves the harmonic map heat flow $\frac{\partial}{\partial t} \Phi_t = \Delta_{g(t), \bar g(t)} \Phi_t$ with initial condition $\Phi_0 = \mathrm{Id}$ and, if necessary, Dirichlet boundary condition $\Phi_t|_{\partial M} = \mathrm{Id}$. The family of metrics $\tilde g(t)$ defined by the condition $g(t) = \Phi_t^\ast \tilde g(t)$ is a solution to the Ricci-DeTurck flow with respect to the family of background metrics $\bar g(t)$. 
\end{proposition}
Observe that if $\bar g(t)$ is a solution to the Ricci flow, then $\bar g(t)$ is also a solution to Ricci-DeTurck flow with respect to the evolving metrics $\bar g(t)$. This follows immediately because $\Delta_{\bar g(t),\bar g(t)}\mathrm{Id} = 0$ and hence $\Phi_t \equiv \mathrm{Id}$ is a solution of the harmonic map heat flow.

\begin{proposition}\label{linearization-of-evolving-ricci-deturck}
Let $\bar g(t)$ be a smooth family of background metrics on a manifold $M$. Suppose $\tilde g_s(t)$, $s \in (-\varepsilon, \varepsilon)$, is smooth family of solutions on $M$ to the Ricci-DeTurck flow with respect to the family of background metrics $\bar g(t)$. Assume $\tilde g_{0}(t) = \bar g(t)$ and let $h(t) = \frac{d}{ds} \tilde g_s(t) \big|_{s =0}$ and $\xi_{s, t} = \Delta_{\tilde g_s(t), \bar g(t)}\mathrm{Id}$. Then 
\[ \frac{d}{ds}\Big(-2\mathrm{Ric}_{\tilde g_s(t)} - \mathcal{L}_{\xi_{s, t}}(\tilde g_s(t))\Big)\Big|_{s = 0}= \Delta_{L, \bar g(t)} h(t). \]
Suppose $\Phi_{s, t}$ is a family of evolving diffeomorphisms of $M$ for $s \in (-\varepsilon, \varepsilon)$ with $\Phi_{0, t} \equiv \mathrm{Id}$. Let $V(t) = \frac{d}{ds} \Phi_{s, t} \big|_{s = 0}$. Then 
\[ \frac{d}{ds} (\Delta_{\bar g(t), \bar g(t)} \Phi_{s, t} )\big|_{s = 0} = \Delta_{\bar{g}(t)} V(t) + \mathrm{Ric}_{\bar{g}(t)}(V(t)).\]
\end{proposition}

\subsection{Gluing Almost Killing Vector Fields}

In this subsection, we restate the results from Section 7 of \cite{Bre20} for higher dimensions. The proofs of the these results in dimension three work verbatim in higher dimension, after making notational modifications to the statements and arguments. For this reason, we will not repeat the proofs here. Throughout this section $C:= C(n)$ always denotes a universal constant depending only upon the dimension. We let the metric $\bar g = g_{S^{n-1}} + dz \otimes dz$ denote the standard round metric on the cylinder of constant sectional curvature $1$ and thus of radius $1$. The value of $\varepsilon_0$ in the following two results is determined by contradiction arguments used to prove the results of Section 7 in \cite{Bre20}, as well as Hamilton's proof of the existence of a canonical CMC foliation once $g$ is sufficiently close to $\bar g$ in \cite{Ham95}.

\begin{proposition}\label{uniqueness_of_symmetry_fields}
If $\varepsilon_0$ is sufficiently small, depending only upon the dimension $n$, then the following holds. Let $g$ be a Riemannian metric on the cylinder $S^{n-1} \times [-20, 20]$ such that $\lVert g- \bar g\rVert_{C^{10}} \leq \varepsilon_0$ and let $\varepsilon \leq \varepsilon_0$. Let $\bar x$ be a point on the center slice $S^{n-1} \times \{0\}$. Let $\Sigma$ denote the leaf of Hamilton's CMC foliation with respect to $g$ that passes through $\bar x$ and let $\nu$ denote its unit normal vector. Suppose that $\mathcal U := \{ U^{(a)} : 1 \leq a \leq {n \choose 2} \}$ is a family of vector fields with the following properties:
\begin{enumerate}
\item[$\bullet$] $\sup_{B_g(\bar x, 12)} \sum_{a =1}^{n \choose 2} |\mathcal L_{U^{(a)}}(g)|^2 + |D(\mathcal L_{U^{(a)}}(g))|^2 \leq \varepsilon^2$;
\item[$\bullet$] $\sup_{\Sigma} \sum_{a =1}^{{n \choose 2}} |\langle U^{(a)}, \nu \rangle |^2 \leq \varepsilon^2$;
\item[$\bullet$] $\sum_{a, b =1}^{{n \choose 2}} \big|\delta_{ab} - \mathrm{area}_g(\Sigma)^{-\frac{n+1}{n-1}} \int_{\Sigma} \langle U^{(a)}, U^{(b)} \rangle d\mu_g\big|^2 \leq \varepsilon^2$. 
\end{enumerate}
Suppose that $\tilde{\mathcal U} := \{ \tilde U^{(a)} : 1 \leq a \leq {n\choose 2} \}$ is a second family of vector fields satisfying the same three properties above (with $U^{(a)}$ replaced by $\tilde U^{(a)}$). 
Then there exists an ${n \choose 2} \times {n \choose 2}$ matrix $\omega \in O({n \choose 2})$ such that 
\[
\sup_{B_{g}(\bar x, 9)} \sum_{a = 1}^{{n \choose 2}} \Big| \sum_{b =1}^{{n \choose 2}} \omega_{ab} U^{(b)} - \tilde U^{(a)} \Big|^2 \leq C \varepsilon^2.
\]
\end{proposition}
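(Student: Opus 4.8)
The plan is to reduce the statement to a rigidity/compactness argument about families of (almost) Killing vector fields on a round cylinder, in the same spirit as the corresponding result in \cite{Bre18}. First I would observe that the first bullet point says each $\mathcal L_{U^{(a)}}(g)$ is $\varepsilon$-small in $C^1$ on a ball of radius $12$; since $g$ is $\varepsilon_0$-close to $\bar g$ in $C^{10}$, elliptic estimates for the (overdetermined) operator $V \mapsto \mathcal L_V(g)$ give that, after subtracting a genuine Killing field $W^{(a)}$ of $\bar g$, the difference $U^{(a)} - W^{(a)}$ is controlled by $C\varepsilon$ in $C^1$ on a slightly smaller ball, say $B_g(\bar x, 11)$. (Concretely: the space of Killing fields of $\bar g$ on the cylinder is exactly $\mathfrak{so}(n)$, of dimension $\binom n2$, consisting of the rotations of the $S^{n-1}$ factor together with "screw motions"; the point is that $V$ with $\mathcal L_V(g)$ small is, up to a small error, an element of this finite-dimensional space after projecting onto it.) Here one must be slightly careful about the domain: the estimate is local, so one works on balls well inside $S^{n-1}\times[-20,20]$, which is why the statement loses radius from $20$ down to $12$, $9$, etc.

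Next I would use bullets two and three to pin down \emph{which} Killing field $W^{(a)}$ one gets. The Killing fields of the cylinder split into those tangent to the cross-sectional spheres (the rotations, spanning an $\binom{n-1}2$-dimensional space) and those with a nontrivial normal component along the CMC leaves (there are $n-1$ of these). Bullet two forces $\langle U^{(a)},\nu\rangle$ to be $\varepsilon$-small on $\Sigma$, which — since $\Sigma$ is $C\varepsilon$-close to a round cross-sectional sphere $S^{n-1}\times\{z_0\}$ — forces the normal component of the associated Killing field to be $C\varepsilon$-small; hence each $U^{(a)}$ is, up to $C\varepsilon$, a rotation field tangent to the spheres. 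But the rotation fields span only an $\binom{n-1}2$-dimensional space, whereas we have $\binom n2 = \binom{n-1}2 + (n-1)$ vector fields; this apparent mismatch is resolved because the CMC leaves through $\bar x$ give a canonical family of cross-sectional spheres, and the translational/screw Killing fields of $\bar g$, while not tangent to a single slice, \emph{are} tangent to the foliation — so the correct model space to project onto is still all of $\mathfrak{so}(n)$, and bullet two only says the fields are (approximately) tangent to the leaves, not to one fixed leaf. Bullet three then says the $\binom n2$ fields $U^{(a)}$ are, in $L^2(\Sigma)$ and after the natural area normalization, approximately orthonormal; the same will be true of the $\tilde U^{(a)}$.

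The final step is the comparison between the two families. Having written $U^{(a)} = W^{(a)} + E^{(a)}$ and $\tilde U^{(a)} = \tilde W^{(a)} + \tilde E^{(a)}$ with $W^{(a)},\tilde W^{(a)}$ Killing fields of $\bar g$ and $\|E^{(a)}\|_{C^1(B_g(\bar x,11))} + \|\tilde E^{(a)}\|_{C^1} \leq C\varepsilon$, bullet three for both families says that $\{W^{(a)}\}$ and $\{\tilde W^{(a)}\}$ are each $C\varepsilon$-close (in the inner product on $\mathfrak{so}(n)$ induced by the $L^2(\Sigma)$ pairing with the area normalization) to orthonormal bases of $\mathfrak{so}(n)$. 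Two orthonormal bases of a fixed inner-product space differ by an element of $O(\binom n2)$; so there is $\omega \in O(\binom n2)$ with $\sum_b \omega_{ab} W^{(b)} = \tilde W^{(a)}$ exactly, and then
\[
\Big| \sum_b \omega_{ab} U^{(b)} - \tilde U^{(a)} \Big|
= \Big| \sum_b \omega_{ab} E^{(b)} - \tilde E^{(a)} \Big|
\le C\varepsilon
\]
pointwise on $B_g(\bar x, 9)$, which is the claim. I expect the main obstacle to be the bookkeeping in the middle step: correctly identifying the model Killing algebra $\mathfrak{so}(n)$, checking that the three normalization hypotheses are exactly enough to determine an element of $\mathfrak{so}(n)$ up to $O(\binom n2)$ (neither under- nor over-determined), and tracking how the Hamilton CMC foliation enters — i.e., verifying that the leaf $\Sigma$ through $\bar x$ is $C\varepsilon$-close to a round slice so that the $L^2(\Sigma)$ inner product is $C\varepsilon$-close to the $L^2$ inner product on a round cross-section, which is what makes "orthonormal on $\Sigma$" equivalent to "orthonormal in $\mathfrak{so}(n)$". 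The elliptic regularity in the first step and the orthogonal-matrix extraction in the last step are routine given the setup in \cite{Bre18}.
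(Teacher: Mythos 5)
Your overall strategy — approximate each $U^{(a)}$ by a genuine Killing field $W^{(a)}$ of $\bar g$, use the normalization hypotheses to see that the $W^{(a)}$ form an approximate orthonormal basis of $\mathfrak{so}(n)$, and then compare the two families through an orthogonal matrix — is a sensible direct route and differs from the paper's (which defers to Section 7 of \cite{Bre18}, where the corresponding result is proved by a contradiction/compactness argument: the constant $C$ and the threshold $\varepsilon_0$ are extracted by rescaling a putative sequence of counterexamples and passing to a limit where everything becomes exactly Killing). Your approach, if pushed through, essentially bundles the same compactness into the ``elliptic estimate'' of your first step, which as stated is not a standard Schauder estimate: you need a normalization of $U^{(a)}$ (which the third bullet provides, through the $L^2(\Sigma)$ constraint) to even make the ``project off the kernel and estimate the remainder'' argument go.

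There is, however, a concrete error in the middle of your argument. You assert that the rotational Killing fields of the cylinder tangent to the cross-sectional spheres form a $\binom{n-1}{2}$-dimensional space, with $n-1$ additional ``screw/translational'' fields making up $\binom{n}{2} = \binom{n-1}{2} + (n-1)$. This is wrong: the isometry algebra of the round sphere $S^{n-1}$ is $\mathfrak{so}(n)$, of dimension $\binom{n}{2}$, and \emph{all} of these fields are tangent to each slice $S^{n-1} \times \{z\}$. The only additional Killing field of the cylinder is the single translation $\partial_z$, which is \emph{normal} (not tangent) to every CMC leaf, so it is certainly not ``tangent to the foliation but not to a fixed leaf.'' Consequently the ``apparent mismatch'' you set out to resolve does not exist, and the paragraph purporting to resolve it is incorrect. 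The correct reading of the second bullet is simpler: it is exactly what is needed to rule out a contribution from the one-dimensional $\partial_z$ summand, so that the model algebra is precisely $\mathfrak{so}(n)$, matching the $\binom{n}{2}$ vector fields. Separately, in your final step the two families $\{W^{(a)}\}$ and $\{\tilde W^{(a)}\}$ are only $C\varepsilon$-approximately orthonormal, so there is no $\omega \in O(\binom n2)$ with $\sum_b \omega_{ab} W^{(b)} = \tilde W^{(a)}$ exactly; one must first replace each family by a genuinely orthonormal basis (Gram--Schmidt, at cost $C\varepsilon$), take $\omega$ to intertwine those, and then absorb the correction terms into the final $C\varepsilon^2$ estimate. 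This is a minor repair, but the claimed exact identity is not available.
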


\begin{corollary}\label{gluing_vectors}
Let $\bar g$, $g$, $\bar x$, $\mathcal U$, $\tilde {\mathcal U}$, $\Sigma$, and $\nu$ satisfy the hypotheses of Proposition \ref{uniqueness_of_symmetry_fields}. We can find a suitable smooth cutoff function $\eta$ such that $\eta \equiv 1$ on $S^{n-1} \times [-20, -1]$ and $\eta \equiv 0$ on $S^{n-1} \times [1, 20]$ and an ${n \choose 2} \times {n \choose 2}$ matrix $\omega \in O({n \choose 2})$ with the property that the vector fields $V^{(a)} := \eta \sum_{b =1}^{{n \choose 2}} \omega_{ab} U^{(b)} + (1-\eta)\tilde U^{(a)}$ satisfy $\sum_{a =1}^{{n \choose 2}} |\mathcal L_{V^{(a)}}(g)|^2 + |D (\mathcal L_{V^{(a)}}(g))|^2 \leq C \varepsilon^2$ in the transition region $S^{n-1} \times [-1, 1]$. 
\end{corollary}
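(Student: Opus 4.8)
\emph{Proof plan.} The plan is to feed the two families $\mathcal U$ and $\tilde{\mathcal U}$ into Proposition~\ref{uniqueness_of_symmetry_fields}, use the resulting orthogonal matrix to make the families comparable, and then interpolate in the cylinder variable $z$ by means of the cutoff $\eta$; the only error created by the interpolation lives on $d\eta$ and will be estimated using the $C^{0}$ closeness supplied by the Proposition, upgraded to a $C^{1}$ bound by interior regularity.

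First I would let $\omega\in O(\binom{n}{2})$ be the matrix furnished by Proposition~\ref{uniqueness_of_symmetry_fields} and set $W^{(a)}:=\sum_{b=1}^{\binom{n}{2}}\omega_{ab}U^{(b)}$, so that $\sup_{B_g(\bar x,9)}\sum_{a}|W^{(a)}-\tilde U^{(a)}|^{2}\le C\varepsilon^{2}$. Since $\omega$ is orthogonal and $\mathcal L_{W^{(a)}}(g)=\sum_b\omega_{ab}\,\mathcal L_{U^{(b)}}(g)$, an orthonormal change of frame leaves $\sum_a|\mathcal L_{W^{(a)}}(g)|^{2}$ and $\sum_a|D(\mathcal L_{W^{(a)}}(g))|^{2}$ unchanged, and likewise leaves $\sum_a|\langle W^{(a)},\nu\rangle|^{2}$ unchanged and conjugates the normalized Gram matrix by $\omega$ without changing its Frobenius distance to the identity; hence $\{W^{(a)}\}$ again satisfies the three bulleted hypotheses with the same $\varepsilon$. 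Next, choose any cutoff $\eta=\eta(z)$ with $\eta\equiv 1$ on $[-20,-1]$, $\eta\equiv 0$ on $[1,20]$ and $|d\eta|_g+|\nabla^g d\eta|_g\le C$, put $V^{(a)}:=\eta W^{(a)}+(1-\eta)\tilde U^{(a)}$ and $Y^{(a)}:=W^{(a)}-\tilde U^{(a)}$. A direct computation in $S^{n-1}\times[-1,1]$ gives
\[
\mathcal L_{V^{(a)}}(g)=\eta\,\mathcal L_{W^{(a)}}(g)+(1-\eta)\,\mathcal L_{\tilde U^{(a)}}(g)+d\eta\otimes (Y^{(a)})^{\flat}+(Y^{(a)})^{\flat}\otimes d\eta .
\]

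The first two terms, together with their covariant derivatives, are bounded after summation over $a$ by $C\varepsilon^{2}$ using the first bulleted hypothesis for $\{W^{(a)}\}$ and for $\tilde{\mathcal U}$. For the last two terms I need $C^{1}$ control of $Y^{(a)}$ on the transition region. The $C^{0}$ bound $\sum_a|Y^{(a)}|^{2}\le C\varepsilon^{2}$ is already available there, because $\bar x$ lies on the center slice and $\|g-\bar g\|_{C^{10}}\le\varepsilon_0$ forces $S^{n-1}\times[-1,1]\subset B_g(\bar x,9)$ with ample room. To bound $DY^{(a)}$ I would apply interior estimates for the overdetermined elliptic Killing operator $X\mapsto\mathcal L_X(g)$: since $\mathcal L_{Y^{(a)}}(g)=\mathcal L_{W^{(a)}}(g)-\mathcal L_{\tilde U^{(a)}}(g)$ is $C^{1}$-bounded by $C\varepsilon$ (first bulleted hypothesis, on the larger ball $B_g(\bar x,12)$) and $Y^{(a)}$ is $C^{0}$-bounded by $C\varepsilon$ on $B_g(\bar x,9)$, these estimates give $\sum_a|Y^{(a)}|^{2}+|DY^{(a)}|^{2}\le C\varepsilon^{2}$ on a slightly smaller neighborhood still containing the transition region. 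Substituting these bounds into the displayed identity and into its covariant derivative yields $\sum_a|\mathcal L_{V^{(a)}}(g)|^{2}+|D(\mathcal L_{V^{(a)}}(g))|^{2}\le C\varepsilon^{2}$ on $S^{n-1}\times[-1,1]$, which is the claim.

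The argument is essentially mechanical once Proposition~\ref{uniqueness_of_symmetry_fields} is in hand. The one point that requires care is the derivative estimate in the transition region: the $C^{0}$ estimate on $W^{(a)}-\tilde U^{(a)}$ produced by the Proposition has to be promoted to a $C^{1}$ (indeed $C^{2}$) estimate via interior regularity for an overdetermined elliptic system — equivalently a Shi-type interior estimate, cf.\ Appendix~C — together with the elementary check that $S^{n-1}\times[-1,1]$ sits comfortably inside the ball on which the Proposition's conclusion and the hypotheses are valid.
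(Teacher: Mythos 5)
Your proof is correct and follows the same route as the omitted argument (which the paper defers to Section~7 of \cite{Bre18}): apply Proposition~\ref{uniqueness_of_symmetry_fields} to obtain $\omega$, absorb it using the $O(\binom{n}{2})$-invariance of the three hypotheses, expand $\mathcal L_{V^{(a)}}(g)$, and estimate the $d\eta$-terms via the $C^{0}$ bound on $Y^{(a)}=W^{(a)}-\tilde U^{(a)}$ promoted to a $C^{1}$ bound by interior regularity for the overdetermined elliptic Killing operator (equivalently, the Bochner-type identity $\Delta Y+\Ric(Y)=\operatorname{div}\mathcal L_Y g-\tfrac12\nabla\operatorname{tr}\mathcal L_Y g$ of Lemma~\ref{linearization_of_ricci}). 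The only slip is the parenthetical ``cf.\ Appendix~C'': Appendix~C treats parabolic interior estimates coupled to the Ricci flow, whereas the step you need is a standard \emph{elliptic} interior estimate for $\Delta+\Ric$; this does not affect the validity of the argument.
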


Note in particular, that the vector fields $V^{(a)}$ satisfy the better estimate $\sum_{a =1}^{{n \choose 2}} |\mathcal L_{V^{(a)}}(g)|^2 + |D (\mathcal L_{V^{(a)}}(g))|^2 \leq \varepsilon^2$ outside of the transition region.

\subsection{Differences in Higher Dimensions and Dimension Three}

For the convenience of the reader already familiar with the proofs in \cite{Bre20}, we would like to give some specific direction of where to look for differences between the proof in dimension three and the proof here. For the most part, these differences are minor. The most significant difference is the replacement of the Anderson-Chow estimate, the last item below. 

\begin{enumerate}
\item In Proposition \ref{mode_analysis}, there is a non-rotationally symmetric solution of the parabolic Lichnerowicz equation over the cylinder that (in norm) remains bounded in dimension three, but grows in higher dimensions. For symmetry to improve along the flow, the presence of a non-symmetric growing mode of the linearization could be trouble. Fortunately, since this solution is still the Lie derivative of the metric along a vector field, we can remove it by hand in Step 6 of the proof of the Neck Improvement Theorem (although in higher dimensions the conformal killing vector field $\xi$ must now be time-dependent). We discuss the presence of the non-decaying solutions in Remark 3.2. 
\item In dimension three, the Lie algebra structure of rotational vector fields is used to eliminate the other non-decaying solutions of the parabolic Lichnerowicz equation over the cylinder in the proof of the Neck Improvement Theorem. To take advantage of this structure in higher dimensions, we use a simple lemma (Lemma \ref{lie_algebra}) to bound the structure constants. This slightly modifies the vector fields we define in Steps 7 and 10 of the proof of the Neck Improvement Theorem. 
\item The proof of Proposition 5.7 (which corresponds to Proposition 9.7 in \cite{Bre20}) requires that we verify an ``asymptotically cylindrical'' condition used to prove the uniqueness of the Bryant soliton among self-similar solutions in \cite{Bre14} in higher dimensions. We do this in Proposition A.10. Also, for our results to apply in dimension four, we need to confirm that the uniformly PIC assumption in Definition \ref{uniformly-PIC-dim4} implies the restricted PIC condition of \cite{CZ06}. We do this in Proposition A.2.
\item The Anderson-Chow estimate \cite{AC05} is an important result that allows one to control solutions of the parabolic Lichnerowicz equation in dimension three. However this estimate is special to three dimensions. In this paper, we use an alternative weighted version of the estimate which is valid in all dimensions. We give a proof of this in Proposition B.1. For its application, see Step 3 in the proof of Proposition \ref{main_prop}. 
\end{enumerate}


\section{Analysis of the Parabolic Lichnerowicz Equation on Shrinking Cylinders} 

In this section, we extend the analysis of the parabolic Lichnerowicz equation on the round cylinder to higher dimensions following Section 6 of \cite{Bre20}. Let $(S^{n-1} \times \mathbb{R}, \bar g(t))$ be the standard solution of the Ricci flow on the round cylinder given by 
\[
\bar g(t) = (-2(n-2)t)g_{S^{n-1}} + dz \otimes dz
\]
for $t < 0$. Here $g_{S^{n-1}}$ is the round metric on $S^{n-1}$ with constant sectional curvature 1. Note that $R_{\bar g(t)} \equiv (n-1)(-2t)^{-1}$ and the radius of the neck is $(-2(n-2)t)^{\frac{1}{2}}$ at time $t$. Let us define
\[
t_n := -\frac{1}{2(n-2)}.
\]
At this time, the scalar curvature of the cylinder is $R_{\bar g(t_n)} \equiv (n-1)(n-2)$ and the radius of the neck is $1$, which is a convenient scale for comparing metric balls and subcylinders. Throughout this section, $C$ always denotes a universal constant depending only upon the dimension, but which may change from line to line. 

\begin{proposition}\label{mode_analysis}
Suppose $L$ is a large positive constant. Let $h(t)$ be a one-parameter family of symmetric $(0,2)$-tensors on the cylinder that is defined in the region $\{|z| \leq \frac{1}{2}L, - \frac{1}{2}L \leq t \leq t_n\}$ and satisfies the parabolic Lichnerowicz equation 
\[
\frac{\partial}{\partial t} h(t) = \Delta_{L, \bar g(t)} h(t).
\]
 Assume that $|h(t)|_{\bar g(t)} \leq 1$ in the region $\{|z| \leq \frac{1}{2}L, - \frac{1}{2}L \leq t \leq -\frac{1}{4}L\}$ and that $|h(t)|_{\bar g(t)} \leq L^{10}$ for all $\{|z| \leq \frac{1}{2}L, - \frac{1}{4}L \leq t \leq t_n\}$. On each slice $S^{n-1} \times \{z\}$, we can decompose $h(t)$ into a sum 
\[
h(t) = \omega (z, t) g_{S^{n-1}} + \chi(z, t) + dz \otimes \sigma(z, t) + \sigma(z, t) \otimes dz + \beta(z, t) dz \otimes dz, 
\]
where $\omega(z, t)$ and $\beta(z, t)$ are scalar functions on $S^{n-1}$, $ \sigma(z, t)$ is a one-form on $S^{n-1}$, and $\chi(z, t)$ is a tracefree symmetric $(0, 2)$-tensor on $S^{n-1}$.  Let $\bar \omega(z, t)$ and $\bar \beta(z, t)$ be the rotationally invariant functions obtained from $\omega(z, t)$ and $\beta(z, t)$ by averaging over the slices $S^{n-1} \times \{z\}$
\[
\int_{S^{n-1} \times \{z\}} (\omega (z, t) - \bar \omega (z, t)) \,\mathrm{dvol}_{S^{n-1}} = \int_{S^{n-1} \times \{z\}} (\beta (z, t) - \bar \beta (z, t))  \,\mathrm{dvol}_{S^{n-1}}= 0 
\]
for  $t \in [-1000, t_n]$ and $z \in [-1000, 1000]$. Then there exists a function $\psi : S^{n-1} \to \mathbb{R}$ (independent of $t$ and $z$) that lies in the span of the first spherical harmonics on $S^{n-1}$, and 
\[
\big| h(t) - \bar \omega (z, t) g_{S^{n-1}} - \bar \beta(z, t) dz \otimes dz - (-t)^{\frac{n-1}{2(n-2)}} \psi g_{S^{n-1}}\big|_{\bar g(t)} \leq C L^{-\frac{1}{2(n-2)}} 
\]
in the region $\{|z| \leq 1000, -1000 \leq t \leq t_n \}$. 
\end{proposition}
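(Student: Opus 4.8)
The plan is to diagonalize the operator $\frac{\partial}{\partial t}-\Delta_{L,\bar g(t)}$ by separation of variables and then run a barrier argument mode by mode. Since $\bar g(t)=r(t)^2 g_{S^{n-1}}+dz\otimes dz$ with $r(t)^2=-2(n-2)t$ is a Riemannian product and the $\R$-factor is flat, the parabolic Lichnerowicz operator respects the splitting of $h$ into its $S^{n-1}S^{n-1}$-part $\omega g_{S^{n-1}}+\chi$, its mixed part $dz\otimes\sigma+\sigma\otimes dz$, and its $dz\,dz$-part $\beta\,dz\otimes dz$; moreover $\omega$ and $\chi$ decouple because the curvature terms in $\Delta_L$ annihilate pure-trace tensors of the form $(\text{function})\cdot g_{S^{n-1}}$. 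A direct computation, using that $g_{S^{n-1}}$ and $dz\otimes dz$ are parallel for $\bar g(t)$, shows that $\omega$ and $\beta$ each satisfy the scalar heat equation $\partial_t u=\Delta_{\bar g(t)}u=r(t)^{-2}\Delta_{S^{n-1}}u+\partial_z^2u$, that $\sigma$ satisfies a Bochner-type equation $\partial_t\sigma=r(t)^{-2}\Delta^{\mathrm{rough}}_{S^{n-1}}\sigma+\partial_z^2\sigma-(n-2)r(t)^{-2}\sigma$, and that $\chi$ satisfies the Lichnerowicz equation of the cross-sectional sphere coupled to $\partial_z^2$.

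Next I would expand $\omega,\beta,\sigma,\chi$ into eigenmodes on $S^{n-1}$: spherical harmonics for $\omega,\beta$; the Hodge decomposition of $\sigma$ into exact and coexact pieces, then eigenforms; and the eigentensor decomposition of transverse-traceless tensors for $\chi$. For each mode the coefficient $f(z,t)$ solves the one-dimensional equation $\partial_t f=\partial_z^2 f+\frac{a}{2(n-2)\,t}\,f$, where $a$ is the relevant eigenvalue plus curvature correction (e.g.\ $a=\lambda_k=k(k+n-2)$ for the $k$-th scalar mode and for the exact $k$-th mode of $\sigma$, $a=2(n-2)$ for the Killing modes of $\sigma$, $a=0$ for the $S^{n-1}$-constant part of $\omega$ and $\beta$). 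Tracking the power $r(t)^{-p}$ of the cross-sectional radius that relates the mode coefficient to $|h|_{\bar g(t)}$ ($p=2,0,1,2$ for $\omega,\beta,\sigma,\chi$), one finds exactly three families of non-decaying modes: the $S^{n-1}$-constant parts of $\omega$ and $\beta$ ($a=0$), which satisfy $\partial_t f=\partial_z^2 f$ and are removed by hand as $\bar\omega\,g_{S^{n-1}}+\bar\beta\,dz\otimes dz$; and the first-spherical-harmonic part of $\omega$ ($a=n-1$, $p=2$), whose $z$-independent solutions are exactly $C\,(-t)^{\frac{n-1}{2(n-2)}}\psi$ with $\psi$ a first spherical harmonic — in the $\bar g(t)$-norm this behaves like $(-t)^{\frac{3-n}{2(n-2)}}$ and hence genuinely grows toward the singularity when $n\geq 4$. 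This is the mode subtracted in the statement, and the presence of all three families is explained in Remark 3.2 (each is a Lie derivative of $\bar g(t)$, the last along a time-dependent conformal Killing field). Every other mode, measured in $|\cdot|_{\bar g(t)}$, decays as $t$ increases from $-\frac L2$ to $t_n$; the slowest is the first exact mode of $\sigma$, i.e.\ $dz\otimes d\psi+d\psi\otimes dz$ with $\psi$ a first harmonic (here $a=n-1$, $p=1$), which decays only like $(-t)^{\frac{1}{2(n-2)}}$ and is responsible for the error exponent.

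For each decaying mode I would run a maximum-principle argument on the parabolic cylinder $\{|z|\leq\frac L2,\ -\frac L2\leq t\leq t_n\}$. The bound $|h|\leq 1$ on $\{-\frac L2\leq t\leq-\frac L4\}$ controls $|f|$ there by a fixed power of $r\leq L^{1/2}$, while $|h|\leq L^{10}$ caps $|f|$ on the rest of the parabolic boundary by a fixed power of $L$; comparing with the barrier $B(z,t)=A\,(-t)^{\frac{a}{2(n-2)}}+L^{C}\exp\!\big(-c\,L^{-1/2}(\tfrac L2-|z|)\big)$ — the exponential term is a supersolution precisely because $c^2\leq\frac{a}{2(n-2)|t|}$ on the relevant range, so it contributes only a super-exponentially small amount near $z=0$ — one gets $|f|\leq C\,L^{\,p/2-\frac{a}{2(n-2)}}$ on $\{|z|\leq 1000,\ -1000\leq t\leq t_n\}$, hence $|h|\leq C\,L^{\,p/2-\frac{a}{2(n-2)}}\leq C\,L^{-\frac{1}{2(n-2)}}$ in every case, the worst being $p=1,\ a=n-1$. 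For the borderline first-harmonic part of $\omega$, one fixes $\psi$ to be (a constant multiple of) its $z$-averaged value at $t=t_n$; its $z$-independent component is then literally $C\,(-t)^{\frac{n-1}{2(n-2)}}\psi$, and its $z$-dependent component gains the extra decay needed to fall below $C\,L^{-\frac{1}{2(n-2)}}$ from the $\partial_z^2$ term, i.e.\ from heat-kernel smoothing in $z$ over the long time interval (using interior derivative estimates for $h$, cf.\ Appendix C). Summing over all modes gives the claimed estimate.

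The main obstacle is the quantitative barrier analysis of the decaying modes: the zeroth-order coefficient $\frac{a}{2(n-2)t}$ degenerates to size $O(L^{-1})$ exactly at the early times where the good a priori bound lives, and is only $O(1)$ near $t_n$, so the barrier must be tuned to interpolate correctly between the two a priori regimes while every power of $L$ is tracked exactly. A related difficulty is that the clean mode decomposition (with $\bar\omega,\bar\beta$ averaged out) is the one used only on $|z|\leq 1000$, whereas the a priori bounds live on $|z|\leq\frac L2$, which forces the spatial cutoff in the barrier and requires care with the boundary contributions. A secondary point is establishing the extra decay of the $z$-dependent part of the borderline mode, the one place where a purely zeroth-order estimate is insufficient and the $z$-heat operator must be exploited. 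All of this is carried out in dimension three in Section 6 of \cite{Bre18}, following the mean curvature flow analysis of \cite{ADS18}, \cite{ADS19}, \cite{BC18}, \cite{BC19}, and the modifications needed here are essentially notational.
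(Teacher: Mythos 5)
Your mode decomposition, the identification of the slowest-decaying mode (the first exact $\sigma$-mode, $p=1$, $a=n-1$, giving the exponent $-\tfrac{1}{2(n-2)}$), the treatment of the borderline first-harmonic $\omega$-mode by subtracting a $z$-independent multiple and using $z$-smoothing, and the mode bookkeeping $(p;a)$ all agree with the paper. Where you genuinely diverge is the analytical engine: the paper estimates each rescaled mode coefficient $\hat f_j$ (which solves the pure 1D heat equation) via the explicit Dirichlet heat-kernel representation formula on $[-\tfrac L4,\tfrac L4]\times[-\tfrac L4,t_n]$, then splits the boundary time-integral at $s=(1+\bar\nu_j^{-1/2})t$ to extract decay in the mode index $j$; you instead keep the damped equation for $f_j$ itself and run a maximum-principle argument against an explicit supersolution $A(-t)^{a/(2(n-2))}+L^{C}e^{-cL^{-1/2}(L/2-|z|)}$. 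Both work and encode the same information, but your route has two technical points that must be repaired to close the argument. First, $e^{-cL^{-1/2}(L/2-|z|)}$ has a convex kink at $z=0$, so its distributional $\partial_z^2$ carries a positive delta there, which spoils the supersolution inequality $\partial_t B-\partial_z^2B+\tfrac{a}{2(n-2)(-t)}B\geq 0$; replace it by the smooth $2L^{C}e^{-cL^{1/2}/2}\cosh(cL^{-1/2}z)$, which satisfies $\partial_z^2B_2=c^2L^{-1}B_2$ everywhere. Second, summability over $j$ (with eigenfunction weights $\sim a_j^n$) requires the lateral contribution $L^{C}e^{-cL^{1/2}/2}$ to decay in $j$, which forces $c\sim\sqrt{a_j}$ rather than a $j$-independent constant; the supersolution condition $c^2L^{-1}\leq\tfrac{a}{2(n-2)|t|}$ permits exactly $c\lesssim\sqrt{a_j}$, so this works, but you should state the $j$-dependence explicitly — otherwise the sum $\sum_j a_j^n\cdot L^{C}e^{-cL^{1/2}/2}$ diverges. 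With these two fixes your barrier argument is a valid alternative to the representation-formula proof, yielding the same estimate $CL^{p/2-a/(2(n-2))}\leq CL^{-1/(2(n-2))}$ for each decaying mode.
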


\begin{proof}

The parabolic Lichnerowicz equation for $h(t)$ is equivalent to the following system of equations for $\omega(z,t)$, $\chi(z,t)$, $\sigma(z,t)$, and $\beta(z,t)$:
\begin{align*}
\frac{\partial}{\partial t} \omega(z,t) &= \frac{\partial^2}{\partial z^2} \omega(z,t) + \frac{1}{(-2(n-2)t)} \Delta_{S^{n-1}} \omega(z,t),\\
\frac{\partial}{\partial t} \chi(z,t) &= \frac{\partial^2}{\partial z^2} \chi(z,t) + \frac{1}{(-2(n-2)t)}\big( \Delta_{S^{n-1}} \chi(z,t) - 2(n-1)\chi(z,t)\big), \\
\frac{\partial}{\partial t} \sigma(z,t) &= \frac{\partial^2}{\partial z^2} \sigma(z,t) + \frac{1}{(-2(n-2)t)}\big( \Delta_{S^{n-1}} \sigma(z,t) - (n-2)\sigma(z,t)\big), \\
\frac{\partial}{\partial t} \beta(z,t) &= \frac{\partial^2}{\partial z^2} \beta(z,t) + \frac{1}{(-2(n-2)t)} \Delta_{S^{n-1}} \beta(z,t). 
\end{align*}
Since $|h(t)|_{\bar g(t)} \leq 1$ in the region $\{|z| \leq \frac{1}{2}L, - \frac{1}{2}L \leq t \leq -\frac{1}{4}L\}$, we have the estimate
\[
\frac{|\omega(z,t)|}{(-t)} + \frac{|\chi(z,t)|_{g_{S^{n-1}}}}{(-t)} +\frac{|\sigma(z,t)|_{g_{S^{n-1}}}}{(-t)^{\frac{1}{2}}}  + |\beta(z,t)|  \leq C
\]
in the region $\{|z| \leq \frac{1}{2}L, - \frac{1}{2}L \leq t \leq -\frac{1}{4}L\}$. 
Similarly, since $|h(t)|_{\bar g(t)} \leq L^{10}$ in the region $\{|z| \leq \frac{1}{2}L, - \frac{1}{4}L \leq t \leq t_n\}$, we have the estimate
\[
\frac{|\omega(z,t)|}{(-t)} + \frac{|\chi(z,t)|_{g_{S^{n-1}}}}{(-t)} +\frac{|\sigma(z,t)|_{g_{S^{n-1}}}}{(-t)^{\frac{1}{2}}}  + |\beta(z,t)|  \leq CL^{10}
\]
in the region $\{|z| \leq \frac{1}{2}L, - \frac{1}{4}L \leq t \leq t_n \}$. Note that $(-t)^{-1} \leq 2(n-2)$ in this range. 
The powers of $(-t)$ in the first estimate are important and will determine whether specific modes decay. The method of estimation in \cite{Bre20} is not sensitive to the power of $L$ in the second estimate. Therefore, as in \cite{Bre20}, we may use the coarse estimate $L^{10} \leq  CL^{20} (-t)^{-10}$ to introduce additional factors of $(-t)^{-1}$ when needed below. In the following steps, we will decompose each of $\chi(z,t)$, $\sigma(z,t)$, $\beta(z,t)$, and $\omega(z,t)$ into sums over the modes of $\Delta_{S^{n-1}}$ acting on traceless symmetric $(0,2)$-tensors, one-forms, and functions respectively. \\

\textit{Step 1:} We begin by analyzing the equation for $\chi(z,t)$. In this step, unless otherwise noted, $|\cdot|$ and $\langle \cdot\,, \cdot \rangle$ denote the norm and inner product with respect to $g_{S^{n-1}}$. Let $S_j$, $j =1,2, \dots$, denote an orthonormal basis of eigenfunctions for the Laplacian acting on the bundle of tracefree $(0,2)$-tensors on $S^{n-1}$, so that $\Delta_{S^{n-1}} S_j = - \nu_j S_j$. Note that $\nu_j > 0$. Indeed, if $\Delta_{S^{n-1}} S = 0$, then $S$ must be both parallel and traceless, and therefore $S \equiv 0$ in view of the holonomy of $S^{n-1}$. Moreover, $\nu_j \sim j^{\frac{2}{n-1}}$ as $j \to \infty$ (see \cite{BGV04}, Corollary 2.43). We assume these eigenfunctions are normalized so that $\int_{S^{n-1}} |S_j|^2 = 1$ for each $j$. Using the Sobolev estimate $\sup_{S^{n-1}}|S_j| \leq C \lVert S_j\rVert_{H^{n-1}}$ and the equation $\Delta_{S^{n-1}} S_j = - \nu_j S_j$, we can deduce $\sup_{S^{n-1}} |S_j| \leq C \nu_j^{\frac{n-1}{2}} \leq C \nu_j^n$.

Let us write 
\[
\chi(z,t) = \sum_{j=1}^{\infty} \chi_j(z, t) S_j,
\]
where 
\[
\chi_j(z, t) = \int_{S^{n-1}} \langle \chi(z, t), S_j \rangle \, \mathrm{dvol}_{S^{n-1}}. 
\]
Note that $|\chi_j(z,t)| \leq C \sup_{S^{n-1}} |\chi(z, t)|$. Define 
\[
\bar \nu_j := \frac{\nu_j + 2}{2(n-2)}.
\]
 Then $\bar \nu_j \geq \frac{1}{n-2} > 0$, $\bar \nu_j + 1 = \frac{\nu_j + 2(n-1)}{2(n-2)}$, and $\bar \nu_j \sim j^{\frac{2}{n-1}}$ as $j \to \infty$.  The equation for $\chi(z,t)$ implies 
\[
\frac{\partial}{\partial t} \chi_j (z, t) = \frac{\partial^2}{\partial z^2} \chi_j(z, t) - \frac{\bar \nu_j + 1}{(-t)} \chi_j(z,t). 
\]
Hence the function $\hat \chi_j(z, t) := (-t)^{-\bar \nu_j - 1} \chi_j(z, t)$ satisfies 
\[
\frac{\partial }{\partial t} \hat \chi_j(z, t) =\frac{\partial^2}{\partial z^2} \hat \chi_j(z, t). 
\]
From our estimates for $\chi(z,t)$, we obtain 
\[
|\hat \chi_j(z, t)| = (-t)^{-\bar \nu_j} \frac{|\chi_j(z, t)|}{(-t)} \leq C(-t)^{-\bar \nu_j}
\]
for $(z, t) \in [-\frac{1}{2}L, \frac{1}{2}L] \times [-\frac{1}{2}L, - \frac{1}{4}L]$, and 
\[
 |\hat \chi_j(z, t)| = (-t)^{-\bar \nu_j} \frac{|\chi_j(z, t)|}{(-t)} \leq CL^{20}(-t)^{- \bar \nu_j-1}
\]
for $(z, t) \in [-\frac{1}{2}L, \frac{1}{2}L] \times [-\frac{1}{4}L, t_n]$. Now suppose $(z, t) \in [-1000, 1000] \times [-1000, t_n]$. Combining these estimates with the representation formula for solution of the one-dimensional heat equation in the rectangle $[-\frac{1}{4}L, \frac{1}{4}L] \times [-\frac{1}{4}L, t_n]$ (see Appendix \ref{parabolic_equations}), we obtain
\[
|\hat \chi_j(z, t)| \leq C \Big(\frac{1}{4}L\Big)^{-\bar \nu_j }+ C L^{21} \int_{-\frac{1}{4}L}^t e^{-\frac{L^2}{100(t-s)}} (t-s)^{-\frac{3}{2}} (-s)^{- \bar \nu_j-1} \, ds. 
\]
In the interval $(-\frac{1}{4}L, t)$, we have
\[
e^{-\frac{L^2}{200(t-s)}} (t-s)^{-\frac{3}{2}}  \leq Ce^{-\frac{L}{100}} L^{-\frac{3}{2}}  \leq C L^{-1}, 
\]
and so
\[
|\hat \chi_j(z, t)| \leq C \Big(\frac{1}{4}L\Big)^{-\bar \nu_j}+ C L^{20} \int_{-\frac{1}{4}L}^t e^{-\frac{L^2}{200(t-s)}} (-s)^{- \bar \nu_j-1} \, ds. 
\]
For $\chi_j$ this gives
\[
|\chi_j(z, t)| \leq C \Big(\frac{L}{4(-t)}\Big)^{-\bar \nu_j}+ C L^{20} \int_{-\frac{1}{4}L}^t e^{-\frac{L^2}{200(t-s)}} \Big(\frac{s}{t}\Big)^{- \bar \nu_j-1} \, ds. 
\]
for $(z, t) \in [-1000, 1000] \times [-1000, t_n]$. The first term decays exponentially as $j \to \infty$. If we naively estimate the exponential in second term by $Ce^{-\frac{L}{100}}$ and then integrate, we could bound the second term by $C L^{20} e^{-\frac{L}{100}} \frac{1}{\bar \nu_j}$, but this is not even summable given the growth rate of $\bar \nu_j$. We need to be a bit more careful near $t$ to avoid losing exponential decay. As observed in \cite{Bre20}, the following estimate works  
\begin{align*}
\int_{-\frac{1}{4}L}^t e^{-\frac{L^2}{200(t-s)}} \Big(\frac{s}{t}\Big)^{- \bar \nu_j-1} \, ds &= \int_{-\frac{1}{4}L}^{(1+\frac{1}{\sqrt{\bar \nu_j}}) t}  e^{-\frac{L^2}{200(t-s)}} \Big(\frac{s}{t}\Big)^{- \bar \nu_j-1} \, ds + \int_{(1+\frac{1}{\sqrt{\bar \nu_j}})t}^{t}  e^{-\frac{L^2}{200(t-s)}} \Big(\frac{s}{t}\Big)^{- \bar \nu_j-1} \, ds \\
& \leq C \Big(e^{-\frac{L}{100}} (1+\frac{1}{\sqrt{\bar \nu_j}})^{-\bar \nu_j} + e^{-\frac{L^2\sqrt{\bar \nu_j}}{200(-t)}} \Big). 
\end{align*}
Since $\bar \nu_j \sim j^{\frac{2}{n-1}}$, both terms decay exponentially as $j \to \infty$.  
In summary, 
\[
|\chi_j(z, t)| \leq C \Big(\frac{L}{4(-t)}\Big)^{-\bar \nu_j}+ C L^{20}e^{-\frac{L}{100}} (1+\frac{1}{\sqrt{\bar \nu_j}})^{-\bar \nu_j }  +  CL^{20}e^{-\frac{L^2\sqrt{\bar \nu_j}}{200(-t)}} 
\]
for $(z, t) \in [-1000, 1000] \times [-1000, t_n]$. Recalling that $\bar \nu_j \sim \nu_j$ and $\bar \nu_j \geq \frac{1}{n-2}$, we sum over all $j$ to obtain
\[
|\chi(z, t)|_{g_{S^{n-1}}} \leq \Big|\sum_{j=1}^{\infty} \chi_j(z, t) S_j \Big| \leq C \sum_{j =1}^\infty \bar \nu_j^n|\chi_j(z, t)| \leq CL^{-\frac{1}{n-2}}
\]
in the region $\{|z| \leq 1000, -1000 \leq t \leq t_n \}$. \\

\textit{Step 2:} Next we analyze the equation for $\sigma(z,t)$. In this step, unless otherwise noted, $|\cdot|$ and $\langle \cdot\,, \cdot \rangle$ denote the norm and inner product with respect to $g_{S^{n-1}}$. Let $Q_j$, $j = 1, 2, \dots$, denote an orthonormal basis of eigenfunctions for the Laplacian acting on the bundle of one-forms of $S^{n-1}$, so that $\Delta_{S^{n-1}} Q_j = - \mu_j Q_j$. Note by Proposition A.1 in \cite{Bre14}, $\mu_j \geq 1$. As before, after normalizing $\int_{S^{n-1}} |Q_j|^2 = 1$, we get $\sup_{S^{n-1}} |Q_j| \leq C \mu_j^n$ for each $j$. Moreover, $\mu_j \sim j^{\frac{2}{n-1}}$ as $j \to \infty$. Let us write 
\[
\sigma(z,t) = \sum_{j =1}^\infty \sigma_j(z, t) Q_j,
\]
where 
\[
\sigma_j(z,t) = \int_{S^{n-1}} \langle \sigma(z, t), Q_j \rangle \, \mathrm{dvol}_{S^{n-1}}.
\]
Note that $|\sigma_j(z,t)| \leq C \sup_{S^{n-1}} |\sigma(z, t)|$. Define 
\[
\bar \mu_j := \frac{\mu_j}{2(n-2)}.
\]
Then $\bar \mu_j \geq \frac{1}{2(n-2)} > 0$, $\bar \mu_j + \frac{1}{2} = \frac{\mu_j +n -2}{2(n-2)}$, and $\bar \mu_j \sim j^{\frac{2}{n-1}}$. The equation for $\sigma(z,t)$ implies 
\[
\frac{\partial}{\partial t} \sigma_j(z,t) = \frac{\partial^2}{\partial z^2} \sigma_j(z, t) - \frac{\bar \mu_j + \frac{1}{2}}{(-t)} \sigma_j(z,t).
\]
Hence the function $\hat \sigma_j(z, t) := (-t)^{-\bar \mu_j - \frac{1}{2}} \sigma_j(z,t)$ satisfies 
\[
\frac{\partial}{\partial t} \hat \sigma_j(z,t) = \frac{\partial^2}{\partial z^2} \hat \sigma_j(z, t). 
\]
From the estimates for $\sigma(z,t)$, we obtain
\[
|\hat \sigma_j(z, t)| = (-t)^{-\bar \mu_j} \frac{|\sigma_j(z, t)|}{(-t)^{\frac{1}{2}}} \leq C(-t)^{-\bar \mu_j} 
\]
for $(z, t) \in [-\frac{1}{2}L, \frac{1}{2}L] \times [-\frac{1}{2}L, - \frac{1}{4}L]$, and 
\[
 |\hat \sigma(z, t)| = (-t)^{-\bar \mu_j} \frac{|\sigma_j(z, t)|}{(-t)^{\frac{1}{2}}} \leq CL^{20}(-t)^{- \bar \mu_j-1} 
\]
for $(z, t) \in [-\frac{1}{2}L, \frac{1}{2}L] \times [-\frac{1}{4}L, t_n]$. Now suppose $(z, t) \in [-1000, 1000] \times [-1000, t_n]$. Using the representation formula for the one-dimensional heat equation in the rectangle $[-\frac{1}{4}L, \frac{1}{4}L] \times [-\frac{1}{4}L, t_n]$, we obtain 
\[
|\hat \sigma_j(z, t)| \leq C \Big(\frac{1}{4}L\Big)^{-\bar \mu_j }+ C L^{21} \int_{-\frac{1}{4}L}^t e^{-\frac{L^2}{100(t-s)}} (t-s)^{-\frac{3}{2}} (-s)^{- \bar \mu_j-1} \, ds.
\]
Repeating the argument from the previous step yields 
\[
|\sigma_j(z, t)| \leq C \Big(\frac{L}{4(-t)}\Big)^{-\bar \mu_j}+ C L^{20}e^{-\frac{L}{100}} (1+(\bar \mu_j)^{-\frac{1}{2}})^{-\bar \mu_j}  +  CL^{20}e^{-\frac{L^2\sqrt{\bar \mu_j}}{200(-t)}}
\]
for $(z, t) \in [-1000, 1000] \times [-1000, t_n]$. Since $\bar \mu_j \geq \frac{1}{2(n-2)}$, summation over all $j$ gives the estimate 
\[
|\sigma(z, t)|_{g_{S^{n-1}}} \leq \Big|\sum_{j=1}^{\infty} \sigma_j(z, t) S_j \Big| \leq C \sum_{j =1}^\infty \bar \mu_j^n|\sigma_j(z, t)| \leq CL^{-\frac{1}{2(n-2)}}
\]
in the region $\{|z| \leq 1000, -1000 \leq t \leq t_n \}$.\\

\textit{Step 3:} Next we analyze the equation for $\beta(z,t)$. Let $Y_j$, $j = 0, 1, 2, \dots$, denote an orthonormal basis of eigenfunctions for the Laplacian acting on scalar functions of $S^{n-1}$, so that $\Delta_{S^{n-1}} Y_j = - \lambda_j Y_j$. Recall that $\lambda_0 = 0$, $\lambda_1 = \lambda_n = n-1$, and $\lambda_{n+1} = 2n$. As before, assuming that $\int_{S^{n-1}} Y_j^2 = 1$, we get $\sup_{S^{n-1}} |Y_j| \leq C \lambda_j^n$ for each $j \geq 1$. Moreover, $\lambda_j \sim j^{\frac{2}{n-1}}$ as $j \to \infty$. Let us write 
\[
\beta(z,t) = \sum_{j =0}^\infty \beta_j(z, t) Y_j,
\]
 where 
\[
\beta_j(z,t) = \int_{S^{n-1}}  \beta(z, t) Y_j \, \mathrm{dvol}_{S^{n-1}}
\]
Note that $|\beta_j(z,t)| \leq C\sup_{S^{n-1}} |\beta(z, t)|$. In the following, let us restrict our attention to the case $j\geq 1$. Define 
\[
\bar \lambda_j := \frac{\lambda_j}{2(n-2)}.
\]
Then $\bar \lambda_j \geq \frac{n-1}{2(n-2)} > 0$ and $\bar \lambda_j \sim j^{\frac{2}{n-1}}$. The equation for $\beta(z,t)$ implies 
\[
\frac{\partial}{\partial t} \beta_j(z,t) = \frac{\partial^2}{\partial z^2} \beta_j(z, t) - \frac{\bar \lambda_j}{(-t)} \beta_j(z,t).
\]
Hence the function $\hat \beta_j(z, t) := (-t)^{-\bar \lambda_j} \beta_j(z,t)$ satisfies 
\[
\frac{\partial}{\partial t} \hat \beta_j(z,t) = \frac{\partial^2}{\partial z^2} \hat \beta_j(z, t). 
\]
From our estimates for $\beta(z,t)$, we obtain
\[
|\hat \beta_j(z, t)| = (-t)^{-\bar \lambda_j} |\beta_j(z,t)|  \leq C (-t)^{-\bar \lambda_j}
\]
for $(z, t) \in [-\frac{1}{2}L, \frac{1}{2}L] \times [-\frac{1}{2}L, - \frac{1}{4}L]$, and 
\[
|\hat \beta_j(z, t)| = (-t)^{-\bar \lambda_j} |\beta_j(z,t)| \leq CL^{20}(-t)^{- \bar \lambda_j-1}
\]
for $(z, t) \in [-\frac{1}{2}L, \frac{1}{2}L] \times [-\frac{1}{4}L, t_n]$. Now suppose $(z, t) \in [-1000, 1000] \times [-1000, t_n]$. Using the representation formula for the one-dimensional heat equation in the rectangle $[-\frac{1}{4}L, \frac{1}{4}L] \times [-\frac{1}{4}L, t_n]$, we obtain 
\[
|\hat \beta_j(z, t)| \leq C \Big(\frac{1}{4}L\Big)^{-\bar \lambda_j }+ C L^{21} \int_{-\frac{1}{4}L}^t e^{-\frac{L^2}{100(t-s)}} (t-s)^{-\frac{3}{2}} (-s)^{- \bar \lambda_j-1} \, ds. 
\]
Repeating the argument from Step 1 yields
\[
|\beta_j(z, t)| \leq C \Big(\frac{L}{4(-t)}\Big)^{-\bar \lambda_j}+ C L^{20}e^{-\frac{L}{100}} (1+(\bar \lambda_j)^{-\frac{1}{2}})^{-\bar \lambda_j }  +  CL^{20}e^{-\frac{L^2\sqrt{\bar \lambda_j}}{200(-t)}},
\]
for $(z, t) \in [-1000, 1000] \times [-1000, t_n]$. Since $\bar \lambda_j \geq \frac{n-1}{2(n-2)}$ for $j \geq 1$, summation over all $j \geq 1$ gives the estimate 
\[
|\beta(z, t) - \bar \beta(z, t)|  \leq \Big|\sum_{j=1}^{\infty} \beta_j(z, t) S_j \Big| \leq C \sum_{j =1}^\infty \bar \lambda_j^n|\beta_j(z, t)| \leq CL^{-\frac{n-1}{2(n-2)}}
\]
in the region $\{|z| \leq 1000, -1000 \leq t \leq t_n \}$.\\

\textit{Step 4:} Finally, we analyze the equation for $\omega(z,t)$. As above, let $Y_j$, $j = 0,1,2, \dots, $ denote an orthonormal basis of eigenfunctions for the Laplacian acting on scalar functions of $S^{n-1}$, so that $\Delta_{S^{n-1}} Y_j = - \lambda_j Y_j$. Let us write 
\[
\omega(z,t) = \sum_{j=0}^{\infty} \omega_j(z, t) Y_j,
\]
where 
\[
\omega_j(z, t) = \int_{S^{n-1}} \omega(z,t) Y_j \, \mathrm{dvol}_{S^{n-1}}.
\]
Note that $|\omega_j(z,t)| \leq C\sup_{S^{n-1}} |\omega(z, t)|$. Let $\bar \lambda_j = \frac{\lambda_j}{2(n-2)}$ as in the previous step. Note that if $1 \leq j \leq n$, then $\bar \lambda_j - 1 = \frac{3 - n}{2(n-2)} \leq 0$. When $j \geq n+1$, then $\bar \lambda_j -1 \geq \frac{2}{n-2} > 0$. Since $\omega(z,t)$ and $\beta(z,t)$ satisfy the same equation, if we let $\hat \omega_j(z,t) = (-t)^{-\bar \lambda_j} \omega_j(z, t)$, then 
\[
\frac{\partial}{\partial t} \hat \omega_j(z,t) = \frac{\partial^2}{\partial z^2} \hat \omega_j(z, t).
\]
From our estimates $\omega(z,t)$, we obtain
\[
|\hat \omega_j(z, t)| = (-t)^{-\bar \lambda_j+1} \frac{|\omega_j(z,t)|}{(-t)}  \leq C (-t)^{-\bar \lambda_j +1} 
\]
for $(z, t) \in [-\frac{1}{2}L, \frac{1}{2}L] \times [-\frac{1}{2}L, - \frac{1}{4}L]$, and 
\[
|\hat \omega_j(z, t)| = (-t)^{-\bar \lambda_j +1} \frac{|\omega_j(z,t)|}{(-t)} \leq CL^{20}(-t)^{- \bar \lambda_j-1} 
\]
for $(z, t) \in [-\frac{1}{2}L, \frac{1}{2}L] \times [-\frac{1}{4}L, t_n]$.  Notice the first estimate is not suitable when $1 \leq j \leq n$, since in this case $-\bar \lambda_j + 1 > 0$. 

First, we consider the case $j \geq n+1$ so that $\bar \lambda_j -1 \geq \frac{2}{n-2}$. Arguing as we have before, we obtain
\[
|\omega_j(z, t)| \leq C \Big(\frac{L}{4(-t)}\Big)^{-\bar \lambda_j + 1}+ C L^{20}e^{-\frac{L}{100}} (1+(\bar \lambda_j)^{-\frac{1}{2}})^{-\bar \lambda_j}  +  CL^{20}e^{-\frac{L^2\sqrt{\bar \lambda_j}}{200(-t)}}
\]
for $(z, t) \in [-1000, 1000] \times [-1000, t_n]$. Summing over $j \geq n+1$, we get 
\[
\Big|\sum_{j = n+1}^{\infty} \omega_j(z,t) Y_j \Big| \leq C \sum_{j =n+1}^{\infty} \lambda_j^n |\omega_j(z, t)| \leq C L^{-\frac{2}{n-2}}. 
\]

When $j =1, \dots n$, then $\bar \lambda_j = \frac{n-1}{2(n-2)}$. In this case, we have estimates
\[
|\hat \omega_j(z, t)| \leq C (-t)^{\frac{n-3}{2(n-2)}}
\]
for $(z, t) \in [-\frac{1}{2}L, \frac{1}{2}L] \times [-\frac{1}{2}L, - \frac{1}{4}L]$, and 
\[
|\hat \omega_j(z, t)| \leq CL^{20}(-t)^{-\frac{n-1}{2(n-2)} -1}
\]
for $(z, t) \in [-\frac{1}{2}L, \frac{1}{2}L] \times [-\frac{1}{4}L, t_n]$. Note that $\frac{n-3}{2(n-2)} - \frac{1}{2} = -\frac{1}{2(n-2)}$ and $-\frac{n-1}{2(n-2)} - 1 = -\frac{3}{2} - \frac{1}{2(n-2)}$. Thus, by standard interior estimates for linear parabolic equations, we obtain 
\[
\Big|\frac{\partial}{\partial z}\hat \omega_j(z, t)\Big| \leq C (-t)^{-\frac{1}{2(n-2)}}
\]
for $(z, t) \in [-\frac{1}{4}L, \frac{1}{4}L] \times [-\frac{1}{2}L, - \frac{1}{4}L]$, and 
\[
\Big|\frac{\partial}{\partial z} \hat \omega_j(z, t)\Big| \leq CL^{20}(-t)^{-\frac{3}{2}}
\]
for $(z, t) \in [-\frac{1}{4}L, \frac{1}{4}L] \times [-\frac{1}{4}L, t_n]$. Since $\frac{\partial}{\partial z}\hat \omega_j(z, t)$ satisfies the one-dimensional heat equation, we can use the representation formula to obtain the estimate
\[
\Big|\frac{\partial}{\partial z}\hat \omega_j(z, t)\Big| \leq C L^{-\frac{1}{2(n-2)}}
\]
for $(z, t) \in [-2000, 2000] \times [-2000, t_n]$ and for $j =1, \dots, n$. Using interior estimates again, we get 
\[
\Big|\frac{\partial}{\partial t} \hat \omega_j(z, t)\Big| = \Big|\frac{\partial^2}{\partial z^2}\hat \omega_j(z, t)\Big| \leq C L^{-\frac{1}{2(n-2)}}
\]
for $(z, t) \in [-1000, 1000] \times [-1000, t_n]$ and for $j =1, \dots, n$. In view of our estimates for $\frac{\partial }{\partial t} \hat \omega_j(z, t)$ and $\frac{\partial }{\partial z} \hat \omega_j(z, t)$, it follows that we can find constants $q_j$, for $j =1 ,\dots, n$, such that 
\[
\Big|\hat \omega_j(z, t) - q_j \Big| \leq C L^{-\frac{1}{2(n-2)}}
\]
for $(z, t) \in [-1000,1000] \times [-1000, t_n]$. Hence
\[
\Big|\omega_j(z, t) - (-t)^{\frac{n-1}{2(n-2)}}q_j \Big| \leq C L^{-\frac{1}{2(n-2)}}
\]
for $(z, t) \in [-1000, 1000] \times [-1000, t_n]$ and $j =1, \dots, n$. 

To summarize this step, we conclude that
\[
\Big|\omega(z, t) - \bar \omega(z,t) - (-t)^{\frac{n-1}{2(n-2)}}\sum_{j=1}^n q_j Y_j \Big| = \Big|\sum_{j =1}^{\infty} \omega_j(z,t) Y_j - (-t)^{\frac{n-1}{2(n-2)}}\sum_{j=1}^n q_j Y_j \Big| \leq C L^{-\frac{1}{2(n-2)}} 
\]
in the region $\{|z| \leq 1000, -1000 \leq t \leq t_n \}$. \\

\textit{Conclusion:} Define $\psi := \sum_{j=1}^n q_j Y_j$. Putting the estimates in all the previous steps together, we have shown that 
\begin{align*}
\big| h(t)& - \bar \omega (z,t) g_{S^{n-1}} - \bar \beta(z, t) dz \otimes dz - (-t)^{\frac{n-1}{2(n-2)}} \psi g_{S^{n-1}}\big|_{\bar g(t)}\\
& \leq C\big| \omega(z,t) - \bar \omega(z,t) - (-t)^{\frac{n-1}{2(n-2)}}\psi\big| + C|\chi(z, t)|_{g_{S^{n-1}}}+ C|\sigma(z,t)|_{g_{S^{n-1}}} + C|\beta(z,t)- \bar \beta(z,t)\big|  \\
& \leq C L^{-\frac{1}{2(n-2)}}
\end{align*}  
in the region $\{|z| \leq 1000, -1000 \leq t \leq t_n \}$. This completes the proof of the theorem. 
\end{proof}

\begin{remark}
Note that since 
\[
|\bar \omega(t) g_{S^{n-1}}|_{\bar g(t)} \sim 1, \quad \text{and} \quad |\bar \beta(t)\, dz \otimes dz |_{\bar g(t)} \sim 1, 
\]
these solutions of the parabolic Lichnerowicz equation correspond to rotationally symmetric sums of non-decaying modes of the operator on the cylinder. Meanwhile, 
\[
|(-t)^{\frac{n-1}{2(n-2)}}\psi g_{S^{n-1}}|_{\bar g(t)} \sim (-t)^{-\frac{n-3}{2(n-2)}}
\]
is actually growing as $t$ increases when $n > 3$. 

The presence of these non-decaying solutions of the parabolic Lichnerowicz equation can be understood as follows. Recall $\bar g(t) = (-2(n-2) t)g_{S^{n-1}} + dz \otimes dz$ denotes the standard solution to the Ricci flow on the cylinder for $t \in (-\infty, 0)$. Consider a large smooth bounded domain $\Omega \subset S^{n-1} \times \mathbb{R}$ and a large real number $-T$. Suppose $g(t)$ is a smooth solution of the Ricci flow defined on $\Omega$ for $t \in [T, t_n]$ which is very close to the standard metric $\bar g(t)$ on $\Omega$ for $t \in [T, t_n]$. Solve the harmonic map heat flow $\frac{\partial}{\partial t} \Phi(t) = \Delta_{g(t), \bar g(t)} \Phi(t)$ on $\Omega \times (T, t_n)$ with initial condition $\Phi_{T} = \mathrm{Id}$ and boundary condition $\Phi_{t} |_{\partial \Omega} = \mathrm{Id}$ for $t \in [T, t_n]$. Note that $\Phi(t)$ must remain very close to the identity. Then, define $\tilde g(t)$ by the identity $g(t) = \Phi_t^\ast \tilde g(t)$. By Proposition \ref{ricci-to-ricci-deturck}, $\tilde g(t)$ is a solution of the Ricci-DeTurck flow with respect to the evolving background metrics $\bar g(t)$. By Proposition \ref{linearization-of-evolving-ricci-deturck}, the difference $\tilde g(t)- \bar g(t)$ should be very close to solving the parabolic Lichnerowicz equation on the cylinder. 

Now there are two ways the metrics $g(t)$ may differ from the metrics $\bar g(t)$. Firstly, they may differ geometrically. This is the case if we pullback the Bryant soliton solution or the ancient oval solution in their tubular regions to the cylinder. Of course the Bryant soliton and ancient oval are rotationally symmetric, and in this case, the difference between $\tilde g(t)$ and $\bar g(t)$ gives rise to the rotationally symmetric $\bar{\omega}$-solution of the parabolic Lichnerowicz equation. Alternatively, we could let $g(t)$ be a small translation of $\bar g(t)$ in time. This would also give rise to the $\bar \omega$-solution. The second possibility is for $g(t)$ to differ from $\bar g(t)$ by a diffeomorphism. In this case, you may have $g(t) = \varphi^\ast \bar g(t)$ for some $\varphi$ close to the identity and so $\Phi_t$ solves $\frac{\partial}{\partial t}\Phi_t = \Delta_{\varphi^\ast \bar g(t), \bar g(t)} \Phi_t$ or equivalently $\frac{\partial}{\partial t}(\Phi_t \circ \varphi^{-1}) = \Delta_{\bar g(t), \bar g(t)} (\Phi_t\circ \varphi^{-1})$. It turns out the identity map is not a stable critical point of the harmonic map heat flow on the cylinder in higher dimensions. By the second part of  Proposition \ref{linearization-of-evolving-ricci-deturck}, the linearization of the map Laplacian on the cylinder is 
\[
X \mapsto \frac{1}{(-2(n-2)t)} (\Delta_{S^{n-1}} + (n-2)) X\big|_{S^{n-1}}+ \frac{\partial^2}{\partial z^2} X.
\]
The vector field $\frac{\partial}{\partial z}$ is clearly in the kernel of this operator. This gives rise to a perturbation of the family of diffeomorphisms which corresponds to a ``stretching" in the $\mathbb{R}$-factor of the cylinder. In higher dimensions $n > 3$, the operator $\Delta_{S^{n-1}} +(n-2)$ has one positive eigenvalue which is $n-3$. It turns out the space of eigenvectors for this eigenvalue is spanned by the gradients of the first spherical harmonics. These considerations point to why the $\bar \beta$-solution is steady and $\psi$-solution grows. 
\end{remark}


\section{The Neck Improvement Theorem}

Recall that $t_n = -\frac{1}{2(n-2)}$ and $\bar g(t) = (-2(n-2)t)g_{S^{n-1}} + dz \otimes dz$. 

\begin{definition}\label{epsilon_neck}
Let $(M, g(t))$ be an $n$-dimensional solution to the Ricci flow and let $(\bar x, \bar t)$ be a spacetime point with $R(\bar x, \bar t) = (n-2)(n-1)r^{-2}$. Define
\[
\hat g(t) = r^{-2} g(r^2 (t-t_n) + \bar t). 
\]
Given $\varepsilon > 0$, we say that $(\bar x, \bar t)$ lies at the center of an evolving $\varepsilon$-neck if the rescaled solution $\hat g(t)$ in the parabolic neighborhood $B_{\hat g(t_n)}(\bar x, \varepsilon^{-1}) \times [t_n -\varepsilon^{-1}  , t_n]$ is $\varepsilon$-close in $C^{[\varepsilon^{-1}]}$ to the standard metric $\bar g(t)$ of a family of shrinking cylinders in a corresponding parabolic neighborhood. 
\end{definition}

By modeling neck regions on $\bar g(t)$ at time $t = t_n$, we have the desirable property that metric balls are close to subcylinders. In particular, for the model metric $\bar g(t_n)$ and a point $x \in S^{n-1} \times \{0\}$, we have 
\[
S^{n-1} \times [-L+2, L-2] \subset B_{\bar g(t_n)}(x, L) \subset S^{n-1} \times [-L, L].
\] 

\begin{definition}[Neck Symmetry]\label{neck_symmetry}
Let $(M, g(t))$ be an $n$-dimensional solution to the Ricci flow and let $(\bar x, \bar t)$ be a spacetime point with $R(\bar x, \bar t) = (n-1)(n-2)r^{-2}$. Assume that $(\bar x, \bar t)$ lies at the center of an evolving $\varepsilon_0$-neck for some  small positive real number $\varepsilon_0$. We say $(\bar x, \bar t)$ is $\varepsilon$-symmetric if there exists a smooth, time-independent family of vector fields $\mathcal U = \{U^{(a)} : 1 \leq a \leq {n \choose 2}\}$ defined on the closed ball $\bar B_{g(\bar t)}(\bar x, 100r)$ with the following properties: 
\begin{enumerate}
\item[$\bullet$] In $B_{g(\bar t)}(\bar x, 100 r) \times [\bar t - 100 r^2, \bar t]$, we have the estimate
\[
 \sum_{l = 0}^2 \sum_{a = 1}^{{n \choose 2}} r^{2l} \big|D^l (\mathcal L_{U^{(a)}}(g(t)))|^2 \leq \varepsilon^2.
\]
\item[$\bullet$] If $t \in [\bar t - 100 r^2, \bar t]$ and $\nu$ denotes the unit normal vector to Hamilton's CMC foliation of the $\varepsilon_0$-neck at time $t$, then in $B_{g(\bar t)}(\bar x, 100 r)$, we have the estimate
\[
\sum_{a = 1}^{{n \choose 2}} r^{-2} |\langle U^{(a)}, \nu \rangle|^2 \leq \varepsilon^2. 
\]
\item[$\bullet$] If $t \in [\bar t -100 r^2, \bar t]$ and $\Sigma \subset B_{g(\bar t)}(\bar x, 100 r)$ is a leaf of Hamilton's CMC foliation of the $\varepsilon_0$-neck at time $t$, then 
\[
\sum_{a, b = 1}^{{n \choose 2}} \bigg| \delta_{ab} - \mathrm{area}_{g(t)}(\Sigma)^{-\frac{n+1}{n-1}} \int_\Sigma \langle U^{(a)}, U^{(b)} \rangle_{g(t)} \, d\mu_{g(t)} \bigg|^2 \leq \varepsilon^2. 
\]
\end{enumerate}
\end{definition}

We recall the following lemma from \cite{Bre20}. The proof in higher dimensions using Definition \ref{neck_symmetry} is the same. 

\begin{lemma}
If $(\bar x, \bar t)$ is a spacetime point that is $\varepsilon$-symmetric and $(\tilde x, \tilde t)$ is a point sufficiently close to $(\bar x, \bar t)$, then $(\tilde x, \tilde t)$ is $2\varepsilon$-symmetric. 
\end{lemma}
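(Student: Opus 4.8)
The plan is to show that $\varepsilon$-symmetry is an open-type condition that degrades continuously under small perturbations of the spacetime base point. Given that $(\bar x, \bar t)$ is $\varepsilon$-symmetric with scale $r$ (so that $R(\bar x, \bar t) = (n-1)(n-2)r^{-2}$), fix the associated family of vector fields $\mathcal U = \{U^{(a)}\}$ on $\bar B_{g(\bar t)}(\bar x, 100r)$ satisfying the three displayed estimates of Definition \ref{neck_symmetry}. For a nearby point $(\tilde x, \tilde t)$ we will use \emph{the same} vector fields $\mathcal U$ (no modification needed), with new scale $\tilde r$ defined by $R(\tilde x, \tilde t) = (n-1)(n-2)\tilde r^{-2}$. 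The first step is to record that all relevant quantities — the scalar curvature $R$, the CMC foliation and its normal $\nu$, the areas of the leaves, and the geometry of the ball $B_{g(\tilde t)}(\tilde x, 100\tilde r)$ — depend continuously on $(\tilde x, \tilde t)$ (the CMC foliation's smooth dependence is guaranteed by Hamilton's construction in Appendix D, valid on an $\varepsilon_0$-neck, and $(\tilde x, \tilde t)$ still lies at the center of an $\varepsilon_0$-neck once it is close enough to $(\bar x, \bar t)$, possibly after very slightly enlarging $\varepsilon_0$ or shrinking the neighborhood).

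The second step is the actual estimate. For the first bullet, since $\mathcal L_{U^{(a)}}(g(t))$ and its first two covariant derivatives are continuous in spacetime and the factor $\tilde r^{2l}$ is close to $r^{2l}$, the weighted sum $\sum_{l,a} \tilde r^{2l}|D^l(\mathcal L_{U^{(a)}}(g))|^2$ over the parabolic neighborhood $B_{g(\tilde t)}(\tilde x, 100\tilde r) \times [\tilde t - 100\tilde r^2, \tilde t]$ differs from the corresponding sum based at $(\bar x, \bar t)$ by an amount that tends to $0$ as $(\tilde x, \tilde t) \to (\bar x, \bar t)$; hence it is bounded by $\varepsilon^2 + o(1) \leq (2\varepsilon)^2$ once $(\tilde x,\tilde t)$ is close enough. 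For the second bullet one uses that $\nu$ for the foliation at $(\tilde x, \tilde t)$ is $C^0$-close to $\nu$ for the foliation at $(\bar x, \bar t)$, and $|\langle U^{(a)}, \nu\rangle|$ is accordingly close, while $\tilde r^{-2}$ is close to $r^{-2}$. For the third bullet one uses that each leaf $\tilde\Sigma$ through the nearby neck is $C^\infty$-close (after the obvious identification) to a leaf $\Sigma$ of the original foliation, so $\mathrm{area}_{g(t)}(\tilde\Sigma)$ and $\int_{\tilde\Sigma}\langle U^{(a)}, U^{(b)}\rangle\, d\mu$ are close to their counterparts over $\Sigma$; the normalizing exponent $-\frac{n+1}{n-1}$ is continuous in the area, so the whole expression $|\delta_{ab} - \mathrm{area}(\tilde\Sigma)^{-\frac{n+1}{n-1}}\int_{\tilde\Sigma}\langle U^{(a)},U^{(b)}\rangle|$ is within $o(1)$ of the original, hence $\leq \varepsilon + o(1)$, so its square is $\leq (2\varepsilon)^2$. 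Finally one notes that for $(\tilde x, \tilde t)$ close enough, $\bar B_{g(\tilde t)}(\tilde x, 100\tilde r) \subset \bar B_{g(\bar t)}(\bar x, 100r)$, so $\mathcal U$ is indeed defined on the required ball; combining the three bullet estimates shows $(\tilde x, \tilde t)$ is $2\varepsilon$-symmetric.

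The main obstacle, and the only point requiring genuine care rather than routine continuity, is controlling the CMC foliation and its unit normal $\nu$ under the change of base point: one must know that Hamilton's canonical CMC foliation of an $\varepsilon_0$-neck varies continuously (indeed smoothly) with the metric and with the choice of neck region, so that the leaf $\tilde\Sigma$ through a point near $\bar x$ at time near $\bar t$ is $C^\infty$-close to a leaf of the original foliation and the normals match up to $o(1)$. This is precisely the content of the canonical-foliation construction recalled in Appendix D (and the uniqueness statement there), so once that is invoked the remaining arguments are uniform-continuity bookkeeping. Since the conclusion only asserts the degraded constant $2\varepsilon$ (not $\varepsilon$), we have ample slack to absorb all the $o(1)$ errors by taking $(\tilde x, \tilde t)$ sufficiently close to $(\bar x, \bar t)$, which is exactly the quantifier structure in the statement.
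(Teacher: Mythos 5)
The paper does not reprove this lemma; it simply cites \cite{Bre18}, so the relevant question is whether your continuity argument is sound on its own terms. The overall strategy — reuse the same family $\mathcal U$, exploit that every quantity in Definition \ref{neck_symmetry} is continuous in the base spacetime point, and absorb the resulting $o(1)$ errors into the slack $\varepsilon \to 2\varepsilon$ — is the right one, and your observation that the $O(\tbinom{n}{2})$-normalization and the CMC foliation behave continuously is correct (indeed the canonical foliation at a fixed time $t$ depends only on $g(t)$ and not on which center point generates the neck, so the foliations for the two base points literally coincide where both are defined; there is nothing to compare or match up there).

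The genuine gap is the domain claim ``for $(\tilde x, \tilde t)$ close enough, $\bar B_{g(\tilde t)}(\tilde x, 100\tilde r) \subset \bar B_{g(\bar t)}(\bar x, 100r)$.'' This is false whenever $R(\tilde x, \tilde t) < R(\bar x, \bar t)$: then $\tilde r > r$, so the new ball has strictly larger radius, and no amount of shrinking the neighborhood of $(\bar x, \bar t)$ forces the containment. Since $R$ is not monotone along the neck, one cannot assume $\tilde r \leq r$. Without this containment, $\mathcal U$ is not a priori defined where the second and third bullet estimates must be verified, and the first bullet estimate must be verified on a parabolic neighborhood that may also poke out of the original one (both spatially and backward in time, since $100\tilde r^2$ may exceed $100 r^2$). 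To close the gap one should instead observe that a smooth family of vector fields on the compact set $\bar B_{g(\bar t)}(\bar x, 100r)$ is, by the standard convention, the restriction of a smooth family defined on some open set $V \supset \bar B_{g(\bar t)}(\bar x, 100r)$; since $\bar B_{g(\tilde t)}(\tilde x, 100\tilde r)$ converges in Hausdorff distance to $\bar B_{g(\bar t)}(\bar x, 100r)$ as $(\tilde x, \tilde t) \to (\bar x, \bar t)$, it is eventually contained in $V$. The sup of the relevant continuous quantities over the slightly larger parabolic region is then $\leq \varepsilon^2 + o(1)$ by uniform continuity on the compact closure of $V$, which is all you need. This also correctly identifies the actual bottleneck: it is the domain bookkeeping, not the foliation, that requires the extra sentence.
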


In the following lemma, we use the gluing result Corollary \ref{gluing_vectors} to construct a family of vector fields defined on a large metric ball that satisfy slightly weakened versions of the estimates of Definition \ref{neck_symmetry}. These estimates hold on an interval of time proportional to the curvature scale. 

\begin{lemma}\label{big_existence}
If $L$ is sufficiently large (depending upon $n$) and $\varepsilon_0$ is sufficiently small (depending upon $n$ and $L$), then the following holds. Let $(M, g(t))$ be a solution to the Ricci flow in dimension $n$ and let $(x_0, t_n)$ be a spacetime point that lies at the center of an evolving $\varepsilon_0$-neck and satisfies $R(x_0, t_n) = (n-1)(n-2)$. Assume that every spacetime point in the parabolic neighborhood $B_{g(t_n)}(x_0, L) \times [t_n -L , t_n)$ is $\varepsilon$-symmetric for some positive real number $\varepsilon \leq \varepsilon_0$. Then, for any $\bar t \in [-\frac{L}{5n}, t_n]$, we can find a time-independent family of vector fields $\mathcal U = \{ U^{(a)} : 1 \leq a \leq {n \choose 2}\}$ defined on $B_{g(t_n)}(x_0, \frac{127L}{128})$ with the following properties: 
\begin{enumerate}
\item[$\bullet$] In $B_{g(t_n)}(x_0, \frac{127L}{128}) \times [5n \bar t, \bar t]$, we have the estimate
\[
\sum_{a = 1}^{{n \choose 2}} \big|\mathcal L_{U^{(a)}}(g(t))|^2 + (-t) \big|D (\mathcal L_{U^{(a)}}(g(t)))|^2 \leq C\varepsilon^2.
\]
\item[$\bullet$] If $t \in [5n \bar t, \bar t]$ and $\nu$ is the unit normal vector to Hamilton's CMC foliation of the $\varepsilon_0$-neck at time $t$, then in $B_{g(t_n)}(x_0, \frac{127L}{128})$ we have the estimate
\[
\sum_{a = 1}^{{n \choose 2}} (-t)^{-1} |\langle U^{(a)}, \nu \rangle|^2 \leq C\varepsilon^2. 
\]
\item[$\bullet$] If $t \in [5n \bar t, \bar t]$ and $\Sigma \subset B_{g(t_n)}(x_0,\frac{127L}{128})$ is a leaf of Hamilton's CMC foliation of the $\varepsilon_0$-neck at time $t$, then 
\[
\sum_{a, b = 1}^{{n \choose 2}} \bigg| \delta_{ab} - \mathrm{area}_{g(t)}(\Sigma)^{-\frac{n+1}{n-1}} \int_\Sigma \langle U^{(a)}, U^{(b)} \rangle_{g(t)} \, d\mu_{g(t)} \bigg|^2 \leq C \varepsilon^2. 
\]
\end{enumerate}
Moreover, the family of vector fields $\mathcal U$ is $C(L)\varepsilon_0$-close to a standard family of rotational vector fields on the cylinder in the $C^{2}$-norm.
\end{lemma}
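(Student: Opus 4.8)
The plan is to partition the long cylinder $B_{g(t_n)}(x_0,L)$ into a collection of overlapping subcylinders, each of controlled length (say length $\sim 40$ in cylindrical coordinates), centered at points on the central axis. On each such subcylinder the hypothesis that its center is $\varepsilon$-symmetric, together with the smallness of $\varepsilon_0$, puts us exactly in the setting of Proposition \ref{uniqueness_of_symmetry_fields} and Corollary \ref{gluing_vectors}: the rescaled metric is $C^{10}$-close to $\bar g$, and we have a family $\mathcal U^{(k)}$ of almost-Killing vector fields on the $k$-th subcylinder satisfying the three normalized conditions of Definition \ref{neck_symmetry} (after unwinding the definitions and using Lemma 4.3 to move the base point to the axis). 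The first step is therefore to fix such a cover and record these local families; one should also invoke the interior parabolic estimates (Proposition \ref{norm_pde} applied to $\mathcal L_U(g)$, which satisfies a parabolic Lichnerowicz equation up to an error controlled by $D\mathcal L_U(g)$, cf. Remark 4.4) to upgrade the $C^2$-in-space control to control on the parabolic neighborhoods $[5n\bar t,\bar t]$, i.e. to convert the spatial smallness into the spacetime estimates in the three bulleted conclusions.

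The second step is the gluing. Working from one end of the cylinder to the other, I would inductively glue the local families using Corollary \ref{gluing_vectors}: having defined a global family $\mathcal U$ on the union of the first $k$ subcylinders, I apply the corollary on the overlap of subcylinder $k$ and $k+1$ to find $\omega^{(k)}\in O\binom n2$ and a cutoff $\eta$ so that $\eta\,\omega^{(k)}\mathcal U^{(k)}+(1-\eta)\mathcal U^{(k+1)}$ has Lie-derivative norm $\le C\varepsilon^2$ on the transition region and $\le \varepsilon^2$ (the unglued bound) outside it. Because each of the three normalized conditions of Definition \ref{neck_symmetry} is invariant under the $O\binom n2$-action and the transition regions are disjoint, the errors do not accumulate across the many gluings: at each point at most one transition region is active, so the global family satisfies the first bulleted estimate with a constant $C$ independent of $L$, and the CMC-normal and orthonormality conditions hold with the same $C\varepsilon^2$ on every leaf. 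The spatial bound $B_{g(t_n)}(x_0,\tfrac{127L}{128})$ in the conclusion (rather than the full $B_{g(t_n)}(x_0,L)$) is precisely the buffer needed so that every leaf and every parabolic neighborhood we use lies comfortably inside the region where the local families are defined; the time restriction $\bar t\in[-\tfrac{L}{5n},t_n]$ ensures $[5n\bar t,\bar t]\subset[t_n-L,t_n]$ so the hypothesis is available throughout. Finally, the closeness of each local family to a standard rotational family on the cylinder (which follows from Proposition \ref{uniqueness_of_symmetry_fields} applied with $\tilde{\mathcal U}$ a genuine rotational family on $\bar g$, whose Lie derivative vanishes) is propagated through the $O\binom n2$-rotations and the cutoff to give the final $C(L)\varepsilon_0$-closeness of $\mathcal U$ in $C^2$.

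The main obstacle is controlling the propagation of the rotation matrices $\omega^{(k)}$ along the chain so that the final family is still close to a \emph{single} rotational family rather than drifting by a large cumulative rotation. This is handled because on each overlap $\mathcal U^{(k)}$ and $\mathcal U^{(k+1)}$ are both $C\varepsilon_0$-close to \emph{the} standard rotational family there, so $\omega^{(k)}$ must be $C\varepsilon_0$-close to the identity (this is where the rigidity in Proposition \ref{uniqueness_of_symmetry_fields} with the genuine rotational comparison family is used); the number of gluings is $O(L)$, so the cumulative rotation is $C(L)\varepsilon_0$-close to the identity, which is exactly the claimed $C(L)\varepsilon_0$-closeness. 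A secondary technical point is the bookkeeping of norms: the conclusion is stated with factors $(-\bar t)$ and $(-\bar t)^{-1}$ reflecting the curvature scale at time $\bar t$, so one must carefully track how the fixed-scale estimates at time $t_n$ transform under the parabolic rescaling implicit in Definition \ref{epsilon_neck}; this is routine but must be done consistently across all three conclusions.
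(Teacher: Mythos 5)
Your overall strategy — chain of overlapping subcylinders, local almost-Killing families from the $\varepsilon$-symmetry hypothesis, gluing with Corollary \ref{gluing_vectors}, buffer regions at the ends accounting for the radius $\tfrac{127L}{128}$ — is the same as the paper's, and the key observation that the transition regions are disjoint so errors do not accumulate is exactly right.

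However, there is one genuine misconception worth correcting: you propose to ``invoke the interior parabolic estimates (Proposition \ref{norm_pde}) to upgrade the $C^2$-in-space control to control on the parabolic neighborhoods $[5n\bar t,\bar t]$.'' No such upgrade is needed, and trying to do one would obscure the actual mechanism. The hypothesis of $\varepsilon$-symmetry at $(x,\bar t)$ (Definition \ref{neck_symmetry}) already furnishes estimates on the \emph{full} parabolic neighborhood $B_{g(\bar t)}(x,100r)\times[\bar t-100r^2,\bar t]$, where $r$ is the curvature scale at time $\bar t$. The crucial (and entirely elementary) observation in the proof is the containment
\[
B_{g(t_n)}(x, 85) \times [5n\bar t, \bar t] \;\subset\; B_{g(\bar t)}(x, 100r) \times [\bar t - 100 r^2, \bar t],
\]
which follows because the shrinking cylinder has $r$ comparable to $(-\bar t)^{1/2}$, so $100 r \geq 90$ and $100 r^2 \geq 5n(-\bar t)$. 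This containment is what makes the conclusion's time interval $[5n\bar t, \bar t]$ accessible from the hypothesis; you must verify it directly rather than manufacture it from an interior estimate. Your explanation of the time restriction $\bar t\in[-\tfrac{L}{5n},t_n]$ focuses on the wrong containment (into $[t_n-L,t_n]$ rather than into $[\bar t-100r^2,\bar t]$); the former is needed just to make the hypothesis applicable, but the latter is where the parabolic control actually comes from.

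A smaller point: your argument that the cumulative rotation matrix stays $C(L)\varepsilon_0$-close to the identity is correct but more elaborate than necessary. Since the conclusion only asks that $\mathcal U$ be close to \emph{some} standard rotational family (not to one anchored at a prescribed end), the paper simply notes that because every $\mathcal L_{U^{(a)}}(g)$ is bounded by $C(L)\varepsilon$ in $C^2$-norm, the glued family must be $C(L)\varepsilon_0$-close in $C^{2,\frac12}$ to a standard rotational family; no tracking of the chain of rotations is required.
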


\begin{proof}
The proof is analogous to the proof of Lemma 8.4 in \cite{Bre20}. 
\end{proof}

In the next lemma, we extend the uniqueness result Proposition \ref{uniqueness_of_symmetry_fields} to the families of rotational vector fields constructed in the previous lemma. The result shows such families of vector fields are unique up to a rotation in $O({n \choose 2})$. It follows that if we allow our family of vector fields to be time-dependent, then we can construct a family of vector fields satisfying the estimates in Lemma \ref{big_existence} for a large interval of time.

\begin{lemma}\label{big_uniqueness}
If $L$ is sufficiently large (depending upon $n$) and $\varepsilon_0$ is sufficiently small (depending upon $n$ and $L$), then the following holds. Let $(M, g(t))$ be a solution to the Ricci flow in dimension $n$ and let $(x_0, t_n)$ be a spacetime point that lies at the center of an evolving $\varepsilon_0$-neck and satisfies $R(x_0, t_n) = (n-1)(n-2)$. Consider a time $\bar t \in [-L, t_n]$ a positive real number $\varepsilon \leq \varepsilon_0$. Suppose that $\mathcal U = \{ U^{(a)} : 1 \leq a \leq {n \choose 2}\}$ is a time-independent family of vector fields defined on $B_{g(t_n)}(x_0, \frac{127L}{128})$ with the following properties: 
\begin{enumerate}
\item[$\bullet$] In $B_{g(t_n)}(x_0, \frac{127L}{128})$, we have the estimate
\[
\sum_{a = 1}^{{n \choose 2}} \big|\mathcal L_{U^{(a)}}(g(\bar t))|^2 + (-\bar t) \big|D (\mathcal L_{U^{(a)}}(g(\bar t)))|^2 \leq \varepsilon^2.
\]
\item[$\bullet$] If $\nu$ is the unit normal vector to Hamilton's CMC foliation of the $\varepsilon_0$-neck at time $\bar t$, then in $B_{g(t_n)}(x_0, \frac{127L}{128})$ we have the estimate
\[
\sum_{a = 1}^{{n \choose 2}} (-\bar t)^{-1} |\langle U^{(a)}, \nu \rangle|^2 \leq \varepsilon^2. 
\]
\item[$\bullet$] If $\Sigma \subset B_{g(t_n)}(x_0, \frac{127L}{128})$ is a leaf of Hamilton's CMC foliation of the $\varepsilon_0$-neck at time $\bar t$, then 
\[
\sum_{a, b = 1}^{{n \choose 2}} \bigg| \delta_{ab} - \mathrm{area}_{g(\bar t)}(\Sigma)^{-\frac{n+1}{n-1}} \int_\Sigma \langle U^{(a)}, U^{(b)} \rangle_{g(\bar t)} \, d\mu_{g(\bar t)} \bigg|^2 \leq \varepsilon^2. 
\]
\end{enumerate}
Moreover, suppose that $\tilde {\mathcal U} = \{ \tilde U^{(a)} : 1 \leq a \leq {n \choose 2}\}$ is a second time-independent family of vector fields defined on $B_{g(t_n)}(x_0,\frac{127L}{128})$ satisfying the same three properties above (with $U^{(a)}$ replaced by $\tilde U^{(a)}$). Then there exists an ${n \choose 2} \times {n \choose 2}$ matrix $\omega \in O({n \choose 2})$ such that in $B_{g(t_n)}(x_0, \frac{31L}{32})$
\[
(-\bar t)^{-1}\sum_{a = 1}^{{n \choose 2}} \Big| \sum_{b =1}^{{n \choose 2}} \omega_{ab} U^{(b)} - \tilde U^{(a)} \Big|_{g(\bar{t})}^2 \leq CL^2 \varepsilon^2.
\]
\end{lemma}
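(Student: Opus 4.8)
\emph{Strategy and rescaling.} The plan is to rescale $g(\bar t)$ so that the neck has unit cross-sections, cover the resulting long neck by $O(L)$ overlapping subcylinders of bounded length, apply Proposition~\ref{uniqueness_of_symmetry_fields} on each to produce an orthogonal matrix $\omega^{(m)} \in O({n \choose 2})$, show that the matrices attached to overlapping subcylinders differ by at most $C\varepsilon$, and chain these comparisons across the neck; the chaining over $O(L)$ steps is precisely what produces the loss of $L^2$ in the final estimate. Write $\bar r := (-2(n-2)\bar t)^{\frac12} \ge 1$ for the radius of the model cylinder at time $\bar t$, and set $\hat g := \bar r^{-2}\,g(\bar t)$ and $\hat z := \bar r^{-1} z$, where $z$ is the height function of the neck normalized by $z(x_0) = 0$. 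Since $(x_0, t_n)$ lies at the center of an evolving $\varepsilon_0$-neck and $\varepsilon_0^{-1} \gg L$, the metric $\hat g$ is $C\varepsilon_0$-close in $C^{10}$ to the unit cylinder $\bar g$ on the relevant region; the constant rescaling leaves Hamilton's CMC foliation unchanged, and by uniqueness of the CMC foliation for metrics close to $\bar g$ (Appendix~D) this foliation restricts to the CMC foliation of each subcylinder. The three hypotheses are scale-invariant up to the powers of $(-\bar t) \sim \bar r^2$ built into the statement: $|\mathcal L_U(g)|$ and $\mathrm{area}(\Sigma)^{-\frac{n+1}{n-1}}\int_\Sigma \langle U^{(a)}, U^{(b)}\rangle\,d\mu$ are scale-invariant, while $|D(\mathcal L_U(g))|$, $|\langle U,\nu\rangle|$, and $|U|$ each carry a single power of the length scale. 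Hence, read with respect to $\hat g$, both $\mathcal U$ and $\tilde{\mathcal U}$ satisfy $\sum_a |\mathcal L_{U^{(a)}}(\hat g)|^2 + |D(\mathcal L_{U^{(a)}}(\hat g))|^2 \le C\varepsilon^2$ on the neck, $\sum_a |\langle U^{(a)},\nu\rangle_{\hat g}|^2 \le C\varepsilon^2$ on every CMC leaf, and the orthonormality defect is $\le C\varepsilon^2$.

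\emph{Localization and a nondegeneracy estimate.} Next I would choose base points $\bar x_m$ on the slices $\{\hat z = 10m\}$, for all integers $m$ with $|10m| \le \frac{127L}{128\bar r} - 20$; there are $N \le CL/\bar r \le CL$ of them. On the $m$-th subcylinder $S^{n-1} \times [10m - 20, 10m + 20]$ (in the $\hat z$-coordinate), which is $C\varepsilon_0$-close to $(S^{n-1}\times[-20,20], \bar g)$, both $\mathcal U$ and $\tilde{\mathcal U}$ satisfy the hypotheses of Proposition~\ref{uniqueness_of_symmetry_fields} with $\varepsilon$ replaced by $C\varepsilon$ (admissible after shrinking $\varepsilon_0$), so that proposition yields $\omega^{(m)} \in O({n \choose 2})$ with
\[
\sup_{B_{\hat g}(\bar x_m, 9)}\ \sum_a \Big| \sum_b \omega^{(m)}_{ab}\, U^{(b)} - \tilde U^{(a)} \Big|_{\hat g}^2 \le C\varepsilon^2 .
\]
Applying the same proposition with $\tilde{\mathcal U}$ equal to a suitably normalized standard family of rotational Killing fields $\Omega^{(a)}$ — which satisfies the three hypotheses up to $C\varepsilon_0$, since the $\Omega^{(a)}$ are Killing for $\bar g$ and $g(\bar t)$ is $C\varepsilon_0$-close to $\bar g(\bar t)$ — shows that $\mathcal U$ is $C\varepsilon_0$-close in $C^2$ on $B_{\hat g}(\bar x_m, 9)$ to a rotation of a standard rotational family. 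Two consequences on $B_{\hat g}(\bar x_m, 9)$: first, $\sum_b |U^{(b)}|_{\hat g}^2 \le C$; and second, $\int_{S^{n-1}\times I} \big| \sum_b A_b\, U^{(b)} \big|_{\hat g}^2 \, d\mu_{\hat g} \ge c\,\sum_b |A_b|^2$ for every tuple $A = (A_b)$ and every subinterval $I$ of $\hat z$-length $\ge 4$ inside the subcylinder, because the Gram matrix $\big( \int_{S^{n-1}} \langle \Omega^{(a)}, \Omega^{(b)}\rangle \big)_{a,b}$ of honest rotational Killing fields on $S^{n-1}$ is positive definite.

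\emph{Comparison and chaining.} For consecutive indices the balls $B_{\hat g}(\bar x_m, 9)$ and $B_{\hat g}(\bar x_{m+1}, 9)$ both contain a subcylinder $S^{n-1}\times I_m$ with $|I_m| \ge 4$; subtracting the two estimates above on $S^{n-1}\times I_m$ gives $\sum_a \big| \sum_b (\omega^{(m)} - \omega^{(m+1)})_{ab}\, U^{(b)} \big|_{\hat g}^2 \le C\varepsilon^2$ pointwise, and integrating and invoking the nondegeneracy yields $\| \omega^{(m)} - \omega^{(m+1)}\| \le C\varepsilon$. Fixing $\omega := \omega^{(0)}$, the matrix of the subcylinder through $x_0$, the triangle inequality gives $\| \omega - \omega^{(m)}\| \le N\cdot C\varepsilon \le CL\varepsilon$ for every $m$, whence on each $B_{\hat g}(\bar x_m, 9)$
\[
\sum_a \Big| \sum_b \omega_{ab}\, U^{(b)} - \tilde U^{(a)} \Big|_{\hat g}^2 \le 2\,\|\omega - \omega^{(m)}\|^2 \sum_b |U^{(b)}|_{\hat g}^2 + 2\sum_a \Big| \sum_b \omega^{(m)}_{ab}\, U^{(b)} - \tilde U^{(a)} \Big|_{\hat g}^2 \le CL^2\varepsilon^2 .
\]
These balls cover $B_{g(t_n)}(x_0, \frac{31L}{32})$ once $L$ is large, since the uncovered collar at each end has $g(t_n)$-width $O(\bar r) = O(\sqrt{L})$. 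Undoing the rescaling — using $|\cdot|_{\hat g}^2 = \bar r^{-2}\,|\cdot|_{g(\bar t)}^2$ and $(-\bar t)^{-1} \sim \bar r^{-2}$ — converts this into $(-\bar t)^{-1}\sum_a \big| \sum_b \omega_{ab}\, U^{(b)} - \tilde U^{(a)} \big|_{g(\bar t)}^2 \le CL^2\varepsilon^2$ on $B_{g(t_n)}(x_0, \frac{31L}{32})$, which is the assertion.

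\emph{Main difficulty.} I expect the crux to be the nondegeneracy step: extracting from the three hypotheses — via Proposition~\ref{uniqueness_of_symmetry_fields} compared against the standard rotational fields — that each $U^{(b)}$ is uniformly $C\varepsilon_0$-close to an honest rotational Killing field, and then using the linear independence of rotational Killing fields on a subcylinder to upgrade the pointwise smallness of $\sum_b (\omega^{(m)} - \omega^{(m+1)})_{ab}\, U^{(b)}$ to smallness of the matrix $\omega^{(m)} - \omega^{(m+1)}$ itself. One must also track the scaling factors throughout and check that the error accumulates only linearly in $N = O(L)$, which is exactly what produces the quadratic loss $CL^2\varepsilon^2$ and no worse.
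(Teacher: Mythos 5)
Your proposal is correct and matches the intended argument: the paper defers to the proof of Lemma 8.5 in \cite{Bre18}, which proceeds exactly as you describe — rescaling to the cylinder of radius one at time $\bar t$, covering the long neck by $O(L/\bar r)$ overlapping subcylinders, applying Proposition \ref{uniqueness_of_symmetry_fields} on each to obtain $\omega^{(m)} \in O\binom{n}{2}$, comparing adjacent matrices on the overlap via the nondegeneracy of the family $\mathcal U$ (which follows from its $C^0$-closeness to a rotation of a standard rotational family), and chaining to accumulate a linear-in-$L$ drift in the orthogonal matrix and hence a quadratic-in-$L$ loss in the squared norm. The one cosmetic overstatement — $C^2$-closeness of $\mathcal U$ to the standard family, where Proposition \ref{uniqueness_of_symmetry_fields} only furnishes $C^0$-closeness directly — is harmless, since the nondegeneracy and boundedness you actually invoke need only the $C^0$ estimate; the scaling bookkeeping and the covering count are correctly handled, with the worst case $\bar r\sim 1$ (i.e.\ $\bar t\sim t_n$) producing the stated bound $CL^2\varepsilon^2$.
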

\begin{proof}
The proof is analogous to the proof of Lemma 8.5 in \cite{Bre20}. 
\end{proof}

Finally, in two steps of the proof of the Neck Improvement Theorem, we will want to take advantage of the Lie algebra structure of a standard family of rotational vector fields on a neck. For that, we will use the following lemma. 

\begin{lemma}\label{lie_algebra}
Let $\{\sigma^{(a)} : 1 \leq a \leq {n \choose 2}\}$ be any orthonormal basis of the Lie algebra $so(n)$. We can find constants $\{ k_{abc} : 1 \leq a,b,c \leq {n \choose 2}\}$ with the property that $|k_{abc}| \leq C$ for every $a, b,c$ and 
\[
\sigma^{(a)} = \sum_{b, c =1}^{{n \choose 2}} k_{abc} \,[\sigma^{(b)}, \sigma^{(c)}]. 
\]
\end{lemma}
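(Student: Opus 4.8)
The statement is a purely Lie-algebraic fact about $so(n)$: every element of a fixed orthonormal basis can be written as a bounded linear combination of brackets of basis elements, with the bound depending only on $n$. The key structural input is that $so(n)$ is semisimple for $n\geq 3$ (and for $n=3$ it is simple, while for $n=4$ it is a product of two copies of $so(3)$, but semisimplicity is all we need). For a semisimple Lie algebra $\mathfrak{g}$ one has $[\mathfrak{g},\mathfrak{g}]=\mathfrak{g}$, which is exactly the surjectivity we want; the content of the lemma is the \emph{quantitative} version, i.e.\ that the coefficients $k_{abc}$ are bounded by a constant depending only on $n$ and not on the choice of orthonormal basis.

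\emph{Step 1: Reduce to a single fixed basis.} Fix once and for all a standard orthonormal basis $\{e^{(a)}\}$ of $so(n)$ (with respect to the inner product $\langle X,Y\rangle = -\tfrac12\,\mathrm{tr}(XY)$, or whatever normalization is in force), e.g.\ the normalized elementary skew-symmetric matrices $E_{ij}-E_{ji}$. Since $so(n)=[so(n),so(n)]$, for this fixed basis we can certainly find fixed constants $k^0_{abc}$ (depending only on $n$) with $e^{(a)}=\sum_{b,c} k^0_{abc}\,[e^{(b)},e^{(c)}]$.

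\emph{Step 2: Transfer to an arbitrary orthonormal basis via an orthogonal change of basis.} Given an arbitrary orthonormal basis $\{\sigma^{(a)}\}$, write $\sigma^{(a)}=\sum_p A_{ap}\, e^{(p)}$ where $A=(A_{ap})\in O\!\left(\binom{n}{2}\right)$ since both bases are orthonormal for the same inner product. Then $e^{(p)}=\sum_a A_{ap}\,\sigma^{(a)}$, and substituting into the identity from Step 1 gives
\[
\sigma^{(a)}=\sum_p A_{ap}\, e^{(p)} = \sum_p A_{ap}\sum_{q,r} k^0_{pqr}\,[e^{(q)},e^{(r)}] = \sum_{p,q,r} A_{ap}\, k^0_{pqr}\,\Big[\sum_b A_{bq}\sigma^{(b)},\ \sum_c A_{cr}\sigma^{(c)}\Big],
\]
so that $\sigma^{(a)}=\sum_{b,c} k_{abc}\,[\sigma^{(b)},\sigma^{(c)}]$ with
\[
k_{abc}=\sum_{p,q,r} A_{ap}\,A_{bq}\,A_{cr}\, k^0_{pqr}.
\]
Since every entry of $A$ satisfies $|A_{ap}|\leq 1$, we get $|k_{abc}|\leq \binom{n}{2}^{3}\,\max_{p,q,r}|k^0_{pqr}|=:C(n)$, which is a constant depending only on $n$.

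\emph{Main obstacle.} There is essentially no analytic difficulty here; the only thing to be careful about is Step 1, i.e.\ producing \emph{some} explicit bounded representation of each $e^{(a)}$ as a combination of brackets of the fixed basis, which amounts to exhibiting, for the standard generators $E_{ij}-E_{ji}$, the relation $[E_{ij}-E_{ji},\,E_{jk}-E_{kj}]=E_{ik}-E_{ki}$ for distinct $i,j,k$ (valid because $n\geq 3$), and rescaling by the fixed normalization constants; this handles $n\geq 3$ uniformly. One should also note that $O(\binom{n}{2})$ is the correct change-of-basis group precisely because both families are orthonormal bases of $so(n)$ with respect to a fixed ad-invariant inner product, so no normalization subtlety arises. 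Hence the proof is a two-line reduction plus a bound on the entries of an orthogonal matrix.
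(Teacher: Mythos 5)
Your proof is correct and follows essentially the same route as the paper's: fix a reference orthonormal basis of normalized elementary skew-symmetric matrices, observe the explicit bracket relation $[E_{ij},E_{jk}]=E_{ik}$ to get bounded coefficients for that basis, and then transfer to an arbitrary orthonormal basis via the triple contraction $k_{abc}=\sum_{p,q,r}A_{ap}A_{bq}A_{cr}k^0_{pqr}$ with $A\in O\bigl(\binom{n}{2}\bigr)$, bounding the result via the boundedness of orthogonal matrix entries. The appeal to semisimplicity in Step 1 is unnecessary scaffolding since you already exhibit the explicit relation, but this is a stylistic point and does not affect correctness.
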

\begin{proof}
Consider the basis of $so(n)$ given by antisymmetric matrices $E_{ij}$, for $1 \leq i < j \leq n$, which have $1$ in the $(i, j)$-entry and have $-1$ in the $(j, i)$-entry. It is easy to see that $[E_{ij}, E_{jk}] = E_{ik}$ and $[E_{ij}, E_{ik}] = -E_{jk}$. Let $\{ \tilde \sigma^{(a)} : 1 \leq a \leq {n \choose 2}\}$ be any ordering of this basis. Then clearly there exist constants $\tilde k_{abc}$ such that $|\tilde k_{abc}| \leq 1$ and $\tilde \sigma^{(a)} = \sum_{b, c = 1}^{{n \choose 2}} \tilde k_{abc} [\tilde \sigma^{(b)}, \tilde \sigma^{(c)}]$. For any other basis, we can find an ${n\choose 2} \times {n \choose 2} $ matrix $\omega \in O({n \choose 2})$ such that $\tilde \sigma^{(a)} = \sum_{b =1}^{{n \choose 2}} \omega_{ab} \sigma^{(b)}$. Note that since $\sum_{b =1}^{{n \choose 2}} \omega_{ab} \omega_{cb} = \delta_{ac}$,  we have $|\omega_{ab}| \leq C$. Moreover, 
\[
\sigma^{(a)} = \sum_{d =1}^{{n \choose 2}} \omega_{da} \tilde \sigma^{(d)} = \sum_{d, e, f =1}^{{n \choose 2}} \omega_{da} \tilde k_{def}  [\tilde \sigma^{(e)}, \tilde \sigma^{(f)}] = \sum_{b,c =1}^{{n \choose 2}}\Big(\sum_{d, e, f =1}^{{n \choose 2}} \omega_{da} \omega_{eb} \omega_{fc} \tilde k_{def} \Big) [\sigma^{(b)}, \sigma^{(c)}].
\]
Thus we can take 
\[
k_{abc} := \sum_{d, e, f =1}^{{n \choose 2}} \omega_{da} \omega_{eb} \omega_{fc} \tilde k_{def}.
\]
\end{proof}

The following is the main result of this section. 

\begin{theorem}[Neck Improvement Theorem]\label{neck_improvement}
There exists a large constant $L$ (depending only upon $n$) and a small constant $\varepsilon_1$ (depending only upon $L$ and $n$) with the following property. Let $(M, g(t))$ be a solution of the Ricci flow in dimension $n$, and let $(x_0, t_0)$ be a spacetime point that lies at the center of an evolving $\varepsilon_1$-neck and satisfies $R(x_0, t_0) = (n-1)(n-2)r^{-2}$. Moreover, suppose that every point in the parabolic neighborhood $B_{g(t_0)}(x_0, L r) \times [t_0 - L r^2, t_0)$ is $\varepsilon$-symmetric, where $\varepsilon \leq \varepsilon_1$. Then $(x_0, t_0)$ is $\frac{\varepsilon}{2}$-symmetric. 
\end{theorem}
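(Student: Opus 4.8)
The plan is to follow the bootstrap/contradiction scheme: after rescaling so that $R(x_0,t_0)=(n-1)(n-2)$ and $t_0=t_n$, we are given a family of $\varepsilon$-symmetric neighborhoods on the parabolic ball $B_{g(t_n)}(x_0,Lr)\times[t_n-Lr^2,t_n)$ and we must produce a \emph{single} time-independent family $\mathcal U=\{U^{(a)}\}$ near $x_0$ whose defining quantities (Lie derivative, normal component, orthonormality defect) are bounded by $\tfrac{\varepsilon}{2}$ rather than just $C\varepsilon$ or $\varepsilon$. First I would invoke Lemma~\ref{big_existence} to manufacture, for each $\bar t\in[-\tfrac{L}{5n},t_n]$, a time-independent family on $B_{g(t_n)}(x_0,\tfrac{127L}{128})$ satisfying the weak estimates with constant $C\varepsilon$, and which is $C(L)\varepsilon_0$-close to a standard rotational family on the cylinder. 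Then I would use Lemma~\ref{big_uniqueness} to compare the families at different base times: the family built at time $\bar t$ and the family built at time $5n\bar t$ agree, on a slightly smaller ball, up to an $O({n\choose2})$ rotation with error $CL^2\varepsilon^2$. Concatenating these comparisons over a geometrically spaced sequence of times (and absorbing the rotations) yields a single time-\emph{dependent} family $V^{(a)}(t)$ defined on a large interval $[-cL,t_n]$ that still satisfies the weak estimates; this is exactly the time-dependent family alluded to in the remark before Definition~\ref{neck_symmetry}.

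Next comes the heart of the argument, which replays Steps 1--6 of Brendle's proof. The Lie derivatives $h^{(a)}(t):=\mathcal L_{V^{(a)}(t)}(g(t))$ nearly satisfy the parabolic Lichnerowicz equation (by Theorem~\ref{pde_for_lie_derivative}, up to an error controlled by $\Delta V+\Ric(V)$, which in turn is controlled by $D\mathcal L_V(g)$ and hence is $O(\varepsilon)$); so on the model cylinder $h^{(a)}$ is $O(\varepsilon)$-close to an honest solution of $\partial_t h=\Delta_{L,\bar g(t)}h$. I would then apply Proposition~\ref{mode_analysis} to each $h^{(a)}$: after averaging out the rotationally symmetric scalar parts $\bar\omega\,g_{S^{n-1}}$ and $\bar\beta\,dz\otimes dz$, and after subtracting the one non-decaying, non-rotationally-symmetric mode $(-t)^{\frac{n-1}{2(n-2)}}\psi^{(a)}g_{S^{n-1}}$ (which in dimension $n>3$ is \emph{growing}, the chief new feature flagged in the introduction), the remainder is bounded by $CL^{-\frac{1}{2(n-2)}}\varepsilon$ on a unit-size neck. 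The non-decaying pieces must be removed by hand: since each is a Lie derivative of the metric, the $\bar\omega$- and $\bar\beta$-parts correspond to the kernel vector field $\partial_z$ and to time translation, while the $\psi^{(a)}$-part corresponds to a conformal Killing field on $S^{n-1}$. Following the remark's guidance, I would subtract off a correction vector field $\xi^{(a)}(t)$ (now genuinely time-dependent) built from $\partial_z$ and from the gradients of first spherical harmonics, chosen so that $\mathcal L_{\xi^{(a)}}(\bar g(t))$ cancels the non-decaying modes; the structure constants here are kept bounded via Lemma~\ref{lie_algebra}, which lets us re-express each rotational generator $\sigma^{(a)}$ as a bounded combination of brackets $[\sigma^{(b)},\sigma^{(c)}]$ so that the corrected fields again close up into an $so(n)$-family.

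After the correction, the new family $W^{(a)}$ has $\sum_a|\mathcal L_{W^{(a)}}(g)|^2\le C L^{-\frac{1}{n-2}}\varepsilon^2$ on the central region, and by choosing $L$ large (depending only on $n$) this beats $(\varepsilon/4)^2$; interior parabolic estimates (Appendix~C) then upgrade this to the two-derivative bound required in the first bullet of Definition~\ref{neck_symmetry}. The normal-component and orthonormality estimates are re-established by the same mechanism as in \cite{Bre18}: the CMC leaves are $O(\varepsilon_0)$-close to round cross-sections, the corrected fields are $O(\varepsilon_0)$-close to exact rotations (hence tangent to the leaves up to $O(\varepsilon)$), and the normalization in the third bullet is preserved because the gluing and comparison matrices all lie in $O({n\choose2})$. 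Finally, since $W^{(a)}$ was built time-dependently, one last application of Lemma~\ref{big_uniqueness} produces a time-\emph{independent} family (the comparison matrix between nearby times is $O(L^2\varepsilon^2)$-close to the identity, so freezing at $t=t_0$ costs only $O(\varepsilon)$ which is absorbable), completing the improvement from $\varepsilon$ to $\varepsilon/2$ after fixing $\varepsilon_1$ small enough that all the $\varepsilon_0$-closeness hypotheses of the cited lemmas hold. I expect the main obstacle to be the careful bookkeeping in removing the growing non-rotationally-symmetric mode: one must verify that the time-dependent conformal Killing correction can be chosen consistently across the family and across the full time interval without destroying the orthonormality normalization or the time-independence one ultimately needs, and that the growth in $(-t)^{\frac{n-1}{2(n-2)}}$ over the interval $[-cL,t_n]$ does not overwhelm the $L^{-\frac{1}{2(n-2)}}$ gain — which is why the time interval in Lemma~\ref{big_existence} is taken proportional to $L$ rather than a fixed multiple of the curvature scale.
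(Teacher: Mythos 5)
Your overall outline correctly identifies the key ingredients (Lemmas \ref{big_existence}, \ref{big_uniqueness}, \ref{lie_algebra}, Proposition \ref{mode_analysis}, and the freeze-in-time step), but there is a genuine conceptual gap in how you propose to remove the rotationally symmetric non-decaying modes.

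You write that since ``each [non-decaying piece] is a Lie derivative of the metric,'' you can subtract a time-dependent correction field $\xi^{(a)}$ built from $\partial_z$ and from gradients of first spherical harmonics to cancel all of $\bar\omega^{(a)}g_{S^{n-1}}$, $\bar\beta^{(a)}dz\otimes dz$, and $(-t)^{\frac{n-1}{2(n-2)}}\psi^{(a)}g_{S^{n-1}}$. This is true for the $\psi$-piece (it is $\mathcal L_{\xi}(\bar g)$ for a conformal Killing field, exactly as the paper does in Step 6), and one could in principle gauge away the $\bar\beta$-piece using $\mathcal L_{a\partial_z}(\bar g)=2a_z\,dz\otimes dz$. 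But it is \emph{false} for $\bar\omega$. Remark 3.2 is explicit on this point: the $\bar\omega$-mode parametrizes a genuine deformation of the cylinder metric (towards the Bryant soliton or the ancient oval, and time-translations of the flow); it does \emph{not} arise from the action of a spatial diffeomorphism. There is no vector field $\xi$ on the cylinder with $\mathcal L_{\xi}(\bar g(t))=\bar\omega(z,t)\,g_{S^{n-1}}$ for a general function $\bar\omega(z,t)$, so your $W^{(a)}=V^{(a)}-\xi^{(a)}$ would still carry the full-size $\bar\omega^{(a)}g_{S^{n-1}}$ term inside $\mathcal L_{W^{(a)}}(g)$, destroying the bound $CL^{-\frac{1}{2(n-2)}}\varepsilon$ and hence the improvement to $\varepsilon/2$.

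The paper handles $\bar\omega$ (and also $\bar\beta$) by a completely different mechanism: the Lie-algebra bracket trick of Step 7. Because both modes are rotationally invariant, they are annihilated by a further Lie derivative along any rotational Killing field. One therefore passes from $W^{(a)}$ to $X^{(a)}=\sum_{b,c}k_{abc}[W^{(b)},W^{(c)}]$ (with $|k_{abc}|\le C$ from Lemma~\ref{lie_algebra}, chosen so that $U_{\mathrm{cyl}}^{(a)}=\sum k_{abc}[U_{\mathrm{cyl}}^{(b)},U_{\mathrm{cyl}}^{(c)}]$), and expands $\mathcal L_{X^{(a)}}(g)=\sum k_{abc}\bigl(\mathcal L_{W^{(b)}}\mathcal L_{W^{(c)}}(g)-\mathcal L_{W^{(c)}}\mathcal L_{W^{(b)}}(g)\bigr)$; the outer Lie derivative kills $\bar\omega^{(c)}g_{S^{n-1}}+\bar\beta^{(c)}dz\otimes dz$ up to $O(\varepsilon_1\varepsilon)$ because $W^{(b)}$ is close to $U_{\mathrm{cyl}}^{(b)}$. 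You do invoke Lemma~\ref{lie_algebra}, but for the wrong purpose (to make the corrected fields ``close up into an $so(n)$-family''). You also do not mention that the same bracket trick is applied a second time (Step 10) to enforce tangency to the CMC foliation.

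A secondary issue: you propose to feed $h^{(a)}:=\mathcal L_{V^{(a)}}(g)$ directly into Proposition~\ref{mode_analysis}, arguing that it ``nearly satisfies'' the parabolic Lichnerowicz equation because $\partial_t V-\Delta V-\Ric(V)$ is $O(\varepsilon)$. Proposition~\ref{mode_analysis} requires an \emph{exact} solution on the cylinder. The paper's Step 2 is designed precisely to produce one: it replaces the concatenated family $U^{(a)}$ by a new $V^{(a)}$ solving $\partial_t V=\Delta V+\Ric(V)$ exactly with Dirichlet data $U^{(a)}$, so that Theorem~\ref{pde_for_lie_derivative} gives $\partial_t h^{(a)}=\Delta_{L,g(t)}h^{(a)}$ exactly; then Step 4 introduces $\bar h^{(a)}$ solving the equation exactly with respect to $\bar g(t)$. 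Without some analogue of these two Dirichlet problems, your error bookkeeping before invoking Proposition~\ref{mode_analysis} is incomplete.
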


\begin{proof}
After a translation in time and a parabolic rescaling, we may assume that $t_0 = t_n$ and $R(x_0, t_n) = (n-1)(n-2)$. Throughout the proof we will assume that $L$ is sufficiently large and $\varepsilon_1$ is sufficiently small depending upon $L$.  Recall that we have a height function $z : B_{g(t_n)}(x_0, \varepsilon_1^{-1}) \to \mathbb{R}$, which we assume is normalized so that $z(x_0) = 0$. We will let $\bar g(t) = (-2(n-2)t)g_{S^{n-1}} + dz \otimes dz$ denote evolving metric on the exact cylinder, as well as its image under pullback to the parabolic neighborhood $B_{g(t_n)}(x_0, \varepsilon_1^{-1}) \times [t_n-\varepsilon_1^{-1} , t_n]$. We have the estimate $\sum_{l = 0}^{1000} |D^l (\bar g(t) - g(t))| \leq C(L)\varepsilon_1$ on the neck region. Here and throughout the proof, the covariant derivative $D$ and norm $|\cdot |$ are taken with respect to $g(t)$.  However, note that $(1- C(L)\varepsilon_1)|D^lT| \leq |\bar D^l T|_{\bar g(t)} \leq (1+ C(L)\varepsilon_1)|D^lT|$ for any tensor $T$ and $0 \leq l \leq 500 $.  \\

\textit{Step 1:} Using Lemma \ref{big_existence} and Lemma \ref{big_uniqueness} (in particular, by choosing a suitable time-dependent element $\omega_{ab}(t) \in O({n \choose 2})$), we can construct a time-dependent family of vector fields $\mathcal U = \{U^{(a)} : 1 \leq a \leq {n \choose 2 }\}$ defined on $B_{g(t_n)}(x_0, \frac{15L}{16}) \times [-L, t_n]$ with the following properties: 
\begin{enumerate}
\item[$\bullet$] On $B_{g(t_n)}(x_0, \frac{15L}{16})$ 
\[
\begin{cases}
\frac{\partial}{\partial t} U^{(a)} = 0 & \text{ for }  t \in \Big[-L, -\frac{1}{4}L\Big]; \\
\Big|\frac{\partial}{\partial t} U^{(a)}\Big| \leq CL (-t)^{-\frac{1}{2}} \varepsilon & \text{ for }  t \in \Big[-\frac{1}{4}L, t_n\Big]. 
\end{cases}
\]
\item[$\bullet$] On $B_{g(t_n)}(x_0, \frac{15L}{16}) \times [-L, t_n]$, 
\[
\big|\mathcal L_{U^{(a)}}(g(t))| + (- t)^{\frac{1}{2}} \big|D (\mathcal L_{U^{(a)}}(g(t)))| \leq C\varepsilon.
\]
\end{enumerate}
 Recall that we have the identity 
\[
\Delta U^{(a)} + \mathrm{Ric}(U^{(a)}) = \mathrm{div}(\mathcal L_{U^{(a)}}(g)) - \frac{1}{2} \nabla \,\mathrm{tr}(\mathcal L_{U^{(a)}}(g)). 
\]
Consequently, the vector fields $U^{(a)}$ satisfy
\[
|\Delta U^{(a)} + \mathrm{Ric}(U^{(a)})| \leq C |D(\mathcal L_{U^{(a)}}(g(t)))| \leq C(-t)^{-\frac{1}{2}} \varepsilon
\]
 on $B_{g(t_n)}(x_0, \frac{15L}{16}) \times [-L, t_n]$. 
 
There is a correspondence between orthonormal bases of $so(n)$ and (time-independent) families of vector fields $\mathcal U_{\mathrm{cyl}} := \{ U^{(a)}_{\mathrm{cyl}} : 1 \leq a \leq {n \choose 2}\}$ defined on the cylinder satisfying $\mathcal L_{U^{(a)}_{\mathrm{cyl}}} \bar g(t) = 0$, $dz( U^{(a)}_{\mathrm{cyl}} )= 0$, and
\[
\mathrm{area}_{\bar g(t)}(S^{n-1})^{-\frac{n+1}{n-1}} \int_{S^{n-1}} \langle U_{\mathrm{cyl}}^{(a)}, U_{\mathrm{cyl}}^{(b)} \rangle_{\bar g(t)} = \delta_{ab}.
\] 
Our assumptions imply that the family of vector fields $\mathcal U$ satisfies these three identities up to errors of bounded by $C(L)\varepsilon_1$. It follows that we can find a standard family of rotational vector fields $U^{(a)}_{\mathrm{cyl}}$ on the standard cylinder such that the vector field $U^{(a)}$ is $C(L)\varepsilon_1$-close in the $C^{2}$-norm to the vector field $ U^{(a)}_{\mathrm{cyl}}$. \\

\textit{Step 2:} Let $V^{(a)}$ be a solution of the PDE 
\[
\frac{\partial}{\partial t} V^{(a)} = \Delta V^{(a)} + \mathrm{Ric}(V^{(a)})
\]
in the region $\{|z| \leq \frac{7}{8}L, \, -L \leq t \leq t_n\}$ with Dirichlet boundary conditions $V^{(a)} = U^{(a)}$ on the parabolic boundary $\{|z| \leq \frac{7}{8}L , \, t = -L \} \cup \{|z| = \frac{7}{8}L ,\, -L \leq t \leq t_n\}$. Consider the difference $V^{(a)} - U^{(a)}$. In the region $\{|z| \leq  \frac{7}{8}L, \, -L \leq t \leq -\frac{1}{4}L\}$, we have
\begin{align*}
\big| \frac{\partial}{\partial t} &(V^{(a)} - U^{(a)}) - \Delta (V^{(a)} - U^{(a)})  + \mathrm{Ric}(V^{(a)} - U^{(a)}) \big| \\
& = |\Delta U^{(a)} + \mathrm{Ric}(U^{(a)})| \leq C(-t)^{-\frac{1}{2}} \varepsilon.
\end{align*}
In the region $\{|z| \leq  \frac{7}{8}L, \, -\frac{1}{4}L \leq t \leq t_n\}$, we have 
\begin{align*}
\big| \frac{\partial}{\partial t} &(V^{(a)} - U^{(a)}) - \Delta (V^{(a)} - U^{(a)})  + \mathrm{Ric}(V^{(a)} - U^{(a)}) \big| \\
& \leq \big|\frac{\partial}{\partial t} U^{(a)}\big| +   |\Delta U^{(a)} + \mathrm{Ric}(U^{(a)})| \leq CL(-t)^{-\frac{1}{2}} \varepsilon. 
\end{align*}
Hence by Proposition \ref{norm_pde}, we have 
\[
\frac{\partial}{\partial t} |V^{(a)} - U^{(a)}| \leq \Delta |V^{(a)} - U^{(a)}| + C(-t)^{-\frac{1}{2}} \varepsilon
\]
in the region $\{|z| \leq  \frac{7}{8}L, \, -L \leq t \leq -\frac{1}{4}L\}$ and 
\[
\frac{\partial}{\partial t} |V^{(a)} - U^{(a)}| \leq \Delta |V^{(a)} - U^{(a)}| + CL (-t)^{-\frac{1}{2}} \varepsilon
\]
in the region $\{|z| \leq \frac{7}{8}L, \, -\frac{1}{4}L \leq t \leq t_n\}$. By the maximum principle, 
\[
|V^{(a)} - U^{(a)}| \leq CL^{\frac{1}{2}}\varepsilon
\]
in the region $\{|z| \leq \frac{7}{8}L, \, -L \leq t \leq -\frac{1}{4}L\}$ and 
\[
|V^{(a)} - U^{(a)}| \leq CL^2 \varepsilon
\]
in the region $\{|z| \leq \frac{7}{8}L, \, -\frac{1}{4}L \leq t \leq t_n\}$. By standard interior estimates for linear parabolic equations (see Appendix C),
\[
|D(V^{(a)} - U^{(a)})| \leq C\varepsilon
\]
in the region $\{|z| \leq \frac{3}{4}L, \, -\frac{3}{4}L \leq t \leq -\frac{1}{4}L\}$ and 
\[
|D(V^{(a)} - U^{(a)})| \leq CL^2 \varepsilon  
\]
in the region $\{|z| \leq \frac{3}{4}L, \, -\frac{1}{4}L \leq t \leq t_n\}$. In particular, in the region $\{|z| \leq \frac{3}{4}L, \, -\frac{1}{4}L \leq t \leq t_n\}$ the vector field $V^{(a)}$ is $C(L)\varepsilon_1$-close to the vector field $U^{(a)}_{\mathrm{cyl}}$ in $C^1$-norm. Consequently, in the region $\{|z| \leq 1000 , \, -1000  \leq t \leq t_n\}$, the vector fields $V^{(a)}$ are $C(L)\varepsilon_1$-close to the standard rotation vector fields in $C^{100}$-norm. \\

\textit{Step 3:} We now consider $h^{(a)}(t) := \mathcal L_{V^{(a)}}(g(t))$. Since $V^{(a)}$ satisfies the PDE $\frac{\partial}{\partial t} V^{(a)} = \Delta V^{(a)} + \mathrm{Ric}(V^{(a)})$, by Theorem \ref{pde_for_lie_derivative}, its Lie derivative satisfies 
\[
\frac{\partial}{\partial t} h^{(a)}(t)  = \Delta_{L, g(t)} h^{(a)}(t).
\]
The estimates we have obtained for $V^{(a)}- U^{(a)}$ imply that 
\[
|h^{(a)}| \leq |\mathcal L_{U^{(a)}}(g)| + C|D(V^{(a)} - U^{(a)})| \leq C \varepsilon
\]
in the region $\{|z| \leq \frac{3}{4}L, \, -\frac{3}{4}L \leq t \leq -\frac{1}{4}L\}$ and 
\[
|h^{(a)}| \leq |\mathcal L_{U^{(a)}}(g)| + C|D(V^{(a)} - U^{(a)})| \leq C L^2\varepsilon
\]
in the region $\{|z| \leq \frac{3}{4}L, \, -\frac{1}{4}L \leq t \leq t_n\}$. By standard interior estimates for linear parabolic equations, 
\[
\sum_{l =0}^{100} |D^l h^{(a)}| \leq C(L) \varepsilon
\]
in the region $\{|z| \leq \frac{1}{2}L, \, -\frac{1}{2}L \leq t \leq t_n\}$. \\

\textit{Step 4:} Recall that we have $\sum_{l = 0}^{100} |D^l (\bar g(t) - g(t))| \leq C(L)\varepsilon_1$. Let $\bar h^{(a)}$ be the solution of the equation
\[
\frac{\partial}{\partial t} \bar h^{(a)}(t)  = \Delta_{L, \bar g(t)} \bar h^{(a)}(t).
\]
in the region $\{|z| \leq \frac{1}{2}L, \, -\frac{1}{2}L \leq t \leq t_n\}$ with Dirichlet boundary condition $\bar h^{(a)} = h^{(a)}$ on the parabolic boundary $\{|z| \leq \frac{1}{2}L, \, t = -\frac{1}{2}L \} \cup \{|z| = \frac{1}{2}L, \, -\frac{1}{2}L \leq t \leq t_n\}$. Then 
\[
\frac{\partial}{\partial t} (\bar h^{(a)}(t) - h^{(a)}(t))  - \Delta_{L, \bar g(t)} (\bar h^{(a)}(t) - h^{(a)}(t)) = E^{(a)}(t)
\]
where the error term is defined by $E^{(a)}(t) := \Delta_{L, \bar g(t)} h^{(a)}(t) - \Delta_{L, g(t)} h^{(a)}(t)$. Since $\sum_{l = 0}^{100} |D^l (\bar g(t) - g(t))| \leq C(L)\varepsilon_1$ and $\sum_{l =0}^{100} |D^l h^{(a)}| \leq C(L) \varepsilon$, we have $\sum_{l =0}^{90} |D^l E^{(a)}(t)| \leq C(L)\varepsilon_1 \varepsilon$ in the region $\{|z| \leq \frac{1}{2}L, \, -\frac{1}{2}L \leq t \leq t_n\}$. Hence by the maximum principle, we have 
\[
|\bar h^{(a)} - h^{(a)}| \leq C(L)\varepsilon_1\varepsilon
\]
in the region $\{|z| \leq \frac{1}{2}L, \, -\frac{1}{2}L \leq t \leq t_n\}$. Using the higher order derivative estimates for the error, standard interior estimates for linear parabolic equations imply 
\[
\sum_{l=0}^{80} |D^l(\bar h^{(a)} - h^{(a)})| \leq C(L)\varepsilon_1 \varepsilon
\]
in the region $\{|z| \leq 1000, \, -1000 \leq t \leq t_n\}$. \\

\textit{Step 5:} In the next step, we use Proposition \ref{mode_analysis} to analyze $\bar h^{(a)}$. By our estimates for $|h^{(a)}|$ and our estimate for $|\bar h^{(a)} - h^{(a)}|$ in Step 4, we have 
\[
|\bar h^{(a)}| \leq C\varepsilon + C(L)\varepsilon_1\varepsilon
\]
in the region $\{|z| \leq \frac{1}{2}L, -\frac{1}{2}L \leq t \leq - \frac{1}{4}L\}$ and 
\[
|\bar h^{(a)}| \leq CL^2\varepsilon + C(L)\varepsilon_1\varepsilon
\]
in the region $\{|z| \leq \frac{1}{2}L, -\frac{1}{4}L \leq t \leq t_n\}$. This implies $\bar h^{(a)}$ satisfies the hypotheses of Proposition \ref{mode_analysis}. It follows that for each $a \in \{1, \dots, {n \choose 2}\}$, we can find a function $\psi^{(a)} : S^{n-1} \to \mathbb{R}$ (independent of $z$ and $t$), as well as rotationally invariant functions $\bar \omega^{(a)}(z, t)$ and $\bar \beta^{(a)}(z, t)$ (depending only about the height $z$ and time $t$) with the following properties:
\begin{enumerate}
\item[$\bullet$] $\psi^{(a)}$ is in the span of the first spherical harmonics on $S^{n-1}$.
\item[$\bullet$] $\bar \omega^{(a)}(z, t)$ and $\bar \beta^{(a)}(z, t)$ are solutions of the one-dimensional heat equation in the region $\{|z| \leq \frac{1}{2}L, -\frac{1}{4}L \leq t \leq t_n\}$.
\item[$\bullet$] In the region $\{|z| \leq 1000, \, -1000 \leq t \leq t_n\}$, we have the estimate 
\[
\big| \bar h^{(a)}(t) - \bar \omega^{(a)}(z, t) g_{S^{n-1}} - \bar \beta^{(a)}(z, t) dz \otimes dz - (-t)^{\frac{n-1}{2(n-2)}} \psi^{(a)} \, g_{S^{n-1}} \big| \leq CL^{-\frac{1}{2(n-2)}} \varepsilon + C(L)\varepsilon_1\varepsilon. 
\]
\end{enumerate}
Since $h^{(a)}$ and its derivatives are bounded by $C(L)\varepsilon$, the functions $\bar \omega^{(a)}, \bar \beta^{(a)}$, and $\psi^{(a)}$ and their derivatives are bounded by $C(L)\varepsilon$. Note that $\bar h^{(a)}(t) - \bar \omega^{(a)}(z, t) g_{S^{n-1}} - \bar \beta^{(a)}(z, t) dz \otimes dz - (-t)^{\frac{n-1}{2(n-2)}} \psi^{(a)} \, g_{S^{n-1}}$ solves the parabolic Lichnerowicz equation with respect to $\bar g(t)$ in the region $\{|z| \leq \frac{1}{2}L, -\frac{1}{4}L \leq t \leq t_n\}$. By standard interior estimates for linear parabolic equations, we obtain 
\begin{align*}
\sum_{l = 0}^{80}& \big|D^l\big(\bar h^{(a)}(t) - \bar \omega^{(a)}(z, t) g_{S^{n-1}} - \bar \beta^{(a)}(z, t) dz \otimes dz - (-t)^{\frac{n-1}{2(n-2)}}\psi^{(a)} \, g_{S^{n-1}} \big)\big|\\
& \leq CL^{-\frac{1}{2(n-2)}} \varepsilon + C(L)\varepsilon_1\varepsilon
\end{align*}
 in the region $\{|z| \leq 800, -400  \leq t \leq t_n\}$. Combining this with the last estimate of Step 4, we obtain 
 \begin{align*}
\sum_{l = 0}^{80}& \big|D^l\big(h^{(a)}(t) - \bar \omega^{(a)}(z, t) g_{S^{n-1}} - \bar \beta^{(a)}(z, t) dz \otimes dz - (-t)^{\frac{n-1}{2(n-2)}}\psi^{(a)} \, g_{S^{n-1}} \big)\big|\\
& \leq CL^{-\frac{1}{2(n-2)}} \varepsilon + C(L)\varepsilon_1\varepsilon
\end{align*}
 in the region $\{|z| \leq 800, -400  \leq t \leq t_n\}$. \\
 
 \textit{Step 6:}  In this step and several following, we modify the vector fields $V^{(a)}$ to remove the non-decaying solutions of the parabolic Lichnerowicz from the estimate of the previous step. We will also need to address the time-dependence of the family of vector fields.
 
 We first turn our attention to the term $k^{(a)}(t):= (-t)^{\frac{n-1}{2(n-2)}} \psi^{(a)} \, g_{S^{n-1}}$. Let us define a time-dependent vector field on $S^{n-1}$ by 
 \[
 \xi^{(a)} := -\frac{1}{4(n-2)}(-t)^{-\frac{n-3}{2(n-2)}}\nabla_{S^{n-1}} \psi^{(a)}.
 \]
 As the gradient of a first spherical harmonic, the vector field $\xi^{(a)}$ is a conformal killing vector field on $S^{n-1}$. Hence 
 \[
 \mathcal L_{\xi^{(a)}} (g_{S^{n-1}})  = \frac{2}{n-1} \mathrm{div}_{S^{n-1}}(\xi^{(a)}) g_{S^{n-1}} = \frac{1}{2(n-2)}(-t)^{-\frac{n-3}{2(n-2)}}\psi^{(a)} g_{S^{n-1}}.
 \]
 Consequently, $\mathcal L_{\xi^{(a)}}(\bar g(t)) = 2(n-2)(-t) \mathcal L_{\xi^{(a)}} (g_{S^{n-1}}) =k^{(a)}(t)$.  
 
We now define a family of vector fields $\mathcal W = \{ W^{(a)} : 1 \leq a \leq {n \choose 2}\}$ by $W^{(a)} := V^{(a)} - \xi^{(a)}$. The final estimates in Step 3 and 4 imply $\sum_{l =0}^{80} |D^l \psi^{(a)}| \leq C(L)\varepsilon$ and in particular $\sum_{l =0}^{79} |D^l \xi^{(a)}| \leq C(L)\varepsilon$. Consequently, in the region $\{|z| \leq 1000 , \, -1000 \leq t \leq t_n\}$, the vector fields $W^{(a)}$ are $C(L)\varepsilon_1$-close to the standard rotation vector fields $U^{(a)}_{\mathrm{cyl}}$ on the cylinder in the $C^{75}$-norm. Putting the identity
 \[
 \mathcal L_{W^{(a)}}(g(t)) = h^{(a)}(t) - k^{(a)}(t) - \mathcal L_{\xi^{(a)}} (g(t) - \bar g(t))
 \]
together with the estimates in Step 5, the estimates for $\xi^{(a)}$, and the estimates for $g(t) - \bar g(t)$, we obtain 
\[
  \sum_{l=0}^{60} \big|D^l\big( \mathcal L_{W^{(a)}}(g(t)) - \bar \omega^{(a)}(z, t) g_{S^{n-1}} - \bar \beta^{(a)}(z, t) dz \otimes dz\big) \big|  \leq CL^{-\frac{1}{2(n-2)}} \varepsilon + C(L)\varepsilon_1\varepsilon
\]
 in the region $\{|z| \leq 800, -400 \leq t \leq t_n\}$. 

We next analyze the time derivative of $W^{(a)}$. Recalling that $ \Delta V^{(a)} + \mathrm{Ric}(V^{(a)}) = \mathrm{div} \,h^{(a)} - \frac{1}{2} \nabla( \mathrm{tr} \,h^{(a)})$, we compute 
\[
\frac{\partial}{\partial t} W^{(a)} = \frac{\partial}{\partial t} V^{(a)} + \frac{\partial}{\partial t} \xi^{(a)} = \mathrm{div} \,h^{(a)} - \frac{1}{2} \nabla( \mathrm{tr} \,h^{(a)}) + \frac{n-3}{(-2(n-2)t)} \xi^{(a)}. 
\]
 Using the Bochner formula, we obtain that
\begin{align*}
\mathrm{div}_{\bar g(t)}\big(k^{(a)}(t)\big) - \frac{1}{2} \nabla_{\bar g(t)} \big( \mathrm{tr}_{\bar g(t)} k^{(a)}(t)\big)& = \Delta_{\bar g(t)} \xi^{(a)} + \mathrm{Ric}_{\bar g(t)}(\xi^{(a)})\\
& = \frac{1}{(-2(n-2)t)} \Big(\Delta_{S^{n-1}}\xi^{(a)} + \mathrm{Ric}_{S^{n-1}}(\xi^{(a)})\Big) \\
& = \frac{n-3}{(-2(n-2)t)}\xi^{(a)}. 
\end{align*}
We also compute that
\begin{align*}
\mathrm{div}_{\bar g(t)} \big(\bar \omega^{(a)} g_{S^{n-1}} + \bar \beta^{(a)} dz \otimes dz \big) &= \frac{\partial \bar \beta^{(a)}}{\partial z} \frac{\partial}{\partial z}, \\
- \frac{1}{2} \nabla_{\bar g(t)} \,\mathrm{tr}_{\bar g(t)}\big(\bar \omega^{(a)} g_{S^{n-1}} + \bar \beta^{(a)} dz \otimes dz\big) & = - \bigg(\frac{n-1}{(-4(n-2)t)} \frac{\partial \bar \omega^{(a)}}{\partial z}  - \frac{1}{2} \frac{\partial \bar \beta^{(a)}}{\partial z}\bigg)\frac{\partial}{\partial z}.
\end{align*}
By our estimates for $h$ and $\bar h - h$, replacing $\bar g(t)$ by $g(t)$ in these identities introduces an error bounded by $C(L)\varepsilon_1\varepsilon$.  Therefore, we obtain that 
\begin{align*}
\sum_{l=0}^{60} &\Big| D^l\Big(\mathrm{div}\, h^{(a)} - \frac{1}{2} \nabla( \mathrm{tr}\,h^{(a)}) + \frac{n-3}{(-2(n-2)t)} \xi^{(a)}- \Big(\frac{1}{2} \frac{\partial \bar \beta^{(a)}}{\partial z}- \frac{n-1}{(-4(n-2)t)} \frac{\partial \bar \omega^{(a)}}{\partial z}\Big) \frac{\partial}{\partial z}\Big)\Big|\\
& \leq CL^{-\frac{1}{2(n-2)}} \varepsilon + C(L)\varepsilon_1\varepsilon
\end{align*}
in the region $\{|z| \leq 800, -400  \leq t \leq t_n\}$. Hence
\[
\sum_{l=0}^{60} \Big|D^l\Big( \frac{\partial}{\partial t} W^{(a)} - \Big(\frac{1}{2} \frac{\partial \bar \beta^{(a)}}{\partial z} - \frac{n-1}{(-4(n-2)t)} \frac{\partial \bar \omega^{(a)}}{\partial z}\Big) \frac{\partial}{\partial z}\Big)\Big| \leq CL^{-\frac{1}{2(n-2)}} \varepsilon + C(L)\varepsilon_1\varepsilon
\]
in the region  $\{|z| \leq 800, -400  \leq t \leq t_n\}$. \\

\textit{Step 7:} In the next step, we use that the rotationally symmetric solutions $\bar \omega^{(a)}(z, t) g_{S^{n-1}}$ and $\bar \beta^{(a)}(z, t) dz \otimes dz$ vanish identically under Lie derivatives by $U^{(a)}_{\mathrm{cyl}}$. Recall that $\mathcal{U}_{\mathrm{cyl}}= \{ U_{\mathrm{cyl}}^{(a)} : 1 \leq a \leq {n \choose 2}\}$ corresponds to some orthonormal basis of $so(n)$. Thus by Lemma \ref{lie_algebra}, we can find constants $k_{abc}$ satisfying $|k_{abc}| \leq C$ and such that $U_{\mathrm{cyl}}^{(a)} = \sum_{b, c =1}^{{n \choose 2}} k_{abc} [U^{(b)}_{\mathrm{cyl}}, U^{(c)}_{\mathrm{cyl}}]$. We define a family of vector fields $\mathcal X := \{X^{(a)} : 1 \leq a \leq {n \choose 2}\}$ by 
\[
X^{(a)} = \sum_{b, c =1}^{{n \choose 2}} k_{abc} [W^{(b)}, W^{(c)}].
\]
By construction, in the region $\{|z| \leq 1000, -1000  \leq t \leq t_n\}$, the vector fields $X^{(a)}$ agree with the standard rotational vector fields $U^{(a)}_{\mathrm{cyl}}$ up to errors of order $C(L)\varepsilon_1$. 

Now the vector fields $X^{(a)}$ satisfy 
\[
\mathcal L_{X^{(a)}}(g(t)) = \sum_{b,c =1}^{{n \choose 2}} k_{abc} \Big( \mathcal L_{W^{(b)}} \big(\mathcal L_{W^{(c)}}(g(t))\big) -  \mathcal L_{W^{(c)}} \big(\mathcal L_{W^{(b)}}(g(t)) \big)\Big).
\]
Moreover, 
\begin{align*}
\mathcal L_{W^{(b)}} &\big(\mathcal L_{W^{(c)}}(g(t))\big) -  \mathcal L_{W^{(c)}} \big(\mathcal L_{W^{(b)}}(g(t)) \big)\\
&  = \mathcal L_{W^{(b)}} \big(\mathcal L_{W^{(c)}}(g(t)) - \bar \omega^{(c)}(z,t)g_{S^{n-1}} - \bar \beta^{(c)}(z,t)dz \otimes dz\big) \\
& \quad  -  \mathcal L_{W^{(c)}} \big(\mathcal L_{W^{(b)}}(g(t)) - \bar \omega^{(b)}(z,t)g_{S^{n-1}} - \bar \beta^{(b)} (z,t)dz \otimes dz\big)\\
& \quad +\mathcal L_{W^{(b)}} \big(\bar \omega^{(c)}(z,t) g_{S^{n-1}} + \bar \beta^{(c)}(z,t) dz \otimes dz \big)\\
& \quad -\mathcal L_{W^{(c)}}  \big(\bar \omega^{(b)}(z,t) g_{S^{n-1}} + \bar \beta^{(b)}(z,t) dz \otimes dz\big)
\end{align*}
The vector fields $W^{(a)}$ are $C(L)\varepsilon_1$-close to the vector fields $U^{(a)}_{\mathrm{cyl}}$ while the functions $\bar \omega^{( b)}, \bar \beta^{(b)}$ (and their derivatives) are bounded by $C(L)\varepsilon$. In particular, for the third and fourth terms above, we have the estimate
\[
\sum_{l =0}^{40} \Big|D^l \Big(\mathcal L_{W^{(b)}} \big( \bar \omega^{(c)}(z, t) g_{S^{n-1}} + \bar \beta^{(c)}(z, t) dz \otimes dz\big)\Big)\Big| \leq C(L)\varepsilon_1\varepsilon 
\]
in the region  $\{|z| \leq 800, -400  \leq t \leq t_n\}$. We estimated the remaining two terms in the previous step. Since $|k_{abc}| \leq C$, we conclude 
\[
\sum_{l=0}^{40} \big|D^l \big( \mathcal L_{X^{(a)}}(g(t))\big)\big| \leq CL^{-\frac{1}{2(n-2)}} \varepsilon + C(L)\varepsilon_1\varepsilon
\] 
in the region $\{|z| \leq 800, -400  \leq t \leq t_n\}$.

We next consider the time derivative of the vector fields. We have
\[
\frac{\partial}{\partial t} X^{(a)} =  \sum_{b, c =1}^{{n \choose 2}} k_{abc} \Big( \Big[\frac{\partial}{\partial t} W^{(b)}, W^{(c)} \Big] + \Big[ W^{(b)}, \frac{\partial}{\partial t} W^{(c)}\Big]\Big). 
\]
Moreover, 
\begin{align*}
\Big[\frac{\partial}{\partial t} W^{(b)}&, W^{(c)} \Big] + \Big[ W^{(b)}, \frac{\partial}{\partial t} W^{(c)}\Big]\\
& = \Big[ \frac{\partial}{\partial t} W^{(b)} - \Big(\frac{1}{2} \frac{\partial \bar \beta^{(b)}}{\partial z} - \frac{n-1}{(-4(n-2)t)} \frac{\partial \bar \omega^{(b)}}{\partial z}\Big) \frac{\partial}{\partial z} \; , \;  W^{(c)} \Big]  \\
& \quad + \Big[ W^{(b)} \;, \;  \frac{\partial}{\partial t} W^{(c)}-\Big(\frac{1}{2} \frac{\partial \bar \beta^{(c)}}{\partial z} - \frac{n-1}{(-4(n-2)t)} \frac{\partial \bar \omega^{(c)}}{\partial z}\Big) \frac{\partial}{\partial z} \Big] \\
& \quad + \Big[ \Big(\frac{1}{2} \frac{\partial \bar \beta^{(b)}}{\partial z}- \frac{n-1}{(-4(n-2)t)} \frac{\partial \bar \omega^{(b)}}{\partial z}\Big) \frac{\partial}{\partial z}\; , \; W^{(c)} \Big] \\
& \quad +\Big[W^{(b)} \; , \; \Big(\frac{1}{2} \frac{\partial \bar \beta^{(c)}}{\partial z} - \frac{n-1}{(-4(n-2)t)} \frac{\partial \bar \omega^{(c)}}{\partial z}\Big) \frac{\partial}{\partial z}\Big].
\end{align*}
The third and fourth therms vanish identically if $W^{(a)}$ is replaced by $U^{(a)}_{\mathrm{cyl}}$. Therefore
\[
\sum_{l = 0}^{40} \Big| D^l \Big[W^{(c)} , \; \Big(\frac{1}{2} \frac{\partial \bar \beta^{(b)}}{\partial z} - \frac{n-1}{(-4(n-2)t)} \frac{\partial \bar \omega^{(b)}}{\partial z}\Big) \frac{\partial}{\partial z} \Big] \Big| \leq C(L)\varepsilon_1\varepsilon
\]
in the region $\{|z| \leq 800, -400  \leq t \leq t_n\}$. We estimated the remaining two terms in our expression for $\frac{\partial}{\partial t} X^{(a)}$ in the previous step. Since again $|k_{abc}| \leq C$, we conclude 
\[
\sum_{l=0}^{40} \big|D^l \big( \frac{\partial}{\partial t} X^{(a)} \big)\big| \leq CL^{-\frac{1}{2(n-2)}} \varepsilon + C(L)\varepsilon_1\varepsilon
\]
in the region $\{|z| \leq 800, -400  \leq t \leq t_n\}$. \\

\textit{Step 8:} We define a family of vector fields $\mathcal Y:= \{Y^{(a)} : 1 \leq a \leq {n \choose 2}\}$ by setting  $Y^{(a)} := X^{(a)}$ at time $t_n$. Since the standard rotational vector fields on the cylinder are time-independent, the vector fields $Y^{(a)}$ agree with the standard rotational vector fields $U^{(a)}_{\mathrm{cyl}}$ on the cylinder in the region $\{|z| \leq 1000, -1000 \leq t \leq t_n\}$ up to errors of order $C(L)\varepsilon_1$. 

The time derivative estimates in Step 7 imply that 
\[
\sum_{l=0}^{40} \big|D^l \big( Y^{(a)} - X^{(a)} \big)\big| \leq CL^{-\frac{1}{2(n-2)}} \varepsilon + C(L)\varepsilon_1\varepsilon
\]
in the region $\{|z| \leq 800, -400  \leq t \leq t_n\}$. Consequently, as we have seen before, this implies 
\[
\sum_{l=0}^{30} \big|D^l\big(\mathcal L_{Y^{(a)}}(g(t)) \big)\big| \leq CL^{-\frac{1}{2(n-2)}} \varepsilon + C(L)\varepsilon_1\varepsilon
\]
in the region $\{|z| \leq 800, -400  \leq t \leq t_n\}$. \\

\textit{Step 9:}  In Steps 1 through 8, we have constructed a family of time-independent vector fields $Y^{(a)}$ that satisfy the first criterion of Definition \ref{neck_symmetry} needed for $(x_0, t_n)$ to be $\big(CL^{-\frac{1}{2(n-2)}} \varepsilon + C(L)\varepsilon_1\varepsilon\big)$-symmetric. We now turn out attention to the remaining criteria in Definition \ref{neck_symmetry}.

Fix a time $t \in [-200, t_n]$ and let us write $g = g(t)$. Let $\Sigma_s$ denote the leaves of Hamilton's CMC foliation of the evolving $\varepsilon_1$-neck centered on $(x_0, t_n)$. This foliation depends on the time $t$, but we suppress this dependence from our notation. Let $\nu$ denote the unit normal to the foliation $\Sigma_s$ and, for each $s$, let $v : \Sigma_s \to \mathbb{R}$ denote the lapse function associate to the foliation. (If our foliation is parametrized by a normal embedding $N : S^{n-1} \times [s_1, s_2] \to \Omega$ with $\Sigma_s = N(S^{n-1} \times \{s\})$,  then $v = \big|\frac{\partial N}{\partial s}\big|$ and $v \nu = \frac{\partial N}{\partial s}$. If our foliation is expressed as the level sets of a submersion $f : \Omega \to [s_1, s_2]$ with $\Sigma_s = f^{-1}(s)$, then $v = |\nabla f|^{-1}$ and $v^{-1} \nu = \nabla f$). We can assume the foliation is parametrized so that $x_0 \in \Sigma_0$ and $\int_{\Sigma_s} v = 1$ for all $s$. The latter identity and comparison with an exact cylinder implies that
\[
\sum_{l =1}^{40} \big| D^l \big( v - \mathrm{area}_{\bar g(t)}(\Sigma_s)^{-1} \big)\big| \leq C(L)\varepsilon_1. 
\]
Since $\Sigma_s$ is a CMC surface for each $s$, the function $v$ satisfies the Jacobi equation 
\[
\Delta_{\Sigma_s} v + (|A|^2 + \mathrm{Ric}(\nu, \nu))v = \mathrm{constant} 
\]
on $\Sigma_s$, where $A$ denotes the second fundamental form of $\Sigma_s$ in $(M, g(t))$. As in three dimensions, the Jacobi operator $\Delta_{\Sigma_s} + (|A|^2 + \mathrm{Ric}(\nu, \nu))$ is a small perturbation of $\Delta_{\Sigma_s}$. Hence for each $s$, the operator $\Delta_{\Sigma_s}  + (|A|^2 + \mathrm{Ric}(\nu, \nu))$ is an invertible operator from the space $\{f \in C^{2, \frac{1}{2}} (\Sigma_s) : \int_{\Sigma_s} f = 0\}$ to the space $\{f \in C^{\frac{1}{2}} (\Sigma_s) : \int_{\Sigma_s} fv = 0\}$ and we have a universal bound for the norm of the inverse of the operator. 

We now restrict our attention to leaves $\Sigma_s$ that are contained in the region $\{|z| \leq 700\}$. Let us define a function $F^{(a)} : \Sigma_s \to \mathbb{R}$ by $F^{(a)} := \langle Y^{(a)}, \nu\rangle$. The quantity
\[
 \Delta_{\Sigma_s} F^{(a)} + (|A|^2 + \mathrm{Ric}(\nu, \nu))F^{(a)} =: H^{(a)}
\]
can be expressed in terms of $\mathcal L_{Y^{(a)}}(g)$ and first derivatives of $\mathcal L_{Y^{(a)}}(g)$. To see this, let $\phi_\tau$ denote the flow of the vector field $Y^{(a)}$. Then the mean curvature of $\phi_{\tau}(\Sigma_s)$ with respect to $g$ is the same as the mean curvature of $\Sigma_s$ with respect to $\phi_{\tau}^\ast g$. Differentiating both sides with respect to $\tau$ and using that $\Sigma_s$ has constant mean curvature implies the claim. As a consequence, the estimates for $\mathcal L_{Y^{(a)}}(g(t))$ in Step 8 imply 
\[
\sum_{l =0}^{20} |D^l H^{(a)}| \leq CL^{-\frac{1}{2(n-2)}} \varepsilon + C(L)\varepsilon_1\varepsilon
\]
in $\{|z| \leq 600\}$. We define $G^{(a)}(s) := \int_{\Sigma_s} F^{(a)}$ and $\tilde F^{(a)} := F^{(a)} - G^{(a)}(s) \nu$. Then $\int_{\Sigma_s} \tilde F^{(a)} =0$ and 
\[
\Delta_{\Sigma_s} \tilde F^{(a)} + (|A|^2 + \mathrm{Ric}(\nu, \nu))\tilde F^{(a)} = H^{(a)} -\int_{\Sigma_s} H^{(a)} v
\]
on $\Sigma_s$.  Using the estimate for $H^{(a)}$ and the universal bound for the inverse of the Jacobi operator, we conclude that $\sum_{l = 0}^{10} |D^l \tilde F^{(a)}| \leq C L^{-\frac{1}{2(n-2)}} \varepsilon + C(L) \varepsilon_1 \varepsilon$ in the region $\{|z| \leq 600\}$. By our estimates for $v$, we can equivalently write 
\[
\sum_{l = 0}^{10} \big| D^l\big(v^{-1} \langle Y^{(a)}, \nu \rangle - G^{(a)}(s) \big)\big| \leq C L^{-\frac{1}{2(n-2)}} \varepsilon + C(L) \varepsilon_1 \varepsilon
\]
in the region $\{|z| \leq 600\}$. 

If $\Omega_s$ denotes the region bounded by $\Sigma_s$ and $\Sigma_0$, then the divergence theorem gives
\[
G^{(a)}(s) - G^{(a)}(0) = \int_{\Sigma_s} \langle Y^{(a)}, \nu \rangle - \int_{\Sigma_0} \langle Y^{(a)}, \nu \rangle = \int_{\Omega_s} \mathrm{div}\, Y^{(a)}. 
\]
Hence, taking the derivative with respect to $s$, we get 
\[
\frac{d}{ds} G^{(a)}(s) = \pm \int_{\Sigma_s} v\, \mathrm{div}\, Y^{(a)}. 
\]
Since $\mathrm{div}\, Y^{(a)} = \frac{1}{2} \mathrm{tr}( \mathcal L_{Y^{(a)}}(g(t)))$, our estimates in the  previous step imply 
\[
|G^{(a)}(s) - G^{(a)}(0)| \leq CL^{-\frac{1}{2(n-2)}} + C(L)\varepsilon_1 \varepsilon
\]
and 
\[
\sum_{l =1}^{10} \Big| \frac{d^l}{ds^l} G^{(a)}(s) \Big| \leq CL^{-\frac{1}{2(n-2)}} + C(L)\varepsilon_1 \varepsilon 
\]
for $s$ such that $\Sigma_s$ intersects the region $\{|z| \leq 600\}$. Putting the previous three estimates together, noting that $G^{(a)}(0)$ is constant, we conclude that
\[
\sum_{l = 1}^{10} \big| D^l\big(v^{-1} \langle Y^{(a)}, \nu \rangle) \big)\big| \leq C L^{-\frac{1}{2(n-2)}} \varepsilon + C(L) \varepsilon_1 \varepsilon
\]
in the region $\{|z| \leq 600\}$.  \\

\textit{Step 10:} Finally, as in Step 7, we define a family of vector fields $\mathcal Z := \{Z^{(a)} : 1 \leq a \leq {n \choose 2}\}$ by 
\[
Z^{(a)} := \sum_{b, c =1}^{{n \choose 2}} k_{abc} [Y^{(b)}, Y^{(c)}],
\]
where the constant coefficients $k_{abc}$ are the same as in Step 7. The vector fields $Z^{(a)}$ are time-independent. As before, these vector fields agree with the standard rotation vector fields $U^{(a)}_{\mathrm{cyl}}$ on the cylinder up to errors bounded by $C(L)\varepsilon_1$ in the region $\{|z| \leq 800, -400 \leq t \leq t_n\}$. 
Since
\[
\mathcal L_{Z^{(a)}}(g) = \sum_{b,c =1}^{{n \choose 2}}  k_{abc} \big(\mathcal L_{Y^{(b)}} (\mathcal L_{Y^{(c)}}(g))-  \mathcal L_{Y^{(c)}} (\mathcal L_{Y^{(b)}}(g))\big),
\]
our estimates in Step 8 imply 
\[
\sum_{l=0}^{20} \big|D^l\big(\mathcal L_{Z^{(a)}}(g(t)) \big)\big| \leq CL^{-\frac{1}{2(n-2)}} \varepsilon + C(L)\varepsilon_1\varepsilon 
\]
in the region $\{|z| \leq 500, -200 \leq t \leq t_n\}$. 

Next, let us show the vector fields $Z^{(a)}$ are nearly tangent to the foliation. Fix a time $t \in [-200, t_n]$, write $g = g(t)$, and let $v$ and $\nu$ denote the lapse function and the unit normal vector fields to Hamilton's CMC foliation of the $\varepsilon_1$-neck at time $t$. It follows from the definition of the lapse function that the vector field $T:= v^{-1}\nu$ is the gradient of a function. Thus
\[
\langle [Y^{(b)}, Y^{(c)}], T \rangle = \big\langle Y^{(b)}, \nabla (\langle Y^{(c)}, T \rangle) \big\rangle - \big\langle Y^{(c)}, \nabla (\langle Y^{(b)}, T\rangle) \big\rangle. 
\]
The last estimate of Step 9 implies that $|\nabla (\langle Y^{(a)}, T \rangle )| \leq CL^{-\frac{1}{2(n-2)}} \varepsilon + C(L)\varepsilon_1\varepsilon$ and similarly for higher order derivatives of this quantity. Since these estimates hold for any time $t \in [-200, t_n]$ and $Z^{(a)}$ is a linear combination of such terms (with $|k_{abc}| \leq C$), we obtain 
\[
\sum_{l =0}^{8} \big| D^l ( \langle Z^{(a)}, T \rangle) \big| \leq CL^{-\frac{1}{2(n-2)}} \varepsilon + C(L)\varepsilon_1\varepsilon
\]
in the region $\{|z| \leq 500, -200 \leq t \leq t_n \}$ 

The $(0,2)$-tensor $\langle \nabla_{\cdot }\; T, \cdot\; \rangle$ is the Hessian of a function and hence symmetric. From this, we obtain that
\begin{align*}
 \langle \nabla (|T|^2), Z^{(a)} \rangle &= 2\langle \nabla_{Z^{(a)}} T, T \rangle \\
 &= 2 \langle \nabla_T T, Z^{(a)} \rangle + 2 \langle T, \nabla_T Z^{(a)}\rangle -2 \langle \nabla_T Z^{(a)}, T \rangle \\
 & = 2\langle \nabla (\langle Z^{(a)}, T \rangle ), T\rangle -(\mathcal L_{Z^{(a)}} g)(T, T) . 
\end{align*}
Similarly, 
\[
g([T, Z^{(a)}], \, \cdot \,) = (\mathcal L_{Z^{(a)}}(g))(T, \cdot ) - d(\langle Z^{(a)}, T \rangle ).
\]
Thus from our estimates for $\mathcal L_{Z^{(a)}}(g)$ and $\langle Z^{(a)}, T\rangle$, we obtain 
\[
\sum_{l=0}^{6} |D^l(\langle \nabla |T|^2 , Z^{(a)} \rangle) | \leq CL^{-\frac{1}{2(n-2)}} \varepsilon + C(L)\varepsilon_1\varepsilon
\]
and 
\[
\sum_{l=0}^{6} |D^l([T, Z^{(a)}]) | \leq CL^{-\frac{1}{2(n-2)}} \varepsilon + C(L)\varepsilon_1\varepsilon
\]
in the region $\{|z| \leq 500, -200 \leq t \leq t_n \}$.

In summary, recalling our estimates for $v$ and the definition of $T$, we have shown in this step that 
\begin{align*} 
\sum_{l=0}^{20} \big|D^l\big(\mathcal L_{Z^{(a)}}(g(t)) \big)\big| &\leq CL^{-\frac{1}{2(n-2)}} \varepsilon + C(L)\varepsilon_1\varepsilon, \\
\sum_{l =0}^{8} \big| D^l ( \langle Z^{(a)}, \nu \rangle) \big| &\leq CL^{-\frac{1}{2(n-2)}} \varepsilon + C(L)\varepsilon_1\varepsilon,\\
\sum_{l=0}^{6} |D^l(\langle \nabla v , Z^{(a)} \rangle) | &\leq CL^{-\frac{1}{2(n-2)}} \varepsilon + C(L)\varepsilon_1\varepsilon, \\
\sum_{l=0}^{6} |D^l([\nu, Z^{(a)}]) | &\leq CL^{-\frac{1}{2(n-2)}} \varepsilon + C(L)\varepsilon_1\varepsilon, \\
\sum_{l=0}^{6} |D^l([v\nu, Z^{(a)}]) | &\leq CL^{-\frac{1}{2(n-2)}} \varepsilon + C(L)\varepsilon_1\varepsilon
\end{align*}
in the region $\{|z| \leq 500, -200 \leq t \leq t_n \}$. In particular, this shows the family vector fields $\mathcal Z$ satisfies the first two criteria of Definition \ref{neck_symmetry} needed for $(x_0, t_n)$ to be $(CL^{-\frac{1}{2(n-2)}} \varepsilon + C(L)\varepsilon_1\varepsilon)$-symmetric. Additionally, we see that third estimate above implies
\[
\sup_{\Sigma} \big| v - \mathrm{area}_{g(t)}(\Sigma)^{-1}\big| \leq CL^{-\frac{1}{2(n-2)}} \varepsilon + C(L)\varepsilon_1\varepsilon,
\]
for any $t \in [-200, t_n]$ and any leaf $\Sigma$ of Hamilton's CMC foliation at time $t$ that is contained in $\{|z| \leq 400\}$. 
 \\

\textit{Step 11:} In the final four steps of the proof, we work to establish the third and final criterion in Definition \ref{neck_symmetry} need for $(x_0, t_n)$ to be $(CL^{-\frac{1}{2(n-2)}} \varepsilon + C(L)\varepsilon_1\varepsilon)$-symmetric. 

In this step, we use the vector fields $\mathcal Z$ to deduce estimates for the Ricci curvature and second fundamental in the region $\{|z| \leq 400, -200 \leq t \leq t_n\}$. To that end, let us fix a point $(\bar x, \bar t) \in \{|z| \leq 400, -200 \leq t \leq t_n\}$ and let $e_1, \dots, e_{n-1}$ be an orthonormal basis for the tangent space at $\bar x$ of the leaf of Hamilton's CMC foliation passing through $\bar x$ at time $\bar t$. The family of vector fields $\mathcal Z$ is close to the standard family of rotational vector fields $\mathcal U_{\mathrm{cyl}}$ on the cylinder. It follows that we can find a vector $\lambda = (\lambda_1, \dots, \lambda_{{n \choose 2}}) \in \mathbb{R}^{{n \choose 2}}$ such that at the point $(\bar x, \bar t)$
\[
\sum_{a = 1}^{{n \choose 2}} \lambda_a \langle Z^{(a)}, e_i \rangle = 0
\]
for $i \in \{1, \dots, n-1\}$,  
\[
\sum_{a = 1}^{{n \choose 2}} \lambda_a \langle D_{e_1} Z^{(a)}, e_2 \rangle = 1, 
\] 
\[
\sum_{a = 1}^{{n \choose 2}} \lambda_a \langle D_{e_1} Z^{(a)}, e_j \rangle = \sum_{a = 1}^{{n \choose 2}} \lambda_a \langle D_{e_2} Z^{(a)}, e_j \rangle = 0,
\]
for $j \in \{3, \dots, n-1\}$ and 
\[
\sum_{a = 1}^{{n \choose 2}} \lambda_a \langle D_{e_j} Z^{(a)}, e_k\rangle = 0,
\]
for $3 \leq k < j \leq n-1$. Having specified ${n \choose 2}$ equations, we have $|\lambda| \leq C$. 

Using the estimate for $\langle Z^{(a)}, \nu \rangle$ in Step 10, the first identity above implies that $\big|\sum_{a = 1}^{{n \choose 2}} \lambda_a Z^{(a)}\big| \leq CL^{-\frac{1}{2(n-2)}} \varepsilon + C(L)\varepsilon_1\varepsilon$ at the point $(\bar x, \bar t)$. This estimate also implies that $| \langle D_{e_i} Z^{(a)}, \nu \rangle + \langle Z^{(a)}, D_{e_i} \nu \rangle| \leq CL^{-\frac{1}{2(n-2)}} \varepsilon + C(L)\varepsilon_1\varepsilon$ for $i \in \{1, 2\}$. Putting these estimates together implies $\big| \sum_{a = 1}^{{n \choose 2}} \lambda_a \langle D_{e_i} Z^{(a)}, \nu \rangle \big| \leq CL^{-\frac{1}{2(n-2)}} \varepsilon + C(L)\varepsilon_1\varepsilon$ for $i \in \{1, 2 \}$ at the point $(\bar x, \bar t)$. Next, using the estimate for $\mathcal L_{Z^{(a)}}(g)$ in Step 10, we obtain 
\[
\Big|\sum_{a = 1}^{{n \choose 2}} \lambda_a \langle D_{e_1} Z^{(a)}, e_1 \rangle \Big| + \Big|\sum_{a = 1}^{{n \choose 2}} \lambda_a \langle D_{e_2} Z^{(a)}, e_2 \rangle \Big| \leq CL^{-\frac{1}{2(n-2)}} \varepsilon + C(L)\varepsilon_1\varepsilon, 
\]
and
\[
\Big|\sum_{a = 1}^{{n \choose 2}} \lambda_a \langle D_{e_2} Z^{(a)}, e_1 \rangle + 1 \Big|  \leq CL^{-\frac{1}{2(n-2)}} \varepsilon + C(L)\varepsilon_1\varepsilon. 
\]
We can similarly estimate all other combinations $e_j, e_k$ for $3 \leq j \leq n-1$.  The estimates for $\mathcal L_{Z^{(a)}}(g)$ imply that the Ricci tensor satisfies $|\mathcal L_{Z^{(a)}} \mathrm{Ric}| \leq CL^{-\frac{1}{2(n-2)}} \varepsilon + C(L)\varepsilon_1\varepsilon$ for $a \in \{1, \dots, {n \choose 2}\}$. By a straightforward calculation, 
\begin{align*}
(\mathcal L_{Z^{(a)}}\mathrm{Ric})(e_1, e_2) &= (D_{Z^{(a)}} \mathrm{Ric})(e_1, e_2) + \mathrm{Ric}( D_{e_1} Z^{(a)}, e_2) + \mathrm{Ric}(e_1, D_{e_2} Z^{(a)}) \\
& = (D_{Z^{(a)}} \mathrm{Ric})(e_1, e_2) + (\langle D_{e_1} Z^{(a)}, e_1 \rangle +\langle  D_{e_2} Z^{(a)}, e_2 \rangle )\mathrm{Ric}(e_1, e_2)  \\
& \quad +\langle D_{e_1} Z^{(a)}, e_2 \rangle  \mathrm{Ric}(e_2, e_2) + \langle D_{e_2} Z^{(a)}, e_1 \rangle \mathrm{Ric}(e_1, e_1) \\ 
& \quad + \sum_{j =3}^n \big(\langle D_{e_1} Z^{(a)}, e_j \rangle \mathrm{Ric}(e_j, e_2) + \langle D_{e_2} Z^{(a)}, e_j \rangle \mathrm{Ric}(e_1, e_j) \big) \\
& \quad + \langle D_{e_1} Z^{(a)}, \nu \rangle \mathrm{Ric}(\nu, e_2) + \langle D_{e_2} Z^{(a)}, \nu \rangle \mathrm{Ric}(e_1, \nu).
\end{align*}
If we multiply this identity by $\lambda_a$ and sum over $1 \leq a \leq {n \choose 2}$, we conclude that 
\[
|\mathrm{Ric}(e_2, e_2) - \mathrm{Ric}(e_1, e_1) | \leq CL^{-\frac{1}{2(n-2)}} \varepsilon + C(L)\varepsilon_1\varepsilon
\]
at the point $(\bar x, \bar t)$. Since $e_1$ and $e_2$ were arbitrary, we conclude $|\mathrm{Ric}(e_i, e_j) - \frac{1}{n-1} \mathrm{tr}_{\Sigma}(\mathrm{Ric}) \, \delta_{ij}| \leq  CL^{-\frac{1}{2(n-2)}} \varepsilon + C(L)\varepsilon_1\varepsilon$ at the point $(\bar x, \bar t)$, where $\mathrm{tr}_{\Sigma}(\mathrm{Ric}) = \sum_{i =1}^{n-1} \mathrm{Ric}(e_i, e_i)$. 

We obtain an estimate for the second fundamental form in a similar fashion. Let $A$ denote the second fundamental form to Hamilton's CMC foliation. We think of $A$ as a $(0, 2)$-tensor that vanishes in the normal direction. The estimates in Step 10 imply $|\mathcal L_{Z^{(a)}} A | \leq CL^{-\frac{1}{2(n-2)}} \varepsilon + C(L)\varepsilon_1\varepsilon$ for $a \in \{1, \dots, {n \choose 2}\}$. We again compute that 
\begin{align*}
(\mathcal L_{Z^{(a)}}A)(e_1, e_2) &= (D_{Z^{(a)}} A)(e_1, e_2) + A( D_{e_1} Z^{(a)}, e_2) + A(e_1, D_{e_2} Z^{(a)}) \\
& = (D_{Z^{(a)}} A)(e_1, e_2) + (\langle D_{e_1} Z^{(a)}, e_1 \rangle +\langle  D_{e_2} Z^{(a)}, e_2 \rangle )A(e_1, e_2)  \\
& \quad +\langle D_{e_1} Z^{(a)}, e_2 \rangle  A(e_2, e_2) + \langle D_{e_2} Z^{(a)}, e_1 \rangle A(e_1, e_1) \\ 
& \quad + \sum_{j =3}^n \big(\langle D_{e_1} Z^{(a)}, e_j \rangle A(e_j, e_2) + \langle D_{e_2} Z^{(a)}, e_j \rangle A(e_1, e_j) \big).
\end{align*}
If we multiply the identity by $\lambda_a$ and sum over $1 \leq a \leq {n \choose 2}$, we conclude that 
\[
|A(e_2, e_2) - A(e_1, e_1) | \leq CL^{-\frac{1}{2(n-2)}} \varepsilon + C(L)\varepsilon_1\varepsilon
\]
at the point $(\bar x, \bar t)$. Again, since $e_1$ and $e_2$ are arbitrary, we conclude $|A(e_i, e_j) - \frac{1}{n-1} H\, \delta_{ij}| \leq  CL^{-\frac{1}{2(n-2)}} \varepsilon + C(L)\varepsilon_1\varepsilon$ at the point $(\bar x, \bar t)$, where $H$ is the mean curvature of Hamilton's CMC foliation.  
 
Finally, by the estimates in Step 10, if $t \in [-200, t_n]$ and $\Sigma \subset \{|z| \leq 400\}$ is a leaf of Hamiton's CMC foliation at time $t$, then 
\[
\inf_\rho \sup_{\Sigma} \big|\frac{1}{n-1} \mathrm{tr}_{\Sigma}(\mathrm{Ric}) - \rho \big| \leq  CL^{-\frac{1}{2(n-2)}} \varepsilon + C(L)\varepsilon_1\varepsilon. 
\]
To summarize, if $t \in [-200, t_n]$ and $\Sigma$ is a leaf of Hamilton's CMC foliation that intersects $\{|z| \leq 400\}$, then we have
\[
\inf_{\rho} \sup_{\Sigma} \big|(\mathrm{Ric} - \rho g)|_{T\Sigma}\big| \leq CL^{-\frac{1}{2(n-2)}} \varepsilon + C(L)\varepsilon_1\varepsilon,
\]
and 
\[
\sup_{\Sigma} \big|(A - \frac{1}{n-1} H g)|_{T\Sigma}\big| \leq CL^{-\frac{1}{2(n-2)}} \varepsilon + C(L)\varepsilon_1\varepsilon,
\]
where $H$ denotes the mean curvature of $\Sigma$, which is constant. \\

\textit{Step 12:} The estimates established in Step 10 show that for each time $t \in [-200, t_n]$, the family of vector fields $\mathcal Z$ are tangential to Hamilton's CMC foliation at time $t$ up to errors of order $CL^{-\frac{1}{2(n-2)}} \varepsilon + C(L)\varepsilon_1\varepsilon$. Let $x \in \{|z| \leq 400\}$ and let $\Sigma_{x, t}$ be the leaf of Hamilton's CMC foliation passing through $x$ at time $t$. Because the family of vector fields $\mathcal Z$ are close to a standard family of rotational vector fields on the cylinder, for each $x \in \{|z| \leq 400\}$, we can find a subset of vector fields $Z^{(a_1)}, \dots, Z^{(a_{n-1})} \in \mathcal Z$ (here the $a_i$ depend upon $x$, but not upon $t$) satisfying two properties. First, $|Z^{(a_1)}(x) \wedge \cdots \wedge Z^{(a_{n-1})}(x)| \geq C^{-1}$. This is a uniform lower bound for the linear independence of the vector fields at $x$. Second, the spaces $T_x\Sigma_{x,t}$ and $\mathrm{span}\{ Z^{(a_1)}(x), \cdots, Z^{(a_{n-1})}(x)\}$ agree as subspaces of $T_xM$ up to errors of order $CL^{-\frac{1}{2(n-2)}} \varepsilon + C(L)\varepsilon_1\varepsilon$. Since the family of vector fields $\mathcal Z$ is time-independent, this means the spaces $T_x\Sigma_{x,t}$ and $T_x \Sigma_{x, t_n}$ are $(CL^{-\frac{1}{2(n-2)}} \varepsilon + C(L)\varepsilon_1\varepsilon)$-close as subspaces of $T_xM$. We can express both the foliation at time $t$ and the foliation at time $t_n$ as the level sets of suitable functions defined on $\{|z| \leq 500\}$. These remarks imply the functions must be close in $C^1$-norm. We conclude that every leaf in of Hamilton's CMC foliation at time $t$ which intersects the region $\{|z| \leq 400\}$ is $(CL^{-\frac{1}{2(n-2)}} \varepsilon + C(L)\varepsilon_1\varepsilon)$-close in the $C^1$-norm to a leaf of Hamilton's CMC foliation at time $t_n$ (as submanifolds of $M$).  \\

\textit{Step 13:} Again, let us fix a time $t \in [-200, t_n]$ and let $\Sigma_s$ denote Hamilton's CMC foliation at time $t$. Let $\nu$ and $v$ denote the associated normal vector field and the lapse function of this foliation. Using our estimates for the second fundamental form in the previous step, we show the quantity $\mathrm{area}_{g(t)}(\Sigma_s)^{-\frac{n+1}{n-1}} \int_{\Sigma_s} \langle Z^{(a)}, Z^{(b)} \rangle_{g(t)} \,d\mu_{g(t)}$ is nearly constant as a function of $s$. Flowing by the vector field $v \nu = \frac{\partial}{\partial s}$ for time $s$ moves $\Sigma_{s_0}$ into $\Sigma_{s_0 + s}$. By standard evolution equations for a flow with velocity $v \nu$, we have $\frac{d }{d s} d\mu_{g(t)} = H v \, d\mu_{g(t)}$ and consequently $\frac{d }{d s} \mathrm{area}_{g(t)}(\Sigma_s) = \int_{\Sigma_s} H v \, d\mu_{g(t)} = H$, where $H$ denote the mean curvature of $\Sigma_s$ with respect to $g(t)$. Moreover, we can compute that 
\begin{align*}
\frac{d}{d s} \Big(\int_{\Sigma_s} \langle Z^{(a)}, Z^{(b)} \rangle_{g(t)} \, d\mu_{g(t)}\Big) & = \int_{\Sigma_s}Hv \langle Z^{(a)}, Z^{(b)} \rangle_{g(t)} \, d\mu_{g(t)} + \int_{\Sigma_s}\big(\mathcal L_{v\nu}(g)\big)( Z^{(a)}, Z^{(b)}) \, d\mu_{g(t)}  \\
& \quad + \int_{\Sigma_s}\langle [v \nu, Z^{(a)}], Z^{(b)} \rangle_{g(t)} \, d\mu_{g(t)} + \int_{\Sigma_s}\langle Z^{(a)}, [v\nu, Z^{(b)}] \rangle_{g(t)} \, d\mu_{g(t)}.
\end{align*}
Since $\mathcal L_{v\nu}(g) = \frac{\partial}{\partial s} g(t) = 2Av$, the estimate $|A - \frac{1}{n-1} H g| \leq CL^{-\frac{1}{2(n-2)}} \varepsilon + C(L)\varepsilon_1\varepsilon$ implies 
\[
|\big(\mathcal L_{v\nu}(g)\big)( Z^{(a)}, Z^{(b)}) -\frac{2}{n-1} H \langle Z^{(a)}, Z^{(b)}\rangle_{g(t)} | \leq CL^{-\frac{1}{2(n-2)}} \varepsilon + C(L)\varepsilon_1\varepsilon.
\]
By our estimate for $[v \nu, Z^{(a)}]$ established in Step 10, 
\[
\big|\langle [v \nu, Z^{(a)}], Z^{(b)} \rangle_{g(t)}\big| + \big|\langle Z^{(a)}, [v\nu, Z^{(b)}] \rangle_{g(t)}\big| \leq CL^{-\frac{1}{2(n-2)}}\varepsilon + C(L)\varepsilon_1\varepsilon.
\] 
Plugging these estimates into the identity above, we obtain 
\[
\Big|\frac{d}{d s} \Big(\int_{\Sigma_s} \langle Z^{(a)}, Z^{(b)} \rangle_{g(t)} \, d\mu_{g(t)}\Big) - \frac{n+1}{n-1} \int_{\Sigma_s} H v \langle Z^{(a)}, Z^{(b)} \rangle_{g(t)} \, d\mu_{g(t)}\Big|\leq CL^{-\frac{1}{2(n-2)}} \varepsilon + C(L)\varepsilon_1\varepsilon.
\]
By our estimate for $v$, this gives 
\[
\Big|\frac{d}{d s} \Big(\int_{\Sigma_s} \langle Z^{(a)}, Z^{(b)} \rangle_{g(t)} \, d\mu_{g(t)}\Big) -  \frac{\frac{n+1}{n-1}H}{\mathrm{area}_{g(t)}(\Sigma_s)} \int_{\Sigma_s} \langle Z^{(a)}, Z^{(b)} \rangle_{g(t)} \, d\mu_{g(t)}\Big|\leq CL^{-\frac{1}{2(n-2)}} \varepsilon + C(L)\varepsilon_1\varepsilon.
\]
Recalling the evolution equation for $\mathrm{area}_{g(t)}(\Sigma_s)$, we conclude that
\[
\Big|\frac{d}{ds} \Big( \mathrm{area}_{g(t)}(\Sigma_s)^{-\frac{n+1}{n-1}} \int_{\Sigma_s} \langle Z^{(a)}, Z^{(b)} \rangle_{g(t)} \, d\mu_{g(t)} \Big)\Big| \leq CL^{-\frac{1}{2(n-2)}} \varepsilon + C(L)\varepsilon_1\varepsilon.
\]

\textit{Step 14:} By the estimate of the previous step, we deduce that there exists a symmetric ${n \choose 2} \times {n \choose 2}$ matrix $Q_{ab}$ (independent of $\Sigma$) such that 
\[
\Big|Q_{ab} - \mathrm{area}_{g(t_n)}(\Sigma)^{-\frac{n+1}{n-1}} \int_{\Sigma} \langle Z^{(a)}, Z^{(b)} \rangle_{g(t_n)} \, d\mu_{g(t_n)} \Big| \leq CL^{-\frac{1}{2(n-2)}} \varepsilon + C(L)\varepsilon_1\varepsilon
\]
for every leaf $\Sigma \subset \{|z| \leq 300\}$ of Hamilton's CMC foliation at time $t_n$. Since the family of vector fields $\mathcal Z$ are close the a standard rotational family of vector fields on the cylinder, the eigenvalues of the matrix $Q_{ab}$ lie in an interval $[\frac{1}{C}, C]$ for some fixed constant $C$. 

By an argument similar to the one given in the previous step, the estimate for the Ricci tensor established in Step 11 implies
\[
\Big|\frac{d}{dt} \Big( \mathrm{area}_{g(t)}(\Sigma)^{-\frac{n+1}{n-1}} \int_{\Sigma} \langle Z^{(a)}, Z^{(b)} \rangle_{g(t)} \, d\mu_{g(t)} \Big)\Big| \leq CL^{-\frac{1}{2(n-2)}} \varepsilon + C(L)\varepsilon_1\varepsilon,
\]
for every leaf $\Sigma \subset \{|z| \leq 300\}$ of Hamilton's CMC foliation at time $t_n$. It follows that 
\[
\Big|Q_{ab} - \mathrm{area}_{g(t)}(\Sigma)^{-\frac{n+1}{n-1}} \int_{\Sigma} \langle Z^{(a)}, Z^{(b)} \rangle_{g(t)} \, d\mu_{g(t)} \Big| \leq CL^{-\frac{1}{2(n-2)}} \varepsilon + C(L)\varepsilon_1\varepsilon
\]
for every leaf $\Sigma \subset \{|z| \leq 300\}$ of Hamilton's CMC foliation at time $t_n$.

By Step 12, every leaf of Hamilton's CMC foliation of the $\varepsilon_1$-neck at time $t$ which is contained in $\{|z| \leq 200\}$ is $(CL^{-\frac{1}{2(n-2)}} \varepsilon + C(L)\varepsilon_1\varepsilon)$-close in the $C^1$-norm to a leaf of Hamilton's CMC foliation of the $\varepsilon_1$-neck at time $t_n$. This finally implies that 
\[
\Big|Q_{ab} - \mathrm{area}_{g(t)}(\Sigma)^{-\frac{n+1}{n-1}} \int_{\Sigma} \langle Z^{(a)}, Z^{(b)} \rangle_{g(t)} \, d\mu_{g(t)} \Big)\Big| \leq CL^{-\frac{1}{2(n-2)}} \varepsilon + C(L)\varepsilon_1\varepsilon
\]
whenever $t \in [-200, t_n]$ and $\Sigma \subset \{|z| \leq 200\}$ is a leaf of Hamilton's CMC foliation of the $\varepsilon_1$-neck at time $t$. 

The vector fields $\sum_{b =1}^{{n\choose 2}} (Q^{-\frac{1}{2}})_{ab} Z^{(a)}$ now satisfy all three conditions of Definition \ref{neck_symmetry}. We conclude the point $(x_0, t_n)$ is $(CL^{-\frac{1}{2(n-2)}} \varepsilon + C(L)\varepsilon_1\varepsilon)$-symmetric. Hence, if we  first take $L$ sufficiently large and then take $\varepsilon_1$ sufficiently small (depending upon $L$), then $(x_0, t_n)$ is $\frac{\varepsilon}{2}$-symmetric. This completes the proof of Theorem \ref{neck_improvement}.
\end{proof}


\section{Rotational Symmetry of Ancient $\kappa$-Solutions}

In this section, we give a proof of Theorem \ref{main}. Throughout this section we assume $n \geq 4$ and $(M, g(t))$, $t \in (-\infty, 0]$ is an $n$-dimensional ancient $\kappa$-solution which is noncompact, $\alpha$-uniformly PIC, and strictly PIC2. In this section, unless otherwise indicated $C$ always denotes a universal constant, which may depend upon $n$, $\kappa$, or $\alpha$. For each $t$, we denote by $R_{\max}(t)$ the supremum of the scalar curvature of $(M, g(t))$. By the extension of Hamilton's Harnack inequality to PIC2 solutions (Theorem \ref{harnack_inequality}), $R_{\max}(t)$ is nondecreasing in $t$. By the extension of Perelman's pointwise derivative estimates to $\kappa$-solutions in higher dimensions (Theorem \ref{pointwise_derivative_estimates}), $t \mapsto R_{\max}(t)^{-1}$ is uniformly Lipschitz continuous. Let us define $r_{\max}(t)$ by the identity $R_{\max}(t) =  r_{\max}(t)^{-2}$. For points $x \in M$ that lie at the center of a neck, we define the curvature scale $r_{\mathrm{neck}}(x, t)$ by the identity $R(x, t) = (n-2)(n-1) r_{\mathrm{neck}}(x,t)^{-2}$. This is the scale we have used in Definition \ref{epsilon_neck} and Definition \ref{neck_symmetry}. Note that 
\[
r_{\max}(t)^2 \leq r_{\mathrm{neck}}(x, t)^2. 
\]

We begin by choosing a large constant $L$ and a small constant $\varepsilon_1 \leq \varepsilon(n)$ such that the gluing results of Section 4 can be applied to any $\varepsilon_1$-neck and so that the conclusion of the Neck Improvement Theorem holds.  For each spacetime point $(x, t)$, we let $\lambda_1(x, t)$ denote the smallest eigenvalue of the Ricci tensor at $(x, t)$. 

\begin{proposition}\label{neck_detection}
Given $\varepsilon_1$, we can find a small constant $\theta:=\theta(n, \varepsilon_1)$ with the following property. Suppose that $(\bar x, \bar t)$ is a spacetime point satisfying $\lambda_1(\bar x, \bar t) \leq \theta R(\bar x, \bar t)$. Then $(\bar x, \bar t)$ lies at the center of an evolving $\varepsilon_1$-neck $N$. Moreover, if $x$ lies outside the compact domain bounded by the leaf of Hamilton's CMC foliation passing through $\bar x$ at time $\bar t$, then $(x, \bar t)$ lies at the center of an evolving $\varepsilon_1$-neck. 
\end{proposition}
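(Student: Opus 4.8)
# Proof Proposal for Proposition \ref{neck_detection}

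\textbf{Overall strategy.} The plan is to argue by contradiction and compactness, in two stages. First I would prove the existence of a single evolving $\varepsilon_1$-neck at $(\bar x, \bar t)$: if no such $\theta$ exists, then for each $j$ there is an ancient $\kappa$-solution $(M_j, g_j(t))$ and a point $(\bar x_j, \bar t_j)$ with $\lambda_1(\bar x_j, \bar t_j) \leq \tfrac1j R(\bar x_j, \bar t_j)$ that is \emph{not} the center of an evolving $\varepsilon_1$-neck. Normalize so $R(\bar x_j, \bar t_j) = (n-1)(n-2)$ and $\bar t_j = 0$. Using the a priori estimates available for ancient $\kappa$-solutions in our setting (bounded curvature, Perelman-type derivative estimates, Hamilton's Harnack inequality, and $\kappa$-noncollapsing, all cited in the excerpt and Appendix A), Hamilton--Cheeger--Gromov compactness yields a subsequential limit $(M_\infty, g_\infty(t), \bar x_\infty)$ that is again an ancient $\kappa$-solution, with $\lambda_1(\bar x_\infty, 0) = 0$, i.e. the Ricci tensor has a kernel at $(\bar x_\infty, 0)$. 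By the strong maximum principle applied to the evolution of $\Ric$ (using weakly PIC2, hence $\Ric \geq 0$), the limit splits off a line, and then the structure theory for noncompact ancient $\kappa$-solutions in higher dimensions (the analogue of Perelman's structure theorem recorded in Appendix A) forces $(M_\infty, g_\infty(t))$ to be the family of shrinking round cylinders $S^{n-1} \times \R$ (or a quotient — but a quotient cannot arise as a smooth pointed limit at a point where the neck is nearly straight, and in any case a quotient still contains an evolving $\varepsilon_1$-neck through every point of its universal cover picture). Hence, for $j$ large, $(\bar x_j, \bar t_j)$ lies at the center of an evolving $\varepsilon_1$-neck, contradicting the choice of the sequence.

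\textbf{Propagating the neck outward.} For the second assertion, fix the $\varepsilon_1$-neck $N$ through $(\bar x, \bar t)$ just produced and let $\Sigma$ be the leaf of Hamilton's CMC foliation through $\bar x$ at time $\bar t$; let $\Omega$ be the compact domain it bounds, and take $x \notin \Omega$. I would use the same contradiction-and-compactness scheme but now exploiting the \emph{global} structure of noncompact ancient $\kappa$-solutions: by the structure theorem (Appendix A), $M$ is, outside a compact set, diffeomorphic to $S^{n-1} \times [0,\infty)$ and every point sufficiently far out lies at the center of an evolving $\varepsilon_1$-neck; the neck we found at $\bar x$ shows $\bar x$ already lies in the ``cylindrical part'' of the solution. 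The key point is that once one is on the cylindrical side of a neck, moving away from the cap only makes the geometry \emph{more} cylindrical — this is where the Harnack inequality and Perelman's asymptotic analysis (the necklike ends have curvature monotone decreasing toward the end along the foliation) enter. Concretely: if the claim failed there would be a sequence of ancient $\kappa$-solutions with points $\bar x_j$ admitting $\varepsilon_1$-necks but with points $x_j$ outside $\Omega_j$ failing to; rescale at $x_j$ (using $r_{\max}(\bar t) \leq r_{\mathrm{neck}}$ to keep the rescaling factors comparable and the limit nonflat) and extract a limit that, by the one-sided cylindrical structure and the splitting argument above, is again the shrinking cylinder, contradiction. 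One must also check that $x_j$ cannot ``escape to the cap side'': this is ruled out because $x_j \notin \Omega_j$ and $\Omega_j$ contains the cap region (or, in the two-ended cylinder case, there is no cap and the statement is immediate).

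\textbf{Main obstacle.} The delicate point is the second part — ruling out that $x$, while outside $\Omega$, sits near a region that is \emph{not} neck-like (a cap, or in principle some other positively-curved end). In three dimensions this is Perelman's structure theorem; here it must be invoked in the form stated in Appendix A for $n \geq 4$ under the uniformly PIC / weakly PIC2 hypotheses, and one needs the fact that a noncompact ancient $\kappa$-solution in this class has \emph{at most one} end and that end is necklike, with the cap (if any) on the compact side of $\Sigma$. I would organize the proof so that the only substantive input is: (i) a pointed-limit lemma producing a shrinking cylinder whenever $\lambda_1/R \to 0$ (Step 1), and (ii) the global structure theorem pinning down where the cap can be relative to $\Sigma$ (Step 2). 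Given (i) and (ii), the conclusion that every $x \notin \Omega$ centers an evolving $\varepsilon_1$-neck follows by a routine connectedness/continuity argument along the cylindrical end, tracking that the CMC foliation of $N$ extends monotonically to cover the whole end. The compactness arguments themselves, while somewhat lengthy, are by now standard in this circle of ideas and require only notational changes from \cite{Bre18}.
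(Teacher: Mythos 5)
Your proposal identifies the same two ingredients the paper invokes — a compactness argument producing a shrinking-cylinder blow-up when $\lambda_1/R \to 0$ (this is exactly Lemma~\ref{neck_detection_lemma}), and the structure theorem (Theorem~\ref{structure_thm} together with Corollary~\ref{structure_cor}) for the propagation to all points outside $\Sigma$. The paper's proof is, in fact, nothing more than the observation that once these results are available in higher dimensions, the three-dimensional proof of Proposition~9.1 in \cite{Bre18} carries over verbatim, so at the level of strategy you are on target.

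There is, however, a circularity in your treatment of the second assertion. You propose a \emph{second} contradiction-and-compactness argument, blowing up at points $x_j$ outside the compact domain $\Omega_j$ bounded by the leaf $\Sigma_j$ through $\bar x_j$, and you rule out $x_j$ escaping toward a cap ``because $x_j \notin \Omega_j$ and $\Omega_j$ contains the cap region.'' But the inclusion of the cap region in $\Omega_j$ is precisely the nontrivial content of the second assertion: once you know that, the conclusion is \emph{immediate} from the structure theorem (every point outside the cap is the center of an evolving $\varepsilon$-neck), and no second blow-up argument is needed. The actual work is to show that, if $\theta$ is chosen small enough, the compact domain $\tilde\Omega$ furnished by the structure theorem sits inside the domain bounded by $\Sigma$ (equivalently, $\Sigma$ is nested on the noncompact side of $\partial\tilde\Omega$). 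You flag this as the ``main obstacle'' but then assume it when deploying the contradiction argument. One also wants to be a bit more careful when ruling out quotient cylinders in the compactness step: the correct mechanism (used in the proof of Lemma~\ref{neck_detection_lemma}) is that strict PIC2 forces $M \cong \R^n$ by the soul theorem, combined with the fundamental-group injection result for necks (Theorem~A.2 in \cite{Bre18b}), rather than the informal ``nearly straight'' observation you give.
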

\begin{proof}
As in the proof of Proposition 9.1 in \cite{Bre20}, this result only depends upon the structure results, Theorem \ref{structure_thm} and Corollary \ref{structure_cor} (cf. Theorem A.2 and Corollary A.3 in \cite{Bre20}) and Lemma \ref{neck_detection_lemma}, the Neck Detection Lemma. Thus, we can follow the proof of Proposition 9.1 in \cite{Bre20} verbatim. 
\end{proof}

For the remainder of this section, let us fix a choice of $\theta$ such that the proposition above holds. We now define a notion of $\varepsilon$-symmetry for the cap of our manifold, following Definition 9.2 in \cite{Bre20}.

\begin{definition}[Cap Symmetry]\label{cap_symmetry}
We will say the flow is $\varepsilon$-symmetric at time $\bar t$ if there exists a compact domain $D \subset M$ and a family of time-independent vector fields $\mathcal U = \{U^{(a)} : 1 \leq a \leq {n \choose 2}\}$ which are defined on an open subset containing $D$ such that the following statements hold: 
\begin{enumerate}
\item[$\bullet$] There exists a point $x \in \partial D$ such that $\lambda_1(x, \bar t) < \theta R(\bar x, \bar t)$. 
\item[$\bullet$] For each $x \in D$, we have $\lambda_1(x, \bar t) > \frac{1}{2} \theta R(x, \bar t)$. 
\item[$\bullet$] The boundary $\partial D$ is a leaf of Hamilton's CMC foliation at time $\bar t$. 
\item[$\bullet$] For each $x \in M \setminus D$, the point $(x, \bar t)$ is $\varepsilon$-symmetric in the sense of Definition \ref{neck_symmetry}.
\item[$\bullet$]  In $D \times  [\bar t - r_{\max}(\bar t)^{2}, \bar t]$, we have the estimate
\[
 \sum_{l = 0}^2 \sum_{a = 1}^{{n \choose 2}} r_{\max}(\bar t)^{2l} \big|D^l (\mathcal L_{U^{(a)}}(g(t)))|^2 \leq \varepsilon^2
\]
\item[$\bullet$] If $\Sigma \subset D$ is a leaf of Hamilton's CMC foliation of $(M, g(\bar t))$ that has distance at most $50\,r_{\mathrm{neck}}(\partial D)$ from $\partial D$, then 
\[
\sup_{\Sigma} \sum_{a = 1}^{{n \choose 2}} r_{\max}(\bar t)^{-2}|\langle U^{(a)}, \nu \rangle|^2 \leq \varepsilon^2, 
\]
where $\nu$ is the unit normal vector to $\Sigma$ in $(M, g(\bar t))$ and $r_{\mathrm{neck}}(\partial D)$ is defined by the identity $\mathrm{area}_{g(\bar t)}(\partial D) = \mathrm{area}_{g_{S^{n-1}}}(S^{n-1}) r_{\mathrm{neck}}(\partial D)^{n-1}$. 
\item[$\bullet$] If $\Sigma \subset D$ is a leaf of Hamilton's CMC foliation of $(M, g(\bar t))$ that has distance at most $50 \,r_{\mathrm{neck}}(\partial D)$ from $\partial D$, then 
\[
\sum_{a, b = 1}^{{n \choose 2}} \bigg| \delta_{ab} - \mathrm{area}_{g(\bar t)}(\Sigma)^{-\frac{n+1}{n-1}} \int_\Sigma \langle U^{(a)}, U^{(b)} \rangle_{g(\bar t)} \, d\mu_{g(\bar t)} \bigg|^2 \leq \varepsilon^2, 
\]
where $r_{\mathrm{neck}}(\partial D)$ is defined by the identity $\mathrm{area}_{g(\bar t)}(\partial D) = \mathrm{area}_{g_{S^{n-1}}}(S^{n-1}) r_{\mathrm{neck}}(\partial D)^{n-1}$. 
\end{enumerate}
\end{definition}

\begin{remark}
Note that by Proposition \ref{neck_detection}, for each $x \in M \setminus D$, the point $(x, \bar t)$ lies at the center of an evolving $\varepsilon_1$-neck. 
\end{remark}

\begin{remark}
Since $D \subset \{x \in M : \lambda_1(x, \bar t) > \frac{1}{2} \theta R(x, \bar t)\}$, Corollary \ref{structure_cor} implies $\mathrm{diam}_{g(\bar t)}(D) \leq C \,r_{\max}(\bar t)$ and $\frac{1}{C}R_{\max}(t) \leq R(x, t) \leq R_{\max}(t)$ for all $x \in D$. Here $C$ is a positive constant that depends upon $\theta$. 
\end{remark}

\begin{lemma}
Suppose that the flow is $\varepsilon$-symmetric at time $\bar t$. If $\tilde t$ is sufficiently close to $\bar t$, then the flow is $2\varepsilon$-symmetric at time $\bar t$. 
\end{lemma}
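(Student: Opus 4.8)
The plan is to argue by continuity, in close parallel with the lemma recalled above showing that $\varepsilon$-symmetry of a spacetime point is inherited by all sufficiently nearby spacetime points. Fix the compact domain $D$ and the time-independent family $\mathcal{U}=\{U^{(a)}:1\le a\le{n \choose 2}\}$ that witnesses $\varepsilon$-symmetry of the flow at time $\bar t$ in the sense of Definition~\ref{cap_symmetry}, and let $x_0\in\partial D$ be the point with $\lambda_1(x_0,\bar t)<\theta\,R(x_0,\bar t)$. We will keep the same vector fields $\mathcal{U}$ and replace $D$ only by a nearby domain $\tilde D$ whose boundary is a leaf of Hamilton's CMC foliation of $(M,g(\tilde t))$; for $\tilde t$ sufficiently close to $\bar t$ we then verify each of the conditions of Definition~\ref{cap_symmetry} for the triple $(\tilde D,\mathcal{U},\tilde t)$ with $\varepsilon$ replaced by $2\varepsilon$.

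First I would construct $\tilde D$. Since $x_0$ lies at the center of an evolving $\varepsilon_1$-neck at time $\bar t$ and this is an open condition, $x_0$ still lies at the center of an evolving $\varepsilon_1$-neck at any time $\tilde t$ close to $\bar t$; moreover Hamilton's CMC foliation of a neck depends continuously (in $C^k$) on the metric (Appendix~D), so at time $\tilde t$ there is a CMC leaf $\tilde\Sigma$ that is $C^k$-close to $\partial D$, and we let $\tilde D$ be the compact region it bounds. Then $\tilde D$ is $C^0$-close to $D$, so the third condition of Definition~\ref{cap_symmetry} holds by construction and $\tilde D$ lies in the open set on which $\mathcal{U}$ is defined. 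The first condition holds because $\lambda_1-\theta R$ is strictly negative at $x_0$ at time $\bar t$ and varies continuously, hence is negative at a nearby point of $\tilde\Sigma$ at time $\tilde t$; the second condition holds because $\lambda_1-\tfrac12\theta R$ is strictly positive on the compact set $D$ at time $\bar t$, hence bounded below there by a positive constant, and therefore remains positive on the slightly perturbed set $\tilde D$ at the nearby time $\tilde t$.

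Next I would transfer the quantitative estimates, all of which rest on the same observation: the tensors $\mathcal{L}_{U^{(a)}}(g(t))$ and their covariant derivatives $D^{l}(\mathcal{L}_{U^{(a)}}(g(t)))$, the curvature tensor, and the leaves of the CMC foliation with their areas and unit normals are all smooth in $(x,t)$ near $D\times\{\bar t\}$, while $r_{\max}(t)$ is Lipschitz. For the fifth condition: the parabolic region $\tilde D\times[\tilde t-r_{\max}(\tilde t)^{2},\tilde t]$ overlaps $D\times[\bar t-r_{\max}(\bar t)^{2},\bar t]$ except for a sliver of temporal width $O(|\tilde t-\bar t|)$, and on that sliver the relevant quantities are, by uniform continuity, within $o(1)$ of values taken on the overlap where the bound $\le\varepsilon^{2}$ holds, so the bound $\le(2\varepsilon)^{2}$ holds once $|\tilde t-\bar t|$ is small. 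The sixth and seventh conditions follow the same way, since the CMC leaves near $\partial\tilde D$, their areas, and their unit normals at time $\tilde t$ converge to the corresponding objects at time $\bar t$ while $\mathcal{U}$ is held fixed. Finally, for the fourth condition, each $x\in M\setminus\tilde D$ lies in $M\setminus D$ up to the thin region between $\partial D$ and $\partial\tilde D$ (which sits inside the $\varepsilon_1$-neck part of $(M,g(\bar t))$, where the estimates coming from the sixth and seventh conditions already control $\mathcal{U}$); for such $x$ the point $(x,\bar t)$ is $\varepsilon$-symmetric in the sense of Definition~\ref{neck_symmetry}, so the recalled nearby-point lemma gives that $(x,\tilde t)$ is $2\varepsilon$-symmetric.

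The main obstacle is making this last step \emph{uniform} in $x\in M\setminus\tilde D$: the recalled lemma is pointwise, whereas we need a single threshold $\delta>0$ so that $|\tilde t-\bar t|<\delta$ forces $2\varepsilon$-symmetry at every $x\in M\setminus\tilde D$ at once. This uniformity is supplied by the controlled geometry of ancient $\kappa$-solutions developed in Appendix~A --- the Perelman-type pointwise derivative estimates and the structure theorem (Theorem~\ref{structure_thm} and Corollary~\ref{structure_cor}) --- which show that after rescaling by the curvature scale the neck regions have uniformly bounded geometry; the proof of the nearby-point lemma then yields a threshold that is uniform once $|\tilde t-\bar t|$ is measured against $r_{\mathrm{neck}}(x,\bar t)^{2}$, and since $r_{\mathrm{neck}}(x,\bar t)^{2}\ge r_{\max}(\bar t)^{2}$ everywhere it suffices to require $|\tilde t-\bar t|<\delta\,r_{\max}(\bar t)^{2}$. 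A secondary technical point, already flagged above, is to confirm that the symmetric difference of $D$ and $\tilde D$ causes no trouble; this uses that $\partial D$ is a CMC leaf through a neck point, so that the property of bounding a cap region of the required type is itself stable under small perturbations of time.
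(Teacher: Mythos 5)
Your overall strategy --- fix the pair $(D, \mathcal{U})$ witnessing cap symmetry at time $\bar t$, replace $\partial D$ with a nearby CMC leaf $\partial \tilde D$ of $g(\tilde t)$, and verify each requirement of Definition~\ref{cap_symmetry} at time $\tilde t$ with $2\varepsilon$ in place of $\varepsilon$ --- is the right continuity argument and is what the paper (which defers to Lemma~9.5 of \cite{Bre18}) has in mind. Your observation that the uniformity in the neck-symmetry requirement follows because the implicit threshold in the lemma recalled after Definition~\ref{neck_symmetry} scales with $r_{\mathrm{neck}}(x,\bar t)^{2} \geq r_{\max}(\bar t)^{2}$ is exactly the right way to close the loop.

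The one step that needs tightening is your treatment of the annular region $D \setminus \tilde D$ in the case that $\tilde D$ comes out strictly inside $D$. For a point $x$ there, the fourth requirement of Definition~\ref{cap_symmetry} at time $\bar t$ does not assert that $(x, \bar t)$ is $\varepsilon$-symmetric (that was only imposed on $M \setminus D$), and the sixth and seventh requirements you invoke control the \emph{cap's} fixed family $\mathcal{U}$ with weights involving $r_{\max}(\bar t)$, on a parabolic domain that need not contain $B_{g(\bar t)}(x, 100\,r_{\mathrm{neck}}(x,\bar t)) \times [\bar t - 100\,r_{\mathrm{neck}}(x,\bar t)^{2}, \bar t]$; since $r_{\mathrm{neck}}(x,\bar t)$ exceeds $r_{\max}(\bar t)$ by a bounded but nontrivial factor near $\partial D$, this only certifies approximate neck-$\varepsilon$-symmetry, not the exact bound required. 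The cleanest repair is to exploit the freedom in Definition~\ref{cap_symmetry} and choose $\tilde D \supset D$: the first two requirements are strict inequalities holding on the compact set $D$, so by continuity they persist on a slightly larger domain at nearby times $\tilde t$, and a slightly outer CMC leaf of $g(\tilde t)$ still bounds an admissible cap. Then $M \setminus \tilde D \subset M \setminus D$ and the fourth requirement transfers via the nearby-point lemma exactly as you argue for points outside $D$. (Alternatively, one can use that $\varepsilon$-symmetry of a spacetime point is a closed condition, so it holds at $(y, \bar t)$ for each $y \in \partial D$, and apply the nearby-point lemma from $(y, \bar t)$ to $(x, \tilde t)$ for the boundary point $y$ nearest $x$.)
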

\begin{proof}
The proof is analogous to the proof of Lemma 9.5 in \cite{Bre20}. 
\end{proof}

\begin{lemma}\label{symmetric}
Let us fix a time $\bar t$. Suppose that, for each $\varepsilon > 0$, the solution $(M, g(\bar t))$ is $\varepsilon$-symmetric. Then $(M, g(\bar t))$ is rotationally symmetric. 
\end{lemma}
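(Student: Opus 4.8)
The plan is to extract, for each $\varepsilon > 0$, the family of approximate symmetry vector fields guaranteed by $\varepsilon$-symmetry at time $\bar t$, and then pass to a limit as $\varepsilon \to 0$ to produce genuine Killing fields that generate an $SO(n)$ action. First I would fix a sequence $\varepsilon_j \to 0$; for each $j$, $\varepsilon_j$-symmetry at time $\bar t$ provides a compact domain $D_j \subset M$ and a family of time-independent vector fields $\mathcal U_j = \{U^{(a)}_j : 1 \le a \le \binom{n}{2}\}$ defined on a neighborhood of $M \setminus D_j$, together with the vector field on the cap. The key point is that by the last remark, the domains $D_j$ all have diameter at most $C\, r_{\max}(\bar t)$, so after passing to a subsequence we may assume the $D_j$ exhaust a fixed compact domain (or converge to one in the Hausdorff sense); hence the vector fields $U^{(a)}_j$ are eventually defined on any fixed compact subset of $M$. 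The estimates in Definition \ref{cap_symmetry} (on the cap) and Definition \ref{neck_symmetry} (on the necks) give uniform $C^2$ bounds on $\mathcal L_{U^{(a)}_j}(g(\bar t))$, and combined with the normalization conditions (the orthonormality integral condition and the CMC-normal condition) this yields uniform $C^{1}_{\mathrm{loc}}$ bounds on the $U^{(a)}_j$ themselves, since a vector field whose Lie derivative of the metric is controlled and which is not identically small on a slice cannot degenerate. So by Arzelà–Ascoli we extract $C^{1}_{\mathrm{loc}}$-convergent subsequences $U^{(a)}_j \to U^{(a)}_\infty$.

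Next I would check that the limiting vector fields $U^{(a)}_\infty$ are genuine Killing fields: the estimate $\sum_{l=0}^2 \sum_a |D^l(\mathcal L_{U^{(a)}_j}(g(\bar t)))|^2 \le \varepsilon_j^2$ forces $\mathcal L_{U^{(a)}_\infty}(g(\bar t)) = 0$ in the limit, on every compact set, hence globally. The normalization conditions survive the limit and guarantee the $U^{(a)}_\infty$ are nonzero and, on each CMC leaf near the neck, linearly independent with the correct Gram matrix (converging to $\delta_{ab}$); together with the closeness of each $\mathcal U_j$ to a standard rotational family on the cylinder (stated in Lemma \ref{big_existence} and used throughout Section 4), the limits span an $(n-1)$-dimensional space tangent to each CMC sphere at generic points. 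Then I would invoke that the Lie bracket of two Killing fields is again Killing, and use the Lie-algebra structure: the vector fields $U^{(a)}_\infty$, being uniform limits of families that are arbitrarily close to an orthonormal basis of the rotational $so(n)$ on every neck, must themselves close up into a Lie algebra isomorphic to $so(n)$ (the structure constants pass to the limit by Lemma \ref{lie_algebra}-type control). The corresponding action integrates, since $M$ is complete, to an isometric $SO(n)$-action whose principal orbits are the CMC spheres; standard arguments then identify $(M, g(\bar t))$ as rotationally symmetric.

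The main obstacle I anticipate is ensuring the limiting vector fields are nondegenerate and span the tangent spaces of the CMC leaves on \emph{all} of $M$, not merely on the neck region where the explicit cylinder comparison is available — in particular near and on the cap domain $D$. On the necks this is immediate from the three conditions in Definition \ref{neck_symmetry} (the second and third conditions exactly prevent the $U^{(a)}$ from becoming tangent-to-normal or from collapsing their span). On the cap one must use the first and second cap conditions in Definition \ref{cap_symmetry} together with the fact that a Killing field on a manifold is determined by its $1$-jet at a point; since the $U^{(a)}_\infty$ restrict on $\partial D$ (a CMC leaf) to an orthonormal rotational family, and Killing fields extend uniquely, the action on $M \setminus D$ automatically extends across $D$. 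Concretely, one argues: the $U^{(a)}_\infty$ are Killing on $M \setminus D$; each extends (again by completeness and the Killing equation, which is a linear ODE along geodesics) to a Killing field on all of $M$; by continuity the extensions still satisfy the bracket relations and the orbit through any boundary point of $D$ is a round sphere, so the orbits sweep out all of $M$. A clean way to organize this is to first establish the existence of the global $SO(n)$-action on the open set $M \setminus D$, observe $M \setminus D$ is connected and its closure is all of $M$, and then extend the action by continuity/uniqueness of Killing fields; I expect this extension-across-the-cap step to require the most care, paralleling the corresponding argument in \cite{Bre18}.

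\begin{proof}
The proof is the same as the proof of Lemma 9.6 in \cite{Bre18} after making notational modifications for higher dimensions. We sketch the argument for the reader's convenience.

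Fix a sequence $\varepsilon_j \to 0$. For each $j$, the hypothesis provides a compact domain $D_j$ and a family of time-independent vector fields $\mathcal U_j = \{U^{(a)}_j : 1 \leq a \leq \binom{n}{2}\}$ satisfying the conditions of Definition \ref{cap_symmetry} with $\varepsilon = \varepsilon_j$. By the remarks following Definition \ref{cap_symmetry}, the diameter of $D_j$ is bounded by $C\, r_{\max}(\bar t)$, so after passing to a subsequence the domains $D_j$ converge to a compact domain $D_\infty$ in the Hausdorff sense, and each fixed compact subset of $M$ is eventually contained in the domain of $\mathcal U_j$. The estimates in Definitions \ref{cap_symmetry} and \ref{neck_symmetry} give uniform local $C^2$ bounds on $\mathcal L_{U^{(a)}_j}(g(\bar t))$; together with the normalization conditions (the integral orthonormality condition and the bound on $\langle U^{(a)}_j, \nu \rangle$ along the CMC leaves) and the fact that each $\mathcal U_j$ is $C(L)\varepsilon_1$-close to a standard family of rotational vector fields on the neck, these yield uniform local $C^1$ bounds on the $U^{(a)}_j$ themselves. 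By Arzel\`a--Ascoli, after passing to a further subsequence, $U^{(a)}_j \to U^{(a)}_\infty$ in $C^1_{\mathrm{loc}}$.

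Passing to the limit in the estimate $\sum_{l=0}^{2} \sum_{a} |D^l(\mathcal L_{U^{(a)}_j}(g(\bar t)))|^2 \leq \varepsilon_j^2$ shows $\mathcal L_{U^{(a)}_\infty}(g(\bar t)) = 0$ on every compact subset of $M$, hence on all of $M$; that is, each $U^{(a)}_\infty$ is a Killing field of $(M, g(\bar t))$ on the open set where the limit is defined. The normalization conditions pass to the limit as well: on each CMC leaf $\Sigma$ near a neck we have $\langle U^{(a)}_\infty, \nu \rangle = 0$ and $\mathrm{area}(\Sigma)^{-\frac{n+1}{n-1}} \int_\Sigma \langle U^{(a)}_\infty, U^{(b)}_\infty \rangle \, d\mu = \delta_{ab}$. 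In particular the $U^{(a)}_\infty$ are tangent to each CMC leaf in the neck region and span its tangent space at every point, with the correct Gram matrix. By Lemma \ref{lie_algebra} and the closeness of $\mathcal U_j$ to an orthonormal basis of the rotational $so(n)$, the brackets $[U^{(a)}_\infty, U^{(b)}_\infty]$ close up into a Lie algebra isomorphic to $so(n)$, and since the bracket of Killing fields is Killing, this Lie algebra acts on $M \setminus D_\infty$ by Killing fields. Because $M$ is complete, each $U^{(a)}_\infty$ extends uniquely to a global Killing field on $M$ (the Killing equation is a linear system of ODE along geodesics, so the $1$-jet along $\partial D_\infty$ determines the extension), and the extensions continue to satisfy the bracket relations by continuity. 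Hence we obtain an effective isometric action of $SO(n)$ on $(M, g(\bar t))$ whose orbit through each point of $\partial D_\infty$ is a round sphere. Since $M \setminus D_\infty$ is connected with closure all of $M$, these orbits sweep out $M$, so $(M, g(\bar t))$ is rotationally symmetric.
\end{proof}
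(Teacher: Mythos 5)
Your overall strategy---extract a $C^1_{\mathrm{loc}}$-limit of the cap vector fields along a sequence $\varepsilon_j \to 0$, observe the limits are Killing fields, and then propagate rotational symmetry to all of $M$---is the same strategy the paper intends (it defers to Lemma~9.6 of Brendle's paper). But two steps in your write-up do not work as stated.

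The sentence ``each fixed compact subset of $M$ is eventually contained in the domain of $\mathcal U_j$'' is false, and the argument you draw from it breaks down. The diameter bound from the remark following Definition~\ref{cap_symmetry} shows only that the $D_j$ stay in a fixed bounded region; the cap family $\mathcal U_j$ is required to be defined only on some neighborhood of $D_j$, and outside that Definition~\ref{cap_symmetry} merely says that each neck point has \emph{its own local family} from Definition~\ref{neck_symmetry}, not that the cap family covers it. So the Arzel\`a--Ascoli limit $U^{(a)}_\infty$ is obtained only near $D_\infty$. To get almost-Killing vector fields on an arbitrary compact set directly, one would have to glue overlapping neck families first (exactly what Lemmas~\ref{big_existence} and~\ref{big_existence_2} do), and you do not invoke that machinery.

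You then (correctly) fall back on extending the limiting Killing fields from near $D_\infty$ to all of $M$, but the extension is asserted without its hypotheses. Completeness alone does not make a local Killing field extend globally: the prolongation $\nabla^2_{X,Y}K = -R(K,X)Y$ gives a unique candidate $1$-jet along any path, but the result is guaranteed to satisfy the Killing equation only if one invokes real-analyticity (Nomizu's extension theorem) or some substitute for it, together with simple connectedness to kill monodromy. Both hypotheses do hold here---$M$ is diffeomorphic to $\R^n$ by the soul theorem since the solution is strictly PIC2 and noncompact, hence simply connected, and a fixed-time slice of a Ricci flow is real-analytic by Bando's theorem---but without stating them the extension step is a genuine gap. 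Once these are in place, the rest of your outline (normalizations pass to the limit, the structure constants give an $so(n)$ Lie algebra, complete Killing fields integrate to an $SO(n)$ action with spherical orbits) is sound.
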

\begin{proof}
The proof is analogous to the proof of Lemma 9.6 in \cite{Bre20}. 
\end{proof}

We now continue with the proof of the main theorem. In the following proposition, we use the uniqueness of the Bryant soliton in higher dimensions \cite{Bre14} and Hamilton's rigidity result for the Harnack inequality (adapted to our setting in \cite{Bre09}) to produce a sequence of times $\hat t_k$ along which rescalings of the solution converge to the Bryant soliton. 

\begin{proposition}\label{bryant_by_harnack}
There exists a sequence of times $\hat t_k \to -\infty$ and a sequence of points $\hat p_k \in M$ with the following property. If we define rescaled solutions of the Ricci flow by 
\[
\hat g_k(t) := R_{\max}(\hat t_k) g(\hat t_k + t\, R_{\max}(\hat{t}_k)^{-1}),
\]
then the solutions $(M, \hat g_k(t), \hat p_k)$ converge (subsequentially) to the Bryant soliton in the pointed Cheeger-Gromov sense. Moreover, the points $\hat p_k$ converge to the tip of the Bryant soliton, and $\frac{R(\hat p_k, \hat t_k)}{R_{\max}(\hat t_k)} \to 1$ as $k \to \infty$. 
\end{proposition}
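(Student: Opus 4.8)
The plan is to first produce, for each time $t$, a point $p_t$ where the scalar curvature is comparable to $R_{\max}(t)$ and where the solution looks like a cap, then to rescale and take a limit along a well-chosen sequence $\hat t_k \to -\infty$, and finally to use Hamilton's Harnack rigidity to identify the limit as the Bryant soliton. First I would recall from the structure theory in Section 7 (the higher-dimensional analogue of Perelman's structure theorem) that every noncompact ancient $\kappa$-solution that is not a shrinking cylinder quotient has, at each time $t$, a nonempty ``cap'' region, i.e.\ a compact set $D_t$ outside of which every point lies at the center of an $\varepsilon_1$-neck. Choose $\hat p_t$ to be a point maximizing the scalar curvature, so $R(\hat p_t, t) = R_{\max}(t)$; since points on a neck have scalar curvature bounded above by the curvature at the tip of the attached cap (by the structure theory and the monotonicity of neck radii along the tube), $\hat p_t$ must lie in the cap region $D_t$, at bounded distance (in the rescaled metric $R_{\max}(t)\,g(t)$) from the ``tip.''

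Next I would rescale: set $\hat g_t(s) := R_{\max}(t)\, g(t + s\, R_{\max}(t)^{-1})$. By the curvature bound (bounded curvature, $\kappa$-noncollapsing) and Perelman's pointwise derivative estimates (Theorem~\ref{pointwise_derivative_estimates} in the excerpt), together with Hamilton's compactness theorem, for any sequence $\hat t_k \to -\infty$ the pointed solutions $(M, \hat g_{\hat t_k}(s), \hat p_{\hat t_k})$ subsequentially converge in the pointed Cheeger--Gromov sense to a complete ancient limit solution $(M_\infty, g_\infty(s), p_\infty)$, which is again $\kappa$-noncollapsed, weakly PIC2, uniformly PIC, with $R_{g_\infty}(p_\infty, 0) = 1$ and $R_{g_\infty} \le 1$ everywhere (since $R_{\max}$ is nondecreasing, the rescaled scalar curvature at times $s \le 0$ is $\le R_{\max}(\hat t_k)/R_{\max}(\hat t_k) \cdot (\text{ratio}) \le 1$ up to $o(1)$). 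The limit is noncompact (it inherits a neck end) and nonflat (curvature $1$ at $p_\infty$). The key extra input is Hamilton's Harnack inequality for PIC2 solutions (Theorem~\ref{harnack_inequality}) and its rigidity case: I would choose the sequence $\hat t_k$ so that the Harnack quantity is asymptotically extremal — concretely, since $\frac{d}{dt}\big(R_{\max}(t)^{-1}\big)$ is bounded (Lipschitz continuity of $R_{\max}^{-1}$), one can pick $\hat t_k \to -\infty$ along which $R_{\max}(t)^{-1}$ is asymptotically affine, which forces $\partial_t R_{\max} + \ldots = 0$ in the limit, i.e.\ the limit solution attains equality in the trace Harnack inequality at $p_\infty$. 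By Hamilton's rigidity theorem (in the form adapted in \cite{Bre09}), the limit $(M_\infty, g_\infty(s))$ is then a steady gradient Ricci soliton.

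Finally, I would invoke the higher-dimensional uniqueness of the Bryant soliton: $(M_\infty, g_\infty)$ is a noncompact, $\kappa$-noncollapsed, weakly PIC2, uniformly PIC steady gradient soliton with a neck end, hence (after verifying it is nonflat and, via the structure theory, that it has a single end that is asymptotically cylindrical — this is exactly the ``asymptotically cylindrical'' hypothesis to be checked in Proposition~A.10 of the excerpt) it is rotationally symmetric by Corollary~\ref{li_zhang_pic}, and then by the uniqueness theorem of \cite{Bre14} it is the Bryant soliton up to scaling; rescaling so that $R = 1$ at the tip pins down the scale and forces $p_\infty$ to be the tip. Tracing back, this gives $\hat p_k \to$ tip and, since $R(\hat p_k, \hat t_k) = R_{\max}(\hat t_k)$ exactly by our choice of $\hat p_k$, we even get the ratio $R(\hat p_k,\hat t_k)/R_{\max}(\hat t_k) = 1$ (rather than merely $\to 1$); the statement allows the weaker conclusion, so this is more than enough.

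The main obstacle I expect is the Harnack-rigidity step: one must carefully select the sequence of times $\hat t_k$ so that the limit solution saturates the Harnack inequality, and then correctly apply the rigidity theorem in the PIC2 setting to conclude the limit is a steady soliton rather than, say, a shrinking cylinder — ruling out the cylinder uses that $\hat p_k$ sits at the curvature maximum inside the cap, so the limit cannot be a neck at $p_\infty$. A secondary technical point is ensuring the limit is genuinely a \emph{complete} ancient solution with \emph{bounded} curvature so that Hamilton's compactness and the structure theory both apply; this follows from the uniform curvature bound and $\kappa$-noncollapsing, but it requires invoking the appropriate higher-dimensional versions of Perelman's estimates collected in Appendix~A.
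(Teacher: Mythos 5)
Your overall strategy matches the paper's: blow up at a sequence of spacetime points, use Hamilton's Harnack rigidity to show the limit is a steady gradient soliton, and then identify it as the Bryant soliton via the asymptotically cylindrical uniqueness result of \cite{Bre14}. However, there are two genuine problems with the details.

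First, the blow-up construction is handled differently and incompletely. You take $\hat p_t$ to be a point maximizing $R(\cdot, t)$ and then propose to choose $\hat t_k$ so that $R_{\max}^{-1}$ is ``asymptotically affine.'' This does not clearly produce a limit to which the rigidity of the Harnack inequality applies. The rigidity (Proposition 14 of \cite{Bre09}, generalizing \cite{Ham93b}) requires an \emph{eternal} solution whose scalar curvature is bounded above by $1$ \emph{at all times, past and future}, and attains $1$ at an interior spacetime point. With your rescaling by $R_{\max}(\hat t_k)$, the bound $R \le 1$ is automatic only for $s \le 0$, since $R_{\max}$ is nondecreasing; for $s>0$ the rescaled curvature can exceed $1$, and ``$R_{\max}^{-1}$ asymptotically affine'' does not make $R_{\max}(\hat t_k + s R_{\max}(\hat t_k)^{-1})/R_{\max}(\hat t_k) \to 1$ (an affine $R_{\max}^{-1}$ with nonzero slope gives a ratio $\ne 1$). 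The paper avoids this entirely: it first establishes that the solution is Type II (Proposition \ref{type_ii}) and then runs Hamilton's Type II blow-up from \cite{Ham95}, choosing $(\hat p_k, \hat t_k)$ to nearly maximize the tent-shaped quantity $(1 + t/k)(-t)R(x,t)$ over $M \times (-k,0)$. That construction is precisely what guarantees the limit is a complete eternal solution with scalar curvature $\le 1$ everywhere, equal to $1$ at an interior point, and it is also why the paper only obtains $R(\hat p_k, \hat t_k)/R_{\max}(\hat t_k) \to 1$ rather than exact equality.

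Second, your final identification step misattributes the relevant results. You write that after verifying the asymptotically cylindrical condition, the limit ``is rotationally symmetric by Corollary \ref{li_zhang_pic}.'' But Corollary \ref{li_zhang_pic} \emph{assumes} rotational symmetry as a hypothesis; it cannot be used to establish it. The correct logic, and what the paper does, is to verify via Proposition \ref{soliton_cylindrical} that the steady soliton limit is asymptotically cylindrical in the sense of \cite{Bre14}, and then invoke Theorem 1.2 of \cite{Bre14} directly, which proves that such a soliton is the rotationally symmetric Bryant soliton (rotational symmetry is part of the conclusion, not an input).
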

\begin{proof}
The proof begins precisely as in the proof of Proposition 9.7 in \cite{Bre20}. The solution $(M, g(t))$ must be a Type II ancient solution (see Proposition \ref{type_ii}) and therefore, we can extract a Type II blow-up limit as Hamilton does in Section 16 of \cite{Ham95}. Namely, we choose points $(\hat p_k, \hat t_k) \in M \times (-k, 0)$ with the property that 
\[
\sup_{(x, t) \in M \times (-k ,0)} \Big(1 +\frac{t}{k}\Big) (-t) R(x, t) \leq \Big(1 +\frac{1}{k}\Big)\Big(1 + \frac{\hat t_k}{k}\Big)(-\hat t_k) R(\hat p_k, \hat t_k).
\]
Since the solution is Type II, it follows that $\hat t_k \to -\infty$ and $R_{\max}(\hat t_k) \leq (1 + k^{-1}) R(\hat p_k, \hat t_k)$. As in \cite{Bre20}, we can extract a convergent subsequence to obtain a complete eternal solution of the Ricci flow with scalar curvature bounded by $1$ at each point in spacetime. Moreover, there exists a spacetime point in the limit where the scalar curvature is equal to $1$. The limit is weakly PIC2 and uniformly PIC. Since the solution attains equality in Hamilton's Harnack equality, it follows from Proposition 14 in \cite{Bre09} (which generalizes the main result of Hamilton in \cite{Ham93b}) that the limit is a steady gradient Ricci soliton. This soliton must be asymptotically cylindrically in the sense used in \cite{Bre14}. See Proposition \ref{soliton_cylindrical} in Appendix A. Hence, by Theorem 1.2 in \cite{Bre14}, the limit must be the rotationally symmetric Bryant soliton.  This completes the proof. 
\end{proof}

\begin{corollary}
There exists a sequence $\hat \varepsilon_k \to 0$ with the following properties. For each $t \in [\hat t_k - \hat \varepsilon_k^{-2} r_{\max}(\hat t_k)^{2}, \hat t_k ]$, we have $(1 - \hat \varepsilon_k) R_{\max}(\hat t_k) \leq R(\hat p_k, t) \leq R_{\max}(t) \leq R_{\max}(\hat t_k)$. Moreover, for each $ t \in [\hat t_k - \hat \varepsilon_k^{-2} r_{\max}(\hat t_k)^{2}, \hat t_k ]$, the solution $(M, g(t))$ is $\hat \varepsilon_k$-symmetric.  
\end{corollary}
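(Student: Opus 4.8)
The plan is to read the corollary off from Proposition~\ref{bryant_by_harnack}: once the parabolically rescaled flows $\hat g_k$ about $(\hat p_k,\hat t_k)$ are known to converge to the Bryant soliton, both the near-maximality of the curvature at $\hat p_k$ and the approximate rotational symmetry of the soliton can be transported to a long backward time interval and to all of $M$, and the statement then follows by a diagonal argument in $k$. Since the notions of $\varepsilon$-symmetry in Definitions~\ref{neck_symmetry} and~\ref{cap_symmetry} are invariant under parabolic rescaling, it suffices to argue with the $\hat g_k$.

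For the curvature estimate: the Bryant soliton is a steady gradient soliton, so its scalar curvature at the tip is constant in $t$ and equal to $\sup R$; with the normalization of Proposition~\ref{bryant_by_harnack} this value is $1$, and $\hat p_k$ converges to the tip, so $R_{\hat g_k}(\hat p_k,t)\to 1$ locally uniformly in $t$. Unrescaling and using $R(\hat p_k,\hat t_k)^{-1}\ge r_{\max}(\hat t_k)^2$ together with the selection inequality $R_{\max}(\hat t_k)\le(1+k^{-1})R(\hat p_k,\hat t_k)$ and $R(\hat p_k,\hat t_k)/R_{\max}(\hat t_k)\to 1$, one obtains: for every $T$ and every $\delta>0$ and all large $k$, $R(\hat p_k,t)\ge(1-\delta)R_{\max}(\hat t_k)$ throughout $[\hat t_k-T\,r_{\max}(\hat t_k)^2,\hat t_k]$. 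The upper bounds $R(\hat p_k,t)\le R_{\max}(t)\le R_{\max}(\hat t_k)$ for $t\le\hat t_k$ are immediate from the monotonicity of $R_{\max}$ (Theorem~\ref{harnack_inequality}). Choosing $\hat\varepsilon_k\to 0$ slowly enough, i.e.\ applying the above with $T=\hat\varepsilon_k^{-2}$ and $\delta\sim\hat\varepsilon_k$ and discarding small $k$, yields the first assertion.

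For the symmetry, fix $\bar t$ in this interval and let $D\subset M$ be the compact domain bounded by the leaf of Hamilton's CMC foliation of $(M,g(\bar t))$ lying just inside $\{\lambda_1(\cdot,\bar t)>\frac12\theta R(\cdot,\bar t)\}$, as prescribed in Definition~\ref{cap_symmetry}. By Corollary~\ref{structure_cor} and the curvature bounds, $D$ has diameter at most $C\,r_{\max}(\hat t_k)$, so in the rescaled metric it sits, together with a fixed-size collar, inside a fixed ball about $\hat p_k$ on which $\hat g_k$ is $\delta_k$-close in $C^{[\delta_k^{-1}]}$ to the Bryant soliton, with $\delta_k\to 0$. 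The Bryant soliton carries an isometric $SO(n)$-action whose ${n\choose 2}$ generating Killing fields are tangent to its rotational spheres --- which coincide with the leaves of its CMC foliation --- and, after normalization, orthonormal on each sphere. I would push these fields forward through the Cheeger--Gromov diffeomorphisms to obtain a family $\mathcal U=\{U^{(a)}\}$ near $D$; smooth spacetime convergence bounds $\mathcal L_{U^{(a)}}(g(t))$ and its first two derivatives by $C\delta_k$, while continuity of Hamilton's CMC foliation in the metric (Appendix D) gives the tangency estimate $|\langle U^{(a)},\nu\rangle|\le C\delta_k$ and the approximate orthonormality near $\partial D$; this verifies the conditions of Definition~\ref{cap_symmetry} that concern $D$ and $\partial D$, with $\varepsilon\sim\delta_k$. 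For $x\in M\setminus D$ lying in the ball of good Bryant approximation, the image of $x$ lies on a neck of the rotationally symmetric soliton, so pushing forward its rotation fields shows $(x,\bar t)$ is $\sim\delta_k$-symmetric in the sense of Definition~\ref{neck_symmetry}.

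The remaining, and crucial, case is $x\in M\setminus D$ lying beyond the ball on which the Bryant approximation is good --- Cheeger--Gromov convergence only controls a bounded (if growing) region, so these points must be handled separately. Here I would invoke the structure theory of noncompact ancient $\kappa$-solutions (Theorem~\ref{structure_thm} and Corollary~\ref{structure_cor}) in the form: for every $\varepsilon>0$ there is $\rho(\varepsilon)$, depending only on $n,\kappa,\alpha$, such that every point of $(M,g(\bar t))$ outside a compact set of diameter at most $\rho(\varepsilon)\,r_{\max}(\bar t)$ lies at the centre of an evolving $\varepsilon$-neck, and is therefore $\sim\varepsilon$-symmetric by comparison with the exact (rotationally symmetric) shrinking cylinder. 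One then closes with a diagonal argument: for each $\varepsilon>0$, Cheeger--Gromov convergence makes $\hat g_k$ close in $C^{[\varepsilon^{-1}]}$ to the Bryant soliton (as a solution) on $B_{\hat g_k}(\hat p_k,\rho(\varepsilon))$ for all large $k$, so letting $\hat\varepsilon_k\to 0$ run slowly enough that this holds at index $k$ with $\varepsilon=\hat\varepsilon_k$, the ball $B_{\hat g_k}(\hat p_k,\rho(\hat\varepsilon_k))$ contains $D$ and the necks inside it (symmetry defect $\lesssim\hat\varepsilon_k$ by the Bryant comparison), while everything beyond it is $\lesssim\hat\varepsilon_k$-symmetric by the structure theorem; relabeling $\hat\varepsilon_k$ to absorb universal constants gives the second assertion. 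I expect the only genuinely substantive point to be this reliance on the structure theorem in its strengthened form --- that necks become arbitrarily cylindrical away from the cap, with the exceptional set controlled relative to $r_{\max}$ --- which is needed precisely because the Bryant limit sees only a bounded part of $M$; granting it, transferring the CMC-foliation estimates across the Cheeger--Gromov convergence and the bookkeeping in the two diagonal arguments are routine.
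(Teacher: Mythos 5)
Your proposal is correct and takes essentially the same route as the paper's proof: both start from the Bryant-soliton Cheeger--Gromov limit in Proposition~\ref{bryant_by_harnack}, handle the cap region by pushing the soliton's rotation fields through the convergence, use the structure theorem (Theorem~\ref{structure_thm}, Corollary~\ref{structure_cor}) to show everything outside a cap of controlled diameter is an $\varepsilon^2$-neck and hence $\varepsilon$-symmetric on a long backward time interval, and close with a diagonal argument in $k$. If anything, you are slightly more explicit than the paper on the first assertion, spelling out how the steadiness of the Bryant soliton upgrades the pointwise inequality $R(\hat p_k,\hat t_k)\ge(1-k^{-1})R_{\max}(\hat t_k)$ to a lower bound holding on the whole backward interval $[\hat t_k-\hat\varepsilon_k^{-2}r_{\max}(\hat t_k)^2,\hat t_k]$; the paper leaves this to the reader.
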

\begin{proof}
The Harnack inequality \cite{Bre09} implies $R_{\max}(t) \leq R_{\max}(\hat t_k)$. The remaining statements follow from Proposition \ref{bryant_by_harnack} and the structure result Theorem \ref{structure_thm} in Appendix A. 
\end{proof}

\textbf{From now on, we assume that the ancient solution $(M, g(t))$ is not rotationally symmetric.} The proof of Theorem \ref{main} will be by contradiction. By Theorem \ref{structure_thm} and Corollary \ref{structure_cor}, we can find a sequence of positive real numbers $\varepsilon_k$ with the following properties: 
\begin{enumerate}
\item[$\bullet$] $\varepsilon_k \to 0$.
\item[$\bullet$] $\varepsilon_k \geq 2 \hat \varepsilon_k$. 
\item[$\bullet$] If a spacetime point $(x, t)$ satisfies $R(x, t) \leq \hat \varepsilon_k R_{\max}(t)$, then $(x, t)$ lies at the center of an evolving $\varepsilon_k^2$-neck. 
\end{enumerate}
For each $k$ large, we define 
\[
t_k = \inf \{t \in [\hat t_k, 0] : \text{the flow is not $\varepsilon_k$-symmetric at time $t$}\}. 
\]
For abbreviation, let $r_k = r_{\max}(t_k) = R_{\max}(t_k)^{-\frac{1}{2}}$. Note that by the Harnack inequality, $R_{\max}(t) \leq r_k^{-2}$ for all $t \leq t_k$. 

\begin{lemma}\label{epsilon_k_symmetric}
If $t \in [\hat t_k - \hat \varepsilon_k^{-2} r_{\max}(\hat t_k)^{2}, t_k)$, then $(M, g(t))$ is $\varepsilon_k$-symmetric. In particular, if $(x, t) \in M \times [\hat t_k - \hat \varepsilon_k^{-2}r_{\max}(\hat t_k)^{2}, t_k)$ is a spacetime point satisfying $\lambda_1(x, t) < \frac{1}{2} \theta R(x, t)$, then the point $(x, t)$ is $\varepsilon_k$-symmetric. 
\end{lemma}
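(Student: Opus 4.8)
The plan is to split the time interval $[\hat t_k - \hat\varepsilon_k^{-2} r_{\max}(\hat t_k)^2, t_k)$ at the time $\hat t_k$ and treat the two pieces separately, each by essentially invoking a definition.

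For $t \in [\hat t_k - \hat\varepsilon_k^{-2} r_{\max}(\hat t_k)^2, \hat t_k]$, the Corollary following Proposition~\ref{bryant_by_harnack} already asserts that $(M, g(t))$ is $\hat\varepsilon_k$-symmetric. Since every estimate appearing in Definition~\ref{cap_symmetry} (and in the notion of neck symmetry from Definition~\ref{neck_symmetry} to which it refers) has the form $(\,\cdot\,) \le \varepsilon^2$, the property of being $\varepsilon$-symmetric at a fixed time is monotone nondecreasing in $\varepsilon$; combined with $\varepsilon_k \ge 2\hat\varepsilon_k$ this upgrades $\hat\varepsilon_k$-symmetry to $\varepsilon_k$-symmetry on this subinterval. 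For $t \in [\hat t_k, t_k)$, the conclusion is immediate from the definition of $t_k$: since $t_k$ is the infimum of the set of times in $[\hat t_k, 0]$ at which the flow fails to be $\varepsilon_k$-symmetric, and $\hat t_k \le t < t_k$, the time $t$ does not belong to that set, so $(M, g(t))$ is $\varepsilon_k$-symmetric. Since $t_k \ge \hat t_k$ by construction, these two subintervals together cover $[\hat t_k - \hat\varepsilon_k^{-2} r_{\max}(\hat t_k)^2, t_k)$, which proves the first assertion.

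For the ``in particular'' statement, fix a spacetime point $(x,t)$ with $t$ in the stated interval and $\lambda_1(x,t) < \frac{1}{2} \theta R(x,t)$. By what we just proved, $(M, g(t))$ is $\varepsilon_k$-symmetric, so there is a compact domain $D \subset M$ and a family of vector fields as in Definition~\ref{cap_symmetry}. The second bullet of that definition says $\lambda_1(y, t) > \frac{1}{2} \theta R(y,t)$ for every $y \in D$; hence our point $x$ lies in $M \setminus D$. The fourth bullet of Definition~\ref{cap_symmetry} then gives precisely that $(x,t)$ is $\varepsilon_k$-symmetric in the sense of Definition~\ref{neck_symmetry}, as claimed.

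There is no genuine analytic obstacle here: the lemma is a bookkeeping statement tying together the definition of $t_k$, the Corollary following Proposition~\ref{bryant_by_harnack}, and the internal structure of Definition~\ref{cap_symmetry}. The only points requiring a word of care are the monotonicity of the symmetry notion in the parameter $\varepsilon$ (needed to pass from $\hat\varepsilon_k$ to $\varepsilon_k$ using $\varepsilon_k \ge 2\hat\varepsilon_k$) and the elementary remark that $t_k \ge \hat t_k$, so that the infimum defining $t_k$ controls precisely the portion of the interval lying above $\hat t_k$.
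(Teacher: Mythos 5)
Your proposal is correct and follows essentially the same route as the paper's own argument: split the interval at $\hat t_k$, use the corollary after Proposition~\ref{bryant_by_harnack} together with $\hat\varepsilon_k \le \tfrac{1}{2}\varepsilon_k$ on the left piece and the definition of $t_k$ on the right piece, then deduce the pointwise statement from the second and fourth bullets of Definition~\ref{cap_symmetry}. The only difference is that you spell out explicitly the monotonicity of the symmetry notion in $\varepsilon$, which the paper leaves implicit.
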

\begin{proof}
By the corollary above and the property $\hat \varepsilon_k \leq \frac{1}{2} \varepsilon_k$,  $(M, g(t))$ is $\varepsilon_k$-symmetric for every $t \in [\hat t_k - \hat \varepsilon_k^{-2}r_{\max}(\hat t_k)^2, \hat t_k]$. On the other hand, by definition of $t_k$, $(M, g(t))$ is $\varepsilon_k$-symmetric for every $t \in [\hat t_k, \hat t_k)$. Consequently, for any $t \in [\hat t_k - \hat \varepsilon_k^{-2}r_{\max}(\hat t_k)^{2}, t_k)$, Definition \ref{cap_symmetry} implies that any point $x \in M$ satisfying $\lambda_1(x, t) < \frac{1}{2} \theta R(x, t)$ must be $\varepsilon_k$-symmetric. 
\end{proof}

\begin{lemma}
The sequence $t_k$ satisfies $\lim_{k \to \infty} t_k = -\infty$. 
\end{lemma}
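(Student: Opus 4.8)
The plan is to argue by contradiction, using only Lemma~\ref{symmetric}, Lemma~\ref{epsilon_k_symmetric}, the fact that $\hat t_k \to -\infty$ established in Proposition~\ref{bryant_by_harnack}, and the fact that rotational symmetry is preserved under a complete Ricci flow with bounded curvature. First I would note that $t_k$ is genuinely a real number in $[\hat t_k, 0]$: by the standing assumption $(M,g(t))$ is not rotationally symmetric, hence neither is $(M,g(0))$ (if it were, propagation of isometries would make the whole solution rotationally symmetric), so the contrapositive of Lemma~\ref{symmetric} at the fixed time $0$ produces some $\varepsilon > 0$ for which the flow is not $\varepsilon$-symmetric at time $0$; since $\varepsilon$-symmetry is weaker than $\varepsilon_k$-symmetry once $\varepsilon_k \le \varepsilon$, the flow is also not $\varepsilon_k$-symmetric at time $0$ for all large $k$, so the set in the definition of $t_k$ is nonempty.

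Now suppose, for contradiction, that $t_k$ does not tend to $-\infty$. Then there exist a constant $\Lambda > 0$ and a subsequence, which I do not relabel, with $t_k > -\Lambda$ for every $k$. Since $\hat t_k \to -\infty$, for all $k$ sufficiently large we have $\hat t_k \le -\Lambda < t_k$, so $-\Lambda$ lies in the window $[\hat t_k - \hat\varepsilon_k^{-2} r_{\max}(\hat t_k)^2,\, t_k)$ on which Lemma~\ref{epsilon_k_symmetric} applies, and hence $(M, g(-\Lambda))$ is $\varepsilon_k$-symmetric. Given any $\varepsilon > 0$, choosing $k$ large enough that $\varepsilon_k \le \varepsilon$ and using that $\varepsilon_k$-symmetry (with the same domain $D$ and vector fields) implies $\varepsilon$-symmetry, we conclude that $(M, g(-\Lambda))$ is $\varepsilon$-symmetric. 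As $\varepsilon > 0$ was arbitrary, Lemma~\ref{symmetric} shows $(M, g(-\Lambda))$ is rotationally symmetric. Since any isometry of $g(-\Lambda)$ is an isometry of $g(t)$ for all $t$ — two Ricci flows on a complete manifold with bounded curvature that agree at one time agree for all time — the whole solution $(M, g(t))$ is then rotationally symmetric, contradicting the standing assumption. This forces $t_k \to -\infty$.

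I do not expect a genuine obstacle here: the substantive work has already been done, namely the Neck Improvement Theorem together with the cap analysis (packaged into Lemma~\ref{symmetric}) and the Type~II blow-up producing the Bryant soliton, which gives $\hat t_k \to -\infty$ in Proposition~\ref{bryant_by_harnack}. The only points that require a little care are the bookkeeping with the two small parameters: one checks that the fixed negative time $-\Lambda$ selected along the subsequence really lies in the interval $[\hat t_k - \hat\varepsilon_k^{-2} r_{\max}(\hat t_k)^2,\, t_k)$, which is immediate once $\hat t_k < -\Lambda < t_k$, and that $\varepsilon_k$-symmetry degrades monotonically to $\varepsilon$-symmetry, so that the hypothesis of Lemma~\ref{symmetric} is met in the limit $k \to \infty$ at the single time $-\Lambda$.
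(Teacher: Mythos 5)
Your proof is correct and follows essentially the same route as the paper (which simply references Lemma 9.9 of \cite{Bre18}): assume a subsequence of $t_k$ stays bounded below, use Lemma \ref{epsilon_k_symmetric} to get $\varepsilon_k$-symmetry at some fixed earlier time, pass $k \to \infty$ and invoke Lemma \ref{symmetric} to deduce rotational symmetry there, then propagate by uniqueness to contradict the standing assumption. The only cosmetic difference is that the paper establishes rotational symmetry at every $t < \limsup t_k$ (so only forward uniqueness of the flow is invoked), whereas you pin it down at a single time $-\Lambda$ and then appeal to ``agree at one time, agree for all time,'' which for the earlier times $t < -\Lambda$ implicitly also uses backward uniqueness (Kotschwar); this is true for complete bounded-curvature Ricci flows, but you could avoid it entirely by running your argument at every $t \le -\Lambda$, as the paper does. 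Your preliminary remark checking that the set defining $t_k$ is nonempty is a sensible addition.
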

\begin{proof}
As in the proof of Lemma 9.10 in \cite{Bre20}, if $\limsup_{k \to \infty} t_k > -\infty$, then $(M, g(t))$ must be rotationally symmetric for all $t < \limsup_{k \to \infty} t_k$ and therefore rotationally symmetric for all $t$, contradictory to our assumption.
\end{proof}

In the next step, we show rescalings of the solution at time $t_k$ converge to the Bryant soliton. This time, we use the classification of rotationally symmetric ancient solutions in \cite{LZ20} to deduce the result. 

\begin{proposition}
There exists a sequence of points $p_k \in M$ with the following properties. If we define rescaled solutions of the Ricci flow by $g_k(t) = r_k^{-2}g(t_k + r_k^{2}t)$, then the solutions $(M, g_k(t), p_k)$ (subsequentially) converge to the Bryant soliton in the pointed Cheeger-Gromov sense. Moreover, the points $p_k$ converge to the tip of the Bryant soliton, and $r_k^2 R(p_k, t_k) \to 1$ as $k \to \infty$. 
\end{proposition}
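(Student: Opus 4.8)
The plan is to realize $p_k$ as a point of near-maximal scalar curvature in the cap region of $(M,g(t_k))$, to extract a pointed Cheeger--Gromov limit of the rescaled flows $g_k(t)=r_k^{-2}g(t_k+r_k^2t)$, to prove this limit is rotationally symmetric by passing the approximate symmetry vector fields available at times $t<t_k$ to the limit, and then to invoke the classification of rotationally symmetric ancient $\kappa$-solutions (Corollary \ref{li_zhang_pic}, together with Lemma \ref{li-zhang-pic}) to identify the limit as the Bryant soliton.

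\emph{Extraction of the limit.} By Theorem \ref{structure_thm}, Corollary \ref{structure_cor}, and Proposition \ref{neck_detection}, the set $\Omega_k:=\{x\in M:\lambda_1(x,t_k)\ge\tfrac12\theta R(x,t_k)\}$ is compact with $\mathrm{diam}_{g(t_k)}(\Omega_k)\le C r_k$, every point outside $\Omega_k$ lies at the center of an evolving $\varepsilon_1$-neck, and $R(\cdot,t_k)\ge c\,R_{\max}(t_k)$ on $\Omega_k$; moreover $\Omega_k\ne\emptyset$, since otherwise the structure theorem would force $(M,g(t))$ to be a quotient of a shrinking cylinder, contradicting the strictly PIC2 hypothesis. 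Choose $p_k\in\Omega_k$ with $R(p_k,t_k)=\max_{\Omega_k}R(\cdot,t_k)$, so that $r_k^2R(p_k,t_k)\ge c>0$. Since $(M,g(t))$ is an ancient $\kappa$-solution, the rescaled flows $g_k(t)$ are $\kappa$-noncollapsed, weakly PIC2 and $\alpha$-uniformly PIC, with scalar curvature at most $r_k^2R_{\max}(t_k)=1$ on $(-\infty,0]$; Hamilton's compactness theorem yields a subsequential limit $(M_\infty,g_\infty(t),p_\infty)$, again an ancient $\kappa$-solution, which is nonflat since $R_{g_\infty}(p_\infty,0)=\lim_kr_k^2R(p_k,t_k)\ge c>0$ and noncompact since the necks attached to $\Omega_k$ have uniformly bounded geometry and reach arbitrarily large rescaled distance from $p_k$. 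Passing $\lambda_1(p_k,t_k)\ge\tfrac12\theta R(p_k,t_k)$ to the limit gives $\lambda_1^{g_\infty}(p_\infty,0)\ge\tfrac12\theta R_{g_\infty}(p_\infty,0)>0$, so $(M_\infty,g_\infty)$ is not a quotient of a shrinking cylinder.

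\emph{Rotational symmetry of the limit.} Fix $s<0$. For $k$ large, $t_k+sr_k^2\in[\hat t_k-\hat\varepsilon_k^{-2}r_{\max}(\hat t_k)^2,\,t_k)$, since $t_k\ge\hat t_k$ forces $r_{\max}(t_k)\le r_{\max}(\hat t_k)$ while $\hat\varepsilon_k^{-2}\to\infty$; hence by Lemma \ref{epsilon_k_symmetric} the flow is $\varepsilon_k$-symmetric at time $t=t_k+sr_k^2$. Definitions \ref{cap_symmetry} and \ref{neck_symmetry} then provide a time-independent family $\mathcal U_k=\{U_k^{(a)}:1\le a\le{n\choose 2}\}$ on all of $M$, with $\sum_{l=0}^2|D^l\mathcal L_{U_k^{(a)}}(g(t))|^2$ bounded by $C\varepsilon_k^2$ at the relevant curvature scale, with the approximate orthonormality over leaves of Hamilton's CMC foliation, and with uniformly bounded $C^2$ norm (being close to a standard family of rotational vector fields on the corresponding model geometry, the cylinder on the necks and the Bryant soliton on the cap by Proposition \ref{bryant_by_harnack}). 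After rescaling by $r_k$ --- the scales $r_{\max}(t_k)$, $r_{\mathrm{neck}}$ and $r_k$ being comparable on sets of bounded rescaled size about $p_\infty$ --- the $\mathcal U_k$ pass to a limiting family $\mathcal U_\infty=\{U_\infty^{(a)}\}$ on $M_\infty$ with $\mathcal L_{U_\infty^{(a)}}(g_\infty(s))=0$, and, because $\varepsilon_k\to0$, the surviving orthonormality relation shows they are linearly independent Killing fields spanning a subalgebra of the isometry algebra isomorphic to $so(n)$. Thus $(M_\infty,g_\infty(s))$ is rotationally symmetric, and since the Ricci flow preserves isometries, $(M_\infty,g_\infty(t))$ is rotationally symmetric for all $t\le0$.

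\emph{Identification, the curvature normalization, and the main obstacle.} As $(M_\infty,g_\infty)$ is weakly PIC2 and not locally reducible (having $\lambda_1>0$ at a point), Hamilton's strong maximum principle makes it strictly PIC2, so by Lemma \ref{li-zhang-pic} it has positive curvature operator and by Corollary \ref{li_zhang_pic} it is, up to scaling, the Bryant soliton. Since $\sup_MR_{g_k}(\cdot,0)=1$ for all $k$, we have $\sup_{M_\infty}R_{g_\infty}(\cdot,0)\le1$, while $R_{g_\infty}(p_\infty,0)=\lim_kr_k^2R(p_k,t_k)$; as the maximum of $R$ on the Bryant soliton is attained only at the tip, $p_\infty$ is the tip, $\lim_kr_k^2R(p_k,t_k)=1$, and $p_k$ converges to the tip in the Cheeger--Gromov sense. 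I expect the rotational symmetry step to be the crux: one must verify that the almost-Killing vector fields furnished by $\varepsilon_k$-symmetry --- assembled from the cap and from the CMC-foliated necks, with estimates stated relative to the curvature scale --- survive both the rescaling and the Cheeger--Gromov limit with enough regularity to become genuine Killing fields, and that the orthonormality condition is strong enough that the limiting family spans a full copy of $so(n)$ rather than a proper subalgebra.
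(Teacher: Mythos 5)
Your proposal is correct and follows essentially the same route as the paper: extract a Cheeger--Gromov limit of the rescalings centered at (near-)maximal-curvature points, use the structure/noncollapsing results plus the strong maximum principle to see the limit is a strictly PIC2 noncompact ancient $\kappa$-solution, pass the $\varepsilon_k$-almost-Killing fields (available for $t<t_k$ by the definition of $t_k$) to the limit to obtain rotational symmetry, and invoke Corollary \ref{li_zhang_pic} to identify the Bryant soliton and normalize $r_k^2R(p_k,t_k)\to 1$. The only cosmetic difference is that you fix $p_k$ in advance as a near-max-curvature point of the cap, while the paper first chooses auxiliary centers $q_k$ with $\liminf\lambda_1(q_k,t_k)/R(q_k,t_k)>0$ and only afterwards defines $p_k$ as points converging to the tip of the limiting Bryant soliton; both choices satisfy the same key lower bound on $\lambda_1/R$ and yield the same limit, so the arguments are interchangeable.
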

\begin{proof}
The proof begins precisely as in the proof of Proposition 9.11 in \cite{Bre20}. Not every point in $M$ can lie at the center of a neck at time $t_k$. Thus, we can find a sequence of points $q_k \in M$ such that $\liminf_{k \to \infty} \frac{\lambda_1(q_k, t_k)}{R(q_k, t_k)} > 0$. By Corollary \ref{structure_cor}, $R(q_k, t_k)$ is comparable to $R_{\max}(t_k)$.  Hence, $\liminf_{k \to \infty} r_k^2R(q_k, t_k) > 0$ and therefore $\liminf_{k \to \infty} r_k^2 \lambda_1(q_k, t_k) > 0$. Now if we rescale the solution by $r_k^{-1}$, then after passing to a subsequence, the rescaled solutions $(M, g_k(t), p_k)$ converge as $k \to \infty$ to a noncompact ancient $\kappa$-solution $(M^{\infty}, g^{\infty}(s))$. Since $\liminf_{k \to \infty} r_k^2 \lambda_1(q_k, t_k) > 0$, the limit does not split a line and the strong maximum principle implies the limit must be strictly PIC2.  From here, the proof in \cite{Bre20} shows that solution $(M^{\infty}, g^{\infty}(s))$ must be rotationally symmetric. Therefore, by the main result of \cite{LZ20} (see Corollary \ref{li_zhang_pic}), the limit must be the Bryant soliton. Let $p_{\infty} \in M^{\infty}$ denote the tip of the limiting soliton. The sequence of points $p_k$ is any sequence of points in $M$ converging to $p_{\infty}$. As in \cite{Bre20}, $R_{g^{\infty}(0)}(p_{\infty}) = 1$, and thus   $r_k^2 R(p_k, t_k) \to 1$ as $k \to \infty$. 
\end{proof}

We will use the approximate soliton potential $f$ as a proxy for the distance function.

\begin{corollary}\label{soliton_approximation}
There exists a sequence of positive real numbers $\delta_k \to 0$ such that $\delta_k \geq 2 \varepsilon_k$ for each $k$ and the following statements hold when $k$ is sufficiently large: 
\begin{enumerate}
\item[$\bullet$] For each $t \in [t_k - \delta_k^{-1}r_k^2, t_k]$, we have $\frac{1}{n}(1 - \delta_k) g \leq r_k^2\mathrm{Ric} \leq (1 + \delta_k)\frac{1}{n} g$ at the point $(p_k, t)$. 
\item[$\bullet$] The scalar curvature satisfies 
\[
\frac{1}{2K}(r_k^{-1} d_{g(t)}(p_k, x) + 1)^{-1} \leq r_k^2 R(x, t) \leq 2K( r_k^{-1} d_{g(t)}(p_k, x) + 1)^{-1}
\]
 for all points $(x, t) \in B_{g(t_k)}(p_k, \delta_k^{-1} r_k) \times  [t_k - \delta_k^{-1}r_k^2, t_k]$. 
\item[$\bullet$] There exists a nonnegative function $ f: B_{g(t_k)}(p_k, \delta_k^{-1} r_k) \times  [t_k - \delta_k^{-1}r_k^2, t_k] \to \mathbb{R}$ (depending upon $k$) such that 
\[
|\mathrm{Ric} - D^2 f| \leq \delta_k r_k^{-2}, \quad |\Delta f + |\nabla f|^2 - r_k^{-2}| \leq \delta_k r_k^{-2}, \quad\text{and} \quad  |\frac{\partial}{\partial t} f + |\nabla f|^2 | \leq \delta_k r_k^{-2}.
\]
\item[$\bullet$] The function $f$ satisfies 
\[
\frac{1}{2K}(r_k^{-1} d_{g(t)}(p_k, x) + 1) \leq f(x, t) + 1 \leq 2K (r_k^{-1} d_{g(t)}(p_k, x) + 1)
\]
 for all points $(x, t) \in B_{g(t_k)}(p_k, \delta_k^{-1} r_k) \times  [t_k - \delta_k^{-1}r_k^2, t_k]$.
\end{enumerate}
Here, $K := K(n) \geq 10$ is a universal constant. 
\end{corollary}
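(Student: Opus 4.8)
The statement is a package of quantitative asymptotics asserting that near $(p_k, t_k)$ the rescaled flow looks like the Bryant soliton, with an explicit soliton potential $f$ playing the role of (the logarithm of) the distance. The natural strategy is a contradiction/compactness argument built on the previous proposition: if no sequence $\delta_k \to 0$ worked, we could find $\bar\delta > 0$ and, along a subsequence, fixed parabolic regions of the rescaled solutions $g_k$ on which one of the four bulleted estimates fails by at least $\bar\delta$. But the previous proposition says $(M, g_k(t), p_k)$ converges in the pointed Cheeger--Gromov sense to the Bryant soliton $(M_{\mathrm{Bry}}, g_{\mathrm{Bry}}(t), p_\infty)$, with $r_k^2 R(p_k,t_k)\to 1$ and $p_k \to p_\infty$. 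So first I would fix the Bryant soliton as the limit, recall its standard properties (it is an eternal steady gradient soliton with a smooth potential $f_{\mathrm{Bry}}$ satisfying $\operatorname{Ric} = D^2 f_{\mathrm{Bry}}$, $\Delta f_{\mathrm{Bry}} + |\nabla f_{\mathrm{Bry}}|^2 = 1$ after scaling $R(p_\infty)=1$, and $\frac{\partial}{\partial t} f_{\mathrm{Bry}} = -|\nabla f_{\mathrm{Bry}}|^2$ under the flow), and recall the sharp curvature asymptotics of the Bryant soliton: $R_{\mathrm{Bry}}$ decays like $d(p_\infty, \cdot)^{-1}$ and $f_{\mathrm{Bry}} + 1 \sim d(p_\infty,\cdot)$, with two-sided bounds governed by a universal $K(n)$. (These are exactly the facts used in \cite{Bre18}; the higher-dimensional Bryant soliton has the same qualitative behaviour, see \cite{Bre14} and Appendix A.)

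Second, I would transfer each estimate from the limit to the $g_k$ for $k$ large. The Ricci pinching at the tip, $\frac{1-\delta_k}{n} g \le r_k^2 \operatorname{Ric} \le \frac{1+\delta_k}{n} g$ at $(p_k,t)$, is immediate: at the tip of the Bryant soliton $\operatorname{Ric} = \frac{1}{n} R\, g = \frac{1}{n} g$ by symmetry, and $C^\infty_{\mathrm{loc}}$ convergence of $g_k$ on a fixed-size parabolic neighborhood of $p_\infty$ (the tip stays in a bounded region because $R$ is bounded below there by the soliton asymptotics) upgrades this to the stated inequality on $[t_k - \delta_k^{-1} r_k^2, t_k]$ after choosing $\delta_k \to 0$ slowly enough that the convergence has ``caught up.'' The two-sided scalar curvature bound is the Bryant soliton's curvature asymptotic pulled back: on compact subsets it follows from $C^0_{\mathrm{loc}}$ convergence, and on the far region (distances up to $\delta_k^{-1} r_k$, which is unbounded in rescaled coordinates) one uses the structure results of Appendix A --- every far point lies on an $\varepsilon_k^2$-neck whose curvature scale is comparable to its distance from the cap --- to get the $(d+1)^{-1}$ decay uniformly, with constant $K(n)$; this is exactly Perelman's / Brendle's bounded-curvature-at-bounded-distance estimate for $\kappa$-solutions. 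The construction of $f$ is the content of \cite{Bre14}-style arguments: take $f$ to be essentially $-\log$ of a suitably normalized scalar curvature, or solve the elliptic equation $\Delta f = R$ with the right asymptotic boundary condition; the soliton identities $|\operatorname{Ric} - D^2 f|$, $|\Delta f + |\nabla f|^2 - r_k^{-2}|$, and $|\partial_t f + |\nabla f|^2|$ being $O(\delta_k r_k^{-2})$ follow because on compact regions $g_k$ is $\delta_k$-close to the genuine soliton (where these vanish), and on the far necks these quantities are controlled by the $\varepsilon_k^2$-neck estimates (on an exact shrinking cylinder the cylinder's own potential makes them vanish too). Finally the matching $f + 1 \sim d$ bound follows by integrating $|\nabla f|^2 = 1 - \Delta f = 1 - R + O(\delta_k r_k^{-2}) \approx 1$ along minimizing geodesics from $p_k$, again with the universal $K(n)$ from the soliton.

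Third, I would run the contradiction: suppose the corollary fails, so for some $\bar\delta>0$ there is no valid $\delta_k$; then along a subsequence one of the four estimates fails at scale $\bar\delta$ at some point within the allowed region. If that point stays at bounded rescaled distance from $p_k$, $C^\infty_{\mathrm{loc}}$ convergence to the Bryant soliton (on which all four hold exactly, or with the sharp $K(n)$) gives a contradiction. If the point escapes to infinity in rescaled coordinates, the neck structure from Appendix A (together with the Neck Detection Proposition \ref{neck_detection} and the already-available $\varepsilon_k$-symmetry from Lemma \ref{epsilon_k_symmetric}, though really just the quantitative neck closeness) forces the failing quantity to satisfy the stated bound with margin, again a contradiction. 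Then diagonalize over $\bar\delta \to 0$ to extract the sequence $\delta_k \to 0$, and arrange $\delta_k \ge 2\varepsilon_k$ by simply enlarging $\delta_k$ if necessary (which only weakens the estimates, so it is harmless).

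\textbf{The main obstacle.} The delicate point is not the compact region --- that is routine given the previous proposition --- but obtaining the estimates \emph{uniformly out to distance $\delta_k^{-1} r_k$}, i.e. on regions that are genuinely large (in fact unbounded) in the rescaled picture and therefore not captured by Cheeger--Gromov convergence to the soliton. This is where one must invoke the full strength of the higher-dimensional structure theory for ancient $\kappa$-solutions from Appendix A: that outside the cap the solution is a long, precisely controlled neck whose radius grows like the distance, that curvature is bounded at bounded distance, and that the soliton potential of the limiting Bryant soliton has the sharp two-sided asymptotics with a dimension-only constant $K(n)$. Sewing the compact (soliton) estimate and the far (neck) estimate together smoothly --- in particular producing a single function $f$ defined on the whole region $B_{g(t_k)}(p_k, \delta_k^{-1} r_k) \times [t_k - \delta_k^{-1} r_k^2, t_k]$ satisfying all the soliton identities with a uniform $O(\delta_k r_k^{-2})$ error --- is the real work, and it is exactly the higher-dimensional analogue of the construction in Section 9 of \cite{Bre18}, relying on Proposition A.10 (asymptotic cylindricity) for the crucial input.
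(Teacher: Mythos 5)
The core idea is right and matches the paper's: pull back the exact Bryant soliton potential $f_{\mathrm{Bry}}$ (which satisfies $\Ric = D^2 f_{\mathrm{Bry}}$, $\Delta f_{\mathrm{Bry}} + |\nabla f_{\mathrm{Bry}}|^2 = 1$, $\partial_t f_{\mathrm{Bry}} + |\nabla f_{\mathrm{Bry}}|^2 = 0$ exactly, and has the sharp two-sided asymptotics $R_{\mathrm{Bry}} \sim d^{-1}$, $f_{\mathrm{Bry}} + 1 \sim d$ with universal $K(n)$) along the Cheeger--Gromov diffeomorphisms from Proposition 5.10, and let $\delta_k$ go to zero slowly enough. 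The paper's proof is just the citation of \cite{Bre18}, Corollary 9.12, and this is the same diagonal argument.

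Where you go astray is the declared ``main obstacle.'' You assert that the region $B_{g(t_k)}(p_k, \delta_k^{-1} r_k) \times [t_k - \delta_k^{-1} r_k^2, t_k]$ is ``not captured by Cheeger--Gromov convergence to the soliton'' and therefore requires a separate neck-structure argument from Appendix A and a gluing step. This obstacle is spurious. Pointed Cheeger--Gromov convergence gives, for every radius $R$ and tolerance $\epsilon$, a $k_0(R,\epsilon)$ beyond which the rescaled flows agree with the Bryant soliton to accuracy $\epsilon$ on $B(p_\infty, R) \times [-R, 0]$ in as many derivatives as one likes; one simply diagonalizes to obtain $\delta_k \to 0$ slowly enough that the approximation is $\delta_k$-good out to rescaled radius $\delta_k^{-1}$ and rescaled time $\delta_k^{-1}$ into the past. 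You actually invoke exactly this ``choose $\delta_k$ slowly enough that convergence has caught up'' reasoning for the first bullet, but then abandon it for the remaining three and substitute an unnecessary cap/neck decomposition. This is not wrong per se, but it forces you to re-derive the $K(n)$-asymptotics from the neck estimates and then sew the two regions together, which is extra and harder-to-control work compared with simply transporting $f_{\mathrm{Bry}}$ on the whole (slowly growing) region. Likewise, the contradiction-then-diagonalize-over-$\bar\delta$ framing is a roundabout restatement of the direct diagonal argument; it works, but the paper does not need it. One minor slip: for the Bryant soliton in dimension $n\ge 3$, $f_{\mathrm{Bry}} \sim d \sim R^{-1}$, not $f_{\mathrm{Bry}} \sim -\log R$ (that is the two-dimensional cigar); your alternative suggestion (solving an elliptic equation, or just pulling back $f_{\mathrm{Bry}}$ directly) is the correct one.
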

\begin{proof}
The proof is analogous to the proof of Corollary 9.12 in \cite{Bre20}. 
\end{proof}

As an immediate corollary of the corollary above, we obtain:

\begin{corollary}\label{curvature_bounds}
For each $t \in [t_k - \delta_k^{-1} r_k^2, t_k]$, we have $(1 - \delta_k^{-1})r_k^{-2} \leq R(p_k, t) \leq R_{\max}(t) \leq r_k^{-2}$.
\end{corollary}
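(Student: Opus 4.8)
The plan is to derive this corollary directly from Corollary~\ref{soliton_approximation}, which records the relevant scalar-curvature bounds at the tip $p_k$ over the parabolic neighborhood $B_{g(t_k)}(p_k, \delta_k^{-1}r_k)\times[t_k-\delta_k^{-1}r_k^2, t_k]$. First I would recall the two sources of upper and lower control. The upper bound $R_{\max}(t)\le r_k^{-2}$ for all $t\le t_k$ is immediate: by Hamilton's Harnack inequality (Theorem~\ref{harnack_inequality}), $R_{\max}(t)$ is nondecreasing in $t$, so $R_{\max}(t)\le R_{\max}(t_k)=r_k^{-2}$ for $t\le t_k$ by the definition $r_k=R_{\max}(t_k)^{-1/2}$. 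This in turn forces $R(p_k,t)\le R_{\max}(t)\le r_k^{-2}$.

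Next I would extract the lower bound at $p_k$ from the Ricci pinching at the tip in the first bullet of Corollary~\ref{soliton_approximation}. For each $t\in[t_k-\delta_k^{-1}r_k^2, t_k]$ we have $\frac{1-\delta_k}{n}g \le r_k^2\Ric \le \frac{1+\delta_k}{n}g$ at $(p_k,t)$; tracing this inequality gives $(1-\delta_k)r_k^{-2}\le R(p_k,t)\le (1+\delta_k)r_k^{-2}$. In particular $R(p_k,t)\ge (1-\delta_k)r_k^{-2}$, which is stronger than the claimed $(1-\delta_k^{-1})r_k^{-2}$ (note $\delta_k\to 0$, so $1-\delta_k^{-1}$ is eventually very negative and the stated inequality is almost vacuous as a lower bound — the substantive content is that $R(p_k,t)$ is close to $r_k^{-2}$). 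Alternatively, one could read the lower bound off the second bullet of Corollary~\ref{soliton_approximation} by setting $x=p_k$, which gives $r_k^2 R(p_k,t)\ge \frac{1}{2K}$, but the tracing argument yields the sharper constant.

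Combining the two inequalities, for $t\in[t_k-\delta_k^{-1}r_k^2, t_k]$ we obtain
\[
(1-\delta_k)r_k^{-2} \le R(p_k,t) \le R_{\max}(t) \le R_{\max}(t_k) = r_k^{-2},
\]
which gives the corollary (with room to spare in the leftmost inequality). There is essentially no obstacle here: the statement is a bookkeeping consequence of results already in hand, and the only thing to be careful about is keeping track of whether the index $t$ ranges over $t\le t_k$ (for the Harnack-monotonicity part) or over the narrower window $[t_k-\delta_k^{-1}r_k^2, t_k]$ (for the pinching part, which is the binding constraint in the statement). I would simply state "This is immediate from Corollary~\ref{soliton_approximation} and the monotonicity of $R_{\max}$" and include the one-line trace computation for completeness.
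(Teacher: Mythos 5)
Your proof is correct and is exactly the argument the paper leaves implicit (the paper offers no proof, merely "As an immediate corollary of the corollary above, we obtain"). You also correctly identified what must be a typo in the stated lower bound: with $\delta_k \to 0$, the quantity $(1-\delta_k^{-1})r_k^{-2}$ is eventually very negative and the inequality would be vacuous; the proof of Lemma~\ref{symmetry_outside_lambda}, which invokes this corollary to deduce $R(\bar x, \bar t) \geq \tfrac{1}{2}\hat\varepsilon_k r_k^{-2}$, clearly needs the bound $(1-\delta_k)r_k^{-2}\le R(p_k,t)$ that you derive by tracing the first bullet of Corollary~\ref{soliton_approximation}. Your two-line argument (Harnack monotonicity of $R_{\max}$ for the upper bound, trace of the Ricci pinching at $p_k$ for the lower bound) is precisely the intended reasoning.
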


\begin{lemma}\label{distance_estimate}
The time derivative of the distance function satisfies the estimate $0 \leq - \frac{d}{dt} d_{g(t)}(p_k, x) \leq 2 n r_k^{-1}$ for all $(x, t) \in M \times [t_k -\delta_k^{-1} r_k^2, t_k]$. 
\end{lemma}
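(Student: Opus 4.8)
The plan is to use the classical Hamilton--Perelman estimate for the evolution of distance under the Ricci flow; the only inputs are $\Ric \geq 0$ (which holds because weakly PIC2 forces nonnegative sectional curvature) and the uniform bound $R_{\max}(t) \leq r_k^{-2}$ for $t \leq t_k$ recorded in Corollary \ref{curvature_bounds}.

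First I would dispose of the lower bound $-\frac{d}{dt}\,d_{g(t)}(p_k,x)\geq 0$: since $\frac{\partial}{\partial t}g = -2\Ric \leq 0$, the metric is pointwise non-increasing in $t$, hence $t \mapsto d_{g(t)}(p_k,x)$ is non-increasing. For the regularity needed to speak of $\frac{d}{dt}$, note that on $[t_k - \delta_k^{-1}r_k^2, t_k]$ nonnegative sectional curvature together with Corollary \ref{curvature_bounds} gives $0 \leq \Ric \leq R\,g \leq r_k^{-2} g$, so $|\frac{\partial}{\partial t}g| \leq 2 r_k^{-2}$ and the metrics $g(t)$ are uniformly bi-Lipschitz equivalent on this interval; hence $t \mapsto d_{g(t)}(p_k,x)$ is Lipschitz, and it suffices to control its derivative at points of differentiability (equivalently, one works with the lower-left Dini derivative throughout, via the usual support-function argument).

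For the upper bound, fix such a $t$, put $d = d_{g(t)}(p_k,x)$, and let $\gamma:[0,d]\to M$ be a $g(t)$-unit-speed minimizing geodesic from $p_k$ to $x$. Differentiating the length of the fixed curve $\gamma$ in the metrics $g(s)$, and using that $\gamma$ minimizes at time $t$ (so $d_{g(s)}(p_k,x)\leq L_{g(s)}(\gamma)$ with equality at $s=t$), gives the standard inequality $-\frac{d}{dt}\,d_{g(t)}(p_k,x) \leq \int_0^d \Ric(\gamma',\gamma')\,ds$. It remains to show $\int_0^d \Ric(\gamma',\gamma')\,ds \leq 2n\,r_k^{-1}$. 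If $d \leq 2r_k$ this is immediate, since nonnegative sectional curvature gives $\Ric(\gamma',\gamma')\leq R \leq r_k^{-2}$, so the integral is at most $d\,r_k^{-2}\leq 2r_k^{-1}$. If $d > 2r_k$, I would run Perelman's cutoff argument: take a parallel orthonormal frame $e_1,\dots,e_{n-1}$ along $\gamma$ perpendicular to $\gamma'$, the piecewise-linear cutoff $\phi$ equal to $s/r_k$ on $[0,r_k]$, to $1$ on $[r_k,d-r_k]$, and to $(d-s)/r_k$ on $[d-r_k,d]$, and the test fields $V_i = \phi\,e_i$. Nonnegativity of the index form of the minimizing geodesic, summed over $i$, yields $\int_0^d \phi^2\,\Ric(\gamma',\gamma')\,ds \leq (n-1)\int_0^d (\phi')^2\,ds = 2(n-1)/r_k$. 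Since $1-\phi^2$ is supported on $[0,r_k]\cup[d-r_k,d]$ and $\Ric(\gamma',\gamma')\leq R \leq r_k^{-2}$ there,
\[
\int_0^d \Ric(\gamma',\gamma')\,ds \;\leq\; 2r_k\cdot r_k^{-2} + \frac{2(n-1)}{r_k} \;=\; \frac{2n}{r_k},
\]
which is the claimed bound.

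I do not expect a serious obstacle: the argument is entirely standard, and the only mildly delicate point is the possible non-smoothness of $t\mapsto d_{g(t)}(p_k,x)$, which is handled exactly as in Hamilton's and Perelman's work by the Lipschitz-in-time estimate above combined with the support-function/barrier formulation of the first-variation inequality. The choice $r_0 = r_k$ in the cutoff is precisely what balances the two contributions and produces the constant $2n$.
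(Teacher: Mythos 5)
Your proposal is correct and follows essentially the same route as the paper: the first-variation formula $-\frac{d}{dt}d_{g(t)}(p_k,x)=\int_0^\ell \Ric(\gamma',\gamma')\,ds$, the bound $0\leq\Ric\leq R_{\max}(t)\leq r_k^{-2}$ from the Harnack inequality, a direct estimate when $\ell\leq 2r_k$, and Perelman's cutoff/index-form estimate (Lemma 8.3(b) of \cite{Per02}) with $r_0=r_k$ when $\ell>2r_k$. The only difference is that you re-derive Perelman's estimate rather than citing it, obtaining $2n\,r_k^{-1}$ by bounding $1-\phi^2\leq 1$ on the cutoff collars, whereas the paper quotes the slightly sharper $(2(n-1)+\tfrac{4}{3})r_k^{-1}\leq 2n\,r_k^{-1}$.
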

\begin{proof}
Let $x \in M$ and let $\ell = d_{g(t)}(p_k, x)$. Let $\gamma : [0, \ell] \to M$ be a minimizing geodesic between $p_k$ and $x$ and let $X(s) = \gamma'(s)$. Then
\[
\frac{d}{dt} d_{g(t)}(p_k, x) = - \int_0^\ell \mathrm{Ric}(X, X) \, ds. 
\]
Now $0 \leq \mathrm{Ric}(X, X) \leq R_{\max}(t) \leq r_k^{-2}$, by the Harnack inequality. If $\ell > 2r_k$, then Lemma 8.3(b) in \cite{Per02} implies 
\[
0 \leq - \frac{d}{dt} d_{g(t)}(p_k, x) \leq (2(n-1) + \frac{4}{3})r_k^{-1} \leq 2n r_k^{-1}.
\]
Otherwise, since $\mathrm{Ric}(X,X) \leq r_k^{-2}$, we get
\[
0 \leq - \frac{d}{dt} d_{g(t)}(p_k, x) \leq r_k^{-2}\ell \leq 2r_k^{-1}.
\]
\end{proof}

Using Theorem \ref{structure_thm} and Corollary \ref{curvature_bounds} , we can choose a large positive real number $\Lambda$ depending upon $n$, $L$, and $\varepsilon_1$ with the following properties:
\begin{enumerate}
\item[$\bullet$] $L \sqrt{\frac{4n^3K}{\Lambda}} \leq 10^{-6}$.
\item[$\bullet$] if $(\bar x, \bar t) \in M \times [t_k - \delta_k^{-1} r_k^2, t_k]$ is a spacetime point satisfying $d_{g(\bar t)}(p_k, \bar x) \geq \Lambda r_k$, then $\lambda_1(x, t) < \frac{1}{2} \theta R(x, t)$ for all points $(x, t) \in B_{g(\bar t)}(\bar x, L \,r_{\mathrm{neck}}(\bar x, \bar t)) \times [\bar t - L \,r_{\mathrm{neck}}(\bar x, \bar t)^2, \bar t]$. 
\end{enumerate}
The idea is to first choose $\varepsilon_2$ small enough depending upon $\varepsilon_1$ so that the conclusion of the second condition holds if $(x, t)$ lies on an evolving $\varepsilon_2$-neck. Then we choose $\Lambda$ large enough so that every point outside $B_{g(\bar t)}(p_k, \Lambda r_k)$ lies at the center of an $\varepsilon_2$-neck. 

The next two results show that points sufficiently far away have improved symmetry. 

\begin{lemma}\label{symmetry_outside_lambda}
If $k$ is sufficiently large, then the following holds. If $(\bar x, \bar t) \in M \times [t_k - \delta_k^{-1} r_k^2, t_k]$ satisfies $d_{g(t_k)}(p_k, \bar x) \geq \Lambda r_k$, then $(\bar x, \bar t)$ is $\frac{\varepsilon_k}{2}$-symmetric. 
\end{lemma}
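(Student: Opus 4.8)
The plan is to obtain the conclusion as a direct application of the Neck Improvement Theorem (Theorem~\ref{neck_improvement}) at the point $(\bar x,\bar t)$, taking $\varepsilon=\varepsilon_k$; this is legitimate once $k$ is large, since $\varepsilon_k\to 0$ and hence eventually $\varepsilon_k\le\varepsilon_1$. The first thing to check is the $\varepsilon_1$-neck hypothesis. Since $\bar t\le t_k$ and the solution has nonnegative Ricci curvature, the distance $t\mapsto d_{g(t)}(p_k,\bar x)$ is nonincreasing, so $d_{g(\bar t)}(p_k,\bar x)\ge d_{g(t_k)}(p_k,\bar x)\ge \Lambda r_k$. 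By the defining property of $\Lambda$ this forces $\lambda_1(x,t)<\tfrac12\theta R(x,t)$ at every point $(x,t)\in B_{g(\bar t)}(\bar x, L\, r_{\mathrm{neck}}(\bar x,\bar t))\times[\bar t-L\, r_{\mathrm{neck}}(\bar x,\bar t)^2,\bar t]$, and then Proposition~\ref{neck_detection} shows that each such $(x,t)$ --- and in particular $(\bar x,\bar t)$ itself --- lies at the center of an evolving $\varepsilon_1$-neck.

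It remains to verify the second hypothesis of Theorem~\ref{neck_improvement}: every point of the parabolic neighborhood $B_{g(\bar t)}(\bar x, L\, r_{\mathrm{neck}}(\bar x,\bar t))\times[\bar t-L\, r_{\mathrm{neck}}(\bar x,\bar t)^2,\bar t)$ must be $\varepsilon_k$-symmetric. By Lemma~\ref{epsilon_k_symmetric} this holds for any $(x,t)$ in this set satisfying $\lambda_1(x,t)<\tfrac12\theta R(x,t)$ --- which we have just arranged --- provided also $t\in[\hat t_k-\hat\varepsilon_k^{-2}r_{\max}(\hat t_k)^2,t_k)$. The upper endpoint is automatic from $\bar t\le t_k$. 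For the lower endpoint I would bound the neck scale: the curvature lower bounds of Corollary~\ref{soliton_approximation} (combined, for points $\bar x$ not already contained in $B_{g(t_k)}(p_k,\delta_k^{-1}r_k)$, with the structure results of Appendix~A, or alternatively with the observation that such very far points already sit at the center of an $(\varepsilon_k/C)$-neck and are therefore directly $\tfrac{\varepsilon_k}{2}$-symmetric) give $R(\bar x,\bar t)\ge c(n)\hat\varepsilon_k^2 r_k^{-2}$, hence $L\, r_{\mathrm{neck}}(\bar x,\bar t)^2\le\tfrac12\hat\varepsilon_k^{-2}r_k^2$ for $k$ large. Feeding this together with $\bar t\ge t_k-\delta_k^{-1}r_k^2\ge\hat t_k-\delta_k^{-1}r_k^2$, the inequality $\delta_k^{-1}\le\tfrac14\hat\varepsilon_k^{-2}$ (which follows from $\delta_k\ge 2\varepsilon_k\ge 4\hat\varepsilon_k$), and the monotonicity $r_{\max}(\hat t_k)^2\ge r_k^2$ (Harnack) yields $\bar t-L\, r_{\mathrm{neck}}(\bar x,\bar t)^2\ge\hat t_k-\hat\varepsilon_k^{-2}r_{\max}(\hat t_k)^2$, as required.

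Once both hypotheses are in place, Theorem~\ref{neck_improvement} gives that $(\bar x,\bar t)$ is $\tfrac{\varepsilon_k}{2}$-symmetric, which is the assertion. The part I expect to be the main obstacle is the second paragraph: one has to be sure that the parabolic neighborhood forced on us by the (a priori large) neck scale $r_{\mathrm{neck}}(\bar x,\bar t)$ still fits inside the time interval on which $\varepsilon_k$-symmetry has been propagated. This reduces to a clean lower bound for $R(\bar x,\bar t)$ in units of $\hat\varepsilon_k^2 r_k^{-2}$, which is exactly what the soliton approximation near $p_k$ provides on the ball $B_{g(t_k)}(p_k,\delta_k^{-1}r_k)$; handling points outside that ball requires either the neck-structure estimates of Appendix~A or a separate, more elementary argument exploiting that such points sit on arbitrarily good necks, followed by unwinding the chain of inequalities relating $\varepsilon_k$, $\hat\varepsilon_k$, $\delta_k$, and $r_k$.
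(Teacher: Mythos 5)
Your overall strategy — verify the two hypotheses of the Neck Improvement Theorem at $(\bar x,\bar t)$ and conclude — is the right one, and the first paragraph (distance nonincreasing in $t$ plus the defining property of $\Lambda$ to place the whole parabolic neighborhood on necks with small $\lambda_1/R$) matches the paper. The gaps are in the second paragraph.

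First, the arithmetic as stated does not close. From $R(\bar x,\bar t)\ge c(n)\hat\varepsilon_k^2\,r_k^{-2}$ one gets
\[
r_{\mathrm{neck}}(\bar x,\bar t)^2=\frac{(n-1)(n-2)}{R(\bar x,\bar t)}\le \frac{(n-1)(n-2)}{c(n)}\,\hat\varepsilon_k^{-2}\,r_k^2,
\]
so that $L\,r_{\mathrm{neck}}(\bar x,\bar t)^2\le \frac{(n-1)(n-2)L}{c(n)}\hat\varepsilon_k^{-2}r_k^2$. Here $L$ and $c(n)$ are fixed and independent of $k$, so the inequality $L\,r_{\mathrm{neck}}^2\le \tfrac12\hat\varepsilon_k^{-2}r_k^2$ cannot be achieved ``for $k$ large''; it would require $\frac{(n-1)(n-2)L}{c(n)}\le\tfrac12$, which is false. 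You need a lower bound with only one power of $\hat\varepsilon_k$, namely $R(\bar x,\bar t)\ge c\,\hat\varepsilon_k\,r_k^{-2}$, so that $L\,r_{\mathrm{neck}}^2\le C L\,\hat\varepsilon_k^{-1}r_k^2$, and then the slack between $\hat\varepsilon_k^{-1}$ and $\hat\varepsilon_k^{-2}$ absorbs the constant as $k\to\infty$.

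Second, and more fundamentally, the correct case split is by curvature, not by distance, and you never actually invoke the third defining property of the sequence $\varepsilon_k$: if $R(x,t)\le\hat\varepsilon_k R_{\max}(t)$ then $(x,t)$ lies at the center of an evolving $\varepsilon_k^2$-neck. The paper's proof splits on whether $R(\bar x,\bar t)\le\hat\varepsilon_k R_{\max}(\bar t)$. In the small-curvature case, this property immediately places $(\bar x,\bar t)$ on an $\varepsilon_k^2$-neck, which is directly $\tfrac{\varepsilon_k}{2}$-symmetric, and no neck-scale estimate is required. In the large-curvature case, $R(\bar x,\bar t)\ge\hat\varepsilon_k R_{\max}(\bar t)\ge\tfrac12\hat\varepsilon_k r_k^{-2}$ (using Corollary~\ref{curvature_bounds}), which is exactly the one-power bound that closes the arithmetic. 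Your distance-based split does give a usable bound $R(\bar x,\bar t)\ge c\,\delta_k r_k^{-2}\ge 4c\,\hat\varepsilon_k r_k^{-2}$ inside $B_{g(t_k)}(p_k,\delta_k^{-1}r_k)$ via Corollary~\ref{soliton_approximation}, but for points outside that ball the claim that they ``already sit at the center of an $(\varepsilon_k/C)$-neck'' is not established — neck quality is controlled by the ratio $R(\bar x,\bar t)/R_{\max}(\bar t)$ and not by distance alone, so making this rigorous requires the curvature-based split anyway. The paper's argument is both simpler and complete, and it does not call on Corollary~\ref{soliton_approximation} at all in this lemma.
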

\begin{proof}
The proof is the same as the proof of Lemma 9.15 in \cite{Bre20}, except for a small difference in how we have defined the scale of a neck in higher dimensions. Let us briefly verify that the proof works here. 

If $R(\bar x, \bar t) \leq \hat \varepsilon_k R_{\max}(\bar t)$, then as in \cite{Bre20}, our choice of $\varepsilon_k$ ensure the point $(\bar x, \bar t)$ if $\frac{\varepsilon_k}{2}$-symmetric. On the other hand, suppose $R(\bar x, \bar t) \geq \hat \varepsilon_k R_{\max}(\bar t)$. For $k$ large, Corollary \ref{curvature_bounds} implies $R(\bar x, \bar t) \geq \frac{1}{2} \hat \varepsilon_k r_k^{-2}$. This means $r_{\mathrm{neck}}(\bar x, \bar t) \leq 2(n-1)(n-2) \hat \varepsilon_k^{-1}  r_k^2$. The Harnack inequality implies $r_k^{2} \leq r_{\max}(\hat t_k)^{2}$. Recall $\delta_k \geq 2 \varepsilon_k \geq 4 \hat{\varepsilon}_k$. Thus, 
\begin{align*}
\bar t - L \,r_{\mathrm{neck}}(\bar x, \bar t)^2 &\geq t_k - \delta_k^{-1} r_k^2 - 2(n-1)(n-2) L \hat \varepsilon_k^{-1} r_k^2 \\
& \geq \hat  t_k - \delta_k^{-1}r_{\max}(\hat t_k)^{2}  - 2(n-1)(n-2) L \hat \varepsilon_k^{-1} r_{\max}(\hat t_k)^{2} \\
& \geq \hat t_k - \hat \varepsilon_k^{-2} r_{\max}(\hat t_k)^{2},
\end{align*}
for $k$ sufficiently large. Consequently, by definition of $\Lambda$ and Lemma \ref{epsilon_k_symmetric} every point in $B_{g(\bar t)}(\bar x, L \,r_{\mathrm{neck}}(\bar x, \bar t)) \times [\bar t - L \,r_{\mathrm{neck}}(\bar x, \bar t)^2, \bar t)$ is $\varepsilon_k$-symmetric. Hence, by the Neck Improvement Theorem, the point $(\bar x, \bar t)$ is $\frac{\varepsilon_k}{2}$-symmetric. 
\end{proof}

\begin{proposition}\label{symmetry_at_infinity}
If $k$ is sufficiently large, then the following holds. If $(\bar x, \bar t) \in M \times [t_k - 2^{-j} \delta_k^{-1} r_k^2, t_k]$ satisfies $2^{\frac{j}{400}} \Lambda r_k \leq d_{g(\bar t)}(p_k, \bar x) \leq (400n^3KL)^{-j} \delta_k^{-1} r_k$, then $(\bar x, \bar t)$ is $2^{-j-1}\varepsilon_k$-symmetric. 
\end{proposition}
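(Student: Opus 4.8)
The plan is to argue by induction on $j$. The base case $j=0$ is essentially Lemma~\ref{symmetry_outside_lambda}: there the hypothesis $d_{g(t_k)}(p_k,\bar x)\ge\Lambda r_k$ yields $\tfrac{\varepsilon_k}{2}$-symmetry, and one passes to the hypothesis $d_{g(\bar t)}(p_k,\bar x)\ge\Lambda r_k$ at time $\bar t\le t_k$ using Lemma~\ref{distance_estimate} to compare the two distances (this parallels Proposition~9.16 in \cite{Bre18}). For the inductive step I would fix a point $(\bar x,\bar t)$ in the range prescribed for $j+1$, set $D:=d_{g(\bar t)}(p_k,\bar x)$, and let $r:=r_{\mathrm{neck}}(\bar x,\bar t)$, so that $R(\bar x,\bar t)=(n-1)(n-2)r^{-2}$. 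The goal is to verify the hypotheses of the Neck Improvement Theorem (Theorem~\ref{neck_improvement}) at $(\bar x,\bar t)$ with symmetry parameter $2^{-j-1}\varepsilon_k$; its conclusion then gives that $(\bar x,\bar t)$ is $\tfrac12\cdot 2^{-j-1}\varepsilon_k=2^{-(j+1)-1}\varepsilon_k$-symmetric, as desired.

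Two things must be checked. First, $(\bar x,\bar t)$ lies at the center of an evolving $\varepsilon_1$-neck: since $D\ge 2^{(j+1)/400}\Lambda r_k\ge\Lambda r_k$, the defining property of $\Lambda$ forces $\lambda_1(\bar x,\bar t)<\tfrac12\theta R(\bar x,\bar t)<\theta R(\bar x,\bar t)$, so this follows from Proposition~\ref{neck_detection}. Second, and this is the crux, every spacetime point $(x,t)\in B_{g(\bar t)}(\bar x,Lr)\times[\bar t-Lr^2,\bar t)$ must be $2^{-j-1}\varepsilon_k$-symmetric. For this I would invoke the inductive hypothesis (the statement for $j$), so I must check that each such $(x,t)$ lies in the $j$-range, i.e. $t\in[t_k-2^{-j}\delta_k^{-1}r_k^2,t_k]$ and $2^{j/400}\Lambda r_k\le d_{g(t)}(p_k,x)\le(400n^2KL)^{-j}\delta_k^{-1}r_k$, and that $\lambda_1(x,t)<\tfrac12\theta R(x,t)$ so that the point is a legitimate one to which the $j$-statement applies (again from the choice of $\Lambda$ together with $x\in B_{g(\bar t)}(\bar x,Lr)$ and $D\ge\Lambda r_k$).

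These inclusions follow by combining three inputs. (i) The curvature estimate of Corollary~\ref{soliton_approximation} gives $R(\bar x,\bar t)\ge\frac{1}{2K}r_k^{-2}(r_k^{-1}D+1)^{-1}$, hence $r^2\le 2(n-1)(n-2)K\,r_k^2(r_k^{-1}D+1)\le 4(n-1)(n-2)K\,r_kD$; combined with $D\ge\Lambda r_k$ and the defining inequality $L\sqrt{4n^2K/\Lambda}\le 10^{-6}$ this yields $Lr\le 10^{-6}D$ and $Lr^2\le 4(n-1)(n-2)KL\,r_kD$. (ii) Since $t\le\bar t$ and the distance to $p_k$ is nonincreasing in $t$ (because $\Ric\ge 0$), one gets $d_{g(t)}(p_k,x)\ge d_{g(\bar t)}(p_k,x)\ge D-Lr\ge(1-10^{-6})D$; as $1-10^{-6}>2^{-1/400}$ this gives the lower bound $d_{g(t)}(p_k,x)\ge 2^{-1/400}D\ge 2^{-1/400}\cdot 2^{(j+1)/400}\Lambda r_k=2^{j/400}\Lambda r_k$. (iii) For the upper bound and the time containment I would use Lemma~\ref{distance_estimate}: over $[\bar t-Lr^2,\bar t]$ the distance to $p_k$ changes by at most $2nr_k^{-1}\cdot Lr^2\le C(n)KL\,r_kD$, so $d_{g(t)}(p_k,x)\le D+Lr+C(n)KL\,r_kD$; feeding in $D\le(400n^2KL)^{-(j+1)}\delta_k^{-1}r_k$ and $Lr\le 10^{-6}D$, every term is a fixed ($n,K,L$)-dependent multiple of $(400n^2KL)^{-(j+1)}\delta_k^{-1}r_k$, and the value of the constant $400n^2KL$ is chosen precisely so that this multiple is absorbed, giving $d_{g(t)}(p_k,x)\le(400n^2KL)^{-j}\delta_k^{-1}r_k$; likewise $t\ge\bar t-Lr^2\ge t_k-2^{-(j+1)}\delta_k^{-1}r_k^2-4(n-1)(n-2)KL(400n^2KL)^{-(j+1)}\delta_k^{-1}r_k^2\ge t_k-2^{-j}\delta_k^{-1}r_k^2$, the doubling from $2^{-(j+1)}$ to $2^{-j}$ swallowing the extra term. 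The Neck Improvement Theorem then applies and the induction closes.

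The main obstacle is exactly the bookkeeping in (iii): one must thread the three separately tuned bases $2^{1/400}$, $2$, and $400n^2KL$ so that the errors incurred in passing from $(\bar x,\bar t)$ to a point of its Neck-Improvement parabolic neighborhood — a spatial displacement of size $Lr\lesssim\sqrt{r_kD}$ and a backward time displacement of size $Lr^2\lesssim r_kD$, over which distances to $p_k$ can only grow — are each dominated by the corresponding slack in the $j$-range constraints. The inequality $L\sqrt{4n^2K/\Lambda}\le 10^{-6}$ is what makes the spatial displacement negligible compared with $D$ (this is what allows the minuscule base $2^{1/400}$ on the lower distance bound), while the generous constant $400n^2KL$ on the upper side and the doubling $2^{-(j+1)}\to 2^{-j}$ on the temporal side absorb the $O_n(KL)$-factor growth coming from Lemma~\ref{distance_estimate}. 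Everything else — identifying the neck, checking that neighborhood points satisfy the Ricci-pinching inequality, and invoking Theorem~\ref{neck_improvement} — is routine.
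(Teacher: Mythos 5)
Your proof takes essentially the same inductive approach as the paper (the paper inducts from $j-1$ to $j$ rather than $j$ to $j+1$, which is a reindexing), and the bookkeeping in your steps (i)--(iii) matches the paper's computations. Two small slips worth noting, neither of which affects validity: in (iii) the backward-in-time distance distortion over $[\bar t - Lr^2,\bar t]$ is $2nr_k^{-1}Lr^2 \le C(n)KL\,D$ (the factor of $r_k$ cancels because $r^2 \le C(n)K\,r_kD$), not $C(n)KL\,r_kD$ as you wrote; and your remark about the base case runs the monotonicity backward — for $\bar t \le t_k$ one has $d_{g(\bar t)}(p_k,\bar x)\ge d_{g(t_k)}(p_k,\bar x)$, so the $j=0$ hypothesis $d_{g(\bar t)}\ge\Lambda r_k$ is the \emph{weaker} one, and one cannot deduce it from the statement of Lemma~\ref{symmetry_outside_lambda} via Lemma~\ref{distance_estimate}. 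Rather, the point is that the proof of Lemma~\ref{symmetry_outside_lambda} only ever uses the distance hypothesis through the defining property of $\Lambda$, which is stated in terms of $d_{g(\bar t)}$, so that proof applies verbatim under the weaker hypothesis; this is what the paper means by ``the previous lemma shows the result holds for $j=0$.''
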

\begin{proof}
The proof is the same as the proof of Lemma 9.16 in \cite{Bre20}, except for a small difference in how we have defined the scale of the neck in higher dimensions. Let us verify that the proof works here. 

The previous lemma shows the result hold for $j = 0$. Assume $j \geq 1$ and that the assertion holds for $j -1$. Suppose $(\bar x, \bar t) \in M \times [t_k - 2^{-j} \delta_k^{-1} r_k^2, t_k]$ such that $2^{\frac{j}{400}} \Lambda r_k \leq d_{g(\bar t)}(p_k, \bar x) \leq (400n^3KL)^{-j} \delta_k^{-1} r_k$. Note that since $\lambda_1(\bar x, \bar t) < \frac{1}{2} \theta R(\bar x, \bar t)$, the point $(\bar x, \bar t)$ lies at the center of an $\varepsilon_1$-neck. 

By Corollary \ref{soliton_approximation}, 
\[
r_{\mathrm{neck}}(\bar x, \bar t)^2\leq 4K(n-1)(n-2) r_k d_{g(\bar t)}(p_k, \bar x) \leq 4K n^2 r_k d_{g(\bar t)}(p_k, \bar x). 
\]
Therefore,
\begin{align*}
\bar t - L \, r_{\mathrm{neck}}(\bar x, \bar t)^2 &\geq \bar t - 4KL n^2 r_k d_{g(\bar t)}(p_k, \bar x) \\
& \geq \bar t - 4KL n^2 (400n^3KL)^{-j} \delta_k^{-1} r_k^2 \\
& \geq \bar t - 2^{-j} \delta_k^{-1} r_k^2 \\
& \geq t_k - 2^{-j +1} \delta_k^{-1} r_k^2. 
\end{align*}
On the other hand, $r_{\mathrm{neck}}(\bar x, \bar t)^2 \leq 4K n^2 r_k d_{g(\bar t)}(p_k, \bar x) \leq \frac{4K n^2}{\Lambda} d_{g(\bar t)}(p_k, \bar x)^2$. Since $L \sqrt{\frac{4n^3K}{\Lambda}} \leq 10^{-6}$, we obtain 
\[
r_{\mathrm{neck}}(\bar x, \bar t) \leq 10^{-6} L^{-1} d_{g(\bar t)}(p_k, \bar x). 
\]
Consequently, if $x \in B_{g(\bar t)}(\bar x, L \, r_{\mathrm{neck}}(\bar x, \bar t))$, then 
\begin{align*}
d_{g(\bar t)}(p_k, x) &\geq d_{g(\bar t)}(p_k, \bar x) - L r_{\mathrm{neck}}(\bar x, \bar t) \\
& \geq (1 - 10^{-6}) d_{g(\bar t)}(p_k, \bar x)\\
& \geq (1 - 10^{-6}) 2^{\frac{j}{400}} \Lambda r_k \\
& \geq 2^{\frac{j-1}{400}} \Lambda r_k. 
\end{align*}
Now $r_k \leq r_{\max}(\bar t) \leq r_{\mathrm{neck}}(\bar x, \bar t)$. Putting this together with $r_{\mathrm{neck}}(\bar x, \bar t)^2 \leq 4K n^2 r_k d_{g(\bar t)}(p_k, \bar x)$, for any $x \in B_{g(\bar t)}(\bar x, L \, r_{\mathrm{neck}}(\bar x, \bar t))$ we obtain
\begin{align*}
d_{g(\bar t)}(p_k, x) + 2nL  \, r_{\mathrm{neck}}(\bar x, \bar t)^2r_k^{-1} &\leq d_{g(\bar t)}(p_k, \bar x) + L\, r_{\mathrm{neck}}(\bar x, \bar t) + 2n L \,r_{\mathrm{neck}}(\bar x, \bar t)^2 r_k^{-1} \\
& \leq d_{g(\bar t)}(p_k, \bar x) + (2n + 1) L\, r_{\mathrm{neck}}(\bar x, \bar t)^2 r_k^{-1}  \\
& \leq 400n^3K L \, d_{g(\bar t)}(p_k, \bar x) \\
& \leq (400n^3K L)^{-j +1} \delta_k^{-1} r_k. 
\end{align*}
Since by Lemma 5.13
\[
d_{g(\bar t)}(p_k, x) \leq d_{g(t)}(p_k, x) \leq d_{g(\bar t)}(p_k, x) + 2n L \, r_{\mathrm{neck}}(\bar x, \bar t)^2 r_k^{-1}
\]
we conclude 
\[
2^{\frac{j-1}{400}} \Lambda r_k \leq d_{g(\bar t)}(p_k, x) \leq (400n^3K L)^{-j +1} \delta_k^{-1} r_k
\]
for all $(x, t) \in B_{g(\bar t)}(\bar x, L \, r_{\mathrm{neck}}(\bar x, \bar t)) \times [\bar t - L \,  r_{\mathrm{neck}}(\bar x, \bar t)^2, \bar t]$. It follows by the induction hypothesis and the Neck Improvement Theorem that the point $(\bar x, \bar t)$ is $2^{-j -1}\varepsilon_k$-symmetric. 
\end{proof}

Next we state existence and uniqueness lemmas, analogous to Lemma \ref{big_existence} and Lemma \ref{big_uniqueness} used in the proof of the Neck Improvement Theorem. 

\begin{lemma}\label{big_existence_2}
If $j$ is sufficiently large and $k$ is sufficiently large depending upon $j$, then the following holds. Given any $\bar t \in [t_k - 2^{\frac{j}{100}} r_k^2, t_k]$, there exist a collection of time-independent vector fields $\mathcal U = \{ U^{(a)} : 1 \leq a \leq {n \choose 2} \}$ on $B_{g(\bar t)}(p_k, 2^{\frac{j}{400}} \Lambda r_k)$ with the following properties: 
\begin{enumerate}
\item[$\bullet$] In $B_{g(\bar t)}(p_k, 2^{\frac{j}{400} }\Lambda r_k) \times [\bar t - r_k^2, \bar t]$, we have the estimate
\[
|\mathcal L_{U^{(a)}}(g(t))| + r_k |D(\mathcal L_{U^{(a)}}(g(t)))| \leq C(r_k^{-1} d_{g(t)}(p_k, x) + 1)^{-100} \varepsilon_k.
\] 
\item[$\bullet$] If $t \in [\bar t - r_k^2, \bar t]$ and $\nu$ denotes the unit normal to Hamilton's CMC foliation of $(M, g(t))$, then in $B_{g(\bar t)}(p_k, 2^{\frac{j}{400}} \Lambda r_k) \setminus B_{g(\bar t)}(p_k, 2\Lambda r_k)$, we have the estimate 
\[
r_k^{-1} |\langle U^{(a)}, \nu \rangle | \leq C (r_k^{-1} d_{g(t)}(p_k, x) + 1)^{-100} \varepsilon_k.
\]
\item[$\bullet$] If $t \in [\bar t - r_k^2, \bar t]$ and $\Sigma$ denotes the leaf of Hamilton's CMC foliation passing through $x \in B_{g(\bar t)}(p_k, 2^{\frac{j}{400}} \Lambda r_k) \setminus B_{g(\bar t)}(p_k, 2\Lambda r_k)$ at time $t$, then 
\[
\sum_{a, b = 1}^{{n \choose 2}} \bigg| \delta_{ab} - \mathrm{area}_{g(t)}(\Sigma)^{-\frac{n+1}{n-1}} \int_\Sigma \langle U^{(a)}, U^{(b)} \rangle_{g(t)} \, d\mu_{g(t)} \bigg| \leq C(r_k^{-1} d_{g(t)}(p_k, x) + 1)^{-100} \varepsilon_k. 
\]
\end{enumerate}
Moreover, on the ball $B_{g(\bar t)}(p_k, 2^{\frac{j}{400}}\Lambda r_k)$, the vector fields $U^{(a)}$ are close in the $C^{2}$-norm to a standard family of rotational vector fields on the Bryant soliton.
\end{lemma}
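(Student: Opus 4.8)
The plan is to repeat the construction in the proof of Lemma~\ref{big_existence} (cf. Lemma 8.4 in \cite{Bre18}) in the cap setting, building $\mathcal U$ by gluing the $\varepsilon_k$-symmetry data available for $t<t_k$ to the improved symmetry far from $p_k$ supplied by Proposition~\ref{symmetry_at_infinity}. Fix first $\bar t\in[t_k-2^{j/100}r_k^2,t_k)$. For $k$ large one has $2^{j/100}r_k^2\le\hat\varepsilon_k^{-2}r_{\max}(\hat t_k)^2$ and $2^{j/100}\le 2^{-j}\delta_k^{-1}$, so Lemma~\ref{epsilon_k_symmetric} applies at time $\bar t$: Definition~\ref{cap_symmetry} furnishes a compact cap $D$ with $\mathrm{diam}_{g(\bar t)}(D)\le C(\theta)\,r_{\max}(\bar t)$, carrying a time-independent family $\mathcal U_{\mathrm{cap}}$ with $\sum_{l\le 2}\sum_a r_{\max}(\bar t)^{2l}\,|D^l\mathcal L_{U^{(a)}}(g)|^2\le\varepsilon_k^2$ on $D\times[\bar t-r_{\max}(\bar t)^2,\bar t]$, together with the information that every point of $M\setminus D$ is $\varepsilon_k$-symmetric. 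By Corollary~\ref{curvature_bounds} we have $r_{\max}(\bar t)\sim r_k$, so all of this is available on $B_{g(\bar t)}(p_k,2^{j/400}\Lambda r_k)\times[\bar t-r_k^2,\bar t]$. The endpoint $\bar t=t_k$ is obtained afterwards by passing to the limit $\bar t\uparrow t_k$, which is legitimate because all families involved are bounded in $C^{2,1/2}$ by bounded geometry and standard interior estimates.

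For the assembly we keep $\mathcal U_{\mathrm{cap}}$ on $D$ and cover $B_{g(\bar t)}(p_k,2^{j/400}\Lambda r_k)\setminus D$ by local symmetry vector fields. For a point $x$ with $d:=d_{g(\bar t)}(p_k,x)$, Corollary~\ref{soliton_approximation} gives $r_{\mathrm{neck}}(x,\bar t)\sim (r_k^{-1}d+1)^{1/2}r_k$, so consecutive necks overlap exactly as in Lemma~\ref{big_existence}; moreover, setting $j'':=\lfloor 400\log_2(d/(\Lambda r_k))\rfloor$ when $d\ge\Lambda r_k$, for $k$ large Lemma~\ref{symmetry_outside_lambda} ($j''=0$) and Proposition~\ref{symmetry_at_infinity} ($j''\ge 1$), applied with the distance $d_{g(\bar t)}(p_k,\cdot)$, show that $(x,\bar t)$ is $2^{-j''-1}\varepsilon_k$-symmetric, the time- and upper-distance hypotheses being automatic since $j''\le j$ and $\delta_k\to 0$. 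We glue these families consecutively along the neck with Corollary~\ref{gluing_vectors}, matching each to its predecessor by an element of $O({n \choose 2})$; because every estimate in Definitions~\ref{neck_symmetry} and~\ref{cap_symmetry} is $O({n \choose 2})$-invariant, no error enters outside the discrete transition annuli. Finally we glue $\mathcal U_{\mathrm{cap}}$ to the innermost neck family across $\partial D$: the last two conditions of Definition~\ref{cap_symmetry}, together with the Lie-derivative bound, show that on the CMC leaves within $50\,r_{\mathrm{neck}}(\partial D)$ of $\partial D$ a rescaling of $\mathcal U_{\mathrm{cap}}$ meets the hypotheses of Proposition~\ref{uniqueness_of_symmetry_fields}, hence agrees with the adjacent neck family up to an element of $O({n \choose 2})$, and Corollary~\ref{gluing_vectors} completes the gluing. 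For $j$ fixed and $k$ large the number of gluing steps is finite (of order $2^{j/800}\sqrt\Lambda$, by the scaling of $r_{\mathrm{neck}}$), so $\mathcal U$ is well-defined on all of $B_{g(\bar t)}(p_k,2^{j/400}\Lambda r_k)$, and it is $C^2$-close to a standard rotational family on the Bryant soliton exactly as in Lemma~\ref{big_existence}, using that $D$ is Cheeger--Gromov close to the Bryant cap and the necks are close to cylinders.

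It remains to read off the weighted estimates. Away from the transition annuli $\mathcal U$ coincides with a rescaled symmetry family, so $|\mathcal L_{U^{(a)}}(g)|+r_k|D\mathcal L_{U^{(a)}}(g)|$ is bounded by $C\varepsilon_k$ on $\{d\le\Lambda r_k\}$ (including the cap, where it is $\le 2\varepsilon_k$ directly from $\mathcal U_{\mathrm{cap}}$) and by $2^{-j''-1}\varepsilon_k$ on $\{d\ge\Lambda r_k\}$. On $\{d\le\Lambda r_k\}$ one has $(r_k^{-1}d+1)^{-100}\ge(\Lambda+1)^{-100}$, a fixed positive number, while on $\{d\ge\Lambda r_k\}$ one has $(r_k^{-1}d+1)^{-100}\ge c\,2^{-j''/4}$ and $2^{-j''-1}\le 2^{-j''/4}$ because $100/400<1$. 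In either case the bound is $\le C(r_k^{-1}d+1)^{-100}\varepsilon_k$ with $C$ depending only on $n$ (it absorbs the fixed powers $(\Lambda+1)^{100}$ and $(C(\theta)+1)^{100}$). The extra universal factor introduced by Corollary~\ref{gluing_vectors} in the transition annuli is harmless, as is the passage from $d_{g(\bar t)}$ to $d_{g(t)}$ for $t\in[\bar t-r_k^2,\bar t]$, since by Lemma~\ref{distance_estimate} the distance changes by at most $2nr_k$ on that interval. The normal-component and orthonormality estimates are obtained in the same way from the last two conditions of Definition~\ref{neck_symmetry}, the last two conditions of Definition~\ref{cap_symmetry}, and $O({n \choose 2})$-invariance.

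The main obstacle is organizational rather than analytic: one must check that $\mathcal U_{\mathrm{cap}}$, given only on the small domain $D$, is genuinely close to a standard rotational family near $\partial D$ so that Proposition~\ref{uniqueness_of_symmetry_fields} can be invoked at the cap--neck junction without spoiling the weighted decay, and that the $O({n \choose 2})$-rotations accumulated over the many (but finitely many) consecutive neck gluings do not degrade the improved symmetry coming from Proposition~\ref{symmetry_at_infinity}. Both are handled as in the proof of Lemma 9.13 in \cite{Bre18}; the only substantive changes are notational (${n \choose 2}$ rotational fields on $S^{n-1}$ rather than three on $S^2$), and the endpoint $\bar t=t_k$ is covered by the limiting argument described above.
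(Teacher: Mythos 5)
Your proposal is essentially the proof given in the paper: invoke Lemma~\ref{epsilon_k_symmetric} to obtain the cap domain and its vector fields from Definition~\ref{cap_symmetry}, use Proposition~\ref{symmetry_at_infinity} with a distance-dependent index to get the improved symmetry on the annuli, glue inward to the cap using Corollary~\ref{gluing_vectors}, verify the weighted estimate by comparing $2^{-j''}$ with $(r_k^{-1}d+1)^{-100}$, and pass to the limit for $\bar t = t_k$. The only noteworthy cosmetic difference is that for the innermost annulus $\{\Lambda r_k \le d_{g(\bar t)}(p_k,\cdot)\le 2\Lambda r_k\}$ the paper uses the $\lambda_1$-criterion of Definition~\ref{cap_symmetry} (every point outside $D$ is $\varepsilon_k$-symmetric) rather than Lemma~\ref{symmetry_outside_lambda}, which technically has a $d_{g(t_k)}$ hypothesis; since $d_{g(\bar t)}\ge d_{g(t_k)}$, your formulation would need that small adjustment, but the conclusion is identical.
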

\begin{proof}
The proof is analogous to the proof of Lemma 9.17 in \cite{Bre20}. 
\end{proof}

\begin{lemma}\label{big_uniqueness_2}
If $j$ is sufficiently large and $k$ is sufficiently large depending upon $j$, then the following holds. Consider a time $\bar t \in [t_k - 2^{\frac{j}{100}} r_k^2, t_k]$. Suppose that $\mathcal U = \{ U^{(a)} : 1 \leq a \leq {n \choose 2} \}$ is a family of time-independent vector fields defined on $B_{g(\bar t)}(p_k, 2^{\frac{j}{400}} \Lambda r_k)$ with the following properties: 
\begin{enumerate}
\item[$\bullet$] In $B_{g(\bar t)}(p_k, 2^{\frac{j}{400} }\Lambda r_k)$, we have the estimate 
\[
|\mathcal L_{U^{(a)}}(g(t))| + r_k |D(\mathcal L_{U^{(a)}}(g(t)))| \leq C(r_k^{-1} d_{g(\bar t)}(p_k, x) + 1)^{-100} \varepsilon_k.
\]
\item[$\bullet$] If $\nu$ denotes the unit normal to Hamilton's CMC foliation of $(M, g(\bar t))$, then in $B_{g(\bar t)}(p_k, 2^{\frac{j}{400}} \Lambda r_k) \setminus B_{g(\bar t)}(p_k, 4\Lambda r_k)$, we have the estimate 
\[
r_k^{-1} |\langle U^{(a)}, \nu \rangle | \leq C (r_k^{-1} d_{g(\bar t)}(p_k, x) + 1)^{-100} \varepsilon_k.
\]
\item[$\bullet$] If $\Sigma$ denotes the leaf of Hamilton's CMC foliation passing through $x \in B_{g(\bar t)}(p_k, 2^{\frac{j}{400}} \Lambda r_k) \setminus B_{g(\bar t)}(p_k, 4\Lambda r_k)$ at time $\bar t$, then 
\[
\sum_{a, b = 1}^{{n \choose 2}} \bigg| \delta_{ab} - \mathrm{area}_{g(\bar t)}(\Sigma)^{-\frac{n+1}{n-1}} \int_\Sigma \langle U^{(a)}, U^{(b)} \rangle_{g(\bar t)} \, d\mu_{g(\bar t)} \bigg| \leq C(r_k^{-1} d_{g(\bar t)}(p_k, x) + 1)^{-100} \varepsilon_k. 
\] 
\end{enumerate}
Moreover, suppose that $\tilde {\mathcal U }= \{\tilde U^{(a)} : 1 \leq a \leq {n \choose 2} \}$ is a second family of time-independent vector fields defined on $B_{g(\bar t)}(p_k, 2^{\frac{j}{400}} \Lambda r_k)$ satisfying the same three properties above (with $U^{(a)}$ replaced by $\tilde U^{(a)}$). 
Then there exists an ${n \choose 2} \times {n \choose 2}$ matrix $\omega \in O({n \choose 2})$ such that 
\[
r_k^{-1} \sum_{a = 1}^{{n \choose 2}} \bigg| \sum_{b =1}^{{n \choose 2}} \omega_{ab} U^{(b)} - \tilde U^{(a)}\bigg| \leq C(r_k^{-1} d_{g(\bar t)}(p_k, x) + 1)^{-20} \varepsilon_k
\]
on $B_{g(\bar t)}(p_k, 2^{\frac{j-1}{400}} \Lambda r_k)$. 
\end{lemma}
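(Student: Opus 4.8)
The plan is to mirror the proof of Lemma \ref{big_uniqueness}, adapted to the cap geometry of the Bryant soliton: first pin down the family up to an element of $O({n \choose 2})$ on a bounded region near the tip $p_k$ by a compactness argument, and then transport this matrix outward through the neck region by applying Proposition \ref{uniqueness_of_symmetry_fields} on a chain of overlapping neck pieces, all the while keeping track of the accumulated rotation and of the polynomial weight. The slight loss of radius $2^{\frac{j}{400}} \to 2^{\frac{j-1}{400}}$ absorbs the overlap needed in the outermost gluing.

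\emph{Uniqueness near the tip.} On the ball $B_{g(\bar t)}(p_k, 8\Lambda r_k)$ I would argue by contradiction. If the claimed estimate failed there for every $\omega \in O({n \choose 2})$, then after rescaling by $r_k^{-1}$ and passing to a subsequence the pointed spaces $(M, r_k^{-2} g(\bar t), p_k)$ converge (by Corollary \ref{soliton_approximation} and the convergence to the Bryant soliton established before it) to the Bryant soliton, and the two families converge to vector fields $U^{(a)}_\infty, \tilde U^{(a)}_\infty$ with vanishing Lie derivative, hence to Killing fields. Since the second and third hypotheses of the lemma hold in the collar $\{4\Lambda r_k < d_{g(\bar t)}(p_k,\cdot) < 8\Lambda r_k\}$, these Killing fields restrict there to a standard family of rotational vector fields; as a Killing field on the Bryant soliton is determined by its $1$-jet at a point, $U^{(a)}_\infty$ and $\tilde U^{(a)}_\infty$ must equal the standard rotational vector fields on the whole limiting ball, so $\tilde U^{(a)}_\infty = \sum_b \omega_{ab} U^{(b)}_\infty$ for some $\omega \in O({n\choose 2})$ — a contradiction. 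This yields a matrix $\omega_{(0)} \in O({n\choose 2})$ with
\[
r_k^{-1}\sum_{a} \Big| \sum_b (\omega_{(0)})_{ab} U^{(b)} - \tilde U^{(a)} \Big| \leq C(r_k^{-1} d_{g(\bar t)}(p_k, x) + 1)^{-20}\varepsilon_k
\]
on $B_{g(\bar t)}(p_k, 8\Lambda r_k)$, the weight being harmless on a bounded region.

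\emph{Propagation through the neck.} For $d_{g(\bar t)}(p_k, x) \geq \Lambda r_k$, Corollary \ref{soliton_approximation} gives $r_{\mathrm{neck}}(x,\bar t)^2 \sim r_k\, d_{g(\bar t)}(p_k,x)$, and the region is foliated by $\varepsilon_1$-necks. I would cover $B_{g(\bar t)}(p_k, 2^{\frac{j-1}{400}}\Lambda r_k) \setminus B_{g(\bar t)}(p_k, \Lambda r_k)$ by a chain of overlapping neck pieces, each of extent comparable to the local neck scale, marching outward from the cap, and apply Proposition \ref{uniqueness_of_symmetry_fields} to each piece after rescaling to unit neck radius. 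On a piece at distance $d$ the three hypotheses hold with a parameter $\varepsilon$ decaying like a large negative power of $r_k^{-1}d$ times $\varepsilon_k$ (the derivative and normal-component weights only improve upon rescaling, while the orthonormality estimate is scale invariant), so the proposition produces there a matrix $\omega_{(d)} \in O({n\choose 2})$ for which $|\sum_b (\omega_{(d)})_{ab} U^{(b)} - \tilde U^{(a)}|$ is bounded by $r_{\mathrm{neck}}$ times that parameter, i.e.\ by $C r_k (r_k^{-1}d)^{-99}\varepsilon_k$ — far better than needed. On each overlap, adjacent matrices satisfy $|(\omega_{(d)} - \omega_{(d')}) U^{(b)}| \leq C r_k (r_k^{-1}d)^{-99}\varepsilon_k$, and, since a standard family of rotational vector fields on a neck is uniformly nondegenerate at the scale $r_{\mathrm{neck}}$ (exactly as in the choice of $\lambda$ in Step 12 of the proof of Theorem \ref{neck_improvement}), this forces $|\omega_{(d)} - \omega_{(d')}|$ to be a large negative power of $r_k^{-1}d$ times $\varepsilon_k$. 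The resulting telescoping series over all gluings is dominated by its innermost terms, so it converges to a limiting $\omega := \lim \omega_{(d)} \in O({n\choose 2})$ with $|\omega - \omega_{(0)}| \lesssim \Lambda^{-99}\varepsilon_k$ and $|\omega - \omega_{(d)}|$ a large negative power of $r_k^{-1}d$ times $\varepsilon_k$ for $d \geq \Lambda r_k$. Feeding this back into the local estimates via the triangle inequality — and using $|U^{(b)}| \lesssim r_{\mathrm{neck}}$ (and $\lesssim d_{g(\bar t)}(p_k,\cdot)$ near the tip) to convert matrix discrepancies into vector-field discrepancies — gives
\[
r_k^{-1}\sum_{a} \Big| \sum_b \omega_{ab} U^{(b)} - \tilde U^{(a)} \Big| \leq C(r_k^{-1} d_{g(\bar t)}(p_k, x) + 1)^{-20}\varepsilon_k
\]
on all of $B_{g(\bar t)}(p_k, 2^{\frac{j-1}{400}}\Lambda r_k)$, for $j$ and then $k$ sufficiently large.

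\emph{Main obstacle.} The two delicate points are: (i) setting up the tip compactness argument correctly — only the Lie-derivative hypothesis is available on the cap itself, so the identification of the limiting Killing fields with the standard rotational fields must be effected through their restriction to the collar together with unique continuation for Killing fields; and (ii) the bookkeeping in the propagation step, where one must check that the rotation accumulated over the (infinitely many) gluings is summable and in fact governed by the innermost contributions — this is precisely where the polynomial weight $(r_k^{-1} d + 1)^{-100}$ in the hypotheses, combined with the growth $r_{\mathrm{neck}} \sim \sqrt{r_k d}$ of the neck scale, does the essential work.
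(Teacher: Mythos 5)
Your outer scheme (propagating $\omega$ through a chain of necks, telescoping the rotations, taking $\omega$ to be the outermost limit so that $|\omega - \omega_{(d)}|$ is controlled by the decaying tail sum, and converting matrix discrepancies into vector-field discrepancies using $|U^{(b)}|\sim r_{\mathrm{neck}}$) is correct in spirit and matches the Brendle template for this kind of uniqueness-by-gluing argument. The bookkeeping — scale invariance of $|\mathcal L_U(g)|$ and of the orthonormality integral, improvement of the $D(\mathcal L_U(g))$ and $\langle U,\nu\rangle$ bounds under rescaling, and the convergence of the drift sum with $d_m \sim m^2 r_k$ — is all essentially right and does yield a loss of polynomial power from $-100$ to something like $-99$, which is more than enough to give $-20$.

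The gap is in the tip step. Your compactness argument is not strong enough to produce the quantitative bound $r_k^{-1}|\sum_b\omega_{ab}U^{(b)} - \tilde U^{(a)}| \leq C\varepsilon_k$ on the cap, because $\varepsilon_k \to 0$. Passing to a Cheeger–Gromov limit only shows that the limiting families are rotational Killing fields on the Bryant soliton related by \emph{some} $\omega_\infty \in O({n\choose 2})$, i.e.\ that $\inf_\omega \sup r_k^{-1}|\cdots| \to 0$; it does not control the \emph{ratio} $\inf_\omega\sup r_k^{-1}|\cdots|/\varepsilon_k$, which is what the lemma asserts is bounded. To run a compactness contradiction you would need a second blow-up (dividing the difference by $\varepsilon_k$ and characterizing the rescaled limits), which is a genuinely different linearized-rigidity argument and is not what you describe; ``unique continuation for Killing fields'' is the correct heuristic but is a statement about exact Killing fields, not a quantitative stability estimate. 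The cleaner route, consistent with the Brendle template, is: first fix $\omega$ from the neck gluing on the collar $\{4\Lambda r_k \leq d_{g(\bar t)}(p_k,\cdot) \leq 8\Lambda r_k\}$ where hypotheses 2 and 3 hold; then set $W^{(a)} = \sum_b\omega_{ab}U^{(b)} - \tilde U^{(a)}$ and use that $\Delta W^{(a)} + \Ric(W^{(a)}) = \mathrm{div}(\mathcal L_{W^{(a)}}g) - \tfrac12\nabla\mathrm{tr}(\mathcal L_{W^{(a)}}g)$ is of order $r_k^{-1}\varepsilon_k$ on the cap by hypothesis 1, while $W^{(a)}$ is of order $r_k\Lambda^{-99}\varepsilon_k$ on the boundary sphere by the neck step; the operator $\Delta + \Ric$ with Dirichlet boundary data is uniformly invertible on the cap region (no rotational Killing field vanishes on the boundary cross-section of the Bryant soliton, and the cap is Cheeger–Gromov close to it for $k$ large), and the elliptic estimate then gives $|W^{(a)}| \leq Cr_k\varepsilon_k$ inside, which is exactly the cap case of the conclusion. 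You should replace your tip compactness argument with this quantitative extension.
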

\begin{proof}
The proof is analogous to the proof of Lemma 9.18 in \cite{Bre20}. 
\end{proof}

In the following, we define 
\[
\Omega^{(j,k)} := \{(x, t) \in B_{g(t_k)}(p_{k}, \delta_k^{-1} r_k) \times [t_k - 2^{\frac{j}{400}} r_k^2, t_k] : f(x, t) \leq 2^{\frac{j}{400}}\}, 
\]
where $f : B_{g(t_k)}(p_{k}, \delta_k^{-1} r_k) \times [t_k - \delta_k^{-1} r_k^2, t_k]\to [0, \infty)$ is the function in Corollary \ref{soliton_approximation}. The following three results are the main results of this section. Their proofs resemble various steps in the proof of the Neck Improvement Theorem. 

\begin{proposition}\label{main_prop}
Let $j$ be a large positive integer. If $k$ is sufficiently large (depending upon $j$), then we can find a family of time-independent vector fields $\mathcal W := \{W^{(a)} : 1 \leq a \leq {n \choose 2}\}$ defined on $B_{g(t_k)}(p_{k}, 4 \Lambda r_k)$  with the property that
\[ 
\sum_{l = 1}^{40} r_k^l |D^l(\mathcal L_{W^{(a)}}(g(t)))| \leq C 2^{-\frac{j}{400}}\varepsilon_k.
\]
in $B_{g(t_k)}(p_{k}, 4 \Lambda r_k) \times [t_k - 1000n^3K\Lambda r_k^2, t_k]$. Here, the constant $C$ is independent of $j$ and $k$. Finally, on the set $B_{g(t_k)}(p_{k}, 4 \Lambda r_k) \times [t_k - 1000n^3K \Lambda r_k^2, t_k]$, the family of vector fields $\mathcal W$ is close to a standard family of rotational vector fields on the Bryant soliton in the $C^{80}$-norm 
\end{proposition}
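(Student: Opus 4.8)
\emph{Overview and Step 1.} I would follow the scheme of the corresponding three-dimensional result in \cite{Bre18}, in four steps: assemble a time-dependent family of almost-Killing vector fields on the large ball with decay in the approximate soliton potential; solve the vector heat equation $\tfrac{\partial}{\partial t}V = \Delta V + \Ric(V)$ with these as Dirichlet data on the spacetime region $\Omega^{(j,k)}$; apply the weighted Anderson-Chow estimate to $\mathcal L_V(g)$ to propagate the tiny data near $\{f = 2^{j/400}\}$ inward to the cap; and finally freeze time. For the first step, for each $\bar t \in [t_k - 2^{j/400} r_k^2, t_k]$ Lemma \ref{big_existence_2} produces a time-independent family $\mathcal U_{\bar t}$ on $B_{g(\bar t)}(p_k, 2^{j/400}\Lambda r_k)$ with $|\mathcal L_{U^{(a)}}(g(t))| + r_k |D(\mathcal L_{U^{(a)}}(g(t)))| \leq C(r_k^{-1} d_{g(t)}(p_k, \cdot) + 1)^{-100}\varepsilon_k$ on $[\bar t - r_k^2, \bar t]$ and $C^2$-close to a standard rotational family on the Bryant soliton. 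Covering the long interval by overlapping subintervals of length $\sim r_k^2$ and using Lemma \ref{big_uniqueness_2} to glue consecutive families through matrices $\omega(t) \in O({n \choose 2})$ close to the identity, I would obtain a single time-dependent family $\mathcal U = \{U^{(a)}(t)\}$ on $B_{g(t_k)}(p_k, 2^{j/400}\Lambda r_k) \times [t_k - 2^{j/400} r_k^2, t_k]$; since every estimate in sight is $O({n\choose 2})$-invariant, the decay estimate for $\mathcal L_{U^{(a)}}(g)$ survives, while the uniqueness estimate bounds $|\tfrac{\partial}{\partial t}U^{(a)}| \leq C r_k^{-1}(r_k^{-1}d_{g(t)}(p_k, \cdot) + 1)^{-20}\varepsilon_k$. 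Combining this with interior parabolic estimates for the higher derivatives of $\mathcal L_{U^{(a)}}(g)$ and the identity $\Delta U^{(a)} + \Ric(U^{(a)}) = \mathrm{div}(\mathcal L_{U^{(a)}}(g)) - \tfrac12\nabla\mathrm{tr}(\mathcal L_{U^{(a)}}(g))$ yields $|\tfrac{\partial}{\partial t}U^{(a)} - \Delta U^{(a)} - \Ric(U^{(a)})| \leq C r_k^{-1}(r_k^{-1}d_{g(t)}(p_k, \cdot) + 1)^{-20}\varepsilon_k$ on the region.

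\emph{Step 2.} I would let $V^{(a)}$ solve $\tfrac{\partial}{\partial t}V^{(a)} = \Delta V^{(a)} + \Ric(V^{(a)})$ on $\Omega^{(j,k)}$ with $V^{(a)} = U^{(a)}$ on the parabolic boundary. Then $V^{(a)} - U^{(a)}$ satisfies a parabolic equation vanishing on the parabolic boundary and forced by the quantity estimated at the end of Step 1, so by Proposition \ref{norm_pde}, $\tfrac{\partial}{\partial t}|V^{(a)} - U^{(a)}| \leq \Delta|V^{(a)} - U^{(a)}| + C r_k^{-1}(f+1)^{-20}\varepsilon_k$ on $\Omega^{(j,k)}$ (using $f + 1 \sim r_k^{-1}d_{g(t)}(p_k, \cdot)$ from Corollary \ref{soliton_approximation}). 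A barrier of the form $A\varepsilon_k r_k(f+1)^{-\sigma}$ with $\sigma = \sigma(n) > 0$ small — which the approximate soliton identities of Corollary \ref{soliton_approximation} make a strict supersolution of the heat equation with a favorable term of size $\sim \sigma r_k^{-2}(f+1)^{-\sigma - 1}$ — dominates $|V^{(a)} - U^{(a)}|$ on the parabolic boundary and absorbs the forcing once $A$ is large; the maximum principle then gives $|V^{(a)} - U^{(a)}| \leq C\varepsilon_k r_k(f+1)^{-\sigma}$ on $\Omega^{(j,k)}$, and interior parabolic estimates control the derivatives. In particular $V^{(a)}$ remains $C(L,\Lambda)\varepsilon_k$-close to a standard rotational family on the Bryant soliton in high norm on a neighborhood of $B_{g(t_k)}(p_k, 10\Lambda r_k) \times [t_k - 2000 n^2 K\Lambda r_k^2, t_k]$.

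\emph{Step 3 (the crux).} I would set $h^{(a)}(t) := \mathcal L_{V^{(a)}}(g(t))$; by Theorem \ref{pde_for_lie_derivative} it solves the parabolic Lichnerowicz equation $\tfrac{\partial}{\partial t}h^{(a)} = \Delta_{L,g(t)}h^{(a)}$ on $\Omega^{(j,k)}$, and on the parabolic boundary $h^{(a)} = \mathcal L_{U^{(a)}}(g)$, so $|h^{(a)}| \leq C(r_k^{-1}d_{g(t)}(p_k, \cdot) + 1)^{-100}\varepsilon_k$ there; in particular $|h^{(a)}| \leq C 2^{-j/4}\varepsilon_k$ on the outer boundary $\{f = 2^{j/400}\}$. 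Next I would invoke the weighted Anderson-Chow estimate, Proposition B.1, which for a suitable exponent $q = q(n,\alpha)$ yields a differential inequality for $R^{-q}|h^{(a)}|$ of strictly parabolic type with a favorable lower-order term whose positivity rests on the uniform PIC hypothesis (this replaces the three-dimensional Anderson-Chow identity used in \cite{Bre18}, which has no higher-dimensional analogue). Feeding in the boundary estimates together with a barrier built from $f$ and an auxiliary factor decaying in $t$, and using that the parabolic length of $\Omega^{(j,k)}$ is $\sim 2^{j/400}r_k^2$ — much longer than the cap scale $\Lambda^2 r_k^2$ — I would conclude that the $O(\varepsilon_k)$ contribution of the data at the initial time $t_k - 2^{j/400}r_k^2$ is damped to negligible size near the cap, leaving the final bound governed by the exponentially small data near $\{f = 2^{j/400}\}$: $|h^{(a)}| \leq C 2^{-j/400}\varepsilon_k$ on $B_{g(t_k)}(p_k, 4\Lambda r_k) \times [t_k - 1000 n^2 K\Lambda r_k^2, t_k]$. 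Standard interior parabolic estimates for the Lichnerowicz equation then promote this to $\sum_{l=1}^{40} r_k^l |D^l(\mathcal L_{V^{(a)}}(g(t)))| \leq C 2^{-j/400}\varepsilon_k$ on a slightly smaller region, with ample room in the number of derivatives. I expect the main obstacle to be precisely this step: establishing the correct weighted form of the Anderson-Chow estimate in higher dimensions and then exploiting it so that the long parabolic length of $\Omega^{(j,k)}$ overcomes the fact that the data near the cap is only of size $\varepsilon_k$.

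\emph{Step 4.} Finally I would set $W^{(a)} := V^{(a)}(\cdot, t_k)$, a time-independent family on $B_{g(t_k)}(p_k, 4\Lambda r_k)$. Since $\tfrac{\partial}{\partial t}V^{(a)} = \mathrm{div}(h^{(a)}) - \tfrac12\nabla\mathrm{tr}(h^{(a)})$, Step 3 gives $\sum_l r_k^l |D^l(\tfrac{\partial}{\partial t}V^{(a)})| \leq C r_k^{-1} 2^{-j/400}\varepsilon_k$; integrating over the interval of length $\leq 1000 n^2 K\Lambda r_k^2$ and using Lemma \ref{linearization_of_ricci} to bound $\mathcal L_{\partial_t V^{(a)}}(g)$, I would find that $\mathcal L_{W^{(a)}}(g(t))$ differs from $h^{(a)}(t)$ by at most $C 2^{-j/400}\varepsilon_k$ in $C^{40}$, whence $\sum_{l=1}^{40} r_k^l |D^l(\mathcal L_{W^{(a)}}(g(t)))| \leq C 2^{-j/400}\varepsilon_k$ on $B_{g(t_k)}(p_k, 4\Lambda r_k) \times [t_k - 1000 n^2 K\Lambda r_k^2, t_k]$, with $C$ independent of $j$ and $k$. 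Closeness of $\mathcal W$ to a standard rotational family on the Bryant soliton in $C^{80}$ would then follow by combining the $C^2$-closeness inherited from Steps 1--2, the high-order control of $\mathcal L_{W^{(a)}}(g(t_k))$ just obtained (so that $W^{(a)}$ is approximately Killing to high order), the Cheeger-Gromov closeness of $(M, g(t_k))$ near $p_k$ to the Bryant soliton, and elliptic estimates for the Killing operator on the Bryant metric.
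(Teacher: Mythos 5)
Your four-step scheme matches the paper's proof of this proposition exactly: glue a time-dependent family using Lemmas \ref{big_existence_2} and \ref{big_uniqueness_2}, solve the vector heat equation with Dirichlet data on $\Omega^{(j,k)}$ and estimate $V^{(a)} - U^{(a)}$ by a barrier in $f$, apply the weighted Anderson--Chow replacement (Proposition \ref{Anderson_Chow_replacement}) to the Lie derivative $h^{(a)}$, and freeze at time $t_k$. The only discrepancies are cosmetic: Proposition \ref{Anderson_Chow_replacement} is a maximum principle for $e^{2\rho t}\inf\{\lambda : -\lambda(\Ric - \rho g) \le h \le \lambda(\Ric - \rho g)\}$ requiring just nonnegative sectional curvature rather than a differential inequality for $R^{-q}|h|$ driven by uniform PIC, the exponential damping comes from choosing $\rho \sim 2^{-j/200}$ from the Bryant soliton's Ricci lower bound $r_k^2\Ric \gtrsim f^{-2} g$ rather than from a separate $t$-dependent barrier, and your Step~2 barrier should be $(f+100)^{-8}$ rather than $(f+1)^{-\sigma}$ so it stays a supersolution near the cap where $f$ is small.
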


\begin{proof}
We will assume throughout this proof that $j$ is large, and $k$ is sufficiently large depending upon $j$. In this way, the region $\Omega^{(j, k)}$ is, after rescaling by $r_k$, as close as we like to the corresponding subset of the Bryant soliton. By Corollary \ref{soliton_approximation}, the function $f : \Omega^{(j,k)} \to [0 ,\infty)$ satisfies 
\[
R + |\nabla f|^2 \leq (1 + 4\delta_k)r_k^{-2} \leq 2 r_k^{-2}
\]
and 
\[
(\frac{\partial}{\partial t} - \Delta) f \leq - (1 - 2 \delta_k) r_k^{-2} \leq -\frac{1}{2} r_k^{-2}. 
\]
Moreover, 
\[
\frac{1}{2K}(r_k^{-1} d_{g(t)}(p_k, x) + 1) \leq f(x, t) + 1 \leq 2K (r_k^{-1} d_{g(t)}(p_k, x) + 1)
\]
for a constant $K$ depending only upon $n$. In particular, if $f(x, \bar {t}) \leq 2^{\frac{j}{400}}$, then $d_{g({\bar t})}(p_{k}, x) < 2^{\frac{j}{400}}\Lambda r_k$. 

\textit{Step 1:} Using Lemma \ref{big_existence_2} and Lemma \ref{big_uniqueness_2}, we can construct a family of time-dependent vector fields of vector fields $\mathcal U := \{ U^{(a)} : 1 \leq a \leq {n \choose 2}\}$ defined on $\Omega^{(j, k)}$, with the following two properties:
\begin{enumerate}
\item[$\bullet$] $r_k |\frac{\partial}{\partial t} U^{(a)}| \leq C(f + 100)^{-10}\varepsilon_k$ on $\Omega^{(j,k)}$.
\item[$\bullet$] $|\mathcal L_{U^{(a)}}(g)| + r_k |D( \mathcal L_{U^{(a)}}(g))| \leq C(f+100)^{-100}\varepsilon_k$ on $\Omega^{(j,k)}$. 
\end{enumerate}
Here $C$ is a positive constant that is independent of $j$ and $k$. Moreover, we can find a standard family of rotational vector fields $\mathcal U_{\mathrm{Bry}} := \{ U^{(a)}_{\mathrm{Bry}} : 1 \leq a \leq {n \choose 2}\}$ on the Bryant soliton, such that the family $\mathcal U$ is close to this standard family on $\Omega^{(j,k)}$ in $C^2$-norm. In other words, as $k$ tends to infinity, the family $\mathcal U$ converges to the family $\mathcal U_{\mathrm{Bry}}$ of rotational vector fields on the Bryant soliton in $C^2$. Note that $r_k |\Delta U^{(a)} + \mathrm{Ric}(U^{(a)})| \leq C r_k |D( \mathcal L_{U^{(a)}}(g))| \leq C (f+100)^{-100}\varepsilon_k$ on $\Omega^{(j,k)}$. \\

\textit{Step 2:}  Let $V^{(a)}$ be the solution of the PDE $\frac{\partial }{\partial t} V^{(a)} = \Delta V^{(a)} + \mathrm{Ric}(V^{(a)})$ on $\Omega^{(j,k)}$ with the Dirichlet boundary condition $V^{(a)} = U^{(a)}$ on the parabolic boundary of $\Omega^{(j,k)}$. Using the estimate 
\[
r_k \big|\frac{\partial}{\partial t} U^{(a)} - \Delta U^{(a)} - \mathrm{Ric}(U^{(a)})\big| \leq C(f + 100)^{-10}\varepsilon_k,
\]
we obtain
\[
r_k \big|\frac{\partial}{\partial t} (V^{(a)} - U^{(a)}) - \Delta (V^{(a)} -U^{(a)}) - \mathrm{Ric}(V^{(a)} - U^{(a)})\big| \leq C(f + 100)^{-10}\varepsilon_k
\]
in $\Omega^{(j,k)}$, where $C$ is a positive constant that is independent of $j$ and $k$. Proposition \ref{norm_pde} gives 
\[
r_k \big(\frac{\partial}{\partial t}- \Delta\big)|V^{(a)} - U^{(a)}| \leq C(f + 100)^{-10}\varepsilon_k
\]
in $\Omega^{(j,k)}$, where $C$ is a positive constant that is independent of $j$ and $k$. By a straightforward calculation, using the estimates $(\frac{\partial}{\partial t} - \Delta) f \leq -\frac{1}{2} r_k^{-2}$, $|\nabla f|^2 \leq 2 r_k^{-2}$, we obtain
\[
\big(\frac{\partial}{\partial t}- \Delta\big)(f + 100)^{-8} \geq (f + 100)^{-9} r_k^{-2}
\]
in $\Omega^{(j,k)}$. Thus, by the maximum principle, we obtain
\[
r_k^{-1}|V^{(a)} - U^{(a)}| \leq C(f + 100)^{-8}\varepsilon_k
\]
in $\Omega^{(j,k)}$, where $C$ is a positive constant that is independent of $j$ and $k$. The distance between $\Omega^{(j-1, k)}$ and the parabolic boundary $\mathcal P\Omega^{(j, k)}$ is bounded below by $c\,r_k$ for a universal constant $c$, so by standard interior estimates for linear parabolic equations, we obtain 
\[
|D(V^{(a)} - U^{(a)})| \leq C(f + 100)^{-8}\varepsilon_k
\]
in $\Omega^{(j-1,k)}$, where $C$ is a positive constant that is independent of $j$ and $k$. In particular, on the set $\Omega^{(j-1, k)}$, the vector field $V^{(a)}$ is close in $C^1$-norm to the vector field $U_{\mathrm{Bry}}^{(a)}$. Consequently, on the set $B_{g(t_k)}(p_{k}, 8 \Lambda r_k) \times [t_k - 2000 n^3K \Lambda r_k^2, t_k]$, the family of vector fields $\mathcal V := \{V^{(a)} : 1 \leq a \leq {n \choose 2}\}$ is close to the family $\mathcal U_{\mathrm{Bry}}$ in $C^{100}$-norm. \\

\textit{Step 3:} We define $h^{(a)}(t) := \mathcal L_{V^{(a)}(t)}(g(t))$. By Theorem \ref{pde_for_lie_derivative}, we have 
\[
\frac{\partial}{\partial t}h^{(a)}(t) =  \Delta_{L, g(t)} h^{(a)}(t). 
\]
The estimate for $D(V^{(a)} - U^{(a)})$ in Step 2 implies 
\[
|\mathcal L_{V^{(a)}}(g) | \leq |\mathcal L_{U^{(a)}}(g)| + C|D(V^{(a)} - U^{(a)})| \leq C(f + 100)^{-8}\varepsilon_k
\]
in $\Omega^{(j-1,k)}$, where $C$ is a positive constant that is independent of $j$ and $k$. 

Since our solution is close to the Bryant soliton, we have the estimate $r_k^2\mathrm{Ric} > 2c f^{-2} g$ in $\Omega^{(j, k)}$ for some small universal constant $c$ if $k$ is sufficiently large depending upon $j$. This is because on the Bryant soliton the smallest eigenvalue of the Ricci tensor falls off like the square of the distance to the tip (see Chapter 4 of \cite{CLN06} for details). Hence, $\mathrm{Ric} \geq 2\rho r_k^{-2} g$ in $\Omega^{(j, k)}$ with $\rho := c \,2^{-\frac{j}{200}}$. Evidently, we also have $\mathrm{Ric} \leq C r_k^{-2} g$ in $\Omega^{(j, k)}$. Consider the weighted norm 
\[
\bar \lambda^{(a)} := \inf \{ \lambda > 0 : -\lambda(r_k^2\mathrm{Ric} - \rho g) \leq h^{(a)} \leq \lambda(r_k^2 \mathrm{Ric} - \rho g)\}. 
\]
Suppose $\lambda_1 \leq \cdots \leq \lambda_n$ denote the eigenvalues of $h^{(a)}$ corresponding to orthonormal eigenvectors $e_1, \dots, e_n$ with respect to $g(t)$ in $\Omega^{(j, k)}$. So $|h^{(a)}|^2 = \sum_{i =1}^n \lambda_i^2$. Then the inequalities
\[
|\lambda_i| \leq \bar \lambda^{(a)} (r_k^2\mathrm{Ric}(e_i, e_i) - \rho) \leq C \bar \lambda^{(a)}
\]
and 
\[
 -\rho^{-1}|h^{(a)}|(r_k^2 \mathrm{Ric} - \rho g) \leq -|h^{(a)}| g \leq h^{(a)} \leq |h^{(a)}| g \leq \rho^{-1}|h^{(a)}|(r_k^2 \mathrm{Ric} - \rho g)
\]
imply that $\rho \bar \lambda^{(a)} \leq |h^{(a)}| \leq C \bar \lambda^{(a)}$ in $\Omega^{(j,k)}$.  In other words, the weighted norm is comparable to the usual norm in $\Omega^{(j,k)}$. Now we apply Lemma \ref{Anderson_Chow_replacement} to $r_k^{-2}h^{(a)}(t)$ in the region $\Omega^{(j-1,k)}$ taking $r_k^{-2} g$ for our metric and $r_k^{-2} \rho = cr_k^{-2} 2^{-\frac{j}{200}}$ as our lower bound for the Ricci tensor. Thus the function
\[
\psi^{(a)} := \exp(-2\rho r_k^{-2}(t_k - t)) \; \bar \lambda^{(a)}
\]
satisfies
\[
\sup_{\Omega^{(j-1, k)}} \psi^{(a)}\leq \sup_{\mathcal P \Omega^{(j-1,k)} }\psi^{(a)}, 
\]
where $\mathcal P \Omega^{(j-1,k)}$ denotes the parabolic boundary of $\Omega^{(j-1,k)}$. The parabolic boundary of $\Omega^{(j-1,k)}$ consists of points $(x, t)$ where $t = t_k - 2^{\frac{j-1}{100}}r_k^2$ and points $(x, t)$ where $f(x, t) = 2^{\frac{j-1}{400}}$. If $t = t_k - 2^{\frac{j-1}{100}}r_k^2$, then using that $\bar \lambda^{(a)} \leq \rho^{-1} |h^{(a)}| \leq C 2^{\frac{j}{200}} |h^{(a)}|$ and $|h^{(a)}| \leq C(f + 100)^{-8} \varepsilon_k \leq C \varepsilon_k$, we obtain 
\[
\psi^{(a)}(x, t) \leq C \exp(-2^{\frac{j}{200}}) 2^{\frac{j}{200}} \varepsilon_k \leq C 2^{-\frac{j}{400}} \varepsilon_k.
\]
On the other hand, if $(x, t)$ is a point where $f(x,t) = 2^{\frac{j-1}{400}}$, then using the bound $\exp(-2\rho r_k^{-2}(t_k - t)) \leq 1$, we have 
\[
\psi^{(a)}(x, t) \leq C 2^{\frac{j}{200}} (2^{\frac{j-1}{400}} + 100)^{-8} \varepsilon_k \leq C2^{-\frac{j}{400}} \varepsilon_k. 
\]
Finally, we observe that $\psi^{(a)} \geq C^{-1} |h^{(a)}|$ in the region $B_{g(t_k)}(p_k, 8 \Lambda r_k) \times [t_k - 2000n^3 K \Lambda r_k^2, t_k]$. Putting all of these estimates together, we conclude
\[
|h^{(a)}| \leq C2^{-\frac{j}{400}}\varepsilon_k
\]
in the region $B_{g(t_k)}(p_k, 8 \Lambda r_k) \times [t_k - 2000n^3 K \Lambda r_k^2, t_k]$ assuming $j$ is large and $k$ is sufficiently large depending upon $j$. Here $C$ is a constant that does not depend on $j$ or $k$. By standard interior estimates for linear parabolic equations, we obtain 
\[
\sum_{l = 0}^{100} r_k^{l}\big|D^lh^{(a)}\big| \leq C 2^{-\frac{j}{400}}\varepsilon_k
\]
in the region $B_{g(t_k)}(p_k, 6 \Lambda r_k) \times [t_k - 1000n^3 K \Lambda r_k^2, t_k]$. As we have seen before, since 
\[
\frac{\partial}{\partial t} V^{(a)} = \Delta V^{(a)} + \mathrm{Ric}(V^{(a)}) = \mathrm{div} h^{(a)} - \frac{1}{2} \nabla (\mathrm{tr} h^{(a)})
\]
this implies 
\[
\sum_{l = 0}^{80} r_k^{l+1}\big|D^l(\frac{\partial}{\partial t} V^{(a)})\big| \leq C 2^{-\frac{j}{400}}\varepsilon_k,
\]
in the region $B_{g(t_k)}(p_k, 6 \Lambda r_k) \times [t_k - 1000n^3 K\Lambda r_k^2, t_k]$.\\

\textit{Step 4:} Finally, we define a family of time-independent vector fields $\mathcal W := \{W^{(a)} : 1 \leq a \leq {n \choose 2}\}$ by setting $W^{(a)} = V^{(a)}$ at time $t_k$. Because the family $\mathcal U_{\mathrm{Bry}}$ is time-independent, the family $\mathcal W$ is close to $\mathcal U_{\mathrm{Bry}}$ in the region $B_{g(t_k)}(p_k, 6 \Lambda r_k) \times [t_k - 1000n^3 K \Lambda  r_k^2, t_k]$. Integrating the time derivative estimates in Step 3, we obtain
\[
\sum_{l = 0}^{40} r_k^{l-1}\big|D^l(W^{(a)} - V^{(a)})\big| \leq C 2^{-\frac{j}{400}}\varepsilon_k. 
\]
Hence together with the estimates for $h^{(a)}$, we get
\[
\sum_{l = 0}^{40} r_k^{l}\big|D^l(\mathcal L_{W^{(a)}}(g))\big| \leq C 2^{-\frac{j}{400}}\varepsilon_k
\]
in the region $B_{g(t_k)}(p_k, 6 \Lambda r_k) \times [t_k - 1000n^3 K \Lambda  r_k^2, t_k]$. This completes the proof.  
\end{proof}

For each $k$ sufficiently large, let us choose a compact domain $D_k \subset M$ with the following three properties:
\begin{enumerate}
\item[$\bullet$] There exists a point $\bar x \in \partial D_k$ such that $\lambda_1(\bar x, t_k) = \frac{2}{3} \theta R(\bar x, t_k)$. 
\item[$\bullet$] For each $x \in D_k$, we have $\lambda_1(x, t_k) \geq \frac{2}{3} \theta R(x, t_k)$. 
\item[$\bullet$] $\partial D_k$ is a leaf of Hamilton's CMC foliation of $(M, g(t_k))$. 
\end{enumerate}
Note that $D_k \subset \{x \in M : \lambda_1(x, t_k) > \frac{1}{2} \theta R(x, t_k)\} \subset B_{g(t_k)}(p_{k}, \Lambda r_k)$ in view of how we have chosen $\Lambda$. In particular, if $\bar x \in M \setminus D_k$, then the point $(\bar x, t_k)$ lies at the center of an evolving $\varepsilon_1$-neck by Proposition \ref{neck_detection}. Recall that for such a point, we define the curvature scale $r_{\mathrm{neck}}(\bar x, t_k)$ by the identity $R(\bar x, t_k) = (n-1)(n-2) r_{\mathrm{neck}}(\bar x, t_k)^{-2}$. 

\begin{proposition}\label{main_prop_2}
Let $j$ be a large positive integer. If $k$ is sufficiently large (depending upon $j$), then the family of vector fields $\mathcal W$ constructed in Proposition \ref{main_prop} have the following property. If $\bar x \in B_{g(t_k)}(p_k, \Lambda r_k) \setminus D_k$, then 
\[
\sum_{l = 1}^{10} r_k^{l-1} |D^l(\langle W^{(a)}, \nu \rangle)| \leq C 2^{-\frac{j}{400}}\varepsilon_k
\]
on the parabolic neighborhood $B_{g(t_k)}(\bar x, 600 r_{\mathrm{neck}}(\bar x, t_k)) \times [t_k - 200 r_{\mathrm{neck}}(\bar x, t_k)^2, t_k]$. Here,  $\nu$ denotes the unit normal to the leaf of Hamilton's CMC foliation and $C$ is a constant independent of $j$ and $k$. 
\end{proposition}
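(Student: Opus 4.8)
The plan is to follow Step 9 of the proof of the Neck Improvement Theorem (Theorem~\ref{neck_improvement}) almost verbatim, with the exact cylinder there replaced by the neck region of the Bryant soliton near its tip.

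First I would record the geometry. Since $\bar x \in B_{g(t_k)}(p_k, \Lambda r_k) \setminus D_k$, the point $(\bar x, t_k)$ lies at the center of an evolving $\varepsilon_1$-neck by Proposition~\ref{neck_detection}. By Corollary~\ref{soliton_approximation} together with $R \leq R_{\max}(t_k) \leq r_k^{-2}$, one has
\[
(n-1)(n-2)\, r_k^2 \;\leq\; r_{\mathrm{neck}}(\bar x, t_k)^2 \;\leq\; 2K(n-1)(n-2)(\Lambda + 1)\, r_k^2,
\]
so the neck scale at $\bar x$ is comparable to $r_k$ up to a factor depending only on $n$, $K$, and $\Lambda$. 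Because the choice of $\Lambda$ forces $\Lambda \geq 160000\, n^2 K$ (which is implied by $L\sqrt{4n^2K/\Lambda}\le 10^{-6}$), a short computation with the triangle inequality shows
\[
B_{g(t_k)}(\bar x, 600\, r_{\mathrm{neck}}(\bar x,t_k)) \times [t_k - 200\, r_{\mathrm{neck}}(\bar x,t_k)^2, t_k] \;\subset\; B_{g(t_k)}(p_k, 4\Lambda r_k) \times [t_k - 1000\, n^2 K\Lambda r_k^2, t_k].
\]
Hence on this parabolic neighborhood the estimate $\sum_{l=1}^{40} r_k^l\, |D^l(\mathcal L_{W^{(a)}}(g(t)))| \leq C\, 2^{-j/400}\varepsilon_k$ of Proposition~\ref{main_prop} holds, and $\mathcal W$ is $C^{80}$-close there to a standard family of rotational vector fields on the Bryant soliton (and the evolving-neck structure guarantees that Hamilton's CMC foliation is defined over the whole interval, since $200 \ll \varepsilon_1^{-1}$). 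The weight $(r_k^{-1} d_{g(\bar t)}(p_k,\cdot) + 1)$ appearing in Lemma~\ref{big_existence_2} and Lemma~\ref{big_uniqueness_2} is bounded above and below on $B_{g(t_k)}(p_k,\Lambda r_k)$, so it plays no role here.

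With these estimates in hand, I would fix a time $t$ in the interval, write $g = g(t)$, and work leafwise on Hamilton's CMC foliation $\{\Sigma_s\}$ of the $\varepsilon_1$-neck around $\bar x$ at time $t$ (which coincides with the restriction of the global CMC foliation of $(M,g(t))$). As in Step 9, the lapse function $v$ of this foliation solves the Jacobi equation $\Delta_{\Sigma_s} v + (|A|^2 + \Ric(\nu,\nu))\, v = \mathrm{const}$, and since $\Delta_{\Sigma_s} + (|A|^2 + \Ric(\nu,\nu))$ is a small perturbation of the Laplacian on a round sphere of radius $\approx r_{\mathrm{neck}}$, it is invertible with uniformly bounded inverse on the subspace of mean-zero functions, while $v$ is $C^l$-close to $\mathrm{area}_{g(t)}(\Sigma_s)^{-1}$. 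Setting $F^{(a)} := \langle W^{(a)}, \nu\rangle$, the expression $\Delta_{\Sigma_s} F^{(a)} + (|A|^2 + \Ric(\nu,\nu))\, F^{(a)}$ can be rewritten in terms of $\mathcal L_{W^{(a)}}(g)$ and its first covariant derivatives, by differentiating, along the flow of $W^{(a)}$, the identity that $\Sigma_s$ has constant mean curvature with respect to the pulled-back metric; hence this expression together with its first several derivatives is $O(2^{-j/400}\varepsilon_k)$. Subtracting the $\Sigma_s$-average and inverting the Jacobi operator then produces a function $G^{(a)}(s)$ of $s$ alone with $\sum_{l=0}^{10} r_k^{\,l-1}\,\big|D^l\big(v^{-1}\langle W^{(a)},\nu\rangle - G^{(a)}(s)\big)\big| \leq C\, 2^{-j/400}\varepsilon_k$.

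To finish, I would bound the $s$-derivatives of $G^{(a)}$ using the divergence theorem: if $\Omega_s$ is the region between $\Sigma_s$ and a fixed reference leaf, then $G^{(a)}(s) - G^{(a)}(s_0) = \int_{\Omega_s} \mathrm{div}\, W^{(a)}$, so $\frac{d}{ds} G^{(a)}(s) = \pm \int_{\Sigma_s} v\, \mathrm{div}\, W^{(a)}$, and since $\mathrm{div}\, W^{(a)} = \tfrac12\,\mathrm{tr}(\mathcal L_{W^{(a)}}(g))$ is $O(2^{-j/400}\varepsilon_k)$ with all its derivatives, every derivative $\tfrac{d^l}{ds^l} G^{(a)}$ with $l \geq 1$ is of the same order. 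Combining this with the previous estimate and the bounds on $v$ yields $\sum_{l=1}^{10} r_k^{\,l-1}\,|D^l(\langle W^{(a)},\nu\rangle)| \leq C\, 2^{-j/400}\varepsilon_k$ on $\Sigma_s$; since $t$ was arbitrary in the time interval and the remaining (tangential and temporal) derivatives not directly captured this way are controlled by the parabolic interior estimates applied to $\mathcal L_{W^{(a)}}(g)$, exactly as in Step 9, this is the assertion. The step I expect to require the most care is the first paragraph — confirming that the neck scale around $\bar x$ is comparable to $r_k$ and that the corresponding parabolic neighborhood sits inside the region where Proposition~\ref{main_prop} applies, which is precisely where the quantitative choice of $\Lambda$ enters; the Jacobi-operator analysis, being local on a single leaf, transfers without change from \cite{Bre18}.
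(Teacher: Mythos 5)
Your argument is correct and establishes the claimed estimate, but it takes a genuinely different (and somewhat longer) route than the paper's own proof. You import Step~9 of the Neck Improvement Theorem essentially wholesale: you project out the lowest eigenmode of the Jacobi operator, invert on the subspace of mean-zero functions, and then recover control over the remaining $s$-dependent constant $G^{(a)}(s)$ via the divergence theorem, losing the $l=0$ estimate in the process. The paper instead exploits a feature of the Bryant geometry that is absent on the exact cylinder: because $\Sigma_s \subset B_{g(t_k)}(p_k, 4\Lambda r_k)$ and the solution there is close to the Bryant soliton (after rescaling by $r_k$), the zeroth-order coefficient $|A|^2 + \Ric(\nu,\nu)$ of the Jacobi operator $\Delta_{\Sigma_s} + (|A|^2 + \Ric(\nu,\nu))$ is bounded below by a universal positive multiple of $r_k^{-2}$, so the operator has no eigenvalue in $[-cr_k^{-2}, cr_k^{-2}]$ and is \emph{fully} invertible with a universal bound on the inverse. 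One then estimates $F^{(a)} = \langle W^{(a)},\nu\rangle$ directly — no projection, no divergence theorem — obtaining $\sum_{l=0}^{10} r_k^{l-1}|D^l F^{(a)}| \leq C2^{-j/400}\varepsilon_k$, which is strictly stronger than what you derive (and than what the statement records) because it includes $l=0$. The shortcut fails on the exact shrinking cylinder, where $|A|^2 + \Ric(\nu,\nu)$ vanishes and the Jacobi operator has a genuine kernel; that is exactly why Step~9 of the Neck Improvement Theorem must go through the projection-and-divergence-theorem detour, and why you found it natural to reuse that argument here. What the paper's approach buys is a cleaner, fully local-in-$s$ argument, together with the $l=0$ bound, which is convenient (and implicitly relied upon) when Steps~11--14 of the Neck Improvement Theorem are replayed in the proof of Proposition~\ref{main_prop_3}. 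If you adopt the Step~9 route, you would need to add a Lie-bracket argument as in Step~10 of the Neck Improvement Theorem before that part of the argument can proceed; since the statement of Proposition~\ref{main_prop_2} itself only records the $l\geq 1$ estimate, your proof does establish the assertion as literally stated. Your preliminary paragraph verifying that the parabolic neighborhood of $\bar x$ fits inside the region covered by Proposition~\ref{main_prop} matches the paper's computation, which uses radius $1000\,r_{\mathrm{neck}}$ rather than $600\,r_{\mathrm{neck}}$; the extra room is used precisely to run the leafwise Jacobi analysis on leaves inside $B_{g(t_k)}(\bar x, 800\,r_{\mathrm{neck}})$ and then state the conclusion at radius $600\,r_{\mathrm{neck}}$, and the same bookkeeping is needed in your version.
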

\begin{proof}
Consider a point $\bar x \in B_{g(t_k)}(p_k, \Lambda r_k) \setminus D_k$. By Corollary \ref{soliton_approximation}, $r_k^2R(\bar x, t_k) \geq \frac{1}{4K \Lambda}$. Recalling that by assumption $\sqrt{\frac{4n^3K}{\Lambda}} \leq 10^{-6}$, this implies 
\[
r_{\mathrm{neck}}(\bar x, t_k)^2 = (n-1)(n-2) R(\bar x, t_k)^{-1} \leq 4n^3 K \Lambda r_k^2 \leq 10^{-12} \Lambda^2 r_k^2. 
\]
Thus 
\[
B_{g(t_k)}(\bar x, 1000 r_{\mathrm{neck}}(\bar x, t_k)) \times [t_k - 200 r_{\mathrm{neck}}(\bar x, t_k)^2, t_k] \subset B_{g(t_k)}(p_{k}, 4 \Lambda r_k) \times [t_k - 1000n^3K\Lambda r_k^2, t_k], 
\]
and in particular, the estimates satisfied by the family of vector fields $\mathcal W$ in Proposition \ref{main_prop} hold on the parabolic neighborhood $B_{g(t_k)}(\bar x, 1000 r_{\mathrm{neck}}(\bar x, t_k)) \times [t_k - 200 r_{\mathrm{neck}}(\bar x, t_k)^2, t_k]$. 

Now we estimate the quantity $\langle W^{(a)}, \nu \rangle$, where $\nu$ denotes the normal to the CMC foliation. To that end, fix a time $t \in  [t_k - 200 r_{\mathrm{neck}}(\bar x, t_k)^2, t_k]$ and let $\Sigma_s$ denote Hamilton's CMC foliation at time $t$. We will only consider those leaves of the foliaiton which are contained in the $B_{g(t_k)}(\bar x, 800 r_{\mathrm{neck}}(\bar x, t_k))$. As we did before, let us define a function $F^{(a)} : \Sigma_s \to \mathbb{R}$ by $F^{(a)} := \langle W^{(a)}, \nu \rangle$. Recall that the quantity 
\[
\Delta_{\Sigma} F^{(a)} + (|A|^2 + \mathrm{Ric}(\nu, \nu)) F^{(a)} =: H^{(a)}
\]
can be expressed in terms of $\mathcal L_{W^{(a)}}(g)$ and first derivatives of $\mathcal L_{W^{(a)}}(g)$. Consequently, the estimates of the previous proposition imply 
\[
\sum_{l =0}^{20} r_k^{l +1} \big|D^lH^{(a)}| \leq C 2^{-\frac{j}{400}}\varepsilon_k.
\]
Now as observed in \cite{Bre20}, since $\Sigma_s \subset B_{g(t_k)}(\bar x, 800 r_{\mathrm{neck}}(\bar x, t_k)) \subset  B_{g(t_k)}(p_k, 4 \Lambda r_k)$ the eigenvalues of the Jacobi operator $\Delta_{\Sigma} + (|A|^2 + \mathrm{Ric}(\nu, \nu))$ lie outside the interval $[-cr_k^{-2}, cr_k^{-2}]$, for some small universal constant $c$. This follows from approximation by the exact Bryant solution, where this estimate can be verified. Consequently, we have a universal bound for the norm of the inverse of the Jacobi operator. As in the proof of the Neck Improvement Theorem, using the estimates for $H^{(a)}$, we thus get
\[
\sum_{l =0}^{10} r_k^{l-1} |D^l F^{(a)}| \leq C 2^{-\frac{j}{400}}\varepsilon_k.
\]
Since $t \in [t_k  - 200 r_{\mathrm{neck}}(\bar x, t_k)^2, t_k]$ is arbitrary, we conclude
\[
\sum_{l =0}^{10} r_k^{l-1} |D^l (\langle W^{(a)}, \nu \rangle) | \leq C 2^{-\frac{j}{400}}\varepsilon_k.
\]
on the parabolic neighborhood $B_{g(t_k)}(\bar x, 600 r_{\mathrm{neck}}(\bar x, t_k)) \times [t_k - 200 r_{\mathrm{neck}}(\bar x, t_k)^2, t_k]$ for a constant $C$ independent of $j$ and $k$. 
\end{proof}

\begin{proposition} \label{main_prop_3}
Let $j$ be a large integer. If $k$ is sufficiently large (depending upon $j$), then the family of vector field $\mathcal W := \{ W^{(a)} : 1 \leq a \leq {n \choose 2}\}$ constructed in Proposition \ref{main_prop} have the following property. For each such $\bar x \in B_{g(t_k)}(p_{k}, \Lambda r_k) \setminus D_k$, we can find a symmetric ${n\choose 2} \times {n \choose 2}$ matrix $Q_{ab}$ such that the estimate
\[
\Big| Q_{ab} - \mathrm{area}_{g(t)}(\Sigma)^{-\frac{n+1}{n-1}} \int_{\Sigma} \langle W^{(a)}, W^{(b)} \rangle_{g(t)} \, d\mu_{g(t)} \Big| \leq C 2^{-\frac{j}{400} }\varepsilon_k. 
\]
holds whenever $t \in [t_k - 200 r_{\mathrm{neck}}(\bar x, t_k)^2, t_k]$ and $\Sigma \subset B_{g(t_k)}(\bar x, 200 r_{\mathrm{neck}}(\bar x, t_k))$ is a leaf of Hamilton's CMC foliation of $(M, g(t))$. The matrix $Q_{ab}$ is independent of $t$ and $\Sigma$. Moreover, the eigenvalues of the matrix $Q_{ab}$ lie in the interval $[\frac{1}{C}, C]$. Here the constant $C$ is independent of $j$ and $k$. 
\end{proposition}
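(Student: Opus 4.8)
The plan is to transcribe Steps 10--14 of the proof of Theorem \ref{neck_improvement} (the Neck Improvement Theorem), applied to the restriction of the family $\mathcal W$ to a neck around each point $\bar x \in B_{g(t_k)}(p_k, \Lambda r_k) \setminus D_k$. Throughout, $j$ is large and $k$ is large depending on $j$; for brevity I write $\delta_{j,k} := C2^{-j/400}\varepsilon_k$ for the target error scale. First I would fix $\bar x$ and observe, exactly as at the start of the proof of Proposition \ref{main_prop_2}, that $r_{\mathrm{neck}}(\bar x, t_k)^2 \leq 4n^2 K\Lambda r_k^2$, so that the parabolic neighborhood $B_{g(t_k)}(\bar x, 1000\,r_{\mathrm{neck}}(\bar x,t_k)) \times [t_k - 200\,r_{\mathrm{neck}}(\bar x,t_k)^2, t_k]$ sits inside $B_{g(t_k)}(p_k, 4\Lambda r_k)\times[t_k - 1000 n^2 K\Lambda r_k^2, t_k]$, the region where the estimates of Proposition \ref{main_prop} (for $\mathcal L_{W^{(a)}}(g)$ and its derivatives) and Proposition \ref{main_prop_2} (for $\langle W^{(a)},\nu\rangle$ and its derivatives) are available. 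Since $\mathcal W$ is $C^{80}$-close to a standard family of rotational vector fields on the Bryant soliton, and a cross-sectional sphere of such a neck is $C^{80}$-close to a round sphere, at each point of the neighborhood a suitable $(n-1)$ of the $W^{(a)}$ are uniformly linearly independent and span the tangent space of the CMC leaf up to error $\delta_{j,k}$.

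Next I would carry out the analog of Steps 10--11. Fixing $t \in [t_k - 200\,r_{\mathrm{neck}}^2, t_k]$ and writing $T = v^{-1}\nu$ for the (gradient) vector field of Hamilton's CMC foliation at time $t$, with lapse $v$ normalized by $\int_{\Sigma_s} v = 1$, the identities $g([T,W^{(a)}],\cdot) = (\mathcal L_{W^{(a)}}g)(T,\cdot) - d\langle W^{(a)},T\rangle$ and $\langle\nabla|T|^2, W^{(a)}\rangle = 2\langle\nabla\langle W^{(a)},T\rangle, T\rangle - (\mathcal L_{W^{(a)}}g)(T,T)$, combined with Propositions \ref{main_prop} and \ref{main_prop_2}, give $\sum_{l=0}^{5} r_k^l\bigl(|D^l[\nu,W^{(a)}]| + |D^l[v\nu,W^{(a)}]|\bigr) + \sum_{l=0}^5 r_k^{l+1}|D^l\langle\nabla v, W^{(a)}\rangle| \leq \delta_{j,k}$, and hence (using the spanning property and $\int v = 1$) $\sup_\Sigma|v - \mathrm{area}_{g(t)}(\Sigma)^{-1}| \leq \delta_{j,k}$ with derivatives, for leaves $\Sigma$ in the neighborhood. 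Then, repeating verbatim the argument of Step 11 of the proof of Theorem \ref{neck_improvement} with the auxiliary coefficients $\lambda_a$ chosen so that $\sum_a\lambda_a W^{(a)}$ approximately generates a fixed infinitesimal rotation, I would deduce from $|\mathcal L_{W^{(a)}}\Ric| + |\mathcal L_{W^{(a)}}A| \leq \delta_{j,k} r_k^{-2}$ that
\[
\inf_\rho\sup_\Sigma \bigl|(\Ric - \rho g)|_{T\Sigma}\bigr| \leq \delta_{j,k} r_k^{-2},\qquad \sup_\Sigma\bigl|(A - \tfrac{1}{n-1}Hg)|_{T\Sigma}\bigr| \leq \delta_{j,k} r_k^{-1}
\]
for every leaf $\Sigma$ of the time-$t$ foliation contained in $B_{g(t_k)}(\bar x, 400\,r_{\mathrm{neck}})$, where $H$ is the constant mean curvature of $\Sigma$.

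With these geometric estimates in hand I would run the monotonicity arguments. As in Step 13, flowing $\Sigma_{s_0}$ by $v\nu$ and using $\mathcal L_{v\nu}(g) = 2Av$, the roundness of $A$, the commutator bound for $[v\nu, W^{(a)}]$, and the lapse estimate, one obtains $\bigl|\tfrac{d}{ds}\bigl(\mathrm{area}_{g(t)}(\Sigma_s)^{-\frac{n+1}{n-1}}\int_{\Sigma_s}\langle W^{(a)},W^{(b)}\rangle_{g(t)}\,d\mu_{g(t)}\bigr)\bigr| \leq \delta_{j,k}$, so that the bracketed quantity is nearly independent of the leaf at fixed time $t$. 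As in the first part of Step 14, using $\tfrac{\partial}{\partial t}g = -2\Ric$ together with the roundness of $\Ric|_{T\Sigma}$ gives, for a fixed submanifold $\Sigma'$ that is a leaf of the time-$t_k$ foliation, $\bigl|\tfrac{d}{dt}\bigl(\mathrm{area}_{g(t)}(\Sigma')^{-\frac{n+1}{n-1}}\int_{\Sigma'}\langle W^{(a)},W^{(b)}\rangle_{g(t)}\,d\mu_{g(t)}\bigr)\bigr| \leq \delta_{j,k}r_k^{-2}$; and as in Step 12, since $\mathcal W$ is time-independent and $\delta_{j,k}$-tangent to the time-$t$ foliation for every $t$ in the interval, the time-$t$ and time-$t_k$ foliations are $\delta_{j,k}$-close in $C^1$ inside $B_{g(t_k)}(\bar x, 300\,r_{\mathrm{neck}})$. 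Finally I would set $Q_{ab} := \mathrm{area}_{g(t_k)}(\Sigma_{\bar x})^{-\frac{n+1}{n-1}}\int_{\Sigma_{\bar x}}\langle W^{(a)},W^{(b)}\rangle_{g(t_k)}\,d\mu_{g(t_k)}$ for the leaf $\Sigma_{\bar x}$ of the time-$t_k$ foliation through $\bar x$, and chain these estimates: passing from a given time-$t$ leaf $\Sigma$ to the time-$t$ leaf through $\bar x$ via the $s$-monotonicity, then to a $C^1$-close time-$t_k$ leaf $\Sigma'$ via the foliation comparison, then from $g(t)$ to $g(t_k)$ on the fixed submanifold $\Sigma'$ via the $t$-monotonicity, then back to $\Sigma_{\bar x}$ via the $s$-monotonicity at time $t_k$ — which yields the asserted bound. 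The eigenvalue bounds on $Q_{ab}$ follow because $\mathcal W$ is $C^{80}$-close to a standard rotational family on the Bryant soliton, for which the analogous matrix is symmetric positive definite with eigenvalues in a fixed interval $[1/C, C]$.

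The main obstacle will be the analog of Step 11 — extracting the near-roundness of the Ricci tensor and of the second fundamental form on the CMC leaves from the approximate-Killing-field estimates — since this is where one must exploit the algebraic structure of the rotational vector fields (through the coefficients $\lambda_a$) and crucially use the pointwise smallness of $\langle W^{(a)},\nu\rangle$ supplied by Proposition \ref{main_prop_2}; everything else is bookkeeping to keep the parabolic neighborhoods nested and to propagate the error scale $\delta_{j,k} = C2^{-j/400}\varepsilon_k$ unchanged through the maximum-principle and Jacobi-operator-inversion steps.
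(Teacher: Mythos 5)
Your proposal is correct and takes essentially the same route as the paper: the paper's proof is a one-paragraph remark that one argues ``analogously to Steps 11--14 of the Neck Improvement Theorem'' using the estimates from Propositions \ref{main_prop} and \ref{main_prop_2}, which is exactly what you carry out in detail (including the bracket/lapse estimates from the end of Step 10, which the paper leaves implicit, the roundness of $\Ric|_{T\Sigma}$ and $A|_{T\Sigma}$ from Step 11, the $C^1$-closeness of the foliations at different times from Step 12, the $s$- and $t$-monotonicity of the normalized Gram matrix from Steps 13--14, and the eigenvalue bound on $Q_{ab}$ from $C^{80}$-closeness to a standard rotational family on the Bryant soliton). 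The only cosmetic difference is the precise ordering in your final chain of approximations, which is a permutation of the paper's chain and equally valid.
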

\begin{proof}
The estimates established in Proposition \ref{main_prop} and Proposition \ref{main_prop_2} hold on the parabolic neighborhood $B_{g(t_k)}(\bar x, 600 r_{\mathrm{neck}}(\bar x, t_k)) \times [t_k - 200 r_{\mathrm{neck}}(\bar x, t_k)^2, t_k]$. We can prove the proposition in a manner analogous to Steps 11 - 14 of the Neck Improvement Theorem. In this argument, we use that the family of vector fields $\mathcal W$ is close to a standard family of rotational vector fields on the Bryant soliton. In particular, this condition ensures that the eigenvalues of the matrix $Q_{ab}$ are universally bounded from above and below. 
\end{proof}

\begin{corollary}\label{final_cor}
If $k$ is sufficiently large, then $(\bar x, t_k)$ is $\frac{\varepsilon_k}{2}$-symmetric for all $\bar x \in M \setminus D_k$.  
\end{corollary}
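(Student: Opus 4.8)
The plan is to combine the three preceding propositions with the dichotomy $M\setminus D_k = (M\setminus B_{g(t_k)}(p_k,\Lambda r_k)) \cup (B_{g(t_k)}(p_k,\Lambda r_k)\setminus D_k)$ and dispatch each piece separately. For points $\bar x$ with $d_{g(t_k)}(p_k,\bar x)\geq \Lambda r_k$, the desired conclusion is already contained in Lemma \ref{symmetry_outside_lambda}, which shows such $(\bar x, t_k)$ are $\tfrac{\varepsilon_k}{2}$-symmetric (here I use that $t_k \in [t_k - \delta_k^{-1}r_k^2, t_k]$ and $t_k\geq \hat t_k$). So the real work is the region $B_{g(t_k)}(p_k,\Lambda r_k)\setminus D_k$, which is a bounded collar attached to the cap.

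For $\bar x \in B_{g(t_k)}(p_k,\Lambda r_k)\setminus D_k$, I would first invoke Proposition \ref{neck_detection}: since $\lambda_1(\bar x, t_k) < \tfrac12\theta R(\bar x, t_k)$ (by our choice of $\Lambda$ and $D_k$), the point $(\bar x, t_k)$ lies at the center of an evolving $\varepsilon_1$-neck with curvature scale $r_{\mathrm{neck}}(\bar x, t_k)$, and $r_{\mathrm{neck}}(\bar x, t_k)^2 \leq 4n^2K\Lambda r_k^2$. Then I would assemble the three estimates needed for $\tfrac{\varepsilon_k}{2}$-symmetry in the sense of Definition \ref{neck_symmetry}, applied to the rescaled parabolic neighborhood of $(\bar x, t_k)$: the bound $\sum_{l=0}^2 r_{\mathrm{neck}}^{2l}|D^l(\mathcal L_{W^{(a)}}(g))|^2 \leq C 2^{-\frac{j}{200}}\varepsilon_k^2$ from Proposition \ref{main_prop} (using $r_{\mathrm{neck}}\leq C r_k$ to absorb the scale mismatch, and that $B_{g(t_k)}(\bar x, 100\,r_{\mathrm{neck}})\times[t_k - 100\,r_{\mathrm{neck}}^2, t_k]$ sits inside the region where Proposition \ref{main_prop} applies); the normal bound $\sum_a r_{\mathrm{neck}}^{-2}|\langle W^{(a)},\nu\rangle|^2 \leq C2^{-\frac{j}{200}}\varepsilon_k^2$ from Proposition \ref{main_prop_2}; and the orthonormality defect bound from Proposition \ref{main_prop_3}. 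Since the matrix $Q_{ab}$ from Proposition \ref{main_prop_3} has eigenvalues in $[\tfrac1C, C]$ and is within $C2^{-\frac{j}{400}}\varepsilon_k$ of the normalized Gram matrix uniformly over leaves, replacing $\mathcal W$ by $\widehat W^{(a)} := \sum_b (Q^{-1/2})_{ab} W^{(b)}$ produces a family satisfying all three estimates with right-hand side $C2^{-\frac{j}{400}}\varepsilon_k$ (the first two estimates are $O(n\choose 2)$-invariant, so the change of basis does not spoil them, and $Q^{-1/2}$ is a fixed bounded matrix so derivative bounds are preserved).

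Finally, since the constant $C$ in all three propositions is independent of $j$ and $k$, I choose $j$ large enough that $C2^{-\frac{j}{400}} \leq \tfrac14$; then for all $k$ sufficiently large (depending on this fixed $j$), the family $\widehat W^{(a)}$ witnesses that $(\bar x, t_k)$ is $\tfrac{\varepsilon_k}{2}$-symmetric. Combining with the case $d_{g(t_k)}(p_k,\bar x)\geq \Lambda r_k$ handled by Lemma \ref{symmetry_outside_lambda} completes the proof.

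The main obstacle I anticipate is bookkeeping rather than conceptual: one must verify that the parabolic neighborhoods appearing in Definition \ref{neck_symmetry} for the point $(\bar x, t_k)$ — namely $B_{g(t_k)}(\bar x, 100\,r_{\mathrm{neck}}(\bar x, t_k))\times[t_k - 100\,r_{\mathrm{neck}}(\bar x,t_k)^2, t_k]$ — are genuinely contained in the domains on which Propositions \ref{main_prop}, \ref{main_prop_2}, and \ref{main_prop_3} deliver their estimates, and that the mismatch between the neck scale $r_{\mathrm{neck}}(\bar x, t_k)$ used in Definition \ref{neck_symmetry} and the global scale $r_k$ used in those propositions only costs powers of the fixed constant $4n^2K\Lambda$, which is harmless once $j$ is chosen. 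The containment for $B_{g(t_k)}(p_k,\Lambda r_k)\setminus D_k$ follows from $r_{\mathrm{neck}}(\bar x, t_k)^2 \leq 4n^2 K\Lambda r_k^2 \leq 10^{-12}\Lambda^2 r_k^2$ exactly as in the proof of Proposition \ref{main_prop_2}, so no new estimate is needed; it is purely a matter of chaining the inclusions carefully.
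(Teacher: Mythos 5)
Your proof is correct and takes essentially the same route as the paper, which just asserts the conclusion from Lemma \ref{symmetry_outside_lambda} together with Propositions \ref{main_prop}, \ref{main_prop_2}, and \ref{main_prop_3}. You fill in the details the paper leaves implicit — most notably the renormalization by $Q^{-1/2}$ and the inequalities $r_k \leq r_{\mathrm{neck}}(\bar x, t_k) \leq \sqrt{4n^2K\Lambda}\, r_k$ that convert between the global scale $r_k$ and the neck scale of Definition \ref{neck_symmetry} — and these details are handled correctly.
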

\begin{proof}
We have already shown in Lemma \ref{symmetry_outside_lambda} that if $\bar x \in M \setminus B_{g(t_k)}(p_{k}, \Lambda r_k)$, then $(\bar x, t_k)$ is $\frac{\varepsilon_k}{2}$-symmetric. On the other hand, if $\bar x \in B_{g(t_k)}(p_{k}, \Lambda r_k) \setminus D_k$, then Proposition \ref{main_prop}, Proposition \ref{main_prop_2}, and Proposition \ref{main_prop_3} imply the point $(\bar x, t_k)$ is $\frac{\varepsilon_k}{2}$-symmetric, provided that we first choose $j$ sufficiently large and then assume $k$ is sufficiently large depending upon $j$. 
\end{proof}

We can now complete the proof of the main result of this paper, Theorem \ref{main}. By Corollary \ref{final_cor}, we know that $(x, t_k)$ is $\frac{\varepsilon_k}{2}$-symmetric for all $x \in M \setminus D_k$. On the other hand, by Proposition \ref{main_prop}, Proposition \ref{main_prop_2}, and Proposition \ref{main_prop_3}, there are vector fields defined on the cap region $D_k$ satisfying the requirements of Definition \ref{cap_symmetry} with $\varepsilon = \frac{\varepsilon_k}{2}$, provided we take a fixed $j$ sufficiently large and then assume $k$ is sufficiently large depending upon this $j$. Thus $(M, g(t_k))$ is $\frac{\varepsilon_k}{2}$-symmetric if $k$ is sufficiently large. By Lemma 5.5, we can find a time $\tilde t_k > t_k$ such that the flow is $\varepsilon_k$-symmetric at time $t$ for all $t \in [t_k, \tilde t_k]$, assuming $k$ is sufficiently large. This contradicts the definition of $t_k$ and completes the proof of Theorem \ref{main}.


\appendix 

\section{Auxiliary Results for Ancient $\kappa$-Solutions}

\begin{theorem} \label{soliton_classification}
Suppose $n \geq 4$ and $(\bar M, \bar g, \bar f)$ is an $n$-dimensional complete gradient shrinking soliton with curvature tensor that is strictly PIC and weakly PIC2. Then $(\bar M, \bar g)$ is isometric to a quotient of either a round sphere $S^n$ or a cylinder $S^{n-1} \times \mathbb{R}$. 
\end{theorem}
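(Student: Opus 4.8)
The plan is to view the soliton as the self-similar Ricci flow it generates and then combine a strong-maximum-principle dichotomy with a de Rham / holonomy analysis. Normalizing the shrinking equation as $\Ric + \nabla^2 \bar f = \tfrac12\bar g$, the family $\bar g(t) = (1-t)\,\varphi_t^*\bar g$, $t\in(-\infty,1)$, is a complete ancient solution of the Ricci flow, and since ``strictly PIC'' and ``weakly PIC2'' are invariant under rescaling and diffeomorphism, $\bar g(t)$ is strictly PIC and weakly PIC2 for every $t$. (Where it is needed to exclude noncompact positively curved cases I would use that, in the setting where this statement is applied, $\bar g$ has bounded curvature and is $\kappa$-noncollapsed, so the structure theory of Appendix~A is available.) Since weakly PIC2 is preserved by the Ricci flow, Hamilton's strong maximum principle --- in the PIC2 form due to Brendle--Schoen --- together with the self-similar structure gives a dichotomy: either $(\bar M,\bar g)$ is strictly PIC2, or the restricted holonomy of $(\bar M,\bar g)$ is reduced, i.e.\ the image of the curvature operator is a proper subalgebra of $\mathfrak{so}(n)$ at every point.

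If $(\bar M,\bar g)$ is strictly PIC2 it has positive sectional curvature, hence is compact: a complete noncompact shrinking soliton cannot be positively curved, since it would then contain neck regions, where sectional curvatures vanish. A compact strictly PIC2 metric is strictly PIC1, so by Brendle's convergence theorem for the Ricci flow the normalized flow through $\bar g$ tends to a metric of constant positive curvature; but for a shrinking soliton the normalized flow is isometric to $\bar g$ at all times, so $\bar g$ is already of constant positive curvature, and $(\bar M,\bar g)$ is a quotient of $S^n$.

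In the reduced-holonomy case I would run the de Rham decomposition on the universal cover $\widetilde{\bar M}$. If $\widetilde{\bar M}$ is a nontrivial Riemannian product, the strict PIC condition forces the splitting to be exactly $\widetilde{\bar M} = N^{n-1}\times\mathbb R$: in any other case (two factors of dimension $\ge 2$, or two flat directions, or $\widetilde{\bar M}$ flat) one exhibits an orthonormal $4$-frame on which the PIC expression vanishes. The potential splits off the $\mathbb R$ factor, so $N$ is a complete gradient shrinking soliton that is weakly PIC2 and, since $N\times\mathbb R$ is strictly PIC, also strictly PIC; one then induces on $n$. The base case $n=4$ uses the classification of three-dimensional gradient shrinking solitons: of the three possibilities ($S^3/\Gamma$, a quotient of $S^2\times\mathbb R$, a quotient of $\mathbb R^3$) only $S^3/\Gamma$ is compatible with $N\times\mathbb R$ being strictly PIC and weakly PIC2; for $n\ge 5$ the inductive hypothesis leaves $N$ a quotient of $S^{n-1}$ or of $S^{n-2}\times\mathbb R$, and the latter is excluded since it would make $\widetilde{\bar M}=S^{n-2}\times\mathbb R^2$, which is not strictly PIC. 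Thus $(\bar M,\bar g)$ is a quotient of $S^{n-1}\times\mathbb R$. If instead $\widetilde{\bar M}$ is irreducible with reduced holonomy, it is Kähler --- impossible, since a Kähler manifold carries isotropic $2$-planes $\mathrm{span}(Z,W)$ with $Z,W$ of type $(1,0)$ on which the sectional curvature is zero, contradicting strict PIC --- or it has one of the other special holonomies, which forces it to be Einstein and then either Ricci-flat (hence flat by Tashiro's theorem, contradicting irreducibility) or positive quaternion-Kähler (which is not strictly PIC), or else it is locally symmetric. In the locally symmetric case weak PIC2 makes $\widetilde{\bar M}$ of compact type, hence compact and simply connected, so by Micallef--Moore the strict PIC condition forces it to be homeomorphic to $S^n$, and a simply connected symmetric space homeomorphic to $S^n$ is the round sphere; so again $(\bar M,\bar g)$ is a quotient of $S^n$.

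The hard part is the reduced-holonomy analysis: extracting from the strict PIC hypothesis alone that the only surviving irreducible reduced-holonomy possibility is the round sphere (ruling out the nonspherical symmetric spaces and the special-holonomy Einstein metrics), and closing the induction in the reducible subcase --- which is where completeness of the cross-section $N$, the three-dimensional classification, and the bounded-curvature/$\kappa$ input are all needed. By contrast, the strong-maximum-principle dichotomy and the strictly PIC2 case are routine given the cited machinery.
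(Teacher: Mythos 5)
Your proposal takes a genuinely different route from the paper, and while the overall shape is plausible, there is a real gap in the compact case and a fair amount of machinery to verify in the irreducible case that the paper's route avoids.

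The paper's proof (for $n\geq 5$) runs on a different dichotomy: it cites Proposition~6.5 of \cite{Bre19}, which says that a weakly PIC2 solution either has $\Ric>0$ or locally splits off a line with a cross-section having $\Ric>0$. In the first case, the key compactness input is the theorem of Munteanu--Wang \cite{MW17}, that a complete gradient shrinking soliton with nonnegative sectional curvature and positive Ricci curvature is compact; once compact, \cite{Bre08} gives the round sphere. In the split case one inducts on the cross-section. No de~Rham/Berger analysis is needed, and no bounded-curvature or $\kappa$-noncollapsing hypotheses are used. For $n=4$ the paper simply cites \cite{LNW18}. By contrast, you organize the argument via a strong-maximum-principle dichotomy (strictly PIC2 versus reduced holonomy), a de~Rham splitting, and Berger's classification of irreducible holonomy. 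This is an interesting alternative framing, but it is heavier: the irreducible reduced-holonomy branch requires you to dispose of K\"ahler, positive quaternion-K\"ahler, special Ricci-flat holonomy, and nonspherical symmetric spaces, each of which needs a careful justification (for instance, the claim that a positive quaternion-K\"ahler manifold has an isotropic $2$-plane with zero isotropic curvature, and that weak PIC2 forces a locally symmetric Einstein soliton to be of compact type, both require proof), whereas the paper's dichotomy sidesteps all of this.

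The concrete gap is in your compactness step. You write that a complete noncompact shrinking soliton with positive sectional curvature ``would then contain neck regions, where sectional curvatures vanish.'' This is not a valid argument: it is circular (the neck structure theorem is a statement about $\kappa$-solutions, not an a priori structural fact about solitons), and even on a neck the sectional curvatures do not vanish, they are merely small in one direction, which is consistent with strict positivity. The parenthetical appeal to bounded curvature, $\kappa$-noncollapsing, and the structure theory of Appendix~A also does not help, because the theorem being proved does not assume these; and if one did assume them the circularity remains. The correct input at this step is the Munteanu--Wang theorem \cite{MW17}, which is a genuine theorem (not a consequence of soft considerations) and is exactly what the paper invokes. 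Without citing it, or supplying an argument of comparable depth, the strictly PIC2 branch of your proof is incomplete.
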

\begin{proof}
For $n = 4$, this result was proven in \cite{LNW18}. Suppose $n \geq 5$. By pulling back $g$ and $f$ to the universal cover, we may assume $(\bar M, \bar g)$ is simply connected and we must show that either $M = S^n$ or $M = S^{n-1} \times \mathbb{R}$. 

If $(\bar M, \bar g)$ is strictly PIC2, then it also has positive sectional curvature. By the result of Munteanu and Wang \cite{MW17}, this implies $(\bar M, \bar g)$ is compact. Then the main result of \cite{Bre08} implies $(\bar M, \bar g)$ must have constant curvature. It follows that $M = S^n$. 

If $(\bar M, \bar g)$ is only weakly PIC2, but not strictly PIC2, then Proposition 6.6 implies $(\bar M, \bar g) = (X, g_X) \times \mathbb{R}$. In this case, it is easy to see that the triple $(X, g_X, f_X)$ is a complete gradient shrinking soliton with $f_X = \bar f|_{X\times\{0\}}$. Moreover, because $(\bar M, \bar g)$ is strictly PIC and weakly PIC2, it follows (by definition) that $(X, g_X)$ is strictly PIC1 and weakly PIC2. Consequently, $(X, g_X)$ has positive Ricci curvature and nonnegative sectional curvature. Again by \cite{MW17}, $(X, g_X)$ must be compact and then by \cite{Bre08}, $(X, g_X)$ must have constant sectional curvature. It follows that $M = S^{n-1} \times \mathbb{R}$. This completes the proof. 
\end{proof}

The structure results for ancient $\kappa$-solutions in dimension four were developed in \cite{CZ06} under slightly different positivity assumptions (following Hamilton's work \cite{Ham97}) than uniformly PIC and weakly PIC2. In order for the subsequent structure results of this section to apply in dimension $n = 4$, we need the following proposition. 

\begin{proposition}
Suppose that $(M, g(t))$ is a four-dimensional ancient $\kappa$-solution (in the sense of Definition \ref{kappa_solution}) with uniformly positive isotropic curvature. Then $(M, g(t))$ satisfies the restricted isotropic curvature pinching condition of \cite{CZ06}. That is, in the notation of \cite{Ham97}, we have the inequalities $c_3 \leq \Lambda c_1$, $a_3 \leq \Lambda a_1$, and $b_3^2 \leq a_1c_1$, for some positive constant $\Lambda$. Consequently, $(M, g(t))$ is an ``ancient $\kappa$-solution with restricted isotropic curvature pinching'' in the sense of \cite{CZ06}. 
\end{proposition}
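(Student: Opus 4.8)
The plan is to reduce the claim to a pointwise statement about algebraic curvature operators on $\R^4$: it suffices to show that any such operator which is $\alpha$-uniformly PIC and weakly PIC2 lies in the restricted--isotropic--curvature--pinching cone of \cite{CZ06}, with $\Lambda$ depending only on $\alpha$. I would work throughout in Hamilton's splitting $so(4)=\Lambda^+\oplus\Lambda^-=so(3)\oplus so(3)$, writing the curvature operator as $\mathcal R=\begin{bmatrix}A&B\\B^t&C\end{bmatrix}$ with $\operatorname{tr}A=\operatorname{tr}C$ (first Bianchi identity) and eigenvalues $a_1\le a_2\le a_3$, $c_1\le c_2\le c_3$, $0\le b_1\le b_2\le b_3$ as in the statement. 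The first task is to record the consequences of weak PIC2 in dimension four, following Hamilton \cite{Ham97} (and the analysis of the curvature cones in \cite{Bre19}): weak PIC2 implies weak PIC, hence $a_1+a_2\ge 0$ and $c_1+c_2\ge 0$, and, more importantly, it couples the off-diagonal block $B$ to $A$ and $C$, bounding $\langle B\phi^-,\phi^+\rangle$ by the $A$- and $C$-quadratic forms in the matched directions $\phi^+\in\Lambda^+$, $\phi^-\in\Lambda^-$.

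The argument would then proceed in two stages. In the first stage I would establish $a_3\le\Lambda a_1$ and $c_3\le\Lambda c_1$, i.e. that $A$ and $C$ are uniformly positive definite. Uniform PIC already gives $a_1+a_2\ge\alpha\max\{a_3,b_3,c_3\}$ and the analogous inequality for $c_1+c_2$, so one has two-eigenvalue control for free; the point is to upgrade this to control of the \emph{smallest} eigenvalue. The mechanism should be that if $a_1$ were much smaller than $a_3$, then, by the weak--PIC2 coupling, the block $B$ (hence $b_3$) and the $C$-form in the corresponding directions would have to be correspondingly small, whereupon the two uniform--PIC inequalities together with $\operatorname{tr}A=\operatorname{tr}C$ become incompatible. (Non-flatness enters here to exclude $\mathcal R\equiv 0$, and also to rule out the degenerate configuration $A=C=0$.) In the second stage, with $A$ and $C$ now pinched, I would deduce $b_3^2\le a_1c_1$ by feeding the singular vectors of $B$ realizing $b_3$ into the weak--PIC2 coupling inequality and absorbing the condition-number constants from the first stage together with the uniform--PIC comparability of $b_3$ with $a_3$ and $c_3$.

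The main obstacle is the first stage --- producing a lower bound $\lambda_{\min}(A)\ge\Lambda^{-1}\lambda_{\max}(A)$ (and likewise for $C$) with $\Lambda$ depending only on $\alpha$. Both hypotheses are genuinely needed: the Fubini--Study metric on $\mathbb{CP}^2$ is weakly PIC2 with $a_1=0<a_3$ but is not uniformly PIC, while $S^2(r_1)\times S^2(r_2)$ with $r_1\neq r_2$ is weakly PIC2 with $a_1=c_1=0<b_3$ and is likewise not uniformly PIC --- so neither $a_3\le\Lambda a_1$ nor $b_3^2\le a_1c_1$ can follow from either condition alone, and the estimate must be extracted from the joint geometry of the uniform--PIC cone and the boundary of the PIC2 cone. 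This is precisely the kind of finite eigenvalue case analysis carried out in \cite{Ham97}; the inequality $c_3\le\Lambda c_1$ then follows by the symmetry $A\leftrightarrow C$, and, combined with the second stage, this shows that $(M,g(t))$ is an ancient $\kappa$-solution with restricted isotropic curvature pinching in the sense of \cite{CZ06}.
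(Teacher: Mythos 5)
Your reduction to a pointwise algebraic statement is the fatal gap: it is simply not true that an algebraic curvature operator on $\R^4$ which is $\alpha$-uniformly PIC and weakly PIC2 must satisfy $a_3 \leq \Lambda(\alpha)\, a_1$. Consider the operator with $A = C = \mathrm{diag}(0,1,1)$ and $B=0$ in Hamilton's block decomposition. It satisfies the Bianchi constraint $\operatorname{tr}A = \operatorname{tr}C$, it is nonnegative definite as a curvature operator (hence weakly PIC2), and $\min\{a_1+a_2,\,c_1+c_2\}=1 = \max\{a_3,b_3,c_3\}$, so it is $1$-uniformly PIC. Yet $a_1 = 0 < 1 = a_3$, so no constant $\Lambda$ works. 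Your two examples (Fubini--Study and $S^2(r_1)\times S^2(r_2)$) correctly show that weak PIC2 alone is insufficient, but they leave open the possibility that the joint condition suffices; this counterexample closes that door. The ``finite eigenvalue case analysis'' you are hoping to extract from \cite{Ham97} does not exist in the pointwise form you need.

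The actual proof in the paper is not pointwise at all --- it is a dynamical argument in which the hypothesis that $(M,g(t))$ is \emph{ancient} with \emph{bounded curvature} is used essentially. One first records that, by Theorems 1.3 and 1.4 of \cite{Ham97}, a cone $C_0$ cut out by the inequalities $(b_2+b_3)^2 \leq \Phi(a_1+a_2)(c_1+c_2)$, $a_2+a_3 \leq (\Phi+1)(a_1+a_2)$, $c_2+c_3 \leq (\Phi+1)(c_1+c_2)$ is transversally invariant under the Hamilton ODE $\tfrac{d}{dt}R = Q(R)$, and that the curvature operator of the given solution lies in $C_0$ for $\Phi$ large (this is where uniform PIC enters). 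One then introduces the shrinking family $C(s) = C_0 \cap \{s(a_1+a_2+a_3)\leq a_1\} \cap \{s(c_1+c_2+c_3) \leq c_1\}$, checks by the displayed ODE inequalities that $C(s)$ remains transversally invariant for $0\leq s\leq s_0:=(3(\Phi+1)(4\Phi+3))^{-1}$, and invokes the pinching theorem for ancient solutions of \cite{BHS11} (Theorem 9) to conclude that the curvature operator must in fact lie in the smaller cone $C(s_0)$, which gives $a_1\geq s_0 a_3$ and $c_1\geq s_0 c_3$. The inequality $b_3^2\leq a_1c_1$ is obtained by the same mechanism with a second family $\tilde C(s)$ and a computation of $\tfrac{d}{dt}\ln(b_3^2/(a_1c_1))$. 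It is precisely the passage from ``lies in an ODE-invariant cone $C(0)$'' to ``lies in the strictly smaller cone $C(s_0)$'' --- a conclusion that only holds because the solution exists for all $t\in(-\infty,0]$ with bounded curvature --- that defeats the counterexample above and makes the proposition true.
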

\begin{proof}
Since by assumption $(M, g(t))$ has bounded curvature, the proposition will follow from the pinching estimates for ancient solutions of the Ricci flow developed in \cite{BHS11}. Recall that in Section 2, we noted uniformly positive isotropic curvature implies there exists a positive constant $\alpha$ such that $\min\{a_1 + a_2, c_1 + c_2 \} \geq \alpha \max\{a_3, b_3, c_3 \}$. Moreover, because our solution is ancient and has bounded curvature, the evolution equations for $a_1$ and $c_1$ imply that $\min\{a_1, c_1 \} \geq 0$ by the maximum principle. 

For $\Phi > 0$, let $C_0$ denote the cone of curvature operators satisfying 
\begin{align*}
(b_2 + b_3)^2 &\leq \Phi(a_1 + a_2)(c_1 + c_2),\\
a_2 + a_3 &\leq (\Phi +1)(a_1 + a_2), \\
c_2 + c_3 &\leq (\Phi + 1)(c_1 + c_2).
\end{align*}
By Theorem 1.3 and Theorem 1.4 in \cite{Ham97}, the cone $C_0$ is transversally invariant under the Hamilton ODE $\frac{d}{dt} R = Q(R)$. Moreover, if $\Phi$ is sufficiently large, then the curvature operator of $(M, g(t))$ is contained in $C_0$ for all $t$. For for $s \geq 0$, consider the cone of curvature operators $C(s) = C_0 \cap \{s(a_1 + a_2 + a_3) \leq a_1\} \cap \{s(c_1 + c_2 + c_3) \leq c_1\}$. Now
\[
\frac{d}{dt} a_1 \geq a_1^2 + b_1^2 + 2a_2a_3 \geq 2a_2a_3 \geq a_3(a_1 + a_2) \geq \frac{1}{(\Phi + 1)}a_3(a_2 + a_3) \geq \frac{2}{9(\Phi + 1)}(a_1 + a_2 + a_3)^2. 
\]
On the other hand,
\begin{align*}
\frac{d}{dt} (a_1 + a_2 + a_3) &= (a_1 + a_2 + a_3)^2 + (b_1^2 + b_2^2 + b_3^2) \\
& \leq (a_1 + a_2 + a_3)^2 + 3 b_3^2  \\
& \leq (a_1 + a_2 + a_3)^2 + 3 \Phi (a_1 + a_2)(c_1 + c_2) \\
& \leq \frac{4\Phi + 3}{3}(a_1 + a_2 + a_3)^2,
\end{align*}
where we have used $a_1 + a_2 + a_3 = c_1 + c_2 + c_3$. Identical inequalities hold for the eigenvalues $c_i$. Thus, as long as $0 \leq s \leq s_0 := (3(\Phi +1)(4\Phi + 3))^{-1}$, then $C(s)$ is transversally invariant under the Hamilton ODE. Since $(M, g(t))$ is an ancient solution with bounded curvature and the curvature operator of $(M, g(t))$ is contained in $C(0)$ (at every point in spacetime), the pinching result of \cite{BHS11} (see Theorem 9), implies that the curvature operator of $(M, g(t))$ is contained in $C(s_0)$. In particular, $a_1 \geq s_0 a_3$ and $c_1 \geq s_0 c_3$. 

It remains to verify the condition $b_3^2 \leq a_1c_1$. We argue in a similar fashion. Let $\tilde C(s) = \{s b_3^2 \leq a_1c_1 \} \cap \{ (b_2 + b_3)^2 \leq q(s)(a_1 + a_2)(c_1 + c_2) \}$, where $q(s) = \frac{s+1}{2s}$ and $0 < s < 1$. Each cone in this family of cones is invariant under the Hamilton ODE. The cone $\{(b_2 + b_3)^2 \leq q(s) (a_1 + a_2)(c_1 + c_2) \}$ is transversally invariant under the Hamilton ODE as long as $q(s) \neq 1$. Indeed, $q(s) > 1$ when $0 < s < 1$. Thus our goal is to show transversal invariance of the  inequality $s b_3^2 \leq a_1c_1$ under the assumption $(b_2 + b_3)^2 \leq q(s)(a_1 + a_2)(c_1 + c_2)$. To that end, we recall that 
\begin{align*}
\frac{d}{dt} \ln(b_3^2) &\leq \frac{4b_1b_2}{b_3} + 2a_3 + 2c_3, \\
\frac{d}{dt} \ln(a_1) &\geq \frac{a_1^2 + b_1^2 +2 a_2a_3}{a_1}, \\
\frac{d}{dt} \ln(c_1) & \geq \frac{c_1^2 + b_1^2 +2 c_2c_3}{c_1}. 
\end{align*}
It follows then that 
\[
\frac{d}{dt} \ln\Big(\frac{b_3^2}{a_1c_1}\Big) \leq  - \frac{(a_1 - b_1)^2 + 2(a_2 - a_1)b_1}{a_1} - \frac{(c_1 - b_1)^2 + 2(c_2 - c_1)b_1}{c_1} - \frac{4b_1(b_3 - b_2)}{b_3}.
\]
The right hand side can only vanish if $a_1 = a_2 = c_1 = c_2 = b_1$ and $b_2 = b_3$. Suppose we have equality in $s b_3^2 = a_1c_1$ and the right hand side vanishes. Then $s b_3^2 = b_1^2$.  On the other hand, the inequality  $(b_2 + b_3)^2 \leq q(s)(a_1 + a_2)(c_1 + c_2)$ implies $ b_3^2 \leq q(s)b_1^2 = q(s)sb_3^2$. Since $q(s)s < 1$ when $0 < s < 1$, we have must $b_3 =0$, which in turns implies the curvature tensor must vanish. It follows that the cone $\tilde C(s)$ is transversally invariant under the Hamilton ODE for $0 < s < 1$. Finally, since the curvature tensor of $(M, g(t))$ satisfies 
\[
a_1c_1 \geq s_0^2 a_3c_3 \geq \frac{1}{4}s_0^2 (a_1 + a_2)(c_1 + c_2) \geq \frac{1}{4\Phi}s_0^2 b_3^2, 
\]
the curvature tensor of $(M, g(t))$ is contained in $\tilde C(s)$ is $s > 0$ is sufficiently small. Once more by \cite{BHS11}, we obtain the inequality $b_3^2 \leq a_1c_1$ and this completes the proof. 
\end{proof}

The classification of asymptotic solitons can be used to give a proof of the universal noncollapsing of ancient $\kappa$-solutions in higher dimensions (see \cite{Naf19}). This is the same manner in which Perelman proved universal noncollapsing in dimension three. For our purposes, we quote the noncollapsing results Theorem 6.19 in \cite{Bre19} for dimension $n \geq 5$ and Theorem 3.5 in \cite{CZ06} for dimension $n=4$. 

\begin{theorem} \label{universal_noncollapsing}
For every $n \geq 4$ and $\alpha > 0$, there exists a constant $\kappa_0 := \kappa_0(n, \alpha)$ with the following property. If $(M, g(t))$ is an $n$-dimensional, noncompact ancient $\kappa$-solution (for some $\kappa > 0$), which $\alpha$-uniformly PIC, and strictly PIC2, then $(M, g(t))$ is a $\kappa_0$-noncollapsed for all $t$. 
\end{theorem}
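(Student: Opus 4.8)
The plan is to follow Perelman's proof of universal noncollapsing for three-dimensional $\kappa$-solutions, using the classification of gradient shrinking solitons in Theorem \ref{soliton_classification} in place of its three-dimensional counterpart. The statement reduces to producing a constant $\bar\kappa = \bar\kappa(n) > 0$ such that Perelman's reduced volume $\tilde V_{(p_0,0)}(\tau)$, based at an arbitrary spacetime point $(p_0,0)$, satisfies $\tilde V_{(p_0,0)}(\tau) \geq \bar\kappa$ for every $\tau > 0$. Once this lower bound is available, Perelman's estimate relating the reduced volume to local volume ratios --- which needs only the uniform curvature bound $|\mathrm{Rm}| \leq C(n,\alpha)\,R$ guaranteed by the uniformly PIC hypothesis (automatic when $n \geq 5$, and precisely what the adjusted definition supplies when $n=4$) --- upgrades it to $\kappa_0$-noncollapsing with $\kappa_0 = \kappa_0(n,\alpha)$ on all scales.

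First I would apply Perelman's blow-down construction: since $(M,g(t))$ is $\kappa$-noncollapsed for some (possibly very small) $\kappa$ and has bounded curvature, one can choose $\tau_j \to \infty$ and points $q_j$ with $\ell(q_j,\tau_j) \leq \tfrac{n}{2}$ so that the rescaled flows $g_j(t) := \tau_j^{-1} g(\tau_j t)$ converge in the pointed Cheeger-Gromov sense to a complete nonflat gradient shrinking soliton $(\bar M, \bar g, \bar f)$. The conditions uniformly PIC and weakly PIC2 are scale invariant and closed under smooth limits, and on a nonflat limit the strong maximum principle gives $R > 0$ everywhere, so $(\bar M, \bar g)$ is strictly PIC and weakly PIC2. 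Theorem \ref{soliton_classification} then forces $(\bar M, \bar g)$ to be a finite quotient of the round sphere $S^n$ or of the round cylinder $S^{n-1} \times \R$.

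Next I would verify that the reduced volume of each such model soliton is bounded below by a universal $\bar\kappa(n) > 0$: for the round sphere and the round cylinder this is an explicit Gaussian-type integral, passing to a finite quotient alters it only by controlled factors, and for each fixed $n$ there are finitely many relevant quotients (in fact Perelman's argument shows the asymptotic soliton of a noncompact ancient solution is itself noncompact, so only quotients of $S^{n-1}\times\R$ occur). Since $\lim_{\tau\to\infty} \tilde V_{(p_0,0)}(\tau)$ equals the reduced volume of the asymptotic soliton, and $\tau \mapsto \tilde V_{(p_0,0)}(\tau)$ is monotone nonincreasing, we obtain $\tilde V_{(p_0,0)}(\tau) \geq \bar\kappa(n)$ for all $\tau$.

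The main obstacle is the construction of the asymptotic soliton. One must first ensure the blow-down limit exists as a smooth complete Ricci flow: this uses the a priori $\kappa$-noncollapsing and bounded curvature of $(M,g(t))$ and --- crucially in dimension four --- the fact that the adjusted uniformly PIC condition, together with the Hamilton-type curvature pinching of \cite{Ham97}, \cite{CZ06}, forces uniformly bounded geometry. More delicate is showing the limit is not flat, which is Perelman's observation that a nonflat ancient $\kappa$-solution has $\tilde V(\tau)$ strictly below $(4\pi)^{n/2}$ while a flat limit would force $\tilde V \equiv (4\pi)^{n/2}$. Carrying these two points, and the noncompactness of the asymptotic soliton, into higher dimensions is exactly the content of Theorem 6.19 in \cite{Bre19} (for $n \geq 5$) and Theorem 3.5 in \cite{CZ06} (for $n=4$), which we quote here.
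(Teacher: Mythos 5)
Your proposal is correct and mirrors exactly the route the paper gestures at: the paper notes that Perelman's classification-of-asymptotic-soliton argument (using Theorem \ref{soliton_classification}) gives universal noncollapsing, and then opts to quote the results from \cite{Bre19} (Theorem 6.19, $n \geq 5$) and \cite{CZ06} (Theorem 3.5, $n=4$), which is also how your write-up concludes. You have simply filled in the outline that the paper compresses into a citation, so there is no substantive difference.
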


We have made use of the following extension \cite{Bre09} of Hamilton's Harnack inequality \cite{Ham93a} to PIC2 solutions of the Ricci flow. 

\begin{theorem}\label{harnack_inequality}
Assume that $(M, g(t))$, $t \in (-T, 0]$ is a solution to the Ricci flow that is complete with bounded curvature and is weakly PIC2. Then
\[
\frac{\partial}{\partial t} R + 2 \langle \nabla R, v \rangle + 2\mathrm{Ric}(v,v) + \frac{1}{t-T}R \geq 0
\]
for every tangent vector $v$. In particular, the product $(t-T)R(x, t)$ is nondecreasing in $t$ for each $x \in M$. Moreover, if the solution is ancient $(T = - \infty)$, then $R(x, t)$ is nondecreasing in $t$ for each $x \in M$. 
\end{theorem}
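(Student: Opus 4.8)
The plan is to obtain Theorem~\ref{harnack_inequality} from the matrix Harnack inequality established in \cite{Bre09}, which upgrades Hamilton's matrix Harnack inequality \cite{Ham93a} from the hypothesis of nonnegative curvature operator to the hypothesis that the solution is weakly PIC2. First I would reduce to the standard normalization: after translating in time we may assume the solution is defined for $t \in (0, \Omega)$ with $\Omega \in (0, \infty]$, so that the inequality to be proved becomes Hamilton's standard form $\partial_t R + \tfrac{1}{t} R + 2\langle \nabla R, v\rangle + 2\Ric(v,v) \ge 0$; the ancient case corresponds to $\Omega = \infty$, in which case letting the base time tend to $-\infty$ removes the zero order term $\tfrac{1}{t} R$.

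Next I would recall Hamilton's Harnack quadratic. Working in a local orthonormal frame that evolves by $\partial_t e_i = \Ric(e_i, \cdot)^\sharp$, so that the components of $g$ stay equal to $\delta_{ij}$, set
\[
P_{ijk} = \nabla_i \Ric_{jk} - \nabla_j \Ric_{ik}, \qquad M_{ij} = \Delta \Ric_{ij} - \tfrac12 \nabla_i \nabla_j R + 2 R_{ipjq} \Ric_{pq} - \Ric_{ip}\Ric_{pj} + \tfrac{1}{2t}\Ric_{ij},
\]
and, for a vector $w$ and a $2$-form $U$, define $\mathcal Z(U,w) = M_{ij} w^i w^j + 2 P_{pij} U^{pi} w^j + R_{ipjq} U^{ij} U^{pq}$. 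The core assertion, which is the content of \cite{Bre09}, is that for a complete solution with bounded curvature that is weakly PIC2 one has $\mathcal Z(U, w) \ge 0$ for all $U$ and $w$. Granting this, the scalar inequality in the theorem follows by the usual tracing procedure: summing $\mathcal Z$ over an orthonormal basis with $U$ taken of the form $v \wedge e_c$ and $w = e_c$, and simplifying with the contracted second Bianchi identity $\nabla_j \Ric_{ij} = \tfrac12 \nabla_i R$ and the evolution equation $\partial_t R = \Delta R + 2|\Ric|^2$, produces exactly $\partial_t R + \tfrac1t R + 2\langle \nabla R, v \rangle + 2\Ric(v,v) \ge 0$; undoing the time translation gives the stated inequality with $\tfrac{1}{t - T}$ in place of $\tfrac1t$. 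The two monotonicity statements are then immediate: taking $v = 0$ and using $R \ge 0$ (a consequence of weakly PIC) shows that $(t-T) R(x,t)$ is nondecreasing in $t$ at each fixed $x$, and in the ancient case $R(x,t)$ is nondecreasing in $t$.

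The substance is therefore the matrix Harnack inequality $\mathcal Z \ge 0$ for weakly PIC2 solutions, and I would prove it by a tensor maximum principle argument. The key structural input is that the PIC2 cone --- nonnegativity of the isotropic curvature of $M \times \R^2$ --- is preserved under the Ricci flow, the invariance result of Brendle--Schoen and Nguyen also used elsewhere in this paper. Following Chow--Chu, Hamilton's quantity $\mathcal Z$ can be realized as a block of the curvature operator of a degenerate metric on $M \times \R$, and under this identification the condition $\mathcal Z \ge 0$ corresponds to a PIC2-type nonnegativity for an associated curvature operator satisfying a reaction--diffusion system of Hamilton type. One then checks that this nonnegativity is preserved under the associated reaction ODE --- using the algebraic structure of the PIC2 cone together with the precise form of the curvature reaction terms --- and verifies the initial condition (as $t \to 0^+$ the $\tfrac{1}{2t}$ term in $M_{ij}$ forces $\mathcal Z > 0$); Hamilton's tensor maximum principle for complete manifolds with bounded curvature then yields $\mathcal Z \ge 0$ for all $t$. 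The main obstacle is precisely this invariance computation for the reaction ODE: this is where one must exploit that the solution is PIC2 --- rather than, as in Hamilton's original argument, having nonnegative curvature operator --- and it is the step that constitutes the new contribution of \cite{Bre09}.
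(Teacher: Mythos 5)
The paper gives no proof of this statement: it is cited verbatim from \cite{Bre09}, and Appendix~A supplies only the statement together with the reference. Your sketch is a correct account of how the result is obtained there. The reduction of the stated trace Harnack inequality to the matrix Harnack quantity $\mathcal Z$ of \cite{Ham93a}, the normalization in time, and the tracing computation (summing $\mathcal Z(v\wedge e_c, e_c)$ over an orthonormal basis and invoking the contracted second Bianchi identity and $\partial_t R = \Delta R + 2|\Ric|^2$) are standard and correct, as is the derivation of the two monotonicity statements; note, however, that the monotonicity of $(t-T)R$ follows just from $\partial_t R + \frac{1}{t-T}R\ge 0$ and $t-T>0$, without any appeal to $R\ge 0$. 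Your description of the proof of the matrix inequality $\mathcal Z\ge 0$ --- realizing the Harnack quadratic as a curvature-type quantity in the spirit of Chow--Chu, checking invariance of the relevant nonnegativity cone under the reaction ODE using the algebraic structure of the PIC2 condition, verifying positivity near $t=0$ from the $\tfrac{1}{2t}\Ric$ term, and closing with Hamilton's tensor maximum principle on complete manifolds with bounded curvature --- is a fair summary of Brendle's argument, and you rightly identify the algebraic invariance computation as the new ingredient of \cite{Bre09}. One small inaccuracy: the Chow--Chu space-time picture lives on $M\times (0,T)$ (the time interval), not $M\times\R$; this is cosmetic and does not affect the logic.
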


We have also used the following extension of Perelman's pointwise derivative estimates \cite{Per03} to higher dimensions. See Corollary 6.14 in \cite{Bre19} for $n \geq 5$ and Proposition 3.3 in \cite{CZ06} for $n = 4$. 

\begin{theorem}\label{pointwise_derivative_estimates}
For every $n \geq 4$ and $\alpha > 0$, there exists a constant $\eta :=  \eta(n, \alpha) < \infty$ with the following property. If $(M, g(t))$ is an $n$-dimensional, ancient $\kappa$-solution, which is $\alpha$-uniformly PIC. Then $|DR| \leq \eta R^{\frac{3}{2}}$ and $|\frac{\partial }{\partial t} R| \leq \eta R^2$ at each point in space-time.
\end{theorem}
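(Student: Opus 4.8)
The plan is to deduce both estimates from the compactness of the space of ancient $\kappa$-solutions modulo parabolic rescaling, by a blow-up argument of the type used by Perelman in dimension three. After the parabolic rescaling $g \mapsto R(x_0,t_0)\, g(t_0 + R(x_0,t_0)^{-1}t)$ it suffices to produce a constant $\eta = \eta(n,\alpha)$ with $|DR|(x_0,t_0) \le \eta$ and $|\tfrac{\partial}{\partial t}R|(x_0,t_0) \le \eta$ whenever $R(x_0,t_0) = 1$. The two inputs are the universal noncollapsing Theorem \ref{universal_noncollapsing} and an analogue of Perelman's local curvature estimate: for every $\rho > 0$ there is $K = K(n,\alpha,\rho)$ such that if $(M,g(t))$ is an ancient $\kappa$-solution which is $\alpha$-uniformly PIC and weakly PIC2 and $R(x_0,t_0)=1$, then $R \le K$ on $B_{g(t_0)}(x_0,\rho)\times[t_0-\rho^2,t_0]$. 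Granting the latter with $\rho = 1$, the full curvature tensor is bounded by $C(n,\alpha)K$ on that parabolic ball (using uniformly PIC when $n \ge 5$, and the dimension-four definition of uniformly PIC when $n=4$), so Shi's interior derivative estimates (Appendix C) give $|DR|(x_0,t_0) \le \eta$ and $|\tfrac{\partial}{\partial t}R|(x_0,t_0) \le \eta$, as desired.

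First I would establish the local curvature estimate by contradiction and a point-picking argument. If it failed, one obtains a sequence of ancient $\kappa$-solutions and spacetime points at which the curvature at the base point is normalized but large at bounded distance and bounded backward time. Hamilton's Harnack inequality (Theorem \ref{harnack_inequality}) propagates curvature bounds backward in time, and the universal noncollapsing constant $\kappa_0(n,\alpha)$ prevents collapse, so one can extract a pointed Cheeger--Gromov limit which is a complete ancient solution, weakly PIC2 and $\alpha$-uniformly PIC, with locally bounded curvature but curvature blowing up somewhere --- a contradiction provided such limits have globally bounded curvature. Proving the latter is where the higher-dimensional structure theory enters: limits that split off a line reduce the dimension and have a lower-dimensional $\kappa$-solution as cross-section, while non-split limits are controlled using the classification of asymptotic shrinking solitons (Theorem \ref{soliton_classification}), and one induces on $n$. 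This is precisely the content of Corollary 6.14 in \cite{Bre19} for $n \ge 5$ and Proposition 3.3 in \cite{CZ06} for $n = 4$.

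With the local curvature estimate available, the derivative bounds follow from a second, much softer blow-up. If $|DR| \le \eta R^{3/2}$ failed for every $\eta$, one would obtain ancient $\kappa$-solutions $(M_k,g_k(t))$ and points $(x_k,t_k)$ with $|DR|_{g_k}(x_k,t_k) > k\,R_{g_k}(x_k,t_k)^{3/2}$; parabolically rescaling so that $R(x_k,0)=1$ and passing to a limit --- which exists by the local curvature estimate and universal noncollapsing --- gives a smooth ancient $\kappa$-solution with $R(x_\infty,0)=1$ and $|DR|(x_\infty,0)=\infty$, a contradiction. The estimate for $|\tfrac{\partial}{\partial t}R|$ is obtained in exactly the same way. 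The main obstacle is thus the first step, the local curvature estimate (equivalently, compactness of the space of normalized ancient $\kappa$-solutions): this is where the lack of a Hamilton--Ivey-type pinching inequality in dimensions $5 \le n \le 11$ is sidestepped by building ``$\alpha$-uniformly PIC and weakly PIC2'' into the definition of an ancient $\kappa$-solution and invoking the soliton classification.
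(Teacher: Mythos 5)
The paper does not prove Theorem \ref{pointwise_derivative_estimates}; it simply cites Corollary 6.14 in \cite{Bre19} (for $n \geq 5$) and Proposition 3.3 in \cite{CZ06} (for $n = 4$) as known results. Your sketch reproduces the standard Perelman-style argument underlying those references --- reduce to a local ``bounded curvature at bounded distance'' estimate for $\kappa$-solutions, then pass to the derivative bounds either through Shi's interior estimates or an equivalent compactness-and-blow-up argument --- and the key ingredients you identify (point-picking, the Harnack inequality, soliton classification, noncollapsing) are the right ones. So the broad strokes are correct, and you have in fact correctly located where the real work sits (the local curvature estimate) and how the ``uniformly PIC and weakly PIC2'' hypothesis replaces Hamilton--Ivey pinching.

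Two inaccuracies in the sketch are worth flagging, though neither is fatal. First, there is no genuine induction on $n$ in the higher-dimensional argument: after blowing up, the limit that splits a line has cross-section a \emph{round} $S^{n-1}$ (forced by the asymptotic soliton classification, Theorem \ref{soliton_classification}), not a general $(n-1)$-dimensional $\kappa$-solution, so one does not reduce to a lower-dimensional instance of the same estimate. Second, there is a mild circularity concern in leaning on Theorem \ref{universal_noncollapsing} as an ``input'': as stated in the paper it requires noncompactness and strict PIC2, whereas Theorem \ref{pointwise_derivative_estimates} is phrased for arbitrary $\alpha$-uniformly PIC ancient $\kappa$-solutions; and in \cite{Bre19} the derivative estimates (Corollary 6.14) in fact appear \emph{before} universal noncollapsing (Theorem 6.19), so the constant there may initially depend on $\kappa$ and only later be made universal. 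You should either check the precise ordering of these results in \cite{Bre19}, or phrase the dependence as $\eta(n,\alpha,\kappa)$ first and upgrade afterwards. Also, the sentence attributing ``the local curvature estimate'' to Corollary 6.14 and Proposition 3.3 is slightly misplaced --- those references \emph{are} the derivative estimates; the local curvature estimate is an intermediate result established along the way.
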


The following two results are consequences of the first author's work on ancient $\kappa$-solutions for $n \geq 5$ and Chen and Zhu's work in dimension four. See Section 6 in \cite{Bre19} and in particular Theorem 6.18 for $n \geq 5$. See Section 3 in \cite{CZ06} and in particular Proposition 3.4 for $n = 4$.

\begin{theorem}\label{structure_thm}
Suppose $n \geq 4$ and let $(M, g(t))$ be an $n$-dimensional ancient $\kappa$-solution which is noncompact, $\alpha$-uniformly PIC, and strictly PIC2. Given any $\varepsilon > 0$, we can find a domain $\Omega_t \subset M$ with the following properties:
\begin{enumerate}
\item[$\bullet$] For each $x \in M \setminus \Omega_t$, the point $(x, t)$ lies at the center of an evolving $\varepsilon$-neck. 
\item[$\bullet$] The boundary $\partial \Omega_t$ is a leaf of Hamilton's CMC foliation at time $t$. 
\item[$\bullet$] $\sup_{x \in \Omega_t} R(x, t) \leq C(n,\alpha, \varepsilon) \inf_{x \in \Omega_t} R(x, t)$. 
\item[$\bullet$] $\mathrm{diam}_{g(t)}(\Omega_t)^2 \, \sup_{x \in \Omega_t} R(x, t) \leq C(n,\alpha, \varepsilon)$. 
\end{enumerate}
\end{theorem}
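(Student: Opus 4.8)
The plan is to reduce Theorem \ref{structure_thm} to the structure theory for ancient $\kappa$-solutions that is already available, treating the ranges $n \geq 5$ and $n = 4$ separately. For $n \geq 5$ the statement is, up to notation, Theorem 6.18 of \cite{Bre19}, so the work is to check that an ancient $\kappa$-solution in the sense of Definition \ref{kappa_solution} which is noncompact, $\alpha$-uniformly PIC and strictly PIC2 satisfies all the hypotheses used there. For $n = 4$ I would instead invoke Proposition A.2 to place $(M,g(t))$ in the class of ancient $\kappa$-solutions with restricted isotropic curvature pinching in the sense of \cite{CZ06}, and then quote their Proposition 3.4.

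For the substance of the $n \geq 5$ argument I would follow Perelman's blueprint. First, combining $\kappa$-noncollapsing with the classification of shrinking solitons (Theorem \ref{soliton_classification}) and the universal noncollapsing estimate (Theorem \ref{universal_noncollapsing}), identify the asymptotic shrinking soliton of $(M,g(t))$ as $t \to -\infty$; since $M$ is noncompact and strictly PIC2, the round sphere quotients are excluded, so outside the case where $M$ is itself a quotient of the shrinking cylinder the asymptotic soliton is the round cylinder $S^{n-1}\times\R$. Second, run a point-picking and dimension-reduction argument: if there were points escaping to infinity along which the rescaled flows failed to be $\varepsilon$-close to the shrinking cylinder, a subsequential Cheeger--Gromov limit (whose existence rests on the curvature control from Hamilton's Harnack inequality, Theorem \ref{harnack_inequality}, and on noncollapsing) would split off a line and reduce to a compact $(n-1)$-dimensional ancient $\kappa$-solution, hence a round sphere, contradicting the failure of the neck condition. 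This shows every point outside a region $\Omega_t$ of curvature comparable to $R_{\max}(t)$ lies at the center of an evolving $\varepsilon$-neck; Hamilton's CMC foliation (\cite{Ham97}, Appendix D) then supplies a canonical leaf to take as $\partial\Omega_t$. Finally, the diameter bound $\mathrm{diam}_{g(t)}(\Omega_t)^2 \sup_{\Omega_t} R \leq C$ follows from a further compactness argument (a long thin cap would again contain a neck in its interior), and the two-sided curvature comparison $\sup_{\Omega_t} R \leq C\inf_{\Omega_t} R$ then follows by integrating the pointwise gradient estimate $|DR| \leq \eta R^{3/2}$ of Theorem \ref{pointwise_derivative_estimates} along short paths inside $\Omega_t$.

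For $n = 4$ the only extra point is that ``uniformly PIC'' is phrased through Hamilton's invariants $a_i,b_i,c_i$ rather than through a bound on the full curvature tensor. Proposition A.2 shows our solution nevertheless satisfies $c_3 \leq \Lambda c_1$, $a_3 \leq \Lambda a_1$ and $b_3^2 \leq a_1 c_1$, which is precisely the restricted isotropic curvature pinching imposed in \cite{CZ06}; their Proposition 3.4 then produces the decomposition into a neck region and a cap together with all four bullet points, the cap playing the role of $\Omega_t$ and its boundary again being a CMC leaf.

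I expect the main obstacle to be bookkeeping rather than a new idea: one must confirm that every ingredient used in the three-dimensional structure theorem has been established in the present generality --- the compactness theory for the relevant ancient solutions, the Harnack and derivative estimates (Theorems \ref{harnack_inequality} and \ref{pointwise_derivative_estimates}), universal noncollapsing (Theorem \ref{universal_noncollapsing}), and the soliton classification (Theorem \ref{soliton_classification}) --- and that the dimension-reduction step closes, which relies on the inductive fact that compact ancient $\kappa$-solutions in lower dimensions with our curvature conditions are round. The subtlety specific to $n=4$ is absorbed entirely into Proposition A.2, so the chief care there is simply matching the terminology of \cite{CZ06} with the present paper, in particular that their cross-sectional spheres and CMC leaves coincide with ours.
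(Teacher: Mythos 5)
Your proposal matches the paper's treatment exactly: Theorem \ref{structure_thm} is stated without proof and presented as a direct consequence of Theorem 6.18 in \cite{Bre19} for $n \geq 5$ and, via Proposition A.2, of Proposition 3.4 in \cite{CZ06} for $n = 4$. The additional blueprint you sketch of the Perelman-style dimension-reduction argument underlying \cite{Bre19} is a fair summary, but the paper itself rests solely on the citations you identified.
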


\begin{corollary}\label{structure_cor}
Suppose $n \geq 4$ and let $(M, g(t))$ be an $n$-dimensional ancient $\kappa$-solution which is noncompact, $\alpha$-uniformly PIC, and strictly PIC2. Let $\varepsilon$ be a small positive real number and let $\Omega_t \subset M$ be the domain in Theorem \ref{structure_thm}. Then $\sup_{x \in M} R(x,t) \leq C(n, \alpha, \varepsilon) \inf_{x \in \Omega_t} R(x, t)$ and $\mathrm{diam}_{g(t)}(\Omega_t)^2 \sup_{x \in M} R(x, t) \leq C(n, \alpha, \varepsilon)$. 
\end{corollary}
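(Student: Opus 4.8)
The plan is to bootstrap from Theorem \ref{structure_thm}. Its last two properties already give $\sup_{\Omega_t}R(\cdot,t)\le C(n,\alpha,\varepsilon)\inf_{\Omega_t}R(\cdot,t)$ and $\mathrm{diam}_{g(t)}(\Omega_t)^2\sup_{\Omega_t}R(\cdot,t)\le C(n,\alpha,\varepsilon)$, so both assertions of the corollary reduce to the single inequality
\[
\sup_{x\in M}R(x,t)\ \le\ C(n,\alpha,\varepsilon)\,\sup_{x\in\Omega_t}R(x,t),
\]
that is, to the statement that the curvature on $M$ is controlled, up to a universal factor, by the curvature of the compact core $\Omega_t$. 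If $(M,g(t))$ is isometric to a quotient of a round shrinking cylinder then $R(\cdot,t)$ is constant and there is nothing to prove, so we may assume the solution has a genuine cap, so that $\Omega_t\neq\varnothing$ and $\partial\Omega_t\subseteq\overline{\Omega_t}$ is a leaf of Hamilton's CMC foliation.

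By Theorem \ref{structure_thm}, $M\setminus\Omega_t$ is a finite union of ends, each of which is swept out by the CMC leaves of the evolving $\varepsilon$-necks that cover it, with $\partial\Omega_t$ the innermost leaf. On each such leaf $\Sigma$ the neck condition forces $R(\cdot,t)|_\Sigma$ to be $(1+O(\varepsilon))$-close to the constant value $(n-1)(n-2)\,r_{\mathrm{neck}}(\Sigma)^{-2}$, where $r_{\mathrm{neck}}(\Sigma)^{n-1}=\mathrm{area}_{g(t)}(\Sigma)/\mathrm{area}_{g_{S^{n-1}}}(S^{n-1})$. Hence the displayed inequality reduces to showing that along each end the scalar curvature is quasi-monotone nonincreasing as one moves outward toward infinity, equivalently that the radii $r_{\mathrm{neck}}(\Sigma)$ are quasi-monotone nondecreasing; granting this, $R(\cdot,t)$ on $M\setminus\Omega_t$ is at most a universal multiple of its value on $\partial\Omega_t$, hence of $\sup_{\Omega_t}R(\cdot,t)$, which closes the argument.

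The monotonicity of the curvature along the ends is the crux, and I expect it to be the main obstacle. It is false for arbitrary necks — a neck may have a ``waist'' — so it genuinely uses that $(M,g(t))$ is an ancient $\kappa$-solution; concretely, it asserts that the maximum of $R(\cdot,t)$ is, up to a universal factor, already attained inside the core $\Omega_t$ (morally, ``at the tip of the cap''). This is part of the structure theory for noncompact ancient $\kappa$-solutions: it is due to Perelman in dimension three and is established, in our generality, in \cite{Bre19} for $n\ge 5$ and in \cite{CZ06} for $n=4$, and it is precisely the ingredient behind Corollary A.3 of \cite{Bre18}. If one wishes to see it directly, the natural route is a blow-up by contradiction: if some solution had a point $y$ with $R(y,t)$ much larger than $\sup_{\Omega_t}R(\cdot,t)$, then $y$ would lie far out on an $\varepsilon$-neck of nearly minimal radius, and rescaling at $y$ — using the $\kappa$-noncollapsing, the bounded curvature, and the pointwise derivative estimates of Theorem \ref{pointwise_derivative_estimates} — one extracts a noncompact ancient $\kappa$-solution limit on which the corresponding core has curvature tending to zero; analyzing this limit with Hamilton's Harnack inequality (Theorem \ref{harnack_inequality}) and the classification of asymptotic shrinking solitons (Theorem \ref{soliton_classification}), which forces any such limit possessing a flat direction to be a quotient of a shrinking cylinder, one rules out this possibility. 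Either way the proof is not purely formal: it leans on the global classification results for ancient $\kappa$-solutions recalled in Appendix A, and the careful bookkeeping lies in controlling the CMC foliation along the ends and in tracking which portions of the solution survive in the blow-up limit.
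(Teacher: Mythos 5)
Your reduction is correct and matches the spirit of the paper's one-line proof (which simply refers the reader to Corollary A.3 of \cite{Bre18}): the two asserted inequalities do reduce, via the third and fourth bullets of Theorem \ref{structure_thm}, to the single claim $\sup_M R(\cdot,t)\le C\sup_{\Omega_t}R(\cdot,t)$, and you correctly recognize that this is equivalent to the assertion that the neck radii along the tube $M\setminus\Omega_t$ do not drop far below $r_{\mathrm{neck}}(\partial\Omega_t)$. Deferring that structural fact to \cite{Bre18}, \cite{Bre19}, and \cite{CZ06} is the same move the authors make.

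The ``direct'' blow-up sketch you append, however, does not close. After rescaling at a hypothetical high-curvature point $y$ with $R(y,t)\gg\sup_{\Omega_t}R(\cdot,t)$, you correctly note that the cap escapes to spatial infinity (its rescaled curvature tends to $0$, and the gradient estimate of Theorem \ref{pointwise_derivative_estimates} forces its rescaled distance to $\infty$), so that every point of the limit $M_\infty$ is the center of an evolving $\varepsilon$-neck; and you conclude $M_\infty$ must be a shrinking round cylinder. But that conclusion is not by itself a contradiction: $y$ was assumed to lie on a neck, so a cylinder limit is exactly what one expects, and your sketch never explains where the actual contradiction is supposed to come from. (Concretely: one still has to rule out a strictly PIC2 limit — e.g., by the CMC-foliation argument that would force $M_\infty\cong S^{n-1}\times\R$, contradicting $M_\infty\cong\R^n$ — and in the cylindrical case one needs some further geometric input about the portion of the tube between $y$ and $\partial\Omega_t$, or a topological argument about how the two ends of the limiting cylinder sit inside the one-ended $M\cong\R^n$.) In addition, Theorem \ref{soliton_classification} classifies complete gradient shrinking solitons; the limit of a parabolic rescaling at a fixed time is an ancient $\kappa$-solution, not a priori a soliton, and the dichotomy ``weakly but not strictly PIC2 $\Rightarrow$ round cylinder quotient'' in this context comes from the strong maximum principle (Corollary 6.7 of \cite{Bre19}, used in Lemma \ref{neck_detection_lemma}), not from Theorem \ref{soliton_classification}. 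So while your overall plan and your reading of where the difficulty lies are sound, the sketched blow-up argument is missing its decisive step.
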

\begin{proof}
This follows exactly as Corollary A.3 follows from Theorem A.2 in \cite{Bre20}. 
\end{proof}

The following result is sometimes called the Neck Detection Lemma.

\begin{lemma}\label{neck_detection_lemma}
Suppose $n \geq 4$ and let $(M, g(t))$ be an $n$-dimensional ancient $\kappa$-solution which is noncompact, $\alpha$-uniformly PIC, and strictly PIC2. Given any $\varepsilon > 0$, there exists a constant $\theta := (n, \alpha, \varepsilon)> 0$ with the following property. If $(x,t)$ is a spacetime point satisfying $\lambda_1(x, t) \leq \theta R(x,t)$, then $(x, t)$ lies at the center of an evolving $\varepsilon$-neck. 
\end{lemma}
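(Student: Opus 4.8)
The plan is a contradiction/compactness argument of the type used to prove Perelman's canonical neighborhood theorem, with the classification of shrinking solitons used to pin down the blow-up limit. Suppose the assertion fails for some $\varepsilon>0$. Then there are spacetime points $(x_k,t_k)$ and numbers $\theta_k\downarrow 0$ with $\lambda_1(x_k,t_k)\le\theta_k R(x_k,t_k)$ such that $(x_k,t_k)$ does not lie at the center of an evolving $\varepsilon$-neck in the sense of Definition \ref{epsilon_neck}. Since $(M,g(t))$ is nonflat and weakly PIC2, the strong maximum principle gives $R>0$ everywhere, so we may parabolically rescale so that $R(x_k,t_k)=(n-1)(n-2)$ and translate time so that $t_k=t_n$. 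Using the universal noncollapsing estimate (Theorem \ref{universal_noncollapsing}), the Harnack inequality (Theorem \ref{harnack_inequality}) and the pointwise derivative estimates (Theorem \ref{pointwise_derivative_estimates}), together with the a priori curvature bound for $\kappa$-solutions on metric balls of a fixed size (from \cite{Bre19} for $n\ge 5$, \cite{CZ06} for $n=4$) and Shi's interior estimates, one extracts a subsequential Cheeger--Gromov limit $(M_\infty,g_\infty(t),x_\infty)$ which is again a complete ancient $\kappa$-solution, weakly PIC2 and uniformly PIC, with $R_{g_\infty(t_n)}(x_\infty)=(n-1)(n-2)$.

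Because $\theta_k\downarrow 0$, the Ricci tensor of $g_\infty(t_n)$ has a zero eigenvalue at $x_\infty$. Since a weakly PIC2 metric has nonnegative sectional curvature, the strong maximum principle for the evolution of $\Ric$ shows that the nullity of $\Ric$ is positive, parallel, and constant in time; hence, after passing to the universal cover if necessary, $M_\infty$ splits isometrically as $N^{n-1}\times\mathbb{R}$, where $N$ itself does not split a line. The splitting is by exactly one line, since a metric of the form $P\times\mathbb{R}^2$ cannot be uniformly PIC. Then $N$ is an $(n-1)$-dimensional ancient $\kappa$-solution which is weakly PIC2 and uniformly PIC1 (the cross-sectional condition equivalent to $M_\infty=N\times\mathbb{R}$ being uniformly PIC, which in particular makes $N$ uniformly PIC); and since $N$ does not split a line, the strong maximum principle upgrades weak PIC2 to strict PIC2 on $N$. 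Were $N$ noncompact, the structure theorem applied in dimension $n-1$ (Theorem \ref{structure_thm} when $n\ge 5$, Perelman's structure theory when $n=4$) would exhibit regions of $N$ arbitrarily close to $S^{n-2}\times\mathbb{R}$, so that $N\times\mathbb{R}$ would contain regions arbitrarily close to $S^{n-2}\times\mathbb{R}^2$, incompatible with uniform PIC. Hence $N$ is compact.

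The asymptotic shrinking soliton of the compact solution $N$ is a compact gradient shrinking soliton which inherits the (scale-invariant) strict positivity of isotropic curvature and the weak PIC2 condition, so by Theorem \ref{soliton_classification} it is a quotient of the round $S^{n-1}$. A rigidity argument — using crucially that $N$ is uniformly PIC1, which prevents $N$ from developing necks and therefore rules out the non-round compact ancient solutions (higher-dimensional ancient ovals) — forces $N$ to be a round shrinking spherical space form. The quotient is trivial: $(M,g(t))$, being strictly PIC2, has positive sectional curvature and is therefore diffeomorphic to $\mathbb{R}^n$, so the metric balls $B_{g_k(t_n)}(x_k,\varepsilon^{-1})$ are simply connected, and this persists in the limit $B_{g_\infty(t_n)}(x_\infty,\varepsilon^{-1})$ under the (locally diffeomorphic) Cheeger--Gromov convergence. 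Therefore $M_\infty$ is the standard round shrinking cylinder and, by our normalization, $(x_\infty,t_n)$ lies at its center; since the convergence is smooth on parabolic neighborhoods, $(x_k,t_k)$ must lie at the center of an evolving $\varepsilon$-neck for all $k$ large, contradicting the choice of $(x_k,t_k)$. This proves the lemma.

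The two genuinely substantial ingredients are the a priori curvature estimates that make the blow-up compact, and the identification of the limit as the exact round cylinder — the chain ``strong maximum principle $\Rightarrow$ line splitting $\Rightarrow$ classification of the compact cross section via Theorem \ref{soliton_classification} and the attendant rigidity $\Rightarrow$ triviality of the quotient.'' For $n=4$, one first invokes the restricted isotropic curvature pinching established earlier in this appendix to place $(M,g(t))$ in the framework of \cite{CZ06}, after which the argument above goes through verbatim with \cite{CZ06} playing the role of \cite{Bre19}. The full details are in \cite{Bre19} for $n\ge 5$ and \cite{CZ06} for $n=4$.
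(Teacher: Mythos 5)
Your argument shares the overall blow-up-and-classify architecture of the paper's proof, but two of your steps have genuine gaps. First, to identify the limit $(M^{(\infty)},g_\infty(t))$ as a quotient of a shrinking cylinder, the paper simply invokes Corollary 6.7 in \cite{Bre19}, which directly gives the dichotomy: such a $\kappa$-solution is either strictly PIC2 or its universal cover is a family of shrinking round cylinders. You instead attempt to reprove this via a splitting argument, passing to the cross section $N$ and then to its asymptotic shrinking soliton. Your assertion that the asymptotic shrinking soliton of the compact solution $N$ is itself compact is unjustified: compact ancient $\kappa$-solutions can have noncompact asymptotic solitons (in dimension three the ancient oval has asymptotic soliton $S^2\times\mathbb{R}$), and nothing you say rules out $S^{n-2}\times\mathbb{R}$ as the soliton before you apply Theorem \ref{soliton_classification}. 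The subsequent ``rigidity argument'' that forces $N$ to be round is only gestured at; filling it in would essentially amount to reproducing the classification result you are trying to replace.

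Second, your treatment of the deck group $\Gamma$ does not work as written. You claim that because $M^{(j)}\cong\mathbb{R}^n$, the metric balls $B_{g_k(t_n)}(x_k,\varepsilon^{-1})$ are simply connected and that this persists under Cheeger--Gromov convergence. But a metric ball in a simply connected manifold need not be simply connected (the inclusion of an open subset into a simply connected manifold is not automatically $\pi_1$-injective), so obtaining $\pi_1$ of the ball in the limit $(S^{n-1}\times\mathbb{R})/\Gamma$ does not by itself contradict simple connectivity of $M^{(j)}$. The paper closes exactly this gap by citing Theorem A.2 in \cite{Bre18b}, which asserts that when the curvature lies in the interior of the PIC2 cone, the fundamental group of an embedded CMC cross-sectional sphere injects into $\pi_1(M)$; this $\pi_1$-injectivity is the precise topological input needed to conclude $\Gamma$ is trivial, and your sketch is missing it.
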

\begin{proof}
We have the tools to follow the standard contradiction argument. If the claim is not true, then we can find a sequence of counterexamples $(M^{(j)}, g_j(t), x_j, 0)$ satisfying $\lambda_1(x_j, 0) \leq \frac{1}{j} R(x_j, 0)$, but $(x_j, 0)$ does not lie at the center of an evolving $\varepsilon$-neck. By Corollary 6.15 in \cite{Bre19} or Corollary 3.7 in \cite{CZ06} if $n = 4$ (compactness of ancient $\kappa$-solutions), after rescaling each solution so that $R(x_j, 0) = 1$, we can extract a subsequence that converges smoothly to another ancient $\kappa$-solution, $(M^{(\infty)}, g_{\infty}(t), x_{\infty}, 0)$ satisfying $\lambda_1(x_{\infty}, 0) = 0$. The limit is $\alpha$-uniformly PIC and weakly PIC2. By Corollary 6.7 in \cite{Bre19}, either the limit is strictly PIC2 or its universal cover is isometric to a family of shrinking cylinders. If the limit were strictly PIC2, we would contradict $\lambda_1(x_{\infty}, 0) = 0$, and so we conclude the limit flow $(M^{(\infty)}, g_{\infty}(t))$ is a noncompact quotient of a family of shrinking round cylinders $(S^{n-1} \times \mathbb{R})/\Gamma$. A similar argument works if $n = 4$ using Lemma 3.2 in \cite{CZ06} (note that the ``restricted isotropic curvature'' pinching condition implies the curvature operator is nonnegative). 

It remains to show $\Gamma$ must be trivial. Since for each $j$, $(M^{(j)}, g_j(t))$ is a complete, noncompact Riemannian manifold with strictly positive sectional curvature, the soul theorem implies $M^{(j)}$ is diffeomorphic to $\mathbb{R}^n$. On the other hand, since the curvature tensor of $(M^{(j)}, g_j(t))$ lies within the interior of the PIC2 cone, Theorem A.2 in \cite{Bre18} implies that for $j$ sufficiently large, the fundamental group of the quotient of $(S^{n-1} \times \{0\})/\Gamma$ injects into fundamental group of $M^{(j)}$. This implies the group of isometries $\Gamma$ must be trivial, and thus $(M^{(\infty)}, g_{\infty}(t))$ is a family of round shrinking cylinders. For $j$ large, this implies $(x_j, 0)$ lies at the center of an evolving $\varepsilon$-neck, a contradiction to our assumption, which completes the proof of the lemma. 
\end{proof}

We used in our proof of the main theorem that the ancient solution $(M, g(t))$ is a Type II ancient solution. In dimension three, this is a consequence of the Neck Stability Theorem of Kleiner and Lott \cite{KL17}. Their argument works in higher dimensions as well. As in \cite{Bre20}, we provide a brief proof here for the reader. 

\begin{proposition}\label{type_ii}
Suppose $n \geq 4$ and let $(M, g(t))$ be an $n$-dimensional ancient $\kappa$-solution which is noncompact, $\alpha$-uniformly PIC, and strictly PIC2. Then $(M, g(t))$ is a Type II ancient solution; i.e. $\limsup_{t \to -\infty} (-t) R_{\max}(t)  = \infty$. 
\end{proposition}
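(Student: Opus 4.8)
The plan is to argue by contradiction: assume $(M,g(t))$ is \emph{not} of Type II, so that there is a constant $\Lambda<\infty$ with $(-t)\,R_{\max}(t)\le\Lambda$ for all $t\le 0$. The first observation is that, by the Harnack inequality (Theorem \ref{harnack_inequality}), $R_{\max}(t)$ is nondecreasing in $t$, so $\lim_{t\to-\infty}R_{\max}(t)$ exists; if it were positive we would get $(-t)R_{\max}(t)\to\infty$, a contradiction, so $R_{\max}(t)\to 0$ as $t\to-\infty$. The heart of the argument, however, is to produce the asymptotic shrinking soliton and then transport its geometry back along the solution.

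Next I would invoke Perelman's construction of the asymptotic soliton, which goes through in our setting: using the universal noncollapsing (Theorem \ref{universal_noncollapsing}) together with the pointwise derivative estimates (Theorem \ref{pointwise_derivative_estimates}) and the reduced length/reduced volume machinery, one extracts from rescalings of the form $\tau_k^{-1}g(-\tau_k\,\cdot)$, based at suitable reduced-distance centers $q_k$ with $\tau_k\to\infty$, a limit that is a complete, nonflat gradient shrinking Ricci soliton; this limit is $\alpha$-uniformly PIC and weakly PIC2, since both conditions are scale invariant and closed under $C^2$-limits. By Theorem \ref{soliton_classification}, the soliton is isometric to a quotient of $S^n$ or of the round cylinder $S^{n-1}\times\R$; since $M$ is noncompact the soliton is noncompact, hence a quotient of $S^{n-1}\times\R$. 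In particular the soliton has scalar curvature $\asymp(-t)^{-1}$ at time $t$, which, combined with the monotonicity of $R_{\max}$ and a suitable choice of the sequence $\tau_k$, yields a matching lower bound $(-t)R_{\max}(t)\ge c>0$; together with the Type I assumption this gives $R_{\max}(t)\asymp(-t)^{-1}$.

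The contradiction then comes from the Neck Stability Theorem of Kleiner--Lott \cite{KL17}, whose proof uses only ingredients now available in higher dimensions (the structure results of Appendix A, the Harnack inequality, and compactness of $\kappa$-solutions) and therefore carries over. Since $(M,g(t))$ is strictly PIC2 it never splits a line, so it is never isometric to a quotient of the round cylinder; hence by Theorem \ref{structure_thm} the ``cap'' $\Omega_t\subset M$, outside of which every point is the center of a neck, is nonempty for each $t$. Neck Stability asserts that a point at the center of a neck at some time is, more sharply, at the center of a neck at all earlier times; consequently the complements of the cap regions grow as $t\to-\infty$, so the $\Omega_t$ are nested and decreasing, and being nonempty and compact their intersection contains a point $\bar x$ lying in $\Omega_t$ for every $t$. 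By Corollary \ref{structure_cor} we have $R(\bar x,t)\asymp R_{\max}(t)\asymp(-t)^{-1}$, and since $\bar x$ is never the center of a neck, the Neck Detection Lemma (Lemma \ref{neck_detection_lemma}) gives $\lambda_1(\bar x,t)\ge\theta R(\bar x,t)$ for a fixed $\theta>0$. Rescaling $(M,g(t))$ at $(\bar x,t_k)$ by $R(\bar x,t_k)$ along a sequence $t_k\to-\infty$ then produces, using the curvature bound $R_{\max}(t)\asymp(-t)^{-1}$ and $\kappa$-noncollapsing, a subsequential limit that is an ancient $\kappa$-solution in which the limiting base point carries $\lambda_1\ge\theta R>0$ and so is \emph{not} the center of a neck. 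On the other hand, taking the reduced distance in Step 2 to be based at $(\bar x,0)$, Perelman's estimate bounding distance by the reduced distance keeps $\bar x$ within bounded rescaled distance of the centers $q_k$; hence this limit is isometric to the asymptotic soliton, i.e.\ to a quotient of the round cylinder, in which every point is the center of a neck. This contradiction completes the argument.

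The main obstacle is precisely this last step: re-deriving the Neck Stability Theorem and its corollaries -- the existence of a permanent cap point and the identification of the re-centered blow-down limit with the asymptotic soliton -- in dimension $n\ge4$. The argument of Kleiner--Lott follows line by line once the higher-dimensional structure theory of Appendix A is in hand, but checking carefully that nothing special to dimension three enters, in particular in the monotonicity underlying neck stability and in locating the reduced-distance centers near the cap, is the delicate part of the proof.
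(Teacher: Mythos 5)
Your proposal follows the Kleiner--Lott route (Neck Stability $\Rightarrow$ Type II), while the paper gives a direct argument following \cite{Bre18}. Both approaches begin identically: assume $\limsup_{t\to-\infty}(-t)R_{\max}(t)<\infty$, build the asymptotic gradient shrinking soliton out of reduced-distance rescalings, and identify it as the round cylinder via Theorem \ref{soliton_classification} (plus the topological argument ruling out nontrivial quotients). After that the two arguments diverge.

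The paper's contradiction is short and local-to-global: by the proof of Theorem A.4 in \cite{Bre18} there is a single point $q$ with $\sup_{t\le 0}(-t)R(q,t)\le C$, from which the asymptotic soliton is built; the Type I bound $\Ric\le\Lambda(-t)^{-1}$ together with Perelman's distance distortion estimate (Lemma 8.3(b)) shows that the rescaled distances $d_{g_i(-1)}$ satisfy $d_{g_i(-1)}\le d_{g_j(-1)}+C\Lambda$ for $i\ge j$, so the rescaled diameter of the cap region $D_i$ stays bounded; but $q$ is at the center of longer and longer rescaled necks, contradiction. No neck stability, no nested-cap intersection, no identification of re-centered blow-downs with the asymptotic soliton.

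Your proposal has a genuine gap, which you yourself flag at the end: it \emph{uses} the Neck Stability Theorem in dimension $n\ge 4$ but does not prove it, asserting only that the Kleiner--Lott proof ``follows line by line.'' Within this paper, that is exactly what needs to be established, and it is not a small matter to verify — the whole point of the paper's chosen route is to avoid having to re-derive Neck Stability. Two further places in your outline require more care even granting Neck Stability: (i) the claim that Neck Stability makes the cap regions $\Omega_t$ nested and decreasing as $t\to-\infty$ — Neck Stability is a statement about a fixed point staying at the center of a neck for earlier times (with an improving modulus), and turning this into nestedness of the $\Omega_t$ for a \emph{fixed} threshold $\varepsilon$ requires bookkeeping that you do not supply; and (ii) the claim that the blow-down based at the permanent cap point $\bar x$ coincides with the asymptotic soliton because $\bar x$ ``stays within bounded rescaled distance'' of the reduced-distance centers $q_k$ — this comparison of reduced-distance centers to the fixed cap point needs a quantitative argument (Kleiner--Lott do this carefully), not just a citation of Perelman's distance/reduced-distance estimate. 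So while the strategy is coherent and is in fact the one the paper alludes to as an alternative, the proposal as written leaves the key lemma unproven and several steps under-justified; the paper's direct distance-distortion argument is considerably shorter and avoids all of these difficulties.
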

\begin{proof}
Suppose for sake of contradiction that $\limsup_{t \to -\infty} (-t)R_{\max}(t) < \infty$. By the proof of Theorem A.4 in \cite{Bre20} (the argument there works independent of the dimension), there exists a point $q \in M$ such that $\sup_{t \leq 0} (-t) R(q, t) \leq C(n)$. Let $\ell(x, t)$ denote the reduced distance of $(x, t)$ from $(q, 0)$. It follows from the definition that $\ell(q, t) \leq C$. By work of Perelman, there exists a sequence of times $t_i \to -\infty$ such that the sequence of flows $(M, g_i(t), q)$, where $g_i(t) = (-t_i)^{-1} g((-t_i)t)$, for $t \in [-2, -1]$, converge in the pointed Cheeger-Gromov sense to a non-flat gradient shrinking Ricci soliton $(\hat M, \hat g(t))$ for $t \in [-2, -1]$. The limit is $\alpha$-uniformly PIC, weakly PIC2, and noncompact. It follows from the classification of such solitons that $(\hat M, \hat g(t))$ must be a family of shrinking round cylinders or a noncompact quotient of the family of cylinders by a discrete group of standard isometries. Arguing as in the proof of the previous lemma, we conclude $(\hat M, \hat g(t))$ must be a family of shrinking round cylinders. 

By convergence to the family of shirinking round cylinders the quantity $(-t_i) R(q, t_i)$ converges to a constant, which we may assume is $1$.  Moreover, for any $\varepsilon > 0$, the point $q$ lies at the center of an $\varepsilon$-neck at time $t_i$ if $i$ is sufficiently large. Since $   R(q, t_i) \geq \frac{1}{2}(-t_i)^{-1}$ if $i$ is sufficiently large, it follows that $R_{\max}(t_i)^{-\frac{1}{2}} \leq 2(-t_i)^{\frac{1}{2}}$ if $i$ sufficiently large. On the other hand, by assumption there exists a positive constant $\Lambda$ such that $R_{\max}(t) \leq \Lambda (-t)^{-1}$. Since $(M, g(t))$ has positive sectional curvature, this implies $\mathrm{Ric}_{g(t)} \leq \Lambda (-t)^{-1}g(t)$. Using the bound for the Ricci curvature, Perelman's distance distortion estimate (Lemma 8.3(b) in \cite{Per02}) implies for any points $x, y \in M$ and $t \leq s$
\[
d_{g(t)}(x, y) \leq d_{g(s)}(x, y) + C\Lambda (-t)^{\frac{1}{2}}. 
\]
In particular, fixing some $j$ large, and considering $i \geq j$ (so $t_i \leq t_j$), the inequality above implies
\[
d_{g_i(-1)}(x, y) = (-t_i)^{-\frac{1}{2}} d_{g(t_i)}(x, y) \leq (-t_i)^{-\frac{1}{2}} d_{g(t_j)}(x, y) + C\Lambda \leq d_{g_j(-1)}(x, y) + C \Lambda. 
\]
For $i$ sufficiently large, let $\Sigma_i$ denote the constant mean curvature sphere passing through $q$ at time $t_i$. Let $D_i$ denote the bounded connected component of $M \setminus \Sigma_i$. The estimate above implies that the diameter of the region $D_i$ with respect to the metric $g_i(-1)$ remains bounded as $i \to \infty$. In particular, the distance between $q$ and a point $p_i \in D_i$ of maximal distance is bounded. This however gives a contradiction, since for $i$ very large the length of the neck centered at $q$ at time $t_i$ becomes arbitrarily long with respect to the metric $g_i(-1)$ as $i \to \infty$. This completes the proof. 
\end{proof}

Finally, we verify that a uniformly PIC and strictly PIC2 steady soliton is asymptotically cylindrical in sense of \cite{Bre14}. 

\begin{proposition}\label{soliton_cylindrical}
Suppose $(M, g)$ is a complete steady gradient Ricci soliton of dimension $n \geq 4$ which is $\kappa$-noncollapsed, uniformly PIC and strictly PIC2. Then $(M, g)$ is asymptotically cylindrical in the sense of \cite{Bre14} and consequently the $n$-dimensional Bryant soliton. 
\end{proposition}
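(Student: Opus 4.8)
The plan is to deduce the statement from the structure theory of noncompact ancient $\kappa$-solutions (Theorem~\ref{structure_thm}, Corollary~\ref{structure_cor}) together with the uniqueness of the Bryant soliton among asymptotically cylindrical steady solitons, Theorem~1.2 of~\cite{Bre14}.

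First I would pass to the associated eternal Ricci flow. Writing the soliton equation as $\Ric = D^2 f$, the flow of $-\nabla f$ is a one-parameter family of diffeomorphisms $\varphi_t$, and $g(t) := \varphi_t^* g$ is a complete solution of the Ricci flow on $(-\infty, \infty)$. The standard identity $R + |\nabla f|^2 \equiv R_0$ for a constant $R_0$ shows $0 \le R \le R_0 < \infty$, and $R_0 > 0$ because $(M, g)$ is strictly PIC2, hence has positive sectional curvature and in particular is nonflat. Since $(M, g)$ is uniformly PIC (in dimension four in the adapted sense of Section~2), the full curvature tensor is controlled by a multiple of $R$, so $(M, g(t))$ has bounded curvature. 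Being eternal (hence ancient), complete, nonflat, $\kappa$-noncollapsed, uniformly PIC and weakly PIC2, $(M, g(t))$ is a noncompact ancient $\kappa$-solution in the sense of Definition~\ref{kappa_solution}; in dimension four the restricted pinching proposition of Appendix~A ensures the structure results below apply to it.

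Next I would apply Theorem~\ref{structure_thm} and Corollary~\ref{structure_cor}: for each $\varepsilon > 0$ there is a compact domain $\Omega_\varepsilon \subset M$ such that every point of $M \setminus \Omega_\varepsilon$ lies at the center of an evolving $\varepsilon$-neck, with uniformly comparable curvature on $\Omega_\varepsilon$ and $\mathrm{diam}(\Omega_\varepsilon)^2 \sup_{\Omega_\varepsilon} R \le C(n, \alpha, \varepsilon)$; for a steady soliton these domains at different times are carried into one another by $\varphi_t$. Now let $p_m \to \infty$ in $M$ and pick $\varepsilon_m \downarrow 0$. For $m$ large we have $p_m \notin \Omega_{\varepsilon_m}$, so $(p_m, 0)$ lies at the center of an evolving $\varepsilon_m$-neck; by Definition~\ref{epsilon_neck}, after rescaling by $R(p_m)$ and translating in time, the flow near $p_m$ is $\varepsilon_m$-close in $C^{[\varepsilon_m^{-1}]}$ to the shrinking cylinder $\bar g(t)$ on a parabolic neighborhood of size $\varepsilon_m^{-1}$. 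Letting $m \to \infty$, the rescalings converge in $C^\infty_{\mathrm{loc}}$ to the shrinking round cylinder on $S^{n-1} \times \R$; moreover $M$ is diffeomorphic to $\R^n$ by the soul theorem (positive sectional curvature), so the cross-sections are genuine spheres $S^{n-1}$ rather than quotients. Combining the neck estimates with the soliton identities $\Ric = D^2 f$ and $R + |\nabla f|^2 \equiv R_0$, one reads off along such a neck that $f$ is asymptotically affine in the cylindrical coordinate, that the unit vector $|\nabla f|^{-1}\nabla f$ converges to the cylinder axis direction, and that $R \to 0$ along the end. These are precisely the requirements defining ``asymptotically cylindrical'' in~\cite{Bre14}.

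Finally, with the asymptotically cylindrical property in hand, Theorem~1.2 of~\cite{Bre14} applies and shows that $(M, g)$ is rotationally symmetric, hence isometric to the $n$-dimensional Bryant soliton. I expect the only genuinely fiddly point to be the third step: matching the $C^\infty_{\mathrm{loc}}$ convergence produced by the $\varepsilon$-neck structure, together with the behavior of $\nabla f$ and the decay of $R$, to the exact quantitative formulation of ``asymptotically cylindrical'' adopted in~\cite{Bre14}; the remaining steps are direct appeals to results already in place.
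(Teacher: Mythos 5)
Your proposal has the right broad strategy (reduce to the asymptotically-cylindrical criterion from \cite{Bre14} and invoke the uniqueness theorem there), but there is a genuine gap at the point you flag as ``fiddly,'' and your description of the criterion to be verified is not correct.

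The definition of asymptotically cylindrical recalled in the paper has two parts: (i) a two-sided quantitative decay estimate $\Lambda^{-1} d_g(p,x)^{-1} \leq R(x) \leq \Lambda\, d_g(p,x)^{-1}$ for some fixed $p$ and $\Lambda$, and (ii) subconvergence of rescalings along arbitrary $x_m \to \infty$ to the shrinking cylinder. Your Step 3 addresses (ii), but you replace (i) with ``$f$ asymptotically affine, $\nabla f/|\nabla f|$ converging to the axis, and $R \to 0$ along the end,'' which is not the required condition and is strictly weaker. The neck structure gives no information about how the neck scale $r_{\mathrm{neck}}(p_m)$ compares to $d_g(p, p_m)$, so $R \sim d^{-1}$ does not follow from Theorem~\ref{structure_thm} (or from Perelman's splitting result, which is what the paper actually uses at this step) together with the soliton identities ``read off'' qualitatively.

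The paper closes this gap by an ODE argument along the integral curves of $X = \nabla f$: the pointwise derivative estimates give $|\langle X, \nabla R\rangle| \leq O(1)R^2$; once the local geometry is known to be cylindrical where $d^2 R \to \infty$, the refined estimates $|\nabla R| \leq o(1)R^{3/2}$ and $|\langle X, \nabla R\rangle + \tfrac{2}{n}R^2| \leq o(1)R^2$ first force $R \to 0$ at infinity (so $f$ has a critical point and $R$ attains its maximum), then, after deriving the linear growth of $f$, integrating $\langle X, \nabla R\rangle \approx -\tfrac{2}{n}R^2$ produces both the lower and upper bounds of (i). None of this bootstrapping appears in your proposal, and without it the hypothesis of Theorem~1.2 of \cite{Bre14} is not met. (A lesser point: the claim ``for $m$ large we have $p_m \notin \Omega_{\varepsilon_m}$'' needs a diagonal argument, since the compact sets $\Omega_\varepsilon$ grow as $\varepsilon \downarrow 0$.)
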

\begin{proof}
We adapt the argument used on p.212 of \cite{Bre14}. We may write $\mathrm{Ric} = D^2 f$ for some real-valued function $f$. Let $X = \nabla f$ and $\Phi_t$ denote the flow of the vector $-X$ for $t \in \mathbb{R}$. Recall that the corresponding solution of the Ricci flow is given $g(t) = \Phi_t^\ast (g)$. In particular, a time derivative correspond to a directional derivative by $-X$. After scaling, we may assume that $R + |\nabla f|^2 = 1$. We recall from \cite{Bre14} that $(M, g)$ is asymptotically cylindrical if:
\begin{enumerate}
\item[(i)] There exists a point $p$ and a positive constant $\Lambda$ such that the scalar curvature satisfies $\Lambda^{-1} d_g(p, x)^{-1} \leq R(x) \leq \Lambda d_g(p, x)^{-1}$. 
\item[(ii)] Let $x_m$ be any arbitrary sequence of points tending to infinity and let $r_m$ be a sequence of real numbers such that $r_m R(x_m) \to \frac{n-1}{2}$ as $m \to \infty$. Consider the rescaled metrics
\[
\hat g_m(t) = r_m^{-1} \Phi^\ast_{r_m t}(g) = r_m^{-1} g(r_m t). 
\]
Then we require that some subsequence of the sequence of flows $(M, \hat g_m(t), x_m)$ converges in the Cheeger-Gromov sense to a family of shrinking round cylinders $(S^{n-1} \times \mathbb{R}, \bar g(t))$ for $t \in (0, 1)$ where $\bar g(t) = 2(n-2)(1 - t) g_{S^{n-1}} + dz \otimes dz.$. 
\end{enumerate}
Consider a sequence of points $x_m$ tending to infinity. From our pointwise derivative estimates, we obtain $|\langle X, \nabla R \rangle| \leq O(1) R^2$. Suppose that $d_g(p, x_m)^2 R(x_m) \to \infty$. Combining Perelman's splitting result (Proposition 6.8 in \cite{Bre19}) together with compactness for ancient $\kappa$-solutions (Corollary 6.15 in \cite{Bre19} for $n \geq 5$ and Corollary 3.7 in \cite{CZ06} for $n = 4$), after passing to a subsequence, the sequence of flows $(M, R(x_m)g(t), x_m)$ converges to an ancient $\kappa$-solution which must split a line. As we have previously argued, the pinching result of \cite{BHS11} and together with the fact that $M$ is diffeomorphic to $\mathbb{R}^n$, implies the limit is a family of shrinking round cylinders. In particular, up to a constant factor, the sequence $(M, R(x_m)g, x_m)$ converges to the standard $S^{n-1} \times \mathbb{R}$ in the Cheeger-Gromov sense. This implies $|\nabla R| \leq o(1) R^{\frac{3}{2}}$, $|\Delta R| \leq o(1) R^2$, and $|\mathrm{Ric}|^2 = (\frac{1}{n} + o(1))R^2$ at $x_m$ as $m \to \infty$. From this we deduce that $|\langle X, \nabla R \rangle + \frac{2}{n} R^2| \leq o(1) R^2$ at $x_m$ as $m \to \infty$. 

This argument above shows that $R \to 0$ at infinity. Indeed, if there exist a sequence $x_m$ tending to infinity such that $R(x_m)$ is bounded away from zero, then $d(p, x_m)^2 R(x_m) \to \infty$. From the argument above, we have $|\nabla R| \leq o(1) R^{\frac{3}{2}}$ and $|\langle X, \nabla R \rangle + \frac{2}{n} R^2| \leq o(1) R^2$ at $x_m$ as $m \to \infty$. Since $|X| \leq 1$, together these estimates imply $R(x_m)$ tends to zero, a contradiction to our assumption. 

Now that we know $R \to 0$ at infinity, there exists a point $p \in M$ where $R$ attains its maximum. By the trace second Bianchi identity, $\nabla R =  -2 \mathrm{Ric}(\nabla f)$, the point $p$ must be a critical point of $f$. Using the second variation formula for geodesics and the fact that $\mathrm{Ric} = D^2 f$, convexity of the potential $f$ implies $f(x) \geq a d(x, p) - b$ for suitable constants $a$ and $b$. On the other hand, the estimate $|\nabla f|^2 \leq 1$ implies $f(x) \leq a d(x, p) + b$. Integrating the inequality $|\langle X, \nabla R \rangle| \leq O(1) R^2$, we obtain $R(x) \geq \Lambda^{-1} d(p, x)^{-1}$ for a suitable constant $\Lambda$. Consequently, $d(p,x)^2 R(x) \to \infty$ as $x \to \infty$ and we have the estimate $|\langle X, \nabla R \rangle + \frac{2}{n} R^2| \leq o(1) R^2$. Integrating this gives the upper bound $R(x) \leq \Lambda d(p, x)^{-1}$. Applying the convergence argument above once more shows that $(M, g)$ is asymptotically cylindrical (and therefore the $n$-dimensional Bryant soliton). 
\end{proof}

\section{Replacement for Anderson-Chow Estimate}

Recall that the Lichnerowicz Laplacian $\Delta_{L, g}$ is defined on $(0,2)$-tensors by
\[
\Delta_{L, g} h_{ik} = \Delta h_{ik} + 2 R_{ijkl} h^{jl} - \mathrm{Ric}_{il} h^l_k - \mathrm{Ric}_{kl} h^l_i.
\]
Recall that if $g(t)$ is a solution of the Ricci flow, then the covariant time derivative $D_t$ acts on time-dependent $(0, 2)$-tensors by 
\[
D_t h_{ik} = \frac{\partial}{\partial t} h_{ik} + \mathrm{Ric}_{il} h^l_k + \mathrm{Ric}_{kl} h^i_k. 
\]
In particular, $D_t g = 0$. 

\begin{proposition}\label{Anderson_Chow_replacement}
Suppose $n \geq 4$ and let $(M, g(t))$, $t \in [0, T]$ be an $n$-dimensional, complete solution to the Ricci flow with nonnegative sectional curvature. Suppose $\Omega \subset M \times (0, T)$ is a compact domain of spacetime. Let $\rho$ be a positive real number and assume $\mathrm{Ric} > \rho g$ everywhere in $\Omega$. Let $h(t)$ be a one-parameter family of symmetric (0,2)-tensors satisfying the parabolic Lichnerowicz equation $\frac{\partial}{\partial t} h(t) = \Delta_{L, g(t)} h(t)$ in $\Omega$.  Define $\psi : \Omega \to \mathbb{R}$ by
\[
\psi := e^{2\rho t} \inf \{ \lambda > 0 : -\lambda(\mathrm{Ric} - \rho g) \leq h \leq \lambda(\mathrm{Ric} - \rho g)\}.
\]
Then 
\[
\sup_{\Omega} \psi \leq \sup_{\mathcal P \Omega }\psi, 
\]
where $\mathcal P \Omega$ denotes the parabolic boundary of $\Omega$. 
\end{proposition}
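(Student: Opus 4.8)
The plan is to use the tensor maximum principle (in Hamilton's formulation). The key point is that $\psi$ is, up to the exponential factor $e^{2\rho t}$, the smallest $\lambda$ for which both $\lambda(\Ric - \rho g) - h \geq 0$ and $\lambda(\Ric - \rho g) + h \geq 0$. Equivalently, setting $S := \Ric - \rho g$, which is positive definite in $\Omega$ by hypothesis, $\psi = e^{2\rho t}$ times the operator norm of $S^{-1/2} h \, S^{-1/2}$. So the statement to prove is a maximum-principle-type estimate for this weighted norm: I want to show that if at some interior point of $\Omega$ the quantity $\psi$ attains a new maximum (strictly larger than its parabolic-boundary maximum), one derives a contradiction from the evolution equations. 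I would first record the evolution equations: under Ricci flow, $\Ric$ (as a $(0,2)$-tensor, using the covariant time derivative $D_t$) satisfies $D_t \Ric = \Delta_{L} \Ric$, so $S = \Ric - \rho g$ satisfies $D_t S = \Delta_{L} S$ as well (since $D_t g = 0$ and $D_t(\rho g) = 0$ because $\rho$ is constant). Similarly $h$ satisfies $D_t h = \Delta_L h$ after converting the ordinary time derivative in the hypothesis $\frac{\partial}{\partial t} h = \Delta_{L}h$ into the covariant one — note $D_t h = \frac{\partial}{\partial t} h + \Ric \ast h = \Delta_{L,g} h + \Ric\ast h$, so actually it is cleaner to keep $\frac{\partial}{\partial t}$ and remember $\frac{\partial}{\partial t} g = -2\Ric$.

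Next I would fix a large constant $\lambda_0 := \sup_{\mathcal{P}\Omega} \psi + \varepsilon$ for arbitrary $\varepsilon > 0$ and consider the symmetric $(0,2)$-tensors $T_{\pm} := e^{-2\rho t}\lambda_0 \, S \mp h = e^{-2\rho t}\lambda_0 (\Ric - \rho g) \mp h$. On the parabolic boundary both $T_+$ and $T_-$ are positive semidefinite by the choice of $\lambda_0$; the goal is to show they remain positive semidefinite throughout $\Omega$, which is exactly the claim $\sup_\Omega \psi \leq \lambda_0$, and then let $\varepsilon \to 0$. To apply the tensor maximum principle I compute the evolution of $T_{\pm}$. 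Using $D_t \Ric = \Delta_L \Ric$ and $D_t h = \Delta_L h + \Ric\ast h$ (or keeping $\partial_t$ throughout, whichever bookkeeping is cleaner), one finds
\[
\frac{\partial}{\partial t} T_{\pm} = \Delta_{L} T_{\pm} + 2\rho\, e^{-2\rho t}\lambda_0 (\Ric - \rho g) + (\text{lower order}),
\]
where the crucial extra term comes from differentiating the exponential weight and gives $+2\rho \, e^{-2\rho t}\lambda_0 S$, which is \emph{strictly positive} on $\Omega$. I need to check that the reaction terms in $\Delta_L$ respect the cone of positive semidefinite tensors when the sectional curvature is nonnegative: this is precisely the mechanism behind the Anderson--Chow estimate, and it uses the $2R_{ijkl}h^{jl}$ term in $\Delta_L$ together with the algebraic identity that for $h$ with a null eigenvector $v$ and $h \geq 0$, the curvature term $R(v, e_i, v, e_j)h^{ij} \geq 0$ by nonnegativity of sectional curvature. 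Combined with the strictly positive weight term, the null-eigenvector condition of Hamilton's maximum principle is satisfied with room to spare, so $T_\pm \geq 0$ is preserved.

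The main obstacle, and the place requiring genuine care, is the algebra of the zeroth-order (reaction) terms: one must verify that at a spacetime point where (say) $T_+$ develops a null eigenvector $v$ — i.e., where $e^{-2\rho t}\lambda_0 S(v,\cdot) = h(v,\cdot)$ in the sense that $v$ is null for $T_+$ — the quantity $\frac{d}{dt}\big( T_+(v,v)\big)$ computed via the reaction terms is $\geq 0$. This requires expanding $\Delta_L T_+ (v,v)$'s reaction part, which involves $R_{ivjv}(T_+)^{ij}$ plus Ricci-contraction terms; the Ricci-contraction terms $-\Ric_{il}(T_+)^l_j - \Ric_{jl}(T_+)^l_i$ contracted against $vv$ vanish on the null direction or are controlled, and the curvature term $2R_{ivjv}(T_+)^{ij}$ is $\geq 0$ because $T_+ \geq 0$ and sectional curvatures are $\geq 0$. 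One also has to handle the non-smoothness of $\psi$ (the operator norm is only Lipschitz) — this is standard and is exactly why one works with the tensors $T_\pm$ and the cone of positive semidefinite matrices rather than with $\psi$ directly, invoking Hamilton's maximum principle for tensors (or equivalently Bony's strong maximum principle formulation). I would cite Hamilton's tensor maximum principle and the original Anderson--Chow computation for the sign of the reaction term, noting that nonnegative sectional curvature (rather than the three-dimensional special structure) is all that is needed for this weighted version, with the weight $e^{2\rho t}$ and the hypothesis $\Ric > \rho g$ supplying the strict positivity that makes the argument go through in all dimensions.
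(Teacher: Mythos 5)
Your proposal follows essentially the same route as the paper's proof: form the tensors $T_\pm = e^{-2\rho t}\lambda_0(\Ric - \rho g) \mp h$, show they are preserved in the cone of nonnegative symmetric $(0,2)$-tensors by a tensor maximum principle, and use nonnegative sectional curvature to control the curvature reaction term $R\ast T_\pm \geq 0$ at a null eigenvector. The paper implements the boundary/strictness issue with an auxiliary barrier $F$ solving the scalar heat equation (positive in the interior, zero on the boundary), while you use an $\varepsilon$-slack in $\lambda_0$ and a first-time argument; both are standard and equivalent devices on a compact spacetime domain.

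There is, however, a sign error in your reaction-term computation that is worth flagging because the whole argument turns on it. Differentiating the weight $e^{-2\rho t}$ contributes $-2\rho\, e^{-2\rho t}\lambda_0\, S$, with a \emph{minus} sign, not the $+2\rho\, e^{-2\rho t}\lambda_0\, S$ you wrote; and the identity $D_t \Ric = \Delta_L \Ric$ is not correct (the correct statement is $\partial_t \Ric = \Delta_{L} \Ric$, hence $D_t\Ric = \Delta_L\Ric + 2\Ric\cdot\Ric$, and in particular $D_t\Ric - \Delta\Ric = 2R\ast\Ric$). When one carries out the computation carefully with $T = e^{-2\rho t}\Lambda(\Ric - \rho g) \pm h$, the negative contribution $-2\rho\alpha(\Ric - \rho g)$ from the weight is exactly compensated by a $+2\rho\alpha\Ric$ term that emerges from the evolution of $\Ric$ and the decomposition $\alpha\Ric - h = T + \alpha\rho g$, and what survives is $(D_t - \Delta)T = 2\rho^2 e^{-2\rho t}\Lambda\, g + 2e^{-2\rho t} R\ast T$. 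So the reaction term is indeed strictly positive — as you claimed — but its origin is a cancellation built into the choice of weight $e^{2\rho t}$, not a directly positive term from differentiating the exponential (which by itself has the unfavorable sign). If you redo the algebra with the correct signs the proof goes through exactly as you planned.
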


\begin{proof}
Let
\[
\Lambda = \sup_{ \Omega} \psi. 
\]
Suppose for sake of contradiction that $\sup_{\mathcal P \Omega} \psi < \Lambda$. Let
\[
\bar t := \inf \{ t : \psi(x, t) = \Lambda \text{ for some } (x, t) \in \Omega \}. 
\]
There exists $\bar x$ such that $\psi(\bar x, \bar t) = \Lambda$. Since $\psi < \Lambda$ on $\mathcal P \Omega$, we have $(\bar x, \bar t) \in \Omega \setminus \mathcal P \Omega$ and if $(x, t) \in \Omega$ with $t < \bar t$, then $\psi(x, t) < \Lambda$. Because $(\bar x, \bar t)$ is an interior point, we can find a smooth open set $U$ containing $\bar x$ and a small positive number $\delta > 0$ such that $D = U \times [\bar t- \delta, \bar t] \subset \Omega$. After a suitable translation in time, we may assume $\bar t - \delta = 0$. 

Consider $(0, 2)$-tensors $T^{(\pm)}(t) = e^{-2\rho t} \Lambda (\mathrm{Ric}_{g(t)}- \rho g(t)) \pm h(t)$ in the parabolic domain $D$. Since 
\[
-e^{-2\rho t} \psi (\mathrm{Ric} - \rho g) \leq h \leq e^{-2\rho t} \psi(\mathrm{Ric} - \rho g)
\]
and $0 \leq \psi \leq \Lambda$, the tensors $T^{(\pm)}$ are weakly positive definite in $D$. Moreover, by our choice of $\bar t$, both tensors $T^{(\pm)}$ are strictly positive definite in $U$ at time $t = 0$. Finally, by definition of $\Lambda$, there exists a vector $v \in T_{\bar x} M$ such that $|v|_{g(\bar t)} = 1$ and either $T^{(+)}_{ij}v_iv_j = 0$ or $T^{(-)}_{ij}v_iv_j = 0$. Without loss of generality, suppose $T^{(-)}_{ij}v_iv_j = 0$ and let us write $T= T^{(-)}$. Now since $T$ is initially strictly positive definite, we can find a nonnegative function $f : \overline U \to \mathbb{R}$ with the properties that $f > 0$ in $U$, $f = 0$ on $\partial U$, and $T(0) - f g(0)$ is weakly positive definite on $U$. Now let $F : U \times [0, \bar t]$ be a solution of the linear heat equation with respect to the evolving metric $g(t)$ with initial condition $F(\cdot, 0) = f$ and Dirichlet boundary condition $F = 0$ on $\partial U \times [0, \bar t]$.  By the maximum principle, $F > 0$ in $U \times [0, \bar t]$. Then $(D_t - \Delta )(F g) = 0$. 
For ease of notation, let $(R\ast T)_{ik} = R_{ijkl} T^{jl}$. Since we assume $(M, g(t))$ has nonnegative sectional curvature and $T$ is weakly positive definite, we have $R \ast T \geq 0$. Now we compute
\begin{align*}
\Big(D_t - \Delta \Big) T &= - 2 \rho e^{-2\rho t} \Lambda (\mathrm{Ric} - \rho g) + 2 \rho e^{-2\rho t} \Lambda \mathrm{Ric}  + 2 e^{-2\rho t} R \ast T\\
& = 2 \rho^2 e^{-2\rho t} \Lambda g +  2 e^{-2\rho t} R \ast T\geq 0.
\end{align*}
Thus, by the maximum principle for tensors $T - Fg$ is weakly positive definite in $U \times [0, \bar t]$. Since $F(\bar x, \bar t) > 0$, this implies $T$ is strictly positive definite at $(\bar x, \bar t)$, contradicting the fact that $T_{ij}v_iv_j = 0$. This completes the proof of the proposition. 
\end{proof}

\section{Interior Estimates for Linear Parabolic Equations}\label{parabolic_equations}

In this section, for the convenience of the reader we state some results from linear parabolic equations used in the proof of the main theorem. Unless otherwise noted, in this section $C$ always denotes a universal constant, which may depend upon the dimension. 

In our analysis of the parabolic Lichnerowicz equation, we used the representation formula for a solution of the one-dimensional heat equation on $[-L, L]$ with Dirichlet boundary conditions. By the method of reflection, one can compute that, for $t \geq 0$, the fundamental solution for the one-dimensional heat equation on $[-L, L]$ with Dirichlet boundary conditions is given by 
\[
S(z, t; w) := \frac{1}{\sqrt{4\pi t}} \bigg(\sum_{k \in \mathbb{Z}} \exp\Big( -\frac{|z - w + 4kL|^2}{4t}\Big) - \sum_{k \in \mathbb{Z}} \exp\Big( -\frac{|z + w + 4kL- 2L|^2}{4t}\Big) \bigg).
\]
Thus, if $u$ is a solution of the one-dimensional heat equation for $z \in [-L,L]$ and $t \in [-L, 0]$, then
\[
u(z, t) = \int_{-L}^{L} S(z, t+ L; w) \,u(w, -L) \, dw  + \int_{-L}^{t} \frac{\partial S}{\partial w}(z, t - s; L) \, u(L , s)  -  \frac{\partial S}{\partial w}(z, t - s; -L) \, u(-L , s)\, ds.
\]
Suppose now that $(z, t) \in  [-4000, 4000] \times [-4000, 0]$ and $L$ is large positive constant. From our expression for $S$, we can readily obtain estimates
\[
\int_{-L}^{L} |S(z, t+ L; w)| \, dw \leq  C, \quad \text{and} \quad \Big|\frac{\partial S}{\partial w} (z, t-s; \pm L) \Big| \leq C L (t-s)^{-\frac{3}{2}} e^{-\frac{L^2}{100(t-s)}}. 
\]
Combining these estimates with the representation formula gives us the following method of estimating a solution by its boundary and initial data.  
\begin{proposition}
Suppose $u$ is a solution of the one-dimensional heat equation in the rectangle $[-L, L] \times [-L, 0]$. If $L$ is a large positive constant and $(z, t) \in [-4000, 4000] \times [-4000, 0]$, then 
\[
|u(z, t)| \leq C\sup_{z \in [-L, L]} \big|u(z, -L)\big| + C L \int_{-L}^t e^{-\frac{L^2}{100(t-s)}}(t-s)^{-\frac{3}{2}} \big(|u(-L, s)| + |u(L, s)|\big)\, ds. 
\]
\end{proposition}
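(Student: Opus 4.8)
The plan is to derive the stated estimate directly from the representation formula for the one-dimensional heat equation on $[-L, L]$ with Dirichlet boundary data, combined with the pointwise bounds on the Dirichlet heat kernel $S(z,t;w)$ stated immediately above. First I would recall the representation formula: if $u$ solves the heat equation on $[-L, L] \times [-L, 0]$, then for $(z,t)$ in this region,
\[
u(z, t) = \int_{-L}^{L} S(z, t+ L; w) \,u(w, -L) \, dw  + \int_{-L}^{t} \Big(\frac{\partial S}{\partial w}(z, t - s; L) \, u(L , s)  -  \frac{\partial S}{\partial w}(z, t - s; -L) \, u(-L , s)\Big)\, ds.
\]
This is obtained by writing $u$ against the Dirichlet heat kernel and integrating by parts in space on $[-L, L]$, using that $u$ vanishes nowhere but the kernel's $w$-derivative at the endpoints picks up the boundary contributions; the interior term vanishes because $S$ solves the adjoint equation.

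Next I would invoke the two kernel estimates already recorded: $\int_{-L}^{L} |S(z, t+ L; w)| \, dw \leq C$ and $|\partial_w S(z, t-s; \pm L)| \leq C L (t-s)^{-3/2} e^{-L^2/(100(t-s))}$, valid for $(z,t) \in [-4000,4000]\times[-4000,0]$ and $L$ large. Taking absolute values inside the representation formula, bounding $|u(w,-L)|$ by its supremum over $w \in [-L,L]$ and pulling it out of the first integral, and bounding $|u(\pm L, s)|$ by $|u(-L,s)| + |u(L,s)|$ inside the second integral, yields exactly
\[
|u(z,t)| \leq C\sup_{z \in [-L,L]} |u(z,-L)| + C L \int_{-L}^t e^{-\frac{L^2}{100(t-s)}}(t-s)^{-\frac{3}{2}}\big(|u(-L,s)| + |u(L,s)|\big)\, ds.
\]

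The only genuinely delicate points are (i) justifying the representation formula and the integration by parts, which requires $u$ to be smooth up to the parabolic boundary — this is standard for solutions of the linear heat equation with continuous boundary data, and in all applications in the paper $u$ arises as a smooth solution, so I would simply cite the standard theory; and (ii) checking that $\partial_w S$ at the endpoints does not contribute any boundary terms at $s = t$ or $s = -L$ beyond what is written — the $s \to t$ behavior is controlled by the Gaussian factor $e^{-L^2/(100(t-s))}$, which forces the integrand to zero faster than any power, so the integral converges. I expect the main obstacle, such as it is, to be purely expository: stating the integration-by-parts step cleanly and noting that the kernel estimates are used verbatim. There is no real analytic difficulty here — the proposition is essentially a packaging of the two displayed kernel bounds into a usable interior-estimate form, and I would keep the proof to a few lines.
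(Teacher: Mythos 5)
Your proposal is correct and matches the paper's approach: the paper states the Dirichlet heat kernel $S$ via the method of reflection, records the same two kernel bounds you use, and obtains the proposition by plugging them into the representation formula and taking absolute values. Your added discussion of the integration-by-parts justification and the convergence of the boundary integral as $s\to t$ is consistent with (and slightly more explicit than) what the paper leaves implicit.
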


In the course of this paper, we have used standard interior estimates for tensors satisfying linear parabolic equations coupled to the Ricci flow. In particular, we often required an estimate that only uses an $C^0$ bound for the inhomogeneous term $F$. This estimate is not an immediate consequence of standard interior parabolic Schauder estimates, which bound the solution by the $C^{\alpha}$-norm of the inhomogeneous term, but we can deduce it using Duhamel's principle. For the convenience of the reader, we sketch the proof of a result here that is sufficient for our applications. In particular, the following result applies to vector fields that satisfy $\frac{\partial}{\partial t} V = \Delta V + \mathrm{Ric}(V) + F$ and $(0,2)$-tensors that satisfy $\frac{\partial}{\partial t} h = \Delta_{L, g(t)} h + F$, as we encountered in the proofs of our results. For notational convenience, let $P(x_0, t_0, r) := B_{g(t_0)}(x_0, r) \times [t_0 - r^2, t_0]$ denote the parabolic cylinder of radius $r$ centered at $(x_0, t_0)$. The result is modeled on Shi's local interior estimates for the curvature tensor. 

\begin{proposition}
Let $U$ be an open neighborhood of a point $x_0$ in an $n$-dimensional manifold $M$. Let $r > 0$. Suppose that $g(t)$ is a family of Riemannian metrics on $U$ evolving by Ricci flow for $t_0 - r^2 \leq t \leq t_0$ and suppose that the closed metric ball $\overline{B}_{g(t_0)}(x_0, r)$ is a compact subset of $U$. Let $R = R_{g(t)}$ denote the Riemann curvature tensor of $g(t)$. Let $H$ and $F$ be smooth time-dependent tensor fields defined on $P(x_0, t_0, r)$. Suppose that $|R| \leq r^{-2}$ and 
\[
\frac{\partial }{\partial t} H = \Delta H + R \ast H +  F
\]
both hold on $P(x_0, t_0, r)$, where $R \ast H$ represents a bilinear expression in the curvature tensor and $H$. Then there exist a positive constant $C$, depending only upon the dimension $n$ and the specific bilinear expression $R \ast H$, such that 
\[
|DH|(x_0, t_0) \leq C r^{-1}\sup_{P(x_0, t_0, r)} |H| +  Cr \sup_{P(x_0, t_0, r)} |F|.
\]
\end{proposition}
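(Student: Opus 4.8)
The plan is to prove the gradient estimate by a local Bernstein-type argument, using a cutoff function in space and time together with the evolution equations for $|H|^2$ and $|DH|^2$ under the coupled heat equation. First I would derive the differential inequalities
\[
\Big(\tfrac{\partial}{\partial t} - \Delta\Big) |H|^2 \leq -2 |DH|^2 + C\, r^{-2}\, |H|^2 + C\, |H|\, |F|
\]
and
\[
\Big(\tfrac{\partial}{\partial t} - \Delta\Big) |DH|^2 \leq -2 |D^2 H|^2 + C\, r^{-2}\, |DH|^2 + C\, |H|\, |D^2H| \cdot r^{-2} + C\, |DH|\, |DF| + \dots,
\]
where the key point is that every curvature term is controlled by the hypothesis $|R| \le r^{-2}$, and (crucially) that we want to avoid any derivative of $F$ appearing. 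To do that, instead of differentiating the equation for $H$, I would work with the quantity $|DH|^2$ but estimate the bad term $\langle DH, DF\rangle$ by absorbing it: one writes $\langle D(\text{stuff}), \dots\rangle$ and integrates by parts conceptually via the auxiliary function below, so that only $|F|$ (not $|DF|$) enters. This is exactly the reason the statement is phrased with a $C^0$ bound on $F$ rather than a $C^\alpha$ bound, and the mechanism is Duhamel's principle as the excerpt hints.

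Concretely, the cleanest route is Duhamel/parabolic representation rather than a pure maximum principle. By parabolic rescaling we may assume $r = 1$; then $|R| \le 1$ on $P(x_0,t_0,1)$. Fix a smooth cutoff $\varphi$ supported in $P(x_0, t_0, 1)$ with $\varphi \equiv 1$ on $P(x_0,t_0,\tfrac12)$ and $|D\varphi| + |D^2\varphi| + |\partial_t \varphi| \le C$. Set $\tilde H := \varphi H$; then $\tilde H$ satisfies
\[
\tfrac{\partial}{\partial t}\tilde H = \Delta \tilde H + R \ast \tilde H + \big(\varphi F + (\partial_t\varphi - \Delta\varphi) H - 2 D\varphi \cdot DH\big).
\]
Writing $\tilde H$ via the heat kernel of the (time-dependent) Laplacian on a slightly larger region, $\tilde H(x_0,t_0) = \int \int G(x_0,t_0;y,s)\,\big[\text{source}\big]\,dy\,ds$, the derivative $DH(x_0,t_0) = D\tilde H(x_0,t_0)$ is obtained by differentiating the kernel. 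For the source term $\varphi F$ and the zeroth-order terms in $H$ we get a bound $C\sup|F| + C\sup|H|$ directly from the integrable kernel-derivative estimate $\int\int |D_x G| \le C$. For the $D\varphi \cdot DH$ term, which a priori contains a derivative of the solution, we integrate by parts in space moving the derivative off $DH$ onto the kernel (and onto $D\varphi$), producing again only $\sup_{P} |H|$ at the cost of a second kernel derivative, which is still integrable against the support of $D\varphi$ (a region bounded away from $(x_0,t_0)$, so no singularity issue). Gaussian bounds for $G$ and its spatial derivatives under a Ricci flow with $|R| \le 1$ are standard (they follow e.g. from parametrix constructions or from Shi's estimates; one only needs $\int_{t_0-1}^{t_0}\!\int |D_x^k G|\,dy\,ds \le C_k$).

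The remaining point — and I expect this to be the main obstacle — is justifying the heat-kernel representation and its derivative bounds on the possibly incomplete, non-compact local region $U$. The standard fix, and the one I would use, is to first run Shi-type estimates to get $\sup_{P(x_0,t_0,\frac12)}|DH| \le C(\sup_{P(x_0,t_0,1)}|H| + \sup|F| + \sup|DF|)$ by the usual maximum-principle argument on $|DH|^2 + \Lambda |H|^2$ with a cutoff (this controls $DH$ qualitatively), and only then feed that back into the Duhamel formula on a ball compactly contained in $U$ — where one can solve the Dirichlet heat equation and invoke interior Gaussian kernel bounds legitimately — to upgrade to the sharp estimate that does not see $DF$. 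Alternatively, and perhaps more in the spirit of ``we sketch the proof,'' one can avoid kernels entirely: localize, and estimate the solution of $\partial_t w = \Delta w + R\ast w + \text{source}$ with zero boundary/initial data by comparing $|w|$ with the supersolution $A\,(\sup|\text{source}_0|)\,t + (\text{barrier for the gradient-of-}\varphi\text{ term})$; the gradient term is handled by writing $2D\varphi\cdot DH = \operatorname{div}(2\varphi' H) - (\text{zeroth order})$ and using that the divergence of a bounded field produces, under the heat semigroup, a bound by $\sup|H|/\mathrm{dist}$ via the gradient estimate for the semigroup. Either way the conclusion $|DH|(x_0,t_0) \le C\sup_{P}|H| + C\sup_{P}|F|$ follows after undoing the rescaling, which restores the factors $r^{-1}$ and $r$ respectively. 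I would present the maximum-principle version since it is shortest and self-contained given what is in the excerpt.
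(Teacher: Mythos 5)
You have the right leading idea — Duhamel's principle is indeed how the $C^0$ bound on $F$ replaces the usual $C^\alpha$ bound — but your concrete realization is either heavier machinery than the paper uses or too vague to close, and it misses the one trick that makes the argument clean. The paper proceeds in two separate steps: (Step 1) prove the \emph{homogeneous} interior gradient estimate by a Shi-type maximum principle on the quantity $G = t\,\varphi^2|DH|^2 + B|H|^2$ with a compactly supported cutoff $\varphi$, obtaining $|DH|(x_0,t_0) \le Cr^{-1}\sup_P|H|$ when $F\equiv 0$; (Step 2) realize Duhamel not through a heat kernel but by solving, for each $s\in[0,r^2)$, a Dirichlet problem $\tilde H^{(s)}$ on a smooth subdomain $\tilde U$ with initial data $F(\cdot,s)$ at time $s$ and zero lateral boundary data, setting $\tilde H(\cdot,t)=\int_0^t \tilde H^{(s)}(\cdot,t)\,ds$, and noting that $\hat H := H - \tilde H$ solves the homogeneous equation. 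Each $\tilde H^{(s)}$ is a bounded homogeneous solution, so the Step 1 estimate applied at parabolic scale $(t-s)^{1/2}$ gives $|D\tilde H^{(s)}|(x_0,t) \le C(t-s)^{-1/2}\sup_P|F|$, which integrates in $s$ to yield the factor $Cr\sup_P|F|$; the Step 1 estimate applied to $\hat H$ gives the $Cr^{-1}\sup_P|H|$ term. No kernel, no Gaussian bound, no derivative of $F$ anywhere.

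By contrast, your first route requires Gaussian bounds on spatial derivatives of the heat kernel of a time-dependent Laplacian on an incomplete local patch — you rightly flag this as the main obstacle, and the paper simply avoids it. Your suggested fix (first prove a weak estimate involving $\sup|DF|$, then feed it into a kernel representation) is an unnecessary detour: once you are solving the Dirichlet problems for the Duhamel pieces, you never need to control $DH$ or $DF$ a priori. Your barrier/divergence alternative — "write $2D\varphi\cdot DH = \operatorname{div}(2\varphi' H) - (\text{zeroth order})$ and bound by $\sup|H|/\mathrm{dist}$ via the semigroup gradient estimate" — is not a proof as written; the "semigroup gradient estimate" you invoke is precisely the statement being proved, so this is circular without a separate argument. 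The missing ingredient throughout is recognizing that the Duhamel pieces $\tilde H^{(s)}$ are themselves homogeneous solutions, so the gradient estimate from the $F\equiv 0$ case can be applied to them directly, converting the kernel-singularity heuristic $(t-s)^{-1/2}$ into a rigorous bound with no kernel machinery at all.
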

\begin{proof}
Throughout the course of the proof, $C$ denotes a constant which may change from line to line, but only depends upon the dimension. Without loss of generality, we may assume $t_0 = r^2$. After replacing $r$ by $2r$, Shi's interior derivative estimates gives us the bound 
\[
\sup_{P(x_0, r^2, r)} |DR| \leq Cr^{-3}. 
\]
By passing to a local cover and pulling back the local solution of the Ricci flow, we may assume the exponential map at $x_0$ at time $r^2$ is injective on the ball of radius $r$. As in Section 13 of \cite{Ham97}, we can construct a suitable, smooth, time-independent function of the radius, $\varphi$, which is compactly supported in $B_{g(r^2)}(x_0, r)$ and has the properties $\varphi(x_0) = 1$, 
\[
0 \leq \varphi \leq C, \quad |D\varphi|_{g(r^2)} \leq C r^{-1}, \quad |D^2 \varphi|_{g(r^2)} \leq C r^{-2}. 
\]
Since 
\[
\Big|\frac{\partial}{\partial t} |D\varphi|^2\Big| \leq 2 |\mathrm{Ric}| |D\varphi|^2 \leq Cr^{-2} |D\varphi|^2,
\] 
integrating over $t \in [0, r^2]$, we conclude $|D\varphi| \leq C r^{-1}$ in $P(x_0, r^2, r)$. Similarly, $|D^2 \varphi| \leq C r^{-2}$ holds in $P(x_0, r^2, r)$. Let 
\[
\Lambda:= \sup_{P(x_0, t_0, r)} |H| 
\]
We consider the homogeneous and inhomogeneous cases separately. \\

\textit{Step 1:} Suppose that $F \equiv 0$. From the evolution equation for $H$, we obtain
\[
\frac{\partial}{\partial t} DH = \Delta DH + DR \ast H + R \ast DH. 
\]
We also have
\begin{align*}
\frac{\partial}{\partial t} |H|^2 &\leq \Delta |H|^2 - 2|DH|^2 + C|R| |H|^2 \\
&\leq \Delta |H|^2 - 2|DH|^2 + Cr^{-2} \Lambda^2. 
\end{align*}
Similarly,
\begin{align*}
\frac{\partial}{\partial t} |DH|^2 &\leq \Delta |DH|^2 - 2 |D^2 H|^2 + C|DR||H||DH| + C|R| |DH|^2 \\
& \leq  \Delta |DH|^2 - 2 |D^2 H|^2 + Cr^{-4}\Lambda^2  + Cr^{-2} |DH|^2. 
\end{align*}
This implies 
\begin{align*}
\frac{\partial}{\partial t} \big(\varphi^2 |DH|^2\big) &\leq \Delta \big(\varphi^2 |DH|^2\big) + C\varphi |D\varphi| |DH| |D^2 H| + C\varphi |D^2 \varphi| |DH|^2 + C |D\varphi|^2 |DH|^2 \\
& \qquad - 2\varphi^2 |D^2 H|^2 + C\varphi^2 r^{-4}\Lambda^2  + C\varphi^2 r^{-2} |DH|^2.
\end{align*}
By Young's inequality, we can bound
\[
2\varphi |D\varphi| |DH| |D^2 H| \leq \varepsilon \varphi^2 |D^2 H|^2 + \varepsilon^{-1} |D\varphi|^2 |DH|^2,
\]
where $\varepsilon>0$ can be chosen arbitrarily. Putting this together with the estimates for $\varphi$ and its derivatives, we obtain
\[
\frac{\partial}{\partial t} \big(\varphi^2 |DH|^2\big) \leq \Delta \big(\varphi^2 |DH|^2\big) + C r^{-4}\Lambda^2  + C r^{-2} |DH|^2. 
\]
Now, in a similar fashion to \cite{Bre10}, we consider the function 
\[
G = t \varphi^2 |DH|^2 + B|H|^2
\]
for a large positive constant $B$ to be determined momentarily. We compute 
\[
\frac{\partial}{\partial t} G \leq \Delta G + \big(\varphi^2 +Ctr^{-2} -2B\big)|DH|^2 + Cr^{-2}  (tr^{-2} + B) \Lambda^2. 
\]
Since $t \leq r^2$, if we take $B$ suitably large depending upon the constant $C$ and the estimate for $\varphi$, we conclude that 
\[
\frac{\partial}{\partial t} G \leq \Delta G + Cr^{-2} \Lambda^2. 
\]
It follows from the maximum principle that $G \leq C \Lambda^2$ on $P(x_0, r^2, r)$, and this gives 
\[
|DH|(x_0, r^2) \leq C r^{-1} \Lambda. 
\] 

\textit{Step 2:} If $F \not\equiv 0$, then we use Duhamel's principle together with the interior estimates derived in the homogeneous case. To fix the particular bilinear expression $R \ast H$, let us write $LH$ in place of $\Delta H + R \ast H$. 

Let $\tilde U \subset M$ be a smooth domain such that $\overline{B}_{g(r^2)}(x_0, \frac{r}{2}) \subset \tilde U \subset B_{g(r^2)}(x_0, r)$. For each $s \in [0, r^2)$, let $\tilde H^{(s)}$ be the solution to the problem 
\[
\begin{cases} 
\frac{\partial}{\partial t} \tilde H^{(s)}(x, t) = L \tilde H^{(s)}(x, t) & (x, t) \in U \times [s, r^2], \\
 \tilde H^{(s)}(x, s) = F(x, s) & x \in U,\\
 \tilde H^{(s)}(x, s) =  0 & x \in \partial U \times [s, r^2]. 
\end{cases} 
\]
Since 
\[
\frac{\partial}{\partial t} |\tilde H^{(s)}|^2 \leq \Delta |\tilde H^{(s)}|^2 + C r^{-2} |\tilde H^{(s)}|^2, 
\]
and $s \leq t \leq r^2$, by the maximum principle, we obtain 
\[
\sup_{U \times [s, r^2]}|\tilde H^{(s)}| \leq C \sup_{P(x_0, r^2, r)} |F|.
\]
For $(x, t) \in U \times [0, r^2]$, define 
\[
\tilde H (x, t) = \int_0^t \tilde H^{(s)}(x, t) \, ds.  
\]
Then $\frac{\partial}{\partial t} \tilde H(x, t) = F(x, t) + \int_0^t \frac{\partial}{\partial t} \tilde H^{(s)} (x, t) \, ds $ and consequently 
\[
\frac{\partial}{\partial t} \tilde H = L \tilde H + F
\]
in $\tilde U \times [0, r^2]$. Observe that $\tilde H$ vanishes on the parabolic boundary of $\tilde U \times [0, r^2]$.  

Let $\hat H(x, t) = H(x, t) - \tilde H(x, t)$. It follows that $\hat H$ solves $\frac{\partial}{\partial t} \hat H= L \hat H$ in $U \times [0, r^2]$. Moreover, 
\[
\sup_{U \times [0, r^2]} |\hat H| \leq \sup_{P(x_0, r^2, r)}|H| + C r^2 \sup_{P(x_0, r^2, r)} |F|.
\]
Applying the interior estimate of Step 1 to $\tilde H^{(s)}$, we get for $t \in (s, r^2)$
\[
|D\tilde H^{(s)}|(x_0, t) \leq C(t -s)^{-\frac{1}{2}} \sup_{U \times [s, r^2]} |\tilde H^{(s)}| \leq C (t - s)^{-\frac{1}{2}} \sup_{P(x_0, r^2, r)} |F|. 
\]
Integrating over $0 \leq s \leq r^2$, we get 
\[
|D\tilde H|(x_0, r^2) \leq C r  \sup_{P(x_0, r^2, r)}|F|. 
\]
Applying the interior estimate to $\hat H$, we obtain 
\[
|D\hat H|(x_0, r^2) \leq C r^{-1}  \sup_{U \times [0, r^2]} |\hat H| \leq  C r^{-1}  \sup_{P(x_0, r^2, r)}|H|  + C r \sup_{P(x_0, r^2, r)}|F|. 
\]
Combining the estimates for $|D\tilde H|(x_0, r^2)$ and $|D\hat H|(x_0, r^2)$ completes the proof of the proposition.
\end{proof}

The proof of the following corollary is similar. 
\begin{corollary}
Under the assumptions of Proposition C.2, for every positive integer $k \geq 2$, there exists a constant $C_k$, depending only upon the dimension and the specific bilinear expression $R \ast H$, such that 
\[
|D^k H|(x_0, t_0) \leq C_k r^{-k} \sup_{P(x_0, t_0, r)} |H| + C_k \sum_{l = 0}^{k -1} r^{l-k+2}\sup_{P(x_0, t_0, r)} |D^{l} F|.
\]
\end{corollary}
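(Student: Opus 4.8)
The plan is to argue by induction on $k$, reducing at each step to Proposition C.2. The starting observation is that differentiating the evolution equation preserves its structure: if $H$ solves $\frac{\partial}{\partial t}H = \Delta H + R\ast H + F$, then for each $m$ the iterated covariant derivative $D^m H$ satisfies an equation of the same type,
\[
\frac{\partial}{\partial t}(D^m H) = \Delta(D^m H) + R\ast(D^m H) + \tilde F^{(m)}, \qquad \tilde F^{(m)} := D^m F + \sum_{j=0}^{m-1} (D^{m-j}R)\ast(D^j H),
\]
where the sum collects all terms produced by commuting $D^m$ past $\Delta$ (via $[\Delta, D]T = R\ast DT + (DR)\ast T$, iterated) and past $\frac{\partial}{\partial t}$ (via $\frac{\partial}{\partial t}\Gamma \sim D\Ric$). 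I would first record this schematic identity; only the derivative counts matter, not the precise bilinear expressions.

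Next I would set up the induction in the slightly stronger ``sup form'': for every $\rho \le r$,
\[
\sup_{P(x_0,t_0,\rho/2)} |D^k H| \le C_k\, \rho^{-k}\sup_{P(x_0,t_0,\rho)}|H| + C_k \sum_{l=0}^{k-1} \rho^{\,l-k+2}\sup_{P(x_0,t_0,\rho)}|D^l F|.
\]
The case $k=0$ is trivial, and $k=1$ follows from Proposition C.2 applied at each interior point of $P(x_0,t_0,\rho/2)$ on its own small parabolic cylinder, which nests inside $P(x_0,t_0,\rho)$ (here one uses $|\Ric|\le C\rho^{-2}$ to see that the metrics on the relevant time interval, of length $\le\rho^2$, are comparable up to a universal factor, so metric balls at different times are comparable). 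For the inductive step at level $k$, I would apply Proposition C.2 to $\tilde H := D^{k-1}H$ on the cylinder $P(x_0,t_0,r/2)$, obtaining
\[
|D^k H|(x_0,t_0) \le C\, r^{-1}\sup_{P(x_0,t_0,r/2)}|D^{k-1}H| + C\, r\sup_{P(x_0,t_0,r/2)}|\tilde F^{(k-1)}|.
\]
The first term is bounded by the inductive hypothesis at level $k-1$. For the second, I would invoke Shi's interior estimates for the curvature tensor, $|D^m R|\le C_m r^{-2-m}$ on a slightly smaller cylinder, to get $|\tilde F^{(k-1)}| \le |D^{k-1}F| + \sum_{m=1}^{k-1} C_m r^{-2-m}\,|D^{k-1-m}H|$, and then apply the inductive hypothesis to each $|D^{k-1-m}H|$ with $m\ge 1$ (so the order $k-1-m \le k-2$). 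A short computation of the exponents (using $m+(k-1-m)=k-1$) shows that, after the multiplication by $r$, every resulting term lands exactly on the claimed right-hand side $C_k r^{-k}\sup|H| + C_k\sum_{l=0}^{k-1} r^{\,l-k+2}\sup|D^l F|$; in particular the single term $r\,|D^{k-1}F|$ supplies the $l=k-1$ summand. Finally I would re-derive the sup form at level $k$ from this pointwise estimate by the same point-translation/nesting argument used in the base case, closing the induction. The stated corollary is the pointwise version at the center.

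The only place requiring care is the bookkeeping of the nested cylinders: each invocation of Proposition C.2, of Shi's estimate, and of the inductive hypothesis costs a shrinking of the parabolic cylinder by a fixed fraction, and after the $O(k)$ shrinkings one still controls $(x_0,t_0)$ by sups over a cylinder of radius a definite fraction of $r$, with constants depending only on $n$, $k$, and the fixed bilinear expressions; combined with the elementary check that $P(y,s,\rho/(CN))$ centered at an interior point of $P(x_0,t_0,\rho/2)$ is contained in $P(x_0,t_0,\rho)$, using the curvature bound to compare metric balls at different times. Beyond this routine organization, I expect no analytic difficulty not already contained in Proposition C.2 and Shi's estimates.
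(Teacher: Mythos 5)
Your proof is correct, and it is the natural way to carry out the argument the paper alludes to by ``the proof of the following corollary is similar.'' Commuting $D^{k-1}$ through the evolution equation (producing the schematic source term $\tilde F^{(k-1)} = D^{k-1}F + \sum_{m\ge 1}(D^m R)\ast(D^{k-1-m}H)$), controlling the curvature derivatives by Shi's estimates $|D^m R|\le C_m r^{-2-m}$, and then feeding $D^{k-1}H$ back into Proposition C.2 is exactly the right inductive packaging; the exponent bookkeeping you describe does land every term on the stated right-hand side, and the nested-cylinder/metric-comparability point needed to upgrade the pointwise estimate of Proposition C.2 to a sup over a slightly smaller cylinder is handled correctly by the curvature bound $|\Ric|\le Cr^{-2}$ over a time interval of length at most $r^2$.
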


\end{document}